\documentclass[11pt,reqno]{amsart}

\usepackage{amsmath}
\usepackage{amssymb}
\usepackage{amsthm}
\usepackage{amsfonts}
\usepackage{upref}
\usepackage{xspace}

\usepackage[paperwidth=20.5cm,paperheight=29.7cm,width=16cm,height=25.5cm,top=24mm,left=19mm,headsep=11mm]{geometry}

\theoremstyle{plain}
\newtheorem{theorem}{Theorem}[section]
\newtheorem{lemma}[theorem]{Lemma}
\newtheorem{proposition}[theorem]{Proposition}
\newtheorem{corollary}[theorem]{Corollary}
\newtheorem{condition}[theorem]{Condition}

\theoremstyle{definition}
\newtheorem{remark}[theorem]{Remark}

\theoremstyle{plain}
\newtoks\thehProclaim

\theoremstyle{definition}
\newtoks\thehDefinition
\newtheorem*{Definition}{\the\thehDefinition}

\theoremstyle{definition}
\newtoks{\thehRemark}
\newtheorem*{Remark}{\the\thehRemark}

\numberwithin{equation}{section}

\theoremstyle{plain}
\newtoks\thehProclaim
\newtheorem*{Proclaim}{\the\thehProclaim}

\numberwithin{equation}{section}

\def\C{\mathbb C}

\def\x{\mathbf x}

\def\D{\mathbf D}

\def\1{\mathbf 1}

\def\1{\bold 1}

\def\div{\mathrm{div}\,}

\def\eps{\varepsilon}

\begin{document}

\title[Homogenization of the first initial boundary-value problem]{Homogenization of the first initial boundary-value problem for parabolic systems: operator error estimates}

\author{ Yu. M. Meshkova}

\address{Chebyshev Laboratory\\
St. Petersburg State University\\
14 Liniya V.O.,  29b\\
199178, St.~Petersburg,
Russia}

\email{y.meshkova@spbu.ru}

\author{ T. A. Suslina}

\address{Department of Physics\\
St. Petersburg State University\\
Ul'yanovskaya 3, Petrodvorets \\198504, St.~Petersburg,
Russia}

\email{t.suslina@spbu.ru}

\keywords{Periodic differential operators, parabolic systems, homogenization, operator error estimates}

\thanks{Supported by Russian Foundation for Basic Research (grant no.~16-01-00087).
The first author was supported by ``{Native Towns}'', a social investment program of PJSC ``{Gazprom Neft}'', by the ``{Dynasty}'' \ foundation, and by the Rokhlin grant.}

\date{\today}

\subjclass[2000]{Primary 35B27}

\begin{abstract}
Let $\mathcal{O}\subset\mathbb{R}^d$ be a bounded domain of class $C^{1,1}$. In $L_2(\mathcal{O};\mathbb{C}^n)$, we consider a selfadjoint matrix second order elliptic differential operator $B_{D,\varepsilon}$, $0<\varepsilon\leqslant1$, with the Dirichlet boundary condition.
The principal part of the operator is given in a factorized form. The operator involves first and zero order terms.
The operator $B_{D,\varepsilon}$ is positive definite; its coefficients are periodic and depend on $\mathbf{x}/\varepsilon$.
We study the behavior of the operator exponential $e^{-B_{D,\varepsilon}t}$, $t>0$, as $\varepsilon\rightarrow 0$.
We obtain approximations for the exponential $e^{-B_{D,\varepsilon}t}$ in the operator norm on $L_2(\mathcal{O};\mathbb{C}^n)$ and in the norm of
operators acting from $L_2(\mathcal{O};\mathbb{C}^n)$ to the Sobolev space $H^1(\mathcal{O};\mathbb{C}^n)$.
The results are applied to homogenization of solutions of the first initial boundary-value problem for parabolic systems.
\end{abstract}

\maketitle

\section*{Introduction}

The paper concerns homogenization theory of periodic differential operators (DO's).
We mention the books on homogenization \cite{BaPa,BeLP,ZhKO,Sa}.

\subsection{Statement of the problem} Let $\Gamma\subset\mathbb{R}^d$ be a lattice and let $\Omega$ be the elementary cell of the lattice $\Gamma$. For a $\Gamma$-periodic function $\psi$ in $\mathbb{R}^d$, we denote $\psi ^\varepsilon (\mathbf{x}):=\psi (\mathbf{x}/\varepsilon)$, where $\varepsilon >0$, and $\overline{\psi}:=\vert \Omega\vert ^{-1}\int_\Omega \psi (\mathbf{x})\,d\mathbf{x}$.

Let $\mathcal{O}\subset\mathbb{R}^d$ be a bounded domain of class $C^{1,1}$. In $L_2(\mathcal{O};\mathbb{C}^n)$, we study a selfadjoint matrix strongly elliptic second order DO ${B}_{D,\varepsilon}$, $0<\varepsilon\leqslant 1$,  with the Dirichlet boundary condition.
The principal part of the operator ${B}_{D,\varepsilon}$ is given in a factorized form
$A_{\varepsilon}=b(\mathbf{D})^*g^\varepsilon (\mathbf{x})b(\mathbf{D})$,
where $b(\mathbf{D})$ is a matrix homogeneous first order DO, and $g(\mathbf{x})$ is a $\Gamma$-periodic bounded and positive definite matrix-valued function in  $\mathbb{R}^d$. (The precise assumptions on $b(\mathbf{D})$ and $g(\mathbf{x})$ are given below in Subsection~\ref{Subsection operatoer A_D,eps}.) The operator ${B}_{D,\varepsilon}$ is given by the differential expression
\begin{equation}
\label{B_D,eps in introduction}
{B}_{\varepsilon}=b(\mathbf{D})^* g^\varepsilon (\mathbf{x}) b(\mathbf{D})
+\sum_{j=1}^d\bigl(a_j^\varepsilon (\mathbf{x})D_j+D_ja_j^\varepsilon(\mathbf{x})^*\bigr)
+Q^\varepsilon (\mathbf{x}) +\lambda Q_0^\varepsilon (\mathbf{x})
\end{equation}
with the Dirichlet condition on $\partial\mathcal{O}$.
Here $a_j(\mathbf{x})$, $j=1,\dots,d$, and $Q(\mathbf{x})$ are $\Gamma$-periodic matrix-valued functions, in general, unbounded;
a $\Gamma$-periodic matrix-valued function $Q_0(\mathbf{x})$ is such that $Q_0(\mathbf{x})>0$ and $Q_0, Q_0^{-1}\in L_\infty$.
The constant $\lambda$ is chosen so that the operator $B_{D,\varepsilon}$ is positive definite.
(The precise assumptions on the coefficients are given below in Subsection~\ref{Subsection lower order terms}.)

The coefficients of the operator \eqref{B_D,eps in introduction} oscillate rapidly for small $\varepsilon$.
Let $\mathbf{u}_\varepsilon (\mathbf{x},t)$ be the solution of the first initial boundary-value problem:
\begin{equation}
\label{intr problem}
\begin{cases}
Q_0^\varepsilon (\mathbf{x})\partial _t \mathbf{u}_\varepsilon (\mathbf{x},t)=-B_{\varepsilon}\mathbf{u}_\varepsilon (\mathbf{x},t),\quad\mathbf{x}\in\mathcal{O},\;t>0;
\\
\mathbf{u}_\varepsilon (\mathbf{x},t)=0,\quad\mathbf{x}\in\partial\mathcal{O},\;t>0;\quad Q_0^\varepsilon (\mathbf{x})\mathbf{u}_\varepsilon (\mathbf{x},0)=\boldsymbol{\varphi}(\mathbf{x}),\quad\mathbf{x}\in\mathcal{O},
\end{cases}
\end{equation}
where $\boldsymbol{\varphi}\in L_2(\mathcal{O};\mathbb{C}^n)$. We are interested in the behavior of the solution in the small period limit.

\subsection{Main results} It turns out that, as $\varepsilon\rightarrow 0$, the solution $\mathbf{u}_\varepsilon (\,\cdot\, ,t)$ converges in  $L_2(\mathcal{O};\mathbb{C}^n)$ to the solution $\mathbf{u}_0(\,\cdot\, ,t)$ of the effective problem with constant coefficients:
\begin{equation}
\label{intr eff problem}
\begin{cases}
\overline{Q_0}\partial _t \mathbf{u}_0 (\mathbf{x},t)=-B^0\mathbf{u}_0 (\mathbf{x},t),\quad\mathbf{x}\in\mathcal{O},\;t>0;
\\
\mathbf{u}_0 (\mathbf{x},t)=0,\quad\mathbf{x}\in\partial\mathcal{O},\;t>0;\quad \overline{Q_0}
\mathbf{u}_0 (\mathbf{x},0)=\boldsymbol{\varphi}(\mathbf{x}),\quad\mathbf{x}\in\mathcal{O}.
\end{cases}
\end{equation}
Here $B^0$ is the differential expression for the effective operator $B_D^0$.
Our first main result is the estimate
\begin{equation}
\label{itr solutions L2}
\Vert \mathbf{u}_\varepsilon (\,\cdot\, ,t)-\mathbf{u}_0(\,\cdot\, ,t)\Vert _{L_2(\mathcal{O})}
\leqslant C\varepsilon (t+\varepsilon ^2)^{-1/2}e^{-c t}\Vert \boldsymbol{\varphi}\Vert _{L_2(\mathcal{O})},\quad t\geqslant 0,
\end{equation}
for sufficiently small $\varepsilon$. For fixed time $t>0$, this estimate is of sharp order $O(\varepsilon)$.
Our second main result is approximation of the solution $\mathbf{u}_\varepsilon (\,\cdot\, ,t)$ in the energy norm:
\begin{equation}
\label{intr solutions H1}
\Vert \mathbf{u}_\varepsilon (\,\cdot\, ,t)-\mathbf{v}_\varepsilon(\,\cdot\, ,t)\Vert _{H^1(\mathcal{O})}
\leqslant C(\varepsilon ^{1/2}t^{-3/4}+\varepsilon t^{-1})
e^{-c t}\Vert \boldsymbol{\varphi}\Vert _{L_2(\mathcal{O})},\quad t>0.
\end{equation}
Here $\mathbf{v}_\varepsilon (\,\cdot\, ,t)=\mathbf{u}_0(\,\cdot\, ,t)+\varepsilon \mathcal{K}_D(t;\varepsilon )\boldsymbol{\varphi}(\,\cdot\,)$
is the first order approximation of the solution $\mathbf{u}_\varepsilon (\,\cdot\, ,t)$. The operator $\mathcal{K}_D(t;\varepsilon )$ is a corrector. It involves rapidly oscillating factors, and so depends on $\varepsilon$. We have ${\Vert \varepsilon \mathcal{K}_D(t;\varepsilon )\Vert_{L_2\rightarrow H^1}\!\!=\!\!O(1)}$. For fixed $t$, estimate \eqref{intr solutions H1} is of order $O(\varepsilon ^{1/2})$ due to the influence of the boundary layer.
The presence of the boundary layer is confirmed by the fact that, in a strictly interior subdomain $\mathcal{O}'\subset \mathcal{O}$, the order of the $H^1$-estimate can be improved:
\begin{equation*}
\Vert \mathbf{u}_\varepsilon (\,\cdot\, ,t)-\mathbf{v}_\varepsilon(\,\cdot\, ,t)\Vert _{H^1(\mathcal{O}')}
\leqslant C\varepsilon (t^{-1/2}\delta ^{-1}+t^{-1})e^{-c t}\Vert \boldsymbol{\varphi}\Vert _{L_2(\mathcal{O})},\quad  t>0.
\end{equation*}
Here $\delta =\mathrm{dist}\,\lbrace\mathcal{O}';\partial\mathcal{O}\rbrace$.

In the general case, the corrector involves a smoothing operator. We distinguish conditions under which it is possible to use a simpler corrector
which does not include the smoothing operator. Along with estimate \eqref{intr solutions H1}, we obtain approximation of the flux $g^\varepsilon b(\mathbf{D})\mathbf{u}_\varepsilon (\,\cdot\, ,t)$ in the $L_2$-norm.

The constants in estimates \eqref{itr solutions L2} and \eqref{intr solutions H1} are controlled in terms of the problem data; they do not depend on  $\boldsymbol{\varphi}$. Therefore, estimates \eqref{itr solutions L2} and \eqref{intr solutions H1} can be rewritten in the uniform operator topology.
In a simpler case where $Q_0(\mathbf{x})=\mathbf{1}_n$, we have
\begin{align*}
&\Vert e^{-B_{D,\varepsilon}t}-e^{-B_D^0t}\Vert _{L_2(\mathcal{O})\rightarrow L_2(\mathcal{O})}
\leqslant C\varepsilon (t+\varepsilon ^2)^{-1/2}e^{-c t},\quad t\geqslant 0,
\\
&\Vert e^{-B_{D,\varepsilon}t}-e^{-B_D^0t}-\varepsilon\mathcal{K}_D(t;\varepsilon )\Vert _{L_2(\mathcal{O})\rightarrow H^1(\mathcal{O})}
\leqslant C (\varepsilon ^{1/2}t^{-3/4}+\varepsilon t^{-1}) e^{-c t},\ t>0.
\end{align*}
The resuts of such type are called \textit{operator error estimates} in homogenization theory.

\subsection{Operator error estimates. Survey}

Currently, the study of operator error estimates is an actively developing area of homogenization theory. The interest in this subject arose in connection with the papers \cite{BSu0,BSu} by M.~Sh.~Birman and T.~A.~Suslina, where the operator $A_\varepsilon$ of the form $b(\mathbf{D})^*g^\varepsilon (\mathbf{x})b(\mathbf{D})$ acting in $L_2(\mathbb{R}^d;\mathbb{C}^n)$
was studied. By the \textit{spectral approach}, it was proved that
\begin{equation}
\label{A_eps res L2 intr}
\Vert (A_\varepsilon +I)^{-1}-(A^0+I)^{-1}\Vert _{L_2(\mathbb{R}^d)\rightarrow L_2(\mathbb{R}^d)}
\leqslant C\varepsilon .
\end{equation}
Here $A^0=b(\mathbf{D})^* g ^0 b(\mathbf{D})$ is an effective operator and $g^0$ is a constant effective matrix.
Approximation for the operator $(A_\varepsilon +I)^{-1}$ in the $(L_2\rightarrow H^1)$-norm was obtained in \cite{BSu06}:
\begin{equation}
\label{A_eps res H1 intr}
\Vert (A_\varepsilon +I)^{-1}-(A^0+I)^{-1}-\varepsilon K(\varepsilon)\Vert _{L_2(\mathbb{R}^d)\rightarrow H^1(\mathbb{R}^d)}
\leqslant C\varepsilon.
\end{equation}
Later T.~A.~Suslina carried over estimates \eqref{A_eps res L2 intr} and \eqref{A_eps res H1 intr} to more general operator  $B_\varepsilon$ of the form \eqref{B_D,eps in introduction} acting in  $L_2(\mathbb{R}^d;\mathbb{C}^n)$.
We also mention the paper \cite{Bo} by D.~I.~Borisov, where the expression for the effective operator ${B}^0$ was found and
approximations \eqref{A_eps res L2 intr}, \eqref{A_eps res H1 intr} for the resolvent were obtained.
In \cite{Bo}, it was assumed that the coefficients of the operator depend not only on the rapid variable, but also on the slow variable;
however, the coefficients of ${B}_\varepsilon$ were assumed to be sufficiently smooth.

To parabolic systems, the spectral approach was applied in the papers \cite{Su04,Su07} by T.~A.~Sus\-li\-na, where
the principal term of approximation was found, and in \cite{Su10}, where estimate with the corrector was proved:
\begin{align}
\label{A_eps exp L2 intr}
&\Vert e^{-A_\varepsilon t}-e^{-A^0 t}\Vert _{L_2(\mathbb{R}^d)\rightarrow L_2(\mathbb{R}^d)}
\leqslant C\varepsilon (t+\varepsilon ^2)^{-1/2},\quad t\geqslant 0,\\
\label{A_eps exp H1 intr}
&\Vert e^{-A_\varepsilon t}-e^{-A^0 t}-\varepsilon \mathcal{K}(t;\varepsilon )\Vert _{L_2(\mathbb{R}^d)\rightarrow H^1(\mathbb{R}^d)}
\leqslant C\varepsilon (t^{-1/2}+t^{-1}),\quad t \geqslant \varepsilon ^2.
\end{align}
In these estimates, the exponentially decreasing function of $t$ is absent, because the bottom of the spectra of $A_\varepsilon$ and $A^0$ is
zero. The exponential of the operator $B_\varepsilon$ of the form \eqref{B_D,eps in introduction} was studied in the paper \cite{M} by Yu.~M.~Meshkova, where analogs of inequalities \eqref{A_eps exp L2 intr} and \eqref{A_eps exp H1 intr} were obtained.

A \textit{different approach} to operator error estimates in homogenization theory was suggested by V.~V.~Zhikov \cite{Zh1}.
In \cite{Zh1,ZhPas}, estimates of the form \eqref{A_eps res L2 intr} and \eqref{A_eps res H1 intr} for the acoustics and elasticity operators were obtained.
The \textit{``modified method of the first order approximation''\,} or the \textit{``shift method''}, in the terminology of the authors,
was based on analysis of the first order approximation to the solution and introduction of the additional parameter.
Along with problems in $\mathbb{R}^d$, in \cite{Zh1,ZhPas}, homogenization problems in a bounded domain $\mathcal{O}\subset \mathbb{R}^d$ with the Dirichlet or Neumann
boundary conditions were studied. To parabolic equations, the shift method was applied in \cite{ZhPAs_parabol}, where analogs of estimates \eqref{A_eps exp L2 intr} and \eqref{A_eps exp H1 intr} were proved. Further results of V.~V.~Zhikov, S.~E.~Pastukhova, and their students are discussed in the recent survey \cite{ZhPasUMN}.

Operator error estimates for the Dirichlet and Neumann problems for second order elliptic equations
in a bounded domain were studied by many authors. Apparently, the first result is due to Sh.~Moskow and M.~Vogelius who proved an estimate
\begin{equation}
\label{A_D,eps L2 ots in introduction}
\Vert A_{D,\varepsilon}^{-1}-(A_D^0)^{-1}\Vert _{L_2(\mathcal{O})\rightarrow L_2(\mathcal{O})}\leqslant C\varepsilon;
\end{equation}
see \cite[Corollary~2.2]{MoV}. Here the operator $A_{D,\varepsilon}$ acts in $L_2(\mathcal{O})$, where $\mathcal{O}\subset \mathbb{R}^2$,
and is given by $-\div g^\varepsilon (\mathbf{x})\nabla$ with the Dirichlet condition on  $\partial\mathcal{O}$. The matrix-valued
function  $g(\mathbf{x})$ is assumed to be infinitely smooth.

For arbitrary dimension, homogenization problems in a bounded domain were studied in  \cite{Zh1} and \cite{ZhPas}.
The acoustics and elasticity operators with the Dirichlet or Neumann boundary conditions and without any smoothness assumptions on coefficients
were considered. The authors obtained approximation with corrector for the inverse operator
 in the $(L_2\rightarrow H^1)$-norm with error estimate of order $O(\sqrt{\varepsilon})$.
The order deteriorates as compared with a similar result in $\mathbb{R}^d$; this is explained by the boundary influence.
As a rough consequence, approximation of the form \eqref{A_D,eps L2 ots in introduction} with error estimate of order $O(\sqrt{\varepsilon})$ was deduced.
Similar results for the operator $-\div g^\varepsilon (\mathbf{x})\nabla$
in a bounded domain $\mathcal{O}\subset\mathbb{R}^d$ with the Dirichlet or Neumann boundary conditions were
obtained by G.~Griso \cite{Gr1,Gr2} with the help of the  ``unfolding''  method.
In \cite{Gr2}, for the same operator a sharp-order estimate \eqref{A_D,eps L2 ots in introduction} was proved.
For elliptic systems similar results were independently obtained in \cite{KeLiS} and in \cite{PSu,Su13}.
Further results and a detailed survey can be found in \cite{Su_SIAM,Su15}.

For the matrix operator of the form \eqref{B_D,eps in introduction} with the Dirichlet condition, a homogenization problem was studied by Q.~Xu \cite{Xu,Xu3}.
The case of the Neumann boundary condition was studied in \cite{Xu2}.
However, in the papers by Q.~Xu, the operator is subject to a rather restrictive condition of uniform ellipticity.
Approximations of the generalized resolvent of the operator \eqref{B_D,eps in introduction} with two-parametric error estimates were obtained in the recent paper \cite{MSuPOMI} by the authors (see also a brief communication \cite{MSuFAA2017}).
We focus on these results in more detail, since they are basic for us.
For $\zeta\in\mathbb{C}\setminus\mathbb{R}_+$, $\vert\zeta\vert\geqslant 1$, and sufficiently small $\varepsilon$, we have
\begin{align}
\label{main result 1}
&\Vert (B_{D,\varepsilon}-\zeta Q_0^\varepsilon )^{-1}-(B_D^0-\zeta \overline{Q_0})^{-1}\Vert _{L_2(\mathcal{O})\rightarrow L_2(\mathcal{O})}
\leqslant C(\phi)\varepsilon\vert\zeta\vert ^{-1/2},
\\
\label{main result 2}
\begin{split}
&\Vert (B_{D,\varepsilon}-\zeta Q_0^\varepsilon )^{-1}-(B_D^0-\zeta \overline{Q_0})^{-1}-\varepsilon K_D(\varepsilon;\zeta)\Vert _{L_2(\mathcal{O})\rightarrow H^1(\mathcal{O})}
\leqslant C(\phi)\bigl(\varepsilon ^{1/2}\vert \zeta\vert ^{-1/4}+\varepsilon\bigr).
\end{split}
\end{align}
Note that the values $C(\phi)$ are controlled explicitly in terms of the problem data and the angle $\phi=\mathrm{arg}\,\zeta$.
 Estimates \eqref{main result 1} and \eqref{main result 2} are uniform with respect to $\phi$ in any domain of the form $\lbrace\zeta=\vert\zeta\vert e^{i\phi}\in\mathbb{C}: \vert\zeta\vert\geqslant 1, \phi _0\leqslant \phi\leqslant 2\pi-\phi _0\rbrace$ with arbitrarily small $\phi _0 >0$.
 Moreover, in \cite{MSuPOMI}, analogs of estimates \eqref{main result 1} and \eqref{main result 2} in a wider domain of spectral parameter $\zeta$ were proved.

We proceed to discussion of the parabolic problems in a bounded domain.
In the two-dimensional case, some estimates of operator type for elliptic and parabolic equations were obtained in \cite{ShKoLe}.
However, in \cite{ShKoLe}, the matrix $g$ was assumed to be $C^\infty$-smooth, and the initial data for a parabolic equation belonged to $H^2(\mathcal{O})$.
In the case of arbitrary dimension and without smoothness assumptions on coefficients, approximation for the exponential of the operator
$b(\mathbf{D})^*g^\varepsilon (\mathbf{x})b(\mathbf{D})$ (with the Dirichlet or Neumann conditions) was found in the paper \cite{MSu2} by the authors:
\begin{align*}
&\Vert e^{-A_{D,\varepsilon}t}-e^{-A_D^0 t}\Vert _{L_2(\mathcal{O})\rightarrow L_2(\mathcal{O})}
\leqslant C\varepsilon (t+\varepsilon ^2)^{-1/2}e^{-c t},\quad t\geqslant 0,\\
&\Vert e^{-A_{D,\varepsilon}t}-e^{-A_D^0 t}-\varepsilon\mathcal{K}_D(t;\varepsilon )\Vert _{L_2(\mathcal{O})\rightarrow H^1(\mathcal{O})}
\leqslant C\varepsilon ^{1/2}t^{-3/4}e^{-c t},\quad t\geqslant \varepsilon ^2.
\end{align*}
The method of \cite{MSu2} was based on employing the identity
\begin{equation*}
e^{-A_{D,\varepsilon}t}=-\frac{1}{2\pi i}\int\limits_\gamma e^{-\zeta t}(A_{D,\varepsilon}-\zeta I)^{-1}\,d\zeta,
\end{equation*}
where $\gamma \subset \mathbb{C}$ is a contour enclosing the spectrum of $A_{D,\varepsilon}$ in positive direction.
This identity allows us to deduce approximations for the operator exponential $e^{-A_{D,\varepsilon}t}$ from the corresponding approximations of the resolvent $(A_{D,\varepsilon}-\zeta I)^{-1}$ with two-parametric error estimates (with respect to $\varepsilon $ and $\zeta $).
The required approximations for the resolvent were found in \cite{Su15}.

The operator with coefficients periodic in the space and time variables was studied by J.~Geng and Z.~Shen \cite{GeSh}. In \cite{GeSh},
operator error estimates for the equation
$$
\partial _t\mathbf{u}_\varepsilon(\mathbf{x},t)=-\mathrm{div}\,g (\varepsilon^{-1}\mathbf{x} ,\varepsilon ^{-2}t)\nabla\mathbf{u}_\varepsilon(\mathbf{x},t)
$$
in a bounded domain of class $C^{1,1}$ were obtained. The results of \cite{GeSh}
were generalized to the case of Lipschitz domains by Q.~Xu and Sh.~Zhou \cite{XuZ}.

\subsection{Method}
We develop the method of the paper \cite{MSu2}. It is based upon the following representation for the solution
$\mathbf{u}_\varepsilon$ of the first initial boundary-value problem \eqref{intr problem}:
$\mathbf{u}_\varepsilon (\,\cdot\, ,t)=-\frac{1}{2\pi i}\int_\gamma e^{-\zeta t}(B_{D,\varepsilon}-\zeta Q_0^\varepsilon )^{-1}\boldsymbol{\varphi}\,d\zeta $,
where $\gamma \subset\mathbb{C}$ is a suitable contour. The solution of the effective problem \eqref{intr eff problem} admits a similar representation. Hence,
\begin{equation}
\label{intr tozd solutions}
\begin{split}
\mathbf{u}_\varepsilon (\,\cdot\, ,t)-\mathbf{u}_0 (\,\cdot\, ,t)
=-\frac{1}{2\pi i}\int\limits_\gamma e^{-\zeta t}\left((B_{D,\varepsilon}-\zeta Q_0^\varepsilon )^{-1}
-(B_D^0-\zeta\overline{Q_0})^{-1}\right)
\boldsymbol{\varphi}\,d\zeta .
\end{split}
\end{equation}
Using the results of \cite{MSuPOMI} (estimate \eqref{main result 1}), we obtain approximation of the resolvent for $\zeta \in\gamma$
and employ representation \eqref{intr tozd solutions}. This leads to \eqref{itr solutions L2}.
Note that the dependence of the right-hand side of \eqref{main result 1} on $\zeta$ for large $\vert\zeta\vert$
is important for us. Approximation with the corrector taken into account is obtained in a similar way.

\subsection{Plan of the paper} The paper consists of five sections and Appendix (\S\S\ref{Section 6}--\ref{Section removing S-eps in strictly interior subdomain}).
In \S\ref{Section 1}, we describe the class of operators $B_{D,\varepsilon}$, introduce the effective operator $B_D^0$, and formulate the needed results about approximation of the operator  $(B_{D,\varepsilon}-\zeta Q_0^\varepsilon )^{-1}$. The main results of the paper are obtained in \S\ref{Section 2}. In \S\ref{Section 3}, these results are applied to homogenization of the solutions of the first initial boundary-value problem for nonhomogeneous parabolic equation. \S\S\ref{Section 4}, \ref{Section 5} are devoted to applications of the general results.
In \S\ref{Section 4}, a scalar elliptic operator with a singular potential of order $O(\varepsilon ^{-1})$ is considered. In \S\ref{Section 5}, we study
an operator with a singular potential of order $O(\varepsilon ^{-2})$.
In Appendix (\S\S\ref{Section 6}--\ref{Section removing S-eps in strictly interior subdomain}), we prove some statements
   concerning removal of the smoothing operator in the corrector. The case of additional smoothness of the boundary is considered in \S\ref{Section removing steklov operator}; the case
    of a strictly interior subdomain is discussed in \S\ref{Section removing S-eps in strictly interior subdomain}.
    The needed properties of the oscillating factors in the corrector are obtained in \S\ref{Section 6}.

\subsection{Notation} Let $\mathfrak{H}$ and $\mathfrak{H}_*$ be complex separable Hilbert spaces. The symbols $(\,\cdot\, ,\,\cdot\,)_\mathfrak{H}$ and $\Vert \,\cdot\,\Vert _\mathfrak{H}$ stand for the inner product and the norm in $\mathfrak{H}$; the symbol $\Vert \,\cdot\,\Vert _{\mathfrak{H}\rightarrow\mathfrak{H}_*}$ denotes the norm of a linear
continuous operator acting from $\mathfrak{H}$ to $\mathfrak{H}_*$.

The set of natural numbers and the set of nonnegative integers are denoted by $\mathbb{N}$ and $\mathbb{Z}_+$, respectively.
We denote  $\mathbb{R}_+: = [0,\infty)$.
The symbols $\langle \,\cdot\, ,\,\cdot\,\rangle$ and $\vert \,\cdot\,\vert$ denote the inner product and the norm in $\mathbb{C}^n$; $\mathbf{1}_n$ is the identity
$(n\times n)$-matrix. If $a$ is an $(m\times n)$-matrix, then the symbol $\vert a\vert$ denotes the norm of $a$ viewed as operator from $\mathbb{C}^n$ to $\mathbb{C}^m$.
If $\alpha =(\alpha _1,\dots, \alpha_d)\in\mathbb{Z}_+^d$ is a multiindex, $\vert\alpha\vert$ denotes its length: $\vert\alpha\vert =\sum _{j=1}^d\alpha _j$.
For $z\in\mathbb{C}$, the complex conjugate number is denoted by $z^*$.
(We use such nonstandard notation, because the upper line denotes the mean value of a periodic function over the periodicity cell.)
We denote $\mathbf{x}=(x_1,\dots , x_d)\in\mathbb{R}^d$, $iD_j=\partial _j =\partial /\partial x_j$, $j=1,\dots,d$, $\mathbf{D}=-i\nabla=(D_1,\dots ,D_d)$.
The $L_p$-classes of $\mathbb{C}^n$-valued functions in a domain $\mathcal{O}\subset\mathbb{R}^d$ are denoted by $L_p(\mathcal{O};\mathbb{C}^n)$, $1\leqslant p\leqslant \infty$.
The Sobolev classes of $\mathbb{C}^n$-valued functions in a domain $\mathcal{O}\subset\mathbb{R}^d$ are denoted by $H^s(\mathcal{O};\mathbb{C}^n)$.
By $H^1_0(\mathcal{O};\mathbb{C}^n)$ we denote the closure of $C_0^\infty (\mathcal{O};\mathbb{C}^n)$ in $H^1(\mathcal{O};\mathbb{C}^n)$. If $n=1$, we write simply $L_p(\mathcal{O})$, $H^s(\mathcal{O})$, etc., but sometimes, if this does not lead to confusion, we use such simple notation for the spaces of vector-valued or matrix-valued functions.
The symbol $L_p((0,T);\mathfrak{H})$, $1\leqslant p\leqslant\infty$, denotes the $L_p$-space of $\mathfrak{H}$-valued functions on the interval $(0,T)$.

Various constants in estimates are denoted by ${c, C, \mathrm{C}, \mathcal{C}, \mathfrak{C}}$
(probably, with indices and marks).

The main results of the present paper were announced in \cite{MSuFAA2017}.

\section{The results on homogenization of the Dirichlet problem\\ for elliptic systems}

\label{Section 1}

\subsection{Lattices in $\mathbb{R}^d$} Let $\Gamma \subset \mathbb{R}^d$ be a lattice generated by the basis $\mathbf{a}_1,\dots ,\mathbf{a}_d \in \mathbb{R}^d$:
$$
\Gamma =\Bigl\lbrace
\mathbf{a}\in \mathbb{R}^d : \mathbf{a}=\sum _{j=1}^d \nu _j \mathbf{a}_j, \nu _j\in \mathbb{Z}
\Bigr\rbrace ,
$$
and let $\Omega$ be the elementary cell of the lattice $\Gamma$:
$$
\Omega =
\Bigl \lbrace
\mathbf{x}\in \mathbb{R}^d :\mathbf{x}=\sum _{j=1}^d \tau _j \mathbf{a}_j , -\frac{1}{2}<\tau _j<\frac{1}{2}
\Bigr\rbrace .
$$
By $\vert \Omega \vert $ we denote the Lebesgue measure of the cell $\Omega$: $\vert \Omega \vert =\mathrm{meas}\,\Omega$.
We put $2r_1:=\mathrm{diam}\,\Omega.$

Let $\widetilde{H}^1(\Omega)$ denote the subspace of functions in $H^1(\Omega)$, whose $\Gamma$-periodic extension to $\mathbb{R}^d$ belongs to $H^1_{\mathrm{loc}}(\mathbb{R}^d)$.
If $\Phi (\mathbf{x})$~is a~$\Gamma$-periodic matrix-valued function in $\mathbb{R}^d$, we put
$\Phi ^\varepsilon (\mathbf{x}):=\Phi (\mathbf{x}/\varepsilon)$, $\varepsilon >0$; $\overline{\Phi}:=\vert \Omega\vert ^{-1}\int_\Omega \Phi(\mathbf{x})\,d\mathbf{x}$, $\underline{\Phi}:=\big(\vert \Omega\vert ^{-1}\int_\Omega \Phi(\mathbf{x})^{-1}\,d\mathbf{x}\big)^{-1}$.
 Here, in the definition of  $\overline{\Phi}$ it is assumed that $\Phi\in L_{1,\mathrm{loc}}(\mathbb{R}^d)$;
 in the definition of $\underline{\Phi}$ it is assumed that the matrix $\Phi$ is square and nondegenerate, and  $\Phi^{-1}\in L_{1,\mathrm{loc}}(\mathbb{R}^d)$.
 By $[\Phi^\varepsilon ]$ we denote the operator of multiplication by the matrix-valued function $\Phi^\varepsilon (\mathbf{x})$.

\subsection{The Steklov smoothing} The Steklov smoothing operator $S_\varepsilon^{(k)}$, $\varepsilon >0$, acts in $L_2(\mathbb{R}^d;\mathbb{C}^k)$ (where $k\in\mathbb{N}$) and is given by
\begin{equation}
\label{S_eps}
\begin{split}
(S_\varepsilon^{(k)} \mathbf{u})(\mathbf{x})=\vert \Omega \vert ^{-1}\int\limits_\Omega \mathbf{u}(\mathbf{x}-\varepsilon \mathbf{z})\,d\mathbf{z},\quad \mathbf{u}\in L_2(\mathbb{R}^d;\mathbb{C}^k).
\end{split}
\end{equation}
We shall omit the index $k$ in the notation and write simply $S_\varepsilon$. Obviously,
$S_\varepsilon \mathbf{D}^\alpha \mathbf{u}=\mathbf{D}^\alpha S_\varepsilon \mathbf{u}$ for $\mathbf{u}\in H^\sigma(\mathbb{R}^d;\mathbb{C}^k)$
and any multiindex $\alpha$ such that $\vert \alpha\vert \leqslant \sigma$.
Note that
\begin{equation}
\label{S_eps <= 1}
\Vert S_\varepsilon \Vert _{H^\sigma(\mathbb{R}^d)\rightarrow H^\sigma(\mathbb{R}^d)}\leqslant 1,\quad \sigma\geqslant 0.
\end{equation}
We need the following properties of the operator $S_\varepsilon$
(see \cite[Lemmas~1.1 and~1.2]{ZhPas} or \cite[Propositions 3.1 and 3.2]{PSu}).

\begin{proposition}
\label{Proposition S__eps - I}
For any function $\mathbf{u}\in H^1(\mathbb{R}^d;\mathbb{C}^k)$, we have
\begin{equation*}
\Vert S_\varepsilon \mathbf{u}-\mathbf{u}\Vert _{L_2(\mathbb{R}^d)}\leqslant \varepsilon r_1\Vert \mathbf{D}\mathbf{u}\Vert _{L_2(\mathbb{R}^d)},
\end{equation*}
where $2r_1=\mathrm{diam}\,\Omega$.
\end{proposition}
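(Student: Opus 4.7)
The plan is to reduce the inequality to a pointwise identity via the fundamental theorem of calculus and then take $L_2$-norms using Minkowski's integral inequality.

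First, by density of $C_0^\infty(\mathbb{R}^d;\mathbb{C}^k)$ in $H^1(\mathbb{R}^d;\mathbb{C}^k)$ and continuity of both $S_\varepsilon$ and the identity on $L_2$, it suffices to prove the estimate for a smooth, compactly supported $\mathbf{u}$. From the definition \eqref{S_eps},
\begin{equation*}
(S_\varepsilon \mathbf{u})(\mathbf{x}) - \mathbf{u}(\mathbf{x}) = |\Omega|^{-1} \int_\Omega \bigl(\mathbf{u}(\mathbf{x}-\varepsilon\mathbf{z}) - \mathbf{u}(\mathbf{x})\bigr)\,d\mathbf{z}.
\end{equation*}
For smooth $\mathbf{u}$, write $\mathbf{u}(\mathbf{x}-\varepsilon\mathbf{z}) - \mathbf{u}(\mathbf{x}) = -\varepsilon \int_0^1 \langle \nabla \mathbf{u}(\mathbf{x}-s\varepsilon\mathbf{z}),\mathbf{z}\rangle\,ds$, so that
\begin{equation*}
(S_\varepsilon \mathbf{u})(\mathbf{x}) - \mathbf{u}(\mathbf{x}) = -\varepsilon|\Omega|^{-1} \int_\Omega \int_0^1 \langle \nabla \mathbf{u}(\mathbf{x}-s\varepsilon\mathbf{z}),\mathbf{z}\rangle\,ds\,d\mathbf{z}.
\end{equation*}

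Next, take $L_2$-norms and apply Minkowski's integral inequality, pulling the $\,ds\,d\mathbf{z}$ integration outside. Using $|\langle \nabla \mathbf{u}(\mathbf{x}-s\varepsilon\mathbf{z}),\mathbf{z}\rangle| \leq |\mathbf{z}|\,|\mathbf{D}\mathbf{u}(\mathbf{x}-s\varepsilon\mathbf{z})|$ (recall $|\nabla \mathbf{u}| = |\mathbf{D}\mathbf{u}|$) and the translation invariance of the $L_2$-norm, we obtain
\begin{equation*}
\|S_\varepsilon \mathbf{u} - \mathbf{u}\|_{L_2(\mathbb{R}^d)} \leq \varepsilon|\Omega|^{-1} \int_\Omega \int_0^1 |\mathbf{z}|\,\|\mathbf{D}\mathbf{u}\|_{L_2(\mathbb{R}^d)}\,ds\,d\mathbf{z} = \varepsilon \Bigl(|\Omega|^{-1}\int_\Omega |\mathbf{z}|\,d\mathbf{z}\Bigr)\|\mathbf{D}\mathbf{u}\|_{L_2(\mathbb{R}^d)}.
\end{equation*}

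It remains to bound $|\mathbf{z}| \leq r_1$ for every $\mathbf{z}\in\Omega$. This is a geometric observation: by the defining parametrization, $\Omega$ is centrally symmetric (if $\mathbf{z}\in\Omega$, so is $-\mathbf{z}$), hence for any $\mathbf{z}\in\Omega$ we have $2|\mathbf{z}| = |\mathbf{z}-(-\mathbf{z})| \leq \operatorname{diam}\Omega = 2r_1$. Substituting this into the previous estimate yields $\|S_\varepsilon \mathbf{u} - \mathbf{u}\|_{L_2(\mathbb{R}^d)} \leq \varepsilon r_1 \|\mathbf{D}\mathbf{u}\|_{L_2(\mathbb{R}^d)}$, as claimed. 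There is no real obstacle here; the only subtle point worth flagging is the central-symmetry argument giving $|\mathbf{z}|\leq r_1$ rather than the weaker $|\mathbf{z}|\leq 2r_1$. A clean alternative would be a Fourier-side proof: one checks that the symbol of $S_\varepsilon$ is $\sigma(\varepsilon\bxi) := |\Omega|^{-1}\int_\Omega e^{-i\varepsilon\bxi\cdot\mathbf{z}}\,d\mathbf{z}$, estimates $|\sigma(\varepsilon\bxi)-1|\leq \varepsilon r_1|\bxi|$ by the same geometric bound on $|\mathbf{z}|$, and applies Plancherel.
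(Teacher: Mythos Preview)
Your proof is correct. Note that the paper itself does not prove this proposition; it simply cites \cite[Lemma~1.1]{ZhPas} and \cite[Proposition~3.1]{PSu}. Your argument via the fundamental theorem of calculus, Minkowski's integral inequality, and translation invariance is the standard one, and the central-symmetry observation yielding $|\mathbf{z}|\leqslant r_1$ for $\mathbf{z}\in\Omega$ is exactly the point that gives the constant $r_1$ rather than $2r_1$.
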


\begin{proposition}
\label{Proposition f^eps S_eps}
Let $\Phi$ be a $\Gamma$-periodic function in $\mathbb{R}^d$ such that $\Phi\in L_2(\Omega)$.
Then the operator $[\Phi ^\varepsilon ]S_\varepsilon $ is continuous in $L_2(\mathbb{R}^d)$ and
\begin{equation*}
\Vert [\Phi^\varepsilon]S_\varepsilon \Vert _{L_2(\mathbb{R}^d)\rightarrow L_2(\mathbb{R}^d)}\leqslant \vert \Omega \vert ^{-1/2}\Vert \Phi\Vert _{L_2(\Omega)}.
\end{equation*}
\end{proposition}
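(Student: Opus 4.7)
The plan is to combine the Cauchy--Schwarz inequality applied to the averaging operator with the $\Gamma$-periodicity of $\Phi$, exploited via a clean change of variables.

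First I would start from the explicit formula \eqref{S_eps}. Applying Cauchy--Schwarz to the integral over $\Omega$ (writing $1 \cdot \mathbf{u}(\mathbf{x}-\varepsilon\mathbf{z})$ and using $|\Omega|\cdot|\Omega|^{-2} = |\Omega|^{-1}$) gives the pointwise bound
\begin{equation*}
|(S_\varepsilon \mathbf{u})(\mathbf{x})|^2 \le |\Omega|^{-1}\int_\Omega |\mathbf{u}(\mathbf{x}-\varepsilon\mathbf{z})|^2\,d\mathbf{z}.
\end{equation*}
Multiplying by $|\Phi^\varepsilon(\mathbf{x})|^2$, integrating over $\mathbb{R}^d$, and applying Fubini yields
\begin{equation*}
\|[\Phi^\varepsilon]S_\varepsilon \mathbf{u}\|_{L_2(\mathbb{R}^d)}^2 \le |\Omega|^{-1}\int_\Omega d\mathbf{z} \int_{\mathbb{R}^d} |\Phi(\mathbf{x}/\varepsilon)|^2 |\mathbf{u}(\mathbf{x}-\varepsilon\mathbf{z})|^2\,d\mathbf{x}.
\end{equation*}

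Next I would change variables $\mathbf{y}=\mathbf{x}-\varepsilon\mathbf{z}$ in the inner integral, so that the integrand becomes $|\Phi(\mathbf{y}/\varepsilon+\mathbf{z})|^2|\mathbf{u}(\mathbf{y})|^2$. Interchanging the order of integration once more, the problem reduces to evaluating, for each fixed $\mathbf{y}\in\mathbb{R}^d$, the integral $\int_\Omega |\Phi(\mathbf{y}/\varepsilon+\mathbf{z})|^2\,d\mathbf{z}$. The shift $\mathbf{w}=\mathbf{y}/\varepsilon+\mathbf{z}$ converts this to $\int_{\mathbf{y}/\varepsilon+\Omega}|\Phi(\mathbf{w})|^2\,d\mathbf{w}$, and here the $\Gamma$-periodicity of $\Phi$ combined with the fact that $\Omega$ is a fundamental cell of $\Gamma$ gives the $\mathbf{y}$-independent value $\|\Phi\|_{L_2(\Omega)}^2$.

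Substituting back, the $\mathbf{y}$-integral yields $\|\mathbf{u}\|_{L_2(\mathbb{R}^d)}^2$, and we conclude
\begin{equation*}
\|[\Phi^\varepsilon]S_\varepsilon \mathbf{u}\|_{L_2(\mathbb{R}^d)}^2 \le |\Omega|^{-1}\|\Phi\|_{L_2(\Omega)}^2\|\mathbf{u}\|_{L_2(\mathbb{R}^d)}^2,
\end{equation*}
which is the claimed bound. There is really no serious obstacle: the only delicate point is noticing that after the change of variables $\mathbf{y}=\mathbf{x}-\varepsilon\mathbf{z}$ the integral of $|\Phi(\mathbf{w})|^2$ over the translated cell $\mathbf{y}/\varepsilon+\Omega$ is independent of $\mathbf{y}$, which is precisely where periodicity enters. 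If one wants to avoid Fubini with a possibly unbounded $\Phi$, the estimate can first be carried out for $\mathbf{u}\in C_0^\infty(\mathbb{R}^d;\mathbb{C}^k)$ and then extended by density, since the right-hand side is finite under the hypothesis $\Phi\in L_2(\Omega)$.
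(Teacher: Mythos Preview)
Your proof is correct. The paper itself does not prove this proposition but cites it from \cite[Lemma~1.2]{ZhPas} and \cite[Proposition~3.2]{PSu}; the argument you give --- Cauchy--Schwarz on the averaging integral, the change of variables $\mathbf{y}=\mathbf{x}-\varepsilon\mathbf{z}$, and the observation that $\int_\Omega |\Phi(\mathbf{y}/\varepsilon+\mathbf{z})|^2\,d\mathbf{z}=\|\Phi\|_{L_2(\Omega)}^2$ by periodicity --- is precisely the standard proof found in those references.
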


\subsection{The operator $A_{D,\varepsilon}$}
\label{Subsection operatoer A_D,eps}
Let $\mathcal{O}\subset \mathbb{R}^d$ be a bounded domain of class $C^{1,1}$.
In $L_2(\mathcal{O};\mathbb{C}^n)$, we consider the operator $A_{D,\varepsilon}$ given formally by the differential expression $A_\varepsilon = b(\mathbf{D})^*g^\varepsilon (\mathbf{x})b(\mathbf{D})$ with the Dirichlet condition on $\partial\mathcal{O}$. Here $g(\mathbf{x})$ is a $\Gamma$-periodic Hermitian $(m\times m)$-matrix-valued function (in general, with complex entries). It is assumed that $g(\mathbf{x})>0$ and $g,g^{-1}\in L_\infty (\mathbb{R}^d)$. The differential operator $b(\mathbf{D})$ is given by $b(\mathbf{D})=\sum _{j=1}^d b_jD_j$, where $b_j$, $j=1,\dots ,d$, are constant matrices of size $m\times n$ (in general, with complex entries). Assume that $m\geqslant n$ and that the symbol $b(\boldsymbol{\xi})=\sum _{j=1}^d b_j\xi_j$ of the operator $b(\mathbf{D})$ has maximal rank:
$\mathrm{rank}\,b(\boldsymbol{\xi})=n$ for $0\neq \boldsymbol{\xi}\in\mathbb{R}^d$.
This condition is equivalent to the estimates
\begin{equation}
\label{<b^*b<}
\alpha _0\mathbf{1}_n \leqslant b(\boldsymbol{\theta})^*b(\boldsymbol{\theta})
\leqslant \alpha _1\mathbf{1}_n,\quad
\boldsymbol{\theta}\in \mathbb{S}^{d-1};\quad
0<\alpha _0\leqslant \alpha _1<\infty,
\end{equation}
with some positive constants $\alpha _0$ and $\alpha _1$.
From \eqref{<b^*b<} it follows that
\begin{equation}
\label{b_l <=}
\vert b_j\vert \leqslant \alpha _1^{1/2},\quad j=1,\dots ,d.
\end{equation}

The precise definition of the operator $A_{D,\varepsilon}$ is given in terms of the quadratic form
\begin{equation}
\label{a_D,eps}
\mathfrak{a}_{D,\varepsilon} [\mathbf{u},\mathbf{u}]=\int\limits_{\mathcal{O}}\langle g^\varepsilon (\mathbf{x})b(\mathbf{D})\mathbf{u},b(\mathbf{D})\mathbf{u}\rangle \,d\mathbf{x},\quad \mathbf{u}\in H^1_0(\mathcal{O};\mathbb{C}^n).
\end{equation}
Extending $\mathbf{u}\in H^1_0(\mathcal{O};\mathbb{C}^n)$ by zero to $\mathbb{R}^d\setminus\mathcal{O}$ and taking \eqref{<b^*b<} into account, we find
\begin{equation}
\label{a_D,eps estimates}
\begin{split}
\alpha _0\| g^{-1}\| ^{-1}_{L_\infty}\|\mathbf{D}\mathbf{u}\| ^2 _{L_2(\mathcal{O})}
\leqslant \!
\mathfrak{a}_{D,\varepsilon}[\mathbf{u},\mathbf{u}]
\leqslant \alpha _1\Vert g\Vert _{L_\infty}\!\| \mathbf{D}\mathbf{u}\| ^2 _{L_2(\mathcal{O})},
\quad\mathbf{u}\in H^1_0(\mathcal{O};\mathbb{C}^n).
\end{split}
\end{equation}

\subsection{Lower order terms. The operator $B_{D,\varepsilon}$}
\label{Subsection lower order terms}
We study the selfadjoint operator $B_{D,\varepsilon}$ whose principal part coincides with $A_{\varepsilon}$.
To define the lower order terms, we introduce $\Gamma$-periodic $(n\times n)$-matrix-valued functions (in general, with complex entries) $a_j$, $j=1,\dots ,d$, such that
\begin{align}
\label{a_j cond}
a_j \in L_\rho (\Omega ), \quad\rho =2 \;\mbox{for}\;d=1,\quad\rho >d\;\mbox{for}\;d\geqslant 2,\quad j=1,\dots ,d.
\end{align}
Next, let $Q$ and $Q_0$ be $\Gamma$-periodic Hermitian $(n\times n)$-matrix-valued functions (with complex entries) such that
\begin{align}
\label{Q condition}
&Q\in L_s(\Omega ),\quad s=1 \;\mbox{for}\;d=1,\quad s >d/2\;\mbox{for}\;d\geqslant 2;\\
& Q_0(\mathbf{x})>0;\quad Q_0,Q_0^{-1}\in L_\infty (\mathbb{R}^d).\nonumber
\end{align}
For convenience of further references, the following set of variables is called the ``problem data'':
\begin{equation}
\label{problem data}
\begin{split}
&d,\,m,\,n,\,\rho ,\,s ;\,\alpha _0,\, \alpha _1 ,\,\Vert g\Vert _{L_\infty},\, \Vert g^{-1}\Vert _{L_\infty},\, \Vert a_j\Vert _{L_\rho (\Omega)},\, j=1,\dots ,d;\\
&\Vert Q\Vert _{L_s(\Omega)};\, \Vert Q_0\Vert _{L_\infty},\,\Vert Q_0^{-1}\Vert _{L_\infty};\;\text{the parameters of the lattice} \ \Gamma ;\;\text{the domain}\  \mathcal{O}.
\end{split}
\end{equation}

In $L_2(\mathcal{O};\mathbb{C}^n)$, we consider the operator $B_{D,\varepsilon}$, $0<\varepsilon\leqslant 1$, formally given by the differential expression
\begin{equation}
\label{B_D,eps}
B_{\varepsilon}=b(\mathbf{D})^* g^\varepsilon (\mathbf{x})b(\mathbf{D})+\sum _{j=1}^d \left(
a_j^\varepsilon (\mathbf{x})D_j +D_j a_j^\varepsilon (\mathbf{x})^*
\right)
+Q^\varepsilon (\mathbf{x})+\lambda Q_0^\varepsilon (\mathbf{x})
\end{equation}
with the Dirichlet boundary condition. Here the constant $\lambda$ is chosen so that the operator $B_{D,\varepsilon}$ is positive definite
(see \eqref{lambda =} below). The precise definition of the operator $B_{D,\varepsilon}$ is given in terms of the quadratic form
\begin{equation}
\label{b_D,eps}
\begin{split}
\mathfrak{b}_{D,\varepsilon }[\mathbf{u},\mathbf{u}]&=(g^\varepsilon b(\mathbf{D})\mathbf{u},b(\mathbf{D})\mathbf{u})_{L_2(\mathcal{O})}+2\mathrm{Re}\,\sum _{j=1}^d (a_j^\varepsilon D_j \mathbf{u},\mathbf{u})_{L_2(\mathcal{O})}
\\
&\quad+(Q^\varepsilon \mathbf{u},\mathbf{u})_{L_2(\mathcal{O})}+\lambda (Q_0^\varepsilon \mathbf{u},\mathbf{u})_{L_2(\mathcal{O})},\quad \mathbf{u}\in H^1_0(\mathcal{O};\mathbb{C}^n).
\end{split}
\end{equation}
Let us check that the form $\mathfrak{b}_{D,\varepsilon}$ is closed.
By the H\"older inequality and the Sobolev embedding theorem, it can be shown that for any $\nu>0$ there exist constants $C_j(\nu)>0$ such that
\begin{equation*}
\begin{split}
\Vert a_j^*\mathbf{u}\Vert ^2 _{L_2(\mathbb{R}^d)}\leqslant \nu \Vert \mathbf{D}\mathbf{u}\Vert ^2_{L_2(\mathbb{R}^d)}+C_j(\nu )\Vert \mathbf{u}\Vert ^2 _{L_2(\mathbb{R}^d)},\quad \mathbf{u}\in H^1(\mathbb{R}^d;\mathbb{C}^n),
\end{split}
\end{equation*}
$j=1,\dots ,d$; see  \cite[(5.11)--(5.14)]{SuAA}.
By the change of variables $\mathbf{y}:=\varepsilon ^{-1}\mathbf{x}$ and $\mathbf{u}(\mathbf{x})=:\mathbf{v}(\mathbf{y})$, we deduce
\begin{align*}
\|   (a_j^\varepsilon )^*\mathbf{u}\| ^2_{L_2(\mathbb{R}^d)}&=\int\limits_{\mathbb{R}^d}\vert a_j(\varepsilon ^{-1}\mathbf{x})^*\mathbf{u}(\mathbf{x})\vert ^2\,d\mathbf{x}
=\varepsilon ^d\int\limits_{\mathbb{R}^d}\vert a_j(\mathbf{y})^*\mathbf{v}(\mathbf{y})\vert ^2\,d\mathbf{y}
\\
&\quad\leqslant \varepsilon ^d\nu \int\limits_{\mathbb{R}^d}\vert \mathbf{D}_{\mathbf{y}}\mathbf{v}(\mathbf{y})\vert ^2\,d\mathbf{y}
+\varepsilon ^d C_j(\nu)\int\limits_{\mathbb{R}^d}\vert \mathbf{v}(\mathbf{y})\vert ^2\,d\mathbf{y}
\\
&\quad\leqslant \nu \Vert \mathbf{D}\mathbf{u}\Vert ^2_{L_2(\mathbb{R}^d)}+C_j(\nu)\Vert \mathbf{u}\Vert ^2_{L_2(\mathbb{R}^d)},
\quad \mathbf{u}\in H^1(\mathbb{R}^d;\mathbb{C}^n),\quad 0<\varepsilon\leqslant 1.
\end{align*}
Then, by \eqref{<b^*b<}, for any $\nu >0$ there exists a constant $C(\nu)>0$ such that
\begin{equation}
\label{sum a-j u}
\begin{split}
\sum _{j=1}^d \Vert (a_j^\varepsilon)^*\mathbf{u}\Vert ^2 _{L_2(\mathbb{R}^d)}
\leqslant \nu
\Vert (g^\varepsilon)^{1/2}b(\mathbf{D})\mathbf{u}\Vert ^2_{L_2(\mathbb{R}^d)}
+C(\nu)\Vert \mathbf{u}\Vert ^2_{L_2(\mathbb{R}^d)},\\\mathbf{u}\in H^1(\mathbb{R}^d;\mathbb{C}^n),\quad 0<\varepsilon\leqslant 1.
\end{split}
\end{equation}
If $\nu$ is fixed, then $C(\nu)$ depends only on $d$, $\rho$, $\alpha _0$, the norms $\Vert g^{-1}\Vert _{L_\infty}$, $\Vert a_j\Vert _{L_\rho (\Omega)}$, $j=1,\dots ,d$,
and the parameters of the lattice $\Gamma$.

By \eqref{<b^*b<}, for $\mathbf{u}\in H^1(\mathbb{R}^d;\mathbb{C}^n)$ we have
\begin{equation}
\label{Du <= c_1^2a}
\Vert \mathbf{D}\mathbf{u}\Vert ^2_{L_2(\mathbb{R}^d)}
\leqslant c_1^2\Vert (g^\varepsilon )^{1/2}b(\mathbf{D})\mathbf{u}\Vert ^2_{L_2(\mathbb{R}^d)},
\end{equation}
where $c_1:=\alpha _0 ^{-1/2}\Vert g^{-1}\Vert^{1/2}_{L_\infty}$.
Combining this with \eqref{sum a-j u}, we obtain
\begin{equation}
\label{2Re sum j <=}
\begin{split}
2\bigg\vert \mathrm{Re}\,\sum _{j=1}^d (D_j\mathbf{u},(a_j^\varepsilon)^*\mathbf{u})_{L_2(\mathbb{R}^d)}\bigg\vert
\leqslant\frac{1}{4}\Vert (g^\varepsilon)^{1/2}b(\mathbf{D})\mathbf{u}\Vert ^2_{L_2(\mathbb{R}^d)} +c_2\Vert \mathbf{u}\Vert ^2_{L_2(\mathbb{R}^d)},\\
\mathbf{u}\in H^1(\mathbb{R}^d;\mathbb{C}^n),\quad 0<\varepsilon\leqslant 1,
\end{split}
\end{equation}
where $c_2:=8c_1^2C(\nu _0)$ with $\nu _0:=2^{-6}\alpha _0\Vert g^{-1}\Vert ^{-1}_{L_\infty}$.

Next, by condition \eqref{Q condition} on $Q$, for any $\nu >0$ there exists a constant $C_Q(\nu)>0$ such that
\begin{equation}
\label{(Qu,u)<=}
\begin{split}
\vert (Q^\varepsilon \mathbf{u},\mathbf{u})_{L_2(\mathbb{R}^d)}\vert \leqslant\nu \Vert \mathbf{D}\mathbf{u}\Vert ^2_{L_2(\mathbb{R}^d)}+C_Q(\nu)\Vert \mathbf{u}\Vert ^2_{L_2(\mathbb{R}^d)},\\
\mathbf{u}\in H^1(\mathbb{R}^d;\mathbb{C}^n),\quad 0<\varepsilon\leqslant 1 .
\end{split}
\end{equation}
For fixed $\nu$, the constant $C_Q(\nu)$ is controlled in terms of $d$, $s$, $\Vert Q\Vert _{L_s(\Omega)}$, and the parameters of the lattice $\Gamma$.

We fix a constant $\lambda$ in \eqref{B_D,eps} as in \cite[Subsection 2.8]{MSu15}:
\begin{equation}
\label{lambda =}
\lambda := (C_Q(\nu _*)+c_2)\Vert Q_0^{-1}\Vert _{L_\infty}\quad\text{for}\;\nu _* :=2^{-1}\alpha _0\Vert g^{-1}\Vert ^{-1}_{L_\infty}.
\end{equation}

We return to the form \eqref{b_D,eps}. Extending the function $\mathbf{u}\in H^1_0(\mathcal{O};\mathbb{C}^n)$ by zero to $\mathbb{R}^d\setminus\mathcal{O}$ and using
 \eqref{a_D,eps}, \eqref{Du <= c_1^2a}, \eqref{2Re sum j <=}, and \eqref{(Qu,u)<=} with $\nu=\nu_*$, we obtain the lower estimate for the form \eqref{b_D,eps}:
\begin{align}
\label{b_eps >=}
\mathfrak{b}_{D,\varepsilon}[\mathbf{u},\mathbf{u}]\geqslant \frac{1}{4}\mathfrak{a}_{D,\varepsilon} [\mathbf{u},\mathbf{u}]\geqslant c_*\Vert \mathbf{D}\mathbf{u}\Vert ^2_{L_2(\mathcal{O})},\quad\mathbf{u}\in H^1(\mathcal{O};\mathbb{C}^n);\\
\label{c_*=}
c_*:=\frac{1}{4}\alpha _0\Vert g^{-1}\Vert ^{-1}_{L_\infty}.
\end{align}
Next, by \eqref{a_D,eps estimates}, \eqref{2Re sum j <=}, and \eqref{(Qu,u)<=} with $\nu=1$, we have
\begin{equation*}
\begin{split}
&\mathfrak{b}_{D,\varepsilon} [\mathbf{u},\mathbf{u}]\leqslant C_*\Vert \mathbf{u}\Vert ^2_{H^1(\mathbb{R}^d)},\quad
\mathbf{u}\in H^1(\mathcal{O};\mathbb{C}^n),
\end{split}
\end{equation*}
where $C_*:=\max\lbrace\frac{5}{4}\alpha _1\Vert g\Vert _{L_\infty}+1;C_Q(1)+\lambda\Vert Q_0\Vert _{L_\infty}+c_2\rbrace$.
Thus, the form $\mathfrak{b}_{D,\varepsilon}$ is closed. The corresponding selfadjoint operator in $L_2(\mathcal{O};\mathbb{C}^n)$ is denoted by $B_{D,\varepsilon}$.

By the Friedrichs inequality, \eqref{b_eps >=} implies that
\begin{equation}
\label{b D,eps >= H1-norm}
\mathfrak{b}_{D,\varepsilon}[\mathbf{u},\mathbf{u}]\geqslant c_*(\mathrm{diam}\,\mathcal{O})^{-2}\Vert \mathbf{u}\Vert ^2_{L_2(\mathcal{O})},\quad \mathbf{u}\in H^1_0(\mathcal{O};\mathbb{C}^n).
\end{equation}
Hence, the operator $B_{D,\varepsilon}$ is positive definite. By \eqref{b_eps >=} and \eqref{b D,eps >= H1-norm},
\begin{align}
\label{H^1-norm <= BDeps^1/2}
\Vert \mathbf{u}\Vert _{H^1(\mathcal{O})}\leqslant c_3\Vert B_{D,\varepsilon}^{1/2}\mathbf{u}\Vert _{L_2(\mathcal{O})},\quad \mathbf{u}\in H^1_0(\mathcal{O};\mathbb{C}^n);\\
\label{c_3}
c_3:=c_*^{-1/2}\left(1+(\mathrm{diam}\,\mathcal{O})^{2}\right)^{1/2}.
\end{align}

We also need an auxiliary operator $\widetilde{B}_{D,\varepsilon}$.
We factorize the matrix $Q_0(\mathbf{x})$: there exists a $\Gamma$-periodic matrix-valued function $f(\mathbf{x})$ such that $f$, $f^{-1}\in L_\infty (\mathbb{R}^d)$ and
\begin{equation}
\label{Q_0=}
Q_0(\mathbf{x})=(f(\mathbf{x})^*)^{-1}f(\mathbf{x})^{-1}.
\end{equation}
(For instance, one can choose $f(\mathbf{x})=Q_0(\mathbf{x})^{-1/2}$.)
Let $\widetilde{B}_{D,\varepsilon}$ be a selfadjoint operator in $L_2(\mathcal{O};\mathbb{C}^n)$ generated by the quadratic form
\begin{equation}
\label{tilde b D,eps=}
\widetilde{\mathfrak{b}}_{D,\varepsilon }[\mathbf{u},\mathbf{u}]:=\mathfrak{b}_{D,\varepsilon}[f^\varepsilon \mathbf{u},f^\varepsilon \mathbf{u}]
\end{equation}
on the domain
$
\mathrm{Dom}\,\widetilde{\mathfrak{b}}_{D,\varepsilon}:=\lbrace \mathbf{u}\in L_2(\mathcal{O};\mathbb{C}^n) : f^\varepsilon \mathbf{u}\in H^1_0(\mathcal{O};\mathbb{C}^n)\rbrace $.
In other words, $\widetilde{B}_{D,\varepsilon}=(f^\varepsilon )^*B_{D,\varepsilon}f^\varepsilon $. Let $\widetilde{B}_{\varepsilon}$ denote the differential expression $(f^\varepsilon )^*B_{\varepsilon}f^\varepsilon $. Note that
\begin{align}
\label{B deps and tilde B D,eps tozd resolvent}
(B_{D,\varepsilon}-\zeta Q_0^\varepsilon)^{-1}=f^\varepsilon (\widetilde{B}_{D,\varepsilon}-\zeta I)^{-1}(f^\varepsilon)^* .
\end{align}

\subsection{The effective matrix and its properties}
The effective operator for $A_{D,\varepsilon} $ is given by the differential expression $A^0=b(\mathbf{D})^*g^0b(\mathbf{D})$
with the Dirichlet condition on $\partial\mathcal{O}$. Here $g^0$ is a constant \textit{effective} matrix of size $m\times m$.
The matrix $g^0$ is expressed in terms of the solution of an auxiliary problem on the cell.
Let an $(n\times m)$-matrix-valued function $\Lambda (\mathbf{x})$ be the (weak) $\Gamma$-periodic solution of the problem
\begin{equation}
\label{Lambda problem}
b(\mathbf{D})^*g(\mathbf{x})(b(\mathbf{D})\Lambda (\mathbf{x})+\mathbf{1}_m)=0,\quad \int\limits_{\Omega }\Lambda (\mathbf{x})\,d\mathbf{x}=0.
\end{equation}
Then the effective matrix is given by
\begin{align}
\label{g^0}
&g^0:=\vert \Omega \vert ^{-1}\int\limits_{\Omega} \widetilde{g}(\mathbf{x})\,d\mathbf{x},
\\
\label{tilde g}
&\widetilde{g}(\mathbf{x}):=g(\mathbf{x})(b(\mathbf{D})\Lambda (\mathbf{x})+\mathbf{1}_m).
\end{align}
It can be checked that the matrix $g^0$ is positive definite.

According to \cite[(6.28) and Subsection~7.3]{BSu05}, the solution of problem \eqref{Lambda problem} satisfies
\begin{equation}
\label{Lamba in H1<=}
\Vert \Lambda\Vert _{H^1(\Omega)}
\leqslant M.
\end{equation}
Here the constant $M$  depends only on $m$, $\alpha _0$, $\Vert g\Vert _{L_\infty}$, $\Vert g^{-1}\Vert _{L_\infty}$, and the parameters of the lattice $\Gamma$.

 The effective matrix satisfies the estimates known as the Voigt--Reuss bracketing
(see, e.~g., \cite[Chapter~3, Theorem~1.5]{BSu}).

\begin{proposition}
Let $g^0$ be the effective matrix \eqref{g^0}. Then
\begin{equation}
\label{Foigt-Reiss}
\underline{g}\leqslant g^0\leqslant \overline{g}.
\end{equation}
If $m=n$, then $g^0=\underline{g}$.
\end{proposition}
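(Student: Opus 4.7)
The plan is to derive both inequalities from a single variational identity for $\langle g^0\boldsymbol{\xi},\boldsymbol{\xi}\rangle$ combined with a $g$-weighted Cauchy--Schwarz inequality on $\Omega$. First, testing the cell equation \eqref{Lambda problem} (applied to an arbitrary $\boldsymbol{\xi}\in\mathbb{C}^m$) against the $\Gamma$-periodic function $\Lambda(\mathbf{x})\boldsymbol{\xi}$ and integrating by parts over $\Omega$ (no boundary contribution by periodicity), one obtains $\int_\Omega\langle \widetilde{g}(\mathbf{x})\boldsymbol{\xi},b(\mathbf{D})\Lambda\boldsymbol{\xi}\rangle\,d\mathbf{x}=0$. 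Combined with the definitions \eqref{g^0}, \eqref{tilde g}, this gives the key identity
\begin{equation*}
\vert\Omega\vert \langle g^0\boldsymbol{\xi},\boldsymbol{\xi}\rangle = \int_\Omega \langle g(\mathbf{x})(b(\mathbf{D})\Lambda\boldsymbol{\xi}+\boldsymbol{\xi}), b(\mathbf{D})\Lambda\boldsymbol{\xi}+\boldsymbol{\xi}\rangle\,d\mathbf{x},
\end{equation*}
and also shows that $\Lambda\boldsymbol{\xi}$ minimizes the quadratic energy $\Phi\mapsto\int_\Omega\langle g(b(\mathbf{D})\Phi+\boldsymbol{\xi}),b(\mathbf{D})\Phi+\boldsymbol{\xi}\rangle\,d\mathbf{x}$ over $\Phi\in\widetilde{H}^1(\Omega;\mathbb{C}^n)$. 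Taking the trial function $\Phi\equiv 0$ immediately yields the upper bound $\langle g^0\boldsymbol{\xi},\boldsymbol{\xi}\rangle\leq\langle\overline{g}\boldsymbol{\xi},\boldsymbol{\xi}\rangle$.

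For the lower bound, I would apply the $g$-weighted Cauchy--Schwarz inequality
\begin{equation*}
\bigl\vert\int_\Omega\langle\mathbf{u},\mathbf{v}\rangle\,d\mathbf{x}\bigr\vert^2 \leq \int_\Omega\langle g\mathbf{u},\mathbf{u}\rangle\,d\mathbf{x}\cdot\int_\Omega\langle g^{-1}\mathbf{v},\mathbf{v}\rangle\,d\mathbf{x}
\end{equation*}
with $\mathbf{u}=b(\mathbf{D})\Lambda\boldsymbol{\xi}+\boldsymbol{\xi}$ and $\mathbf{v}\equiv\boldsymbol{\eta}$ an arbitrary constant vector in $\mathbb{C}^m$. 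Since each component of $b(\mathbf{D})\Lambda$ is a derivative of a $\Gamma$-periodic function, $\int_\Omega b(\mathbf{D})\Lambda\,d\mathbf{x}=0$, so the left-hand side reduces to $\vert\Omega\vert^2\vert\langle\boldsymbol{\xi},\boldsymbol{\eta}\rangle\vert^2$, while the right-hand side equals $\vert\Omega\vert^2\langle g^0\boldsymbol{\xi},\boldsymbol{\xi}\rangle\langle\overline{g^{-1}}\boldsymbol{\eta},\boldsymbol{\eta}\rangle$ by the key identity. Selecting $\boldsymbol{\eta}:=\underline{g}\boldsymbol{\xi}$, so that $\overline{g^{-1}}\boldsymbol{\eta}=\boldsymbol{\xi}$, the inequality simplifies to $\langle\underline{g}\boldsymbol{\xi},\boldsymbol{\xi}\rangle^2 \leq \langle g^0\boldsymbol{\xi},\boldsymbol{\xi}\rangle\langle\underline{g}\boldsymbol{\xi},\boldsymbol{\xi}\rangle$. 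Since $\underline{g}$ is Hermitian and positive definite (as $g^{-1}\in L_\infty$), I may divide by $\langle\underline{g}\boldsymbol{\xi},\boldsymbol{\xi}\rangle>0$ to obtain $\langle\underline{g}\boldsymbol{\xi},\boldsymbol{\xi}\rangle\leq\langle g^0\boldsymbol{\xi},\boldsymbol{\xi}\rangle$, i.e.\ $\underline{g}\leq g^0$.

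For the equality in the case $m=n$, the rank condition in \eqref{<b^*b<} says that the square matrix $b(\boldsymbol{\theta})$ is invertible for every $\boldsymbol{\theta}\neq 0$. Expanding any $\Gamma$-periodic $\mathbf{w}\in L_2(\Omega;\mathbb{C}^m)$ satisfying $b(\mathbf{D})^*\mathbf{w}=0$ in a Fourier series on the dual lattice, invertibility of $b(\mathbf{k})^*$ at every non-zero frequency $\mathbf{k}$ forces all non-constant modes to vanish, so $\mathbf{w}$ is a constant vector. Applied to $\mathbf{w}=\widetilde{g}(\mathbf{x})\boldsymbol{\xi}$, which solves $b(\mathbf{D})^*\mathbf{w}=0$ by \eqref{Lambda problem}, this forces $\widetilde{g}(\mathbf{x})\boldsymbol{\xi}\equiv g^0\boldsymbol{\xi}$, i.e.\ $b(\mathbf{D})\Lambda\boldsymbol{\xi}+\boldsymbol{\xi}=g^{-1}(\mathbf{x})g^0\boldsymbol{\xi}$. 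Averaging over $\Omega$ and again using that $b(\mathbf{D})\Lambda$ is mean-zero, I arrive at $\boldsymbol{\xi}=\overline{g^{-1}}\,g^0\boldsymbol{\xi}$ for every $\boldsymbol{\xi}$, whence $g^0=(\overline{g^{-1}})^{-1}=\underline{g}$.

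The main obstacle is the Cauchy--Schwarz step: one must choose the test vector $\boldsymbol{\eta}$ so that the resulting scalar inequality is sharp enough to encode the full matrix inequality $\underline{g}\leq g^0$, and the choice $\boldsymbol{\eta}=\underline{g}\boldsymbol{\xi}$ is what makes $\overline{g^{-1}}\boldsymbol{\eta}$ coincide with $\boldsymbol{\xi}$, forcing the two sides to balance. The remaining ingredients---the mean-zero property of $b(\mathbf{D})\Lambda$, Hermiticity of $g$ and the effective matrices, and the elementary Fourier analysis in the $m=n$ case---are standard consequences of $\Gamma$-periodicity.
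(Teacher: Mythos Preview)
Your argument is correct in all three parts: the variational identity for $\langle g^0\boldsymbol{\xi},\boldsymbol{\xi}\rangle$, the upper bound via the trial function $\Phi\equiv 0$, the lower bound via the $g$-weighted Cauchy--Schwarz inequality with the clever choice $\boldsymbol{\eta}=\underline{g}\boldsymbol{\xi}$, and the Fourier argument for $m=n$ all go through as stated. Note, however, that the paper does not actually prove this proposition---it only cites \cite[Chapter~3, Theorem~1.5]{BSu}---so your self-contained argument is a genuine addition rather than a comparison; it is in fact the standard variational proof one finds in the Birman--Suslina reference.
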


From \eqref{Foigt-Reiss} it follows that
\begin{equation}
\label{|g^0|<=}
\vert g^0\vert \leqslant \Vert g\Vert _{L_\infty},\quad \vert (g^0)^{-1}\vert \leqslant \Vert g^{-1}\Vert _{L_\infty}.
\end{equation}

Now we distinguish the cases where one of the inequalities in \eqref{Foigt-Reiss}
becomes an identity, see \cite[Chapter~3, Propositions~1.6 and~1.7]{BSu}.

\begin{proposition}
The identity $g^0=\overline{g}$ is equivalent to the relations
\begin{equation}
\label{overline-g}
b(\D)^* {\mathbf g}_k(\x) =0,\ \ k=1,\dots,m,
\end{equation}
where ${\mathbf g}_k(\x)$, $k=1,\dots,m,$ are the columns of the matrix $g(\x)$.
\end{proposition}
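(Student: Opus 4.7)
The plan is to exploit the variational characterization of the effective matrix $g^0$, which recasts the equality $g^0=\overline{g}$ as the statement that the trivial function minimizes a certain quadratic functional on the cell. Concretely, for any constant vector $\c\in\C^m$ I would consider
\[
I_{\c}[\v]:=\int_\Omega \langle g(\x)(b(\D)\v+\c),b(\D)\v+\c\rangle\,d\x,\qquad \v\in\widetilde H^1(\Omega;\C^n),
\]
with the normalization $\int_\Omega \v\,d\x=0$. A standard computation, obtained by testing the weak form of \eqref{Lambda problem} (written columnwise, contracted with $\c$) against the periodic function $\v-\Lambda\c$, shows that the minimizer is $\v=\Lambda\c$, that the minimum value equals $|\Omega|\,\langle g^0\c,\c\rangle$ in view of \eqref{g^0}--\eqref{tilde g}, and that $I_{\c}[0]=|\Omega|\,\langle \overline g\,\c,\c\rangle$.

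For the implication \eqref{overline-g}$\Rightarrow g^0=\overline{g}$, the hypothesis $b(\D)^*\mathbf{g}_k=0$ for $k=1,\dots,m$ is precisely the matrix identity $b(\D)^* g(\x)=0$, so $\Lambda\equiv 0$ satisfies both the equation and the mean-zero condition in \eqref{Lambda problem}. Uniqueness of the mean-zero periodic solution then forces $\Lambda\equiv 0$, and therefore $\widetilde g=g$ in \eqref{tilde g}, so $g^0=\overline{g}$.

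For the converse, assume $g^0=\overline g$. Then $I_{\c}[0]=I_{\c}[\Lambda\c]$ for every $\c\in\C^m$, so $\v=0$ is also a minimizer. Subtracting and using the Euler--Lagrange orthogonality for $\Lambda\c$, I would obtain
\[
\int_\Omega \langle g(\x)b(\D)(\Lambda\c),b(\D)(\Lambda\c)\rangle\,d\x=0,
\]
whence $b(\D)(\Lambda\c)=0$ by positive definiteness of $g$. Since $\Lambda\c$ is $\Gamma$-periodic with zero mean, its Fourier expansion is supported on the nonzero dual lattice vectors, and the maximal rank condition $\rank b(\bxi)=n$ for $\bxi\neq 0$ (which already underlies \eqref{<b^*b<}) forces every Fourier coefficient of $\Lambda\c$ to vanish. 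Hence $\Lambda\c=0$ for every $\c$, i.e.\ $\Lambda\equiv 0$. Substituting back into \eqref{Lambda problem} leaves $b(\D)^* g(\x)\mathbf{1}_m=0$, which column by column is exactly \eqref{overline-g}.

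The only step that needs any care is the injectivity of $b(\D)$ on mean-zero $\Gamma$-periodic vector fields, but this is immediate from the Fourier argument just sketched; everything else reduces to manipulating the definitions \eqref{g^0}--\eqref{tilde g} and the weak form of \eqref{Lambda problem}.
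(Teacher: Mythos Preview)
Your argument is correct. The paper does not actually prove this proposition; it merely quotes the result from \cite[Chapter~3, Proposition~1.6]{BSu}. Your variational approach is precisely the standard one used in that reference: the identity $I_{\c}[\Lambda\c]=|\Omega|\langle g^0\c,\c\rangle$ together with $I_{\c}[0]=|\Omega|\langle\overline g\,\c,\c\rangle$ reduces $g^0=\overline g$ to the vanishing of $\int_\Omega\langle g\,b(\D)\Lambda\c,b(\D)\Lambda\c\rangle\,d\x$, and the Fourier injectivity of $b(\D)$ on mean-zero periodic fields then forces $\Lambda\equiv 0$, which is equivalent to \eqref{overline-g}. Both directions are cleanly handled, and the only nontrivial ingredient (injectivity of $b(\bxi)$ for $\bxi\neq 0$) is exactly the rank condition already assumed in the paper.
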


\begin{proposition} The identity $g^0 =\underline{g}$ is equivalent to the representations
\begin{equation}
\label{underline-g}
{\mathbf l}_k(\x) = {\mathbf l}_k^0 + b(\D) {\mathbf w}_k,\ \ {\mathbf l}_k^0\in \C^m,\ \
{\mathbf w}_k \in \widetilde{H}^1(\Omega;\C^m),\ \ k=1,\dots,m,
\end{equation}
where ${\mathbf l}_k(\x)$, $k=1,\dots,m,$ are the columns of the matrix $g(\x)^{-1}$.
\end{proposition}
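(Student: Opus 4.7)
The plan is to establish the single algebraic identity
\begin{equation*}
\int_\Omega \bigl(\widetilde{g}(\x)-\underline{g}\bigr)^{*}\,g(\x)^{-1}\,\bigl(\widetilde{g}(\x)-\underline{g}\bigr)\,d\x=|\Omega|\,(g^0-\underline{g}),
\end{equation*}
from which both directions of the equivalence are read off at once.

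First, I will prove this identity by direct computation. Setting $\mathbf{Z}(\x):=b(\D)\Lambda(\x)+\mathbf{1}_m$ we have $\widetilde{g}=g\mathbf{Z}$, $\overline{\mathbf{Z}}=\mathbf{1}_m$, and $\widetilde{g}^{\,*}g^{-1}\widetilde{g}=\mathbf{Z}^{*}\widetilde{g}$. The weak form of the cell equation \eqref{Lambda problem}, tested against $\Lambda$ itself (an admissible test function by \eqref{Lamba in H1<=}), yields $\int_\Omega \widetilde{g}^{\,*}\,b(\D)\Lambda\,d\x=0$; this collapses $\int_\Omega \widetilde{g}^{\,*}g^{-1}\widetilde{g}\,d\x$ into $\int_\Omega \widetilde{g}\,d\x=|\Omega|g^0$. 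The mixed terms $\int_\Omega \widetilde{g}^{\,*}g^{-1}\,d\x$ and $\int_\Omega g^{-1}\widetilde{g}\,d\x$ both equal $|\Omega|\mathbf{1}_m$ because $\widetilde{g}^{\,*}g^{-1}=\mathbf{Z}^{*}$ and $\overline{\mathbf{Z}}=\mathbf{1}_m$, while $\int_\Omega g^{-1}\,d\x=|\Omega|\,\underline{g}^{-1}$ by the definition of $\underline{g}$. Expanding the square and using $\underline{g}\,\underline{g}^{-1}\,\underline{g}=\underline{g}$ assembles the identity.

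For the forward direction, if $g^0=\underline{g}$ the right-hand side vanishes, and positive definiteness of $g^{-1}$ forces $\widetilde{g}(\x)\equiv\underline{g}$ a.e. Unwinding $\widetilde{g}=g\bigl(b(\D)\Lambda+\mathbf{1}_m\bigr)$ then yields
\begin{equation*}
g(\x)^{-1}=\underline{g}^{-1}+b(\D)\bigl(\Lambda(\x)\,\underline{g}^{-1}\bigr),
\end{equation*}
and reading off the $k$-th column produces the representation \eqref{underline-g} with $\mathbf{l}_k^0$ the $k$-th column of $\underline{g}^{-1}\in\C^m$ and $\mathbf{w}_k$ the $k$-th column of $\Lambda\,\underline{g}^{-1}$, which lies in $\widetilde{H}^1(\Omega;\C^n)$ by \eqref{Lamba in H1<=}.

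For the converse I will take \eqref{underline-g} in matrix form $g(\x)^{-1}=\mathbf{l}^0+b(\D)\mathbf{w}(\x)$, average it to identify $\mathbf{l}^0=\overline{g^{-1}}=\underline{g}^{-1}$, and set $\Lambda_0(\x):=\mathbf{w}(\x)(\mathbf{l}^0)^{-1}-\overline{\mathbf{w}(\mathbf{l}^0)^{-1}}$. A short calculation shows $g(\x)\bigl(b(\D)\Lambda_0+\mathbf{1}_m\bigr)=(\mathbf{l}^0)^{-1}=\underline{g}$, a constant matrix, so in particular $b(\D)^{*}g\bigl(b(\D)\Lambda_0+\mathbf{1}_m\bigr)=0$. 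Uniqueness of the cell problem then gives $\Lambda_0=\Lambda$, whence $\widetilde{g}\equiv\underline{g}$ and $g^0=\overline{\widetilde{g}}=\underline{g}$. The only point requiring slight care is the legitimacy of testing the weak form of \eqref{Lambda problem} against $\Lambda$ itself; once this is in place (via \eqref{Lamba in H1<=}), both directions are one-line consequences of the key identity.
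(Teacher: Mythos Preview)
The paper does not actually prove this proposition; it is quoted from \cite[Chapter~3, Proposition~1.7]{BSu}. Your argument is correct and is in fact the standard one: the identity
\[
\int_\Omega (\widetilde{g}-\underline{g})^{*}g^{-1}(\widetilde{g}-\underline{g})\,d\x=|\Omega|\,(g^0-\underline{g})
\]
is precisely the computation underlying the lower Voigt--Reuss bound~\eqref{Foigt-Reiss}, and both implications follow from it as you indicate. One small remark: in the representation~\eqref{underline-g} the functions $\mathbf{w}_k$ should be $\C^n$-valued (not $\C^m$-valued as printed), since $b(\D)$ acts on $\C^n$-valued functions; you handled this correctly by taking $\mathbf{w}_k$ to be a column of $\Lambda\,\underline{g}^{-1}$, which is $n\times m$.
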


\subsection{The effective operator}
\label{Subsection Effective operator}
To describe how the lower order terms of the operator $B_{D,\varepsilon}$ are homogenized, we consider a
 $\Gamma$-periodic $(n\times n)$-matrix-valued function $\widetilde{\Lambda}(\mathbf{x})$ which is the (weak) solution of the problem
\begin{equation}
\label{tildeLambda_problem}
b(\mathbf{D})^*g(\mathbf{x})b(\mathbf{D})\widetilde{\Lambda }(\mathbf{x})+\sum \limits _{j=1}^dD_ja_j(\mathbf{x})^*=0,\quad \int\limits_{\Omega }\widetilde{\Lambda }(\mathbf{x})\,d\mathbf{x}=0.
\end{equation}
According to \cite[(7.51) and (7.52)]{SuAA}, we have
\begin{equation}
\label{tilde Lambda in H1 <=}
\Vert \widetilde{\Lambda}\Vert _{H^1(\Omega)}
\leqslant \widetilde{M},
\end{equation}
where the constant $\widetilde{M}$ depends only on $n$, $\rho$, $\alpha _0$, $\Vert g^{-1}\Vert _{L_\infty}$, $\Vert a_j\Vert _{L_\rho(\Omega)}$, $j=1,\dots,d$,
and the parameters of the lattice $\Gamma$.

Next, we define constant matrices $V$ and $W$ as follows:
\begin{align}
\label{V}
&V:=\vert \Omega \vert ^{-1}\int\limits_{\Omega}(b(\mathbf{D})\Lambda (\mathbf{x}))^*g(\mathbf{x})(b(\mathbf{D})\widetilde{\Lambda}(\mathbf{x}))\,d\mathbf{x},\\
\label{W}
&W:=\vert \Omega \vert ^{-1}\int\limits_{\Omega} (b(\mathbf{D})\widetilde{\Lambda}(\mathbf{x}))^*g(\mathbf{x})(b(\mathbf{D})\widetilde{\Lambda}(\mathbf{x}))\,d\mathbf{x}.
\end{align}

In $L_2(\mathcal{O};\mathbb{C}^n)$, consider the quadratic form
\begin{equation*}
\begin{split}
\mathfrak{b}_D^0[\mathbf{u},\mathbf{u}]&=(g^0b(\mathbf{D})\mathbf{u},b(\mathbf{D})\mathbf{u})_{L_2(\mathcal{O})}
+2\mathrm{Re}\,\sum _{j=1}^d (\overline{a_j}D_j\mathbf{u},\mathbf{u})_{L_2(\mathcal{O})}
\\
&\quad-2\mathrm{Re}\,(V\mathbf{u},b(\mathbf{D})\mathbf{u})_{L_2(\mathcal{O})}
-(W\mathbf{u},\mathbf{u})_{L_2(\mathcal{O})}
+(\overline{Q}\mathbf{u},\mathbf{u})_{L_2(\mathcal{O})}
\\
&\qquad+\lambda (\overline{Q_0}\mathbf{u},\mathbf{u})_{L_2(\mathcal{O})},
\quad \mathbf{u}\in H^1_0(\mathcal{O};\mathbb{C}^n).
\end{split}
\end{equation*}
The following estimates were proved in \cite[(2.22) and (2.23)]{MSuPOMI}:
\begin{align}
\label{b_D^0 ots }
&c_*\Vert \mathbf{D}\mathbf{u}\Vert ^2_{L_2(\mathcal{O})}\leqslant \mathfrak{b}_D^0[\mathbf{u},\mathbf{u}]\leqslant c_4\Vert \mathbf{u}\Vert ^2_{H^1(\mathcal{O})},\quad \mathbf{u}\in H^1_0(\mathcal{O};\mathbb{C}^n),
\\
\label{b_D^0 ots 2}
&\mathfrak{b}_D^0[\mathbf{u},\mathbf{u}]\geqslant c_*(\mathrm{diam}\,\mathcal{O})^{-2}\Vert \mathbf{u}\Vert ^2_{L_2(\mathcal{O})},\quad \mathbf{u}\in H^1_0(\mathcal{O};\mathbb{C}^n).
\end{align}
Here the constant $c_4$ depends only on the problem data \eqref{problem data}.
A selfadjoint operator in $L_2(\mathcal{O};\mathbb{C}^n)$ corresponding to the form $\mathfrak{b}_D^0$ is denoted by $B_D^0$.
By \eqref{b_D^0 ots } and \eqref{b_D^0 ots 2},
\begin{equation}
\label{H^1-norm <= BD0^1/2}
\begin{split}
&\Vert \mathbf{u}\Vert _{H^1(\mathcal{O})}\leqslant c_3\Vert (B_D^0)^{1/2}\mathbf{u}\Vert _{L_2(\mathcal{O})},\quad \mathbf{u}\in H^1_0(\mathcal{O};\mathbb{C}^n),
\end{split}
\end{equation}
where $c_3$ is given by \eqref{c_3}.

Due to condition $\partial\mathcal{O}\in C^{1,1}$, the operator $B_D^0$ is given by
\begin{equation}
\label{B_D^0}
B^0=b(\mathbf{D})^*g^0b(\mathbf{D})-b(\mathbf{D})^*V-V^*b(\mathbf{D})
+\sum _{j=1}^d(\overline{a_j+a_j^*})D_j-W+\overline{Q}+\lambda\overline{Q_0}
\end{equation}
on the domain $H^2(\mathcal{O};\mathbb{C}^n)\cap H^1_0(\mathcal{O};\mathbb{C}^n)$, and we have
\begin{equation}
\label{BD0 ^-1 L2->H2}
\Vert (B_D^0)^{-1}\Vert _{L_2(\mathcal{O})\rightarrow H^2(\mathcal{O})}\leqslant \widehat{c}.
\end{equation}
Here the constant $\widehat{c}$ depends only on the problem data \eqref{problem data}.
To justify this fact, we refer to the theorems about regularity of solutions of the strongly elliptic systems (see \cite[Chapter~4]{McL}).

\begin{remark}
Instead of condition $\partial\mathcal{O}\in C^{1,1}$, one could impose the following implicit condition\textnormal{:}
a bounded Lipschitz domain $\mathcal{O}\subset \mathbb{R}^d$ is such that estimate \eqref{BD0 ^-1 L2->H2} holds.
For such domain the results of the paper remain true. In the case of scalar elliptic operators, wide conditions on $\partial \mathcal{O}$ ensuring estimate \eqref{BD0 ^-1 L2->H2}
can be found in \textnormal{\cite{KoE}} and \textnormal{\cite[Chapter~7]{MaSh} (}in particular, it suffices to assume that $\partial\mathcal{O}\in C^\alpha$, $\alpha >3/2${\rm)}.
\end{remark}

Denote
\begin{equation}
\label{f_0=}
f_0:=\left(\overline{Q_0}\right)^{-1/2}.
\end{equation}
By \eqref{Q_0=},
\begin{equation}
\label{f_0<=}
\vert f_0\vert \leqslant \Vert f\Vert _{L_\infty}=\Vert Q_0^{-1}\Vert^{1/2}_{L_\infty},\quad \vert f_0^{-1}\vert \leqslant \Vert f^{-1}\Vert _{L_\infty}=\Vert Q_0\Vert ^{1/2}_{L_\infty}.
\end{equation}
In what follows, we will need the operator $\widetilde{B}_D^0:=f_0B_D^0 f_0$ corresponding to the quadratic form
\begin{equation}
\label{tilde b_D^0 =}
\widetilde{\mathfrak{b}}_{D}^0[\mathbf{u},\mathbf{u}]:=\mathfrak{b}_D^0[f_0\mathbf{u},f_0\mathbf{u}],\quad \mathbf{u}\in H^1_0(\mathcal{O};\mathbb{C}^n).
\end{equation}
Note that
$ (B_D^0-\zeta\overline{Q_0})^{-1}=f_0(\widetilde{B}_D^0-\zeta I)^{-1}f_0$.

\subsection{Approximation of the generalized resolvent $(B_{D,\varepsilon}-\zeta Q_0^\varepsilon)^{-1}$} 
Now we formulate the results of the paper \cite{MSuPOMI}, where the behavior of the generalized resolvent $(B_{D,\varepsilon}-\zeta Q_0^\varepsilon )^{-1}$ was studied.
Suppose that $\zeta\in\mathbb{C}\setminus\mathbb{R}_+$ and $\vert\zeta\vert\geqslant 1$. The principal term of approximation of the generalized resolvent $(B_{D,\varepsilon}-\zeta Q_0^\varepsilon)^{-1}$ was found in~\cite[Theorem~2.5]{MSuPOMI}; approximation of this resolvent in the~$(L_2\rightarrow H^1)$-norm with the corrector taken into account was found in~\cite[Theorem~2.6]{MSuPOMI}; an appropriate approximation of the operator $g^\varepsilon b(\mathbf{D})(B_{D,\varepsilon}-\zeta Q_0^\varepsilon )^{-1}$ (corresponding to the ``flux'') was obtained in~\cite[Proposition~10.7]{MSuPOMI}.

We choose the numbers $\varepsilon _0, \varepsilon _1\in (0,1]$ according to the following condition.

\begin{condition}
\label{condition varepsilon}
Let $\mathcal{O}\subset \mathbb{R}^d$ be a bounded domain. Denote
$$
(\partial\mathcal{O})_{\varepsilon}: =\left\lbrace \mathbf{x}\in \mathbb{R}^d : \mathrm{dist}\,\lbrace \mathbf{x};\partial\mathcal{O}\rbrace <\varepsilon \right\rbrace.
$$
Suppose that there exists a number $\varepsilon _0\in (0,1]$ such that the strip $(\partial\mathcal{O})_{\varepsilon _0}$ can be covered by a finite number of
  open sets admitting diffeomorphisms of class $C^{0,1}$ rectifying the boundary $\partial\mathcal{O}$. Denote $\varepsilon _1:=\varepsilon _0 (1+r_1)^{-1}$,
where $2r_1=\mathrm{diam}\,\Omega$.
\end{condition}
Obviously, the number $\varepsilon _1$ depends only on the domain $\mathcal{O}$ and the lattice $\Gamma$.
Note that Condition \ref{condition varepsilon} is ensured only by the assumption that $\partial\mathcal{O}$ is Lipschitz;
we imposed a more restrictive condition $\partial\mathcal{O}\in C^{1,1}$ in order to ensure estimate~\eqref{BD0 ^-1 L2->H2}.

\begin{theorem}[\cite{MSuPOMI}]
\label{Theorem Dirichlet L2}
Let $\mathcal{O}\subset \mathbb{R}^d$ be a bounded domain of class $C^{1,1}$.
Suppose that the assumptions of Subsections \textnormal{\ref{Subsection operatoer A_D,eps}--\ref{Subsection Effective operator}} are satisfied. Let
$$
\zeta =\vert \zeta \vert e^{i\phi}\in \mathbb{C}\setminus \mathbb{R}_+, \quad\vert \zeta \vert \geqslant 1.
$$
 Denote
\begin{equation*}
c(\phi):=\begin{cases}
\vert \sin \phi \vert ^{-1}, &\phi\in (0,\pi /2)\cup (3\pi /2 ,2\pi),\\
1, &\phi\in [\pi /2,3\pi /2].
\end{cases}
\end{equation*}
Suppose that $\varepsilon _1$ is subject to Condition~\textnormal{\ref{condition varepsilon}}. Then for $0<\varepsilon\leqslant \varepsilon _1$ we have
\begin{equation*}
\Vert (B_{D,\varepsilon}-\zeta Q_0^\varepsilon )^{-1}-(B_D^0-\zeta \overline{Q_0})^{-1}\Vert _{L_2(\mathcal{O})\rightarrow L_2(\mathcal{O})}\leqslant C_1 c(\phi)^5 \varepsilon \vert \zeta \vert ^{-1/2}.
\end{equation*}
The constant $C_1$ depends only on the problem data \eqref{problem data}.
\end{theorem}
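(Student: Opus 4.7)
The theorem concerns the generalized resolvent at a complex spectral parameter $\zeta$ outside the positive real axis. My approach follows the now-standard program for operator error estimates with a ``spectral gain'' via the factor $|\zeta|^{-1/2}$, adapted from the analysis of $A_{D,\varepsilon}$ in \cite{PSu,Su13,Su15}. The plan has three main stages: reduce to a non-weighted resolvent, build an approximate solution via first order correctors with Steklov smoothing and boundary cutoffs, and close the estimate using coercivity plus a duality argument.

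\textbf{Stage 1: Reduction.} By the factorization $(B_{D,\varepsilon}-\zeta Q_0^\varepsilon)^{-1}=f^\varepsilon(\widetilde B_{D,\varepsilon}-\zeta I)^{-1}(f^\varepsilon)^*$ from \eqref{B deps and tilde B D,eps tozd resolvent}, and the parallel identity for the effective operator, together with the uniform bounds \eqref{f_0<=} on $f$ and $f_0$, it suffices to control the difference $(\widetilde B_{D,\varepsilon}-\zeta)^{-1}-f_0(f^\varepsilon)^{-1}(B_D^0-\zeta\overline{Q_0})^{-1}(f^\varepsilon)^{-*}f_0$ in $L_2(\mathcal{O};\C^n)\to L_2(\mathcal{O};\C^n)$. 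Since $f^\varepsilon\to f_0$ only weakly, I would split the task, as in \cite{MSuPOMI}, into (i) the principal homogenization error between the two resolvents at the level of $\widetilde B$, and (ii) error of replacing $f^\varepsilon$ by $f_0$, which is of order $\varepsilon|\zeta|^{-1/2}$ by direct Steklov-type estimates and Proposition~\ref{Proposition f^eps S_eps}.

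\textbf{Stage 2: First order approximation.} Set $\mathbf u_\varepsilon=(\widetilde B_{D,\varepsilon}-\zeta)^{-1}\mathbf F$ and $\mathbf u_0=(\widetilde B_D^0-\zeta)^{-1}\mathbf F$. Using \eqref{BD0 ^-1 L2->H2}, the function $f_0\mathbf u_0$ lies in $H^2(\mathcal{O};\C^n)\cap H^1_0$. I introduce the first order approximation
\begin{equation*}
\mathbf v_\varepsilon=f_0\mathbf u_0+\varepsilon(1-\chi_\varepsilon)\bigl(\Lambda^\varepsilon S_\varepsilon b(\mathbf D)(f_0\mathbf u_0)+\widetilde\Lambda^\varepsilon S_\varepsilon(f_0\mathbf u_0)\bigr),
\end{equation*}
where $\chi_\varepsilon$ is a smooth cutoff equal to $1$ near $\partial\mathcal{O}$, supported in $(\partial\mathcal{O})_{c\varepsilon}$, with $|\nabla\chi_\varepsilon|\lesssim\varepsilon^{-1}$. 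The role of $\Lambda$ and $\widetilde\Lambda$, solving \eqref{Lambda problem} and \eqref{tildeLambda_problem}, is to kill the ``bad'' terms in $B_\varepsilon(f^\varepsilon\mathbf v_\varepsilon/f_0)-B^0\mathbf u_0$ modulo divergence-like remainders; the Steklov operator $S_\varepsilon$ produces $H^1$-functions without losing regularity, cf.~\eqref{S_eps <= 1} and Proposition~\ref{Proposition S__eps - I}; and the cutoff $1-\chi_\varepsilon$ restores the Dirichlet boundary condition.

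\textbf{Stage 3: Residual and closing the estimate.} Compute the discrepancy $\mathbf r_\varepsilon:=(\widetilde B_\varepsilon-\zeta)(f_0^{-1}\mathbf v_\varepsilon)-\mathbf F$ and test it against $(\widetilde B_{D,\varepsilon}-\zeta)^{-1}\mathbf r_\varepsilon$. A careful expansion, using the defining equations for $\Lambda,\widetilde\Lambda$ and integration by parts, produces three groups of terms: (a) interior commutator errors from $[S_\varepsilon,\cdot]$, (b) boundary strip terms coming from $\chi_\varepsilon$, and (c) lower order terms $a_j,Q$. Each is estimated via Proposition~\ref{Proposition f^eps S_eps}, Proposition~\ref{Proposition S__eps - I}, \eqref{Lamba in H1<=}, and \eqref{tilde Lambda in H1 <=}. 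The key quantitative a priori inputs are the coercivity estimates
\begin{equation*}
\|(\widetilde B_{D,\varepsilon}-\zeta)^{-1}\|_{L_2\to L_2}\leqslant c(\phi)|\zeta|^{-1},\qquad \|(\widetilde B_{D,\varepsilon}-\zeta)^{-1}\|_{L_2\to H^1}\leqslant Cc(\phi)^{1/2}|\zeta|^{-1/2},
\end{equation*}
and the analogous bound with $H^2$ control for the effective operator, which gives $\|f_0\mathbf u_0\|_{H^2(\mathcal{O})}\leqslant Cc(\phi)|\zeta|^{0}\|\mathbf F\|_{L_2}$ on the scale $|\zeta|\ge1$ (by interpolation between $L_2\to L_2$ and $L_2\to H^2$). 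Combining yields an $(L_2\to H^1)$ approximation error of order $(\varepsilon^{1/2}|\zeta|^{-1/4}+\varepsilon)c(\phi)^k$, matching \eqref{main result 2}.

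\textbf{From $H^1$ to $L_2$ via duality.} The final $L_2\to L_2$ bound of sharp order $\varepsilon|\zeta|^{-1/2}$ is obtained by the standard duality trick: write $(R_\varepsilon-R^0)\mathbf F$ as $\langle R_\varepsilon^*\mathbf G,\mathbf F\rangle-\langle R^{0,*}\mathbf G,\mathbf F\rangle$ with arbitrary $\mathbf G\in L_2$, apply the $(L_2\to H^1)$ corrector-type estimate to $R_\varepsilon\mathbf F$, and use that the corrector is small in $L_2$ (of order $\varepsilon$ times a bounded operator, by Proposition~\ref{Proposition f^eps S_eps} applied to $\Lambda^\varepsilon$ and $\widetilde\Lambda^\varepsilon$, which are periodic and in $L_2(\Omega)$). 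The cross terms cancel modulo $O(\varepsilon|\zeta|^{-1/2})$.

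\textbf{Main obstacle.} The delicate point is the \emph{boundary layer}: the cutoff $\chi_\varepsilon$ forces a $\varepsilon^{1/2}$ loss in $H^1$ because $\|\nabla(f_0\mathbf u_0)\|_{L_2((\partial\mathcal O)_\varepsilon)}\lesssim \varepsilon^{1/2}\|f_0\mathbf u_0\|_{H^2(\mathcal{O})}$ only. Converting this to the correct $\varepsilon|\zeta|^{-1/2}$ gain in $L_2$ requires squaring via duality and tracking the $|\zeta|$-dependence of $\|f_0\mathbf u_0\|_{H^2}$ very carefully; this is where the fifth power $c(\phi)^5$ arises, since each of the two $H^1$-approximation factors contributes $c(\phi)^{\ge 2}$ through the coercivity bound and an extra $c(\phi)$ enters through the elliptic regularity estimate for the effective operator.
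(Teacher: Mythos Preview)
The paper does \emph{not} contain a proof of this theorem: Theorem~\ref{Theorem Dirichlet L2} is quoted verbatim from \cite{MSuPOMI} (see the citation in the theorem header and the sentence preceding it, ``Now we formulate the results of the paper \cite{MSuPOMI}\ldots''). There is therefore no ``paper's own proof'' to compare against here; the result is used as a black box in the contour-integral arguments of \S\ref{Section 2}.

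That said, your outline follows the standard architecture of \cite{PSu,Su13,Su15,MSuPOMI} (first-order approximation with Steklov smoothing and boundary cutoff, discrepancy estimate, then duality to upgrade the $H^1$-error to a sharp $L_2$-error), so it is the same route taken in the source you would be reproducing. A few details in your sketch are loose and would need tightening if you actually wrote this out: (i) the reduction in Stage~1 is garbled---the object $f_0(f^\varepsilon)^{-1}(B_D^0-\zeta\overline{Q_0})^{-1}(f^\varepsilon)^{-*}f_0$ is not what appears naturally, and the replacement $f^\varepsilon\to f_0$ is not handled by ``Steklov-type estimates'' alone (it requires the full machinery, since $f^\varepsilon$ is only $L_\infty$); (ii) the claimed bound $\|f_0\mathbf u_0\|_{H^2}\leqslant Cc(\phi)\|\mathbf F\|_{L_2}$ uniformly in $|\zeta|\geqslant 1$ is false as stated---the $H^2$-norm of the effective solution grows with $|\zeta|$, and the correct bookkeeping splits $\|\mathbf u_0\|_{H^2}$ into a part controlled by $|\zeta|\|\mathbf u_0\|_{L_2}$ plus lower-order pieces; (iii) the accounting for the power $c(\phi)^5$ is heuristic. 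None of these are fatal to the strategy, but they are exactly the places where \cite{MSuPOMI} does real work.
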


We fix a linear continuous extension operator
\begin{equation}
\label{P_O H^1, H^2}
\begin{split}
P_\mathcal{O}: H^\sigma(\mathcal{O};\mathbb{C}^n)\rightarrow H^\sigma(\mathbb{R}^d;\mathbb{C}^n),\quad \sigma\geqslant 0.
\end{split}
\end{equation}
Such a ``universal'' extension operator exists for any Lipschitz bounded domain (see \cite{R}).
We have
\begin{equation}
\label{PO}
\| P_{\mathcal O} \|_{H^\sigma({\mathcal O}) \to H^\sigma({\mathbb R}^d)} \leqslant C_{\mathcal O}^{(\sigma)},\quad \sigma\geqslant 0,
\end{equation}
where the constant $C_{\mathcal O}^{(\sigma)}$ depends only on $\sigma$ and the domain ${\mathcal O}$.
By $R_\mathcal{O}$ we denote the operator of restriction of functions in $\mathbb{R}^d$ to the domain $\mathcal{O}$. We put
\begin{equation}
\label{K_D(eps,zeta)}
K_D(\varepsilon ;\zeta ):=R_{\mathcal{O}}\bigl([\Lambda ^\varepsilon ] b(\mathbf{D})+[\widetilde{\Lambda}^\varepsilon ]\bigr)S_\varepsilon P_{\mathcal{O}}(B_D^0-\zeta\overline{Q_0})^{-1}.
\end{equation}
The corrector \eqref{K_D(eps,zeta)} is a continuous mapping of $L_2(\mathcal{O};\mathbb{C}^n)$ to $H^1(\mathcal{O};\mathbb{C}^n)$. This can be easily checked with the help of Proposition~\ref{Proposition f^eps S_eps} and relations $\Lambda$, $\widetilde{\Lambda}\in\widetilde{H}^1(\Omega)$. Note that\break
$\Vert \varepsilon K_D(\varepsilon ;\zeta)\Vert _{L_2(\mathcal{O})\rightarrow H^1(\mathcal{O})}=O(1)$ for small $\varepsilon$ and fixed $\zeta$.

\begin{theorem}[\cite{MSuPOMI}]
\label{Theorem Dirichlet H1}
Suppose that the assumptions of Theorem~\textnormal{\ref{Theorem Dirichlet L2}} are satisfied. Let $K_D(\varepsilon;\zeta)$ be given by \eqref{K_D(eps,zeta)}.
Then for $\zeta\in\mathbb{C}\setminus\mathbb{R}_+$, $\vert \zeta\vert\geqslant 1$, and $0<\varepsilon \leqslant \varepsilon _1$ we have
\begin{equation}
\label{Th L2->H1}
\begin{split}
\Vert (B_{D,\varepsilon}-\zeta Q_0^\varepsilon )^{-1}-(B_D^0-\zeta \overline{Q_0})^{-1}-\varepsilon K_D(\varepsilon ;\zeta )\Vert _{L_2(\mathcal{O})\rightarrow H^1(\mathcal{O})}
\leqslant C_2 c(\phi)^2\varepsilon ^{1/2}\vert \zeta \vert ^{-1/4}+C_3 c(\phi)^4\varepsilon .
\end{split}
\end{equation}
Let $\widetilde{g}(\mathbf{x})$ be the matrix-valued function~\eqref{tilde g}. We put
\begin{equation}
\label{G(eps,zeta)}
G_D(\varepsilon ;\zeta)\! :=\! \widetilde{g}^\varepsilon S_\varepsilon b(\mathbf{D})P_\mathcal{O}(B_D^0\!-\!\zeta\overline{Q_0})^{-1}\!\!+\!g^\varepsilon\bigl( b(\mathbf{D})\widetilde{\Lambda}\bigr)^\varepsilon S_\varepsilon P_\mathcal{O}(B_D^0\!-\!\zeta \overline{Q_0})^{-1}.
\end{equation}
Then for $\zeta\in\mathbb{C}\setminus\mathbb{R}_+$, $\vert \zeta\vert\geqslant 1$, and $0<\varepsilon \leqslant \varepsilon _1$
the operator $g^\varepsilon b(\mathbf{D})(B_{D,\varepsilon}-\zeta Q_0^\varepsilon )^{-1}$ corresponding to the ``flux'' satisfies
\begin{equation}
\label{Th L_2->H^1 fluxes ell case}
\begin{split}
\Vert  g^\varepsilon b(\mathbf{D})(B_{D,\varepsilon}\!-\!\zeta Q_0^\varepsilon )^{-1}\!-\!G_D(\varepsilon ;\zeta)\Vert _{L_2(\mathcal{O})\rightarrow L_2(\mathcal{O})}
\!\leqslant\!\widetilde{C}_2c(\phi)^{5/2}\!\varepsilon ^{1/2}\vert \zeta \vert ^{-1/4}\!.
\end{split}
\end{equation}
The constants $C_2$, $C_3$, and $\widetilde{C}_2$ depend only on the problem data \eqref{problem data}.
\end{theorem}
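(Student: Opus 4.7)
The plan is to use the first-order-corrector energy method, adapted to the Dirichlet bounded-domain setting by means of a boundary cutoff; the loss from the sharp order $\varepsilon$ to $\varepsilon^{1/2}$ is dictated by a boundary layer of width $\sim\varepsilon$. I fix $\boldsymbol{\varphi}\in L_2(\mathcal{O};\mathbb{C}^n)$ and set $\mathbf{u}_\varepsilon:=(B_{D,\varepsilon}-\zeta Q_0^\varepsilon)^{-1}\boldsymbol{\varphi}$, $\mathbf{u}_0:=(B_D^0-\zeta\overline{Q_0})^{-1}\boldsymbol{\varphi}$, and let $\widetilde{\mathbf{u}}_0:=P_\mathcal{O}\mathbf{u}_0\in H^2(\mathbb{R}^d;\mathbb{C}^n)$. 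The natural first-order approximation is $\mathbf{u}_0+\varepsilon([\Lambda^\varepsilon]b(\mathbf{D})+[\widetilde{\Lambda}^\varepsilon])S_\varepsilon\widetilde{\mathbf{u}}_0$, but it does not vanish on $\partial\mathcal{O}$; I therefore introduce a cutoff $\theta_\varepsilon\in C^\infty(\overline{\mathcal{O}})$ with $\theta_\varepsilon=0$ on $(\partial\mathcal{O})_\varepsilon$, $\theta_\varepsilon=1$ off $(\partial\mathcal{O})_{2\varepsilon}$ and $|\nabla\theta_\varepsilon|\leqslant C/\varepsilon$ (available for $\varepsilon\leqslant\varepsilon_1$ by Condition~\ref{condition varepsilon}), and work with $\widehat{\mathbf{v}}_\varepsilon:=\mathbf{u}_0+\varepsilon\theta_\varepsilon\bigl([\Lambda^\varepsilon]b(\mathbf{D})+[\widetilde{\Lambda}^\varepsilon]\bigr)S_\varepsilon\widetilde{\mathbf{u}}_0\in H^1_0(\mathcal{O};\mathbb{C}^n)$.

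Next, I test $\mathfrak{b}_{D,\varepsilon}[\mathbf{u}_\varepsilon-\widehat{\mathbf{v}}_\varepsilon,\boldsymbol{\eta}]-\zeta(Q_0^\varepsilon(\mathbf{u}_\varepsilon-\widehat{\mathbf{v}}_\varepsilon),\boldsymbol{\eta})_{L_2(\mathcal{O})}$ against $\boldsymbol{\eta}\in H^1_0(\mathcal{O};\mathbb{C}^n)$. Using that $\mathbf{u}_\varepsilon$ and $\mathbf{u}_0$ solve the inhomogeneous equations with the same right-hand side $\boldsymbol{\varphi}$, and that $\Lambda,\widetilde{\Lambda}$ solve the cell problems \eqref{Lambda problem} and \eqref{tildeLambda_problem} with $g^0,V,W$ defined by \eqref{g^0}, \eqref{V}, \eqref{W}, the $\varepsilon^0$-order terms cancel algebraically. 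The remaining discrepancy splits into three groups: (i) smoothing defects $[\Phi^\varepsilon](S_\varepsilon-I)(\cdot)$ on $\mathbf{u}_0$ and $b(\mathbf{D})\mathbf{u}_0$, controlled via Propositions~\ref{Proposition S__eps - I}, \ref{Proposition f^eps S_eps} and the $H^2$-bound \eqref{BD0 ^-1 L2->H2}; (ii) mean-zero fluctuation terms of the form $(\widetilde{g}-g^0)^\varepsilon S_\varepsilon b(\mathbf{D})\widetilde{\mathbf{u}}_0$ and analogous ones for the lower-order coefficients, treated by representing the fluctuations as $b(\mathbf{D})^*$-divergences of bounded $\Gamma$-periodic potentials (in the spirit of \cite{BSu06}) and extracting a factor $\varepsilon$ after integration by parts; (iii) boundary-layer terms supported on $\mathrm{supp}\,\nabla\theta_\varepsilon\subset(\partial\mathcal{O})_{2\varepsilon}$.

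The main obstacle is group~(iii), which is the source of both the $\varepsilon^{1/2}$ order and the $|\zeta|^{-1/4}$ factor. The estimate relies on the trace-type bound of Zhikov--Pastukhova,
\begin{equation*}
\|\mathbf{w}\|_{L_2((\partial\mathcal{O})_{2\varepsilon})}^2\leqslant C\varepsilon\,\|\mathbf{w}\|_{H^1(\mathcal{O})}\|\mathbf{w}\|_{L_2(\mathcal{O})},
\end{equation*}
applied to $\mathbf{w}=\widetilde{\mathbf{u}}_0$ and its first derivatives, combined with the two-sided effective bounds $\|\mathbf{u}_0\|_{L_2}\leqslant c(\phi)|\zeta|^{-1}\|\boldsymbol{\varphi}\|_{L_2}$ (sectoriality of $B_D^0$) and $\|\mathbf{u}_0\|_{H^2}\leqslant\widehat{c}\,c(\phi)\|\boldsymbol{\varphi}\|_{L_2}$ (from \eqref{BD0 ^-1 L2->H2} after noting that the $L_2\to L_2$ norm is $O(c(\phi))$). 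Interpolating yields $\|\widetilde{\mathbf{u}}_0\|_{H^1((\partial\mathcal{O})_{2\varepsilon})}\leqslant C\varepsilon^{1/2}c(\phi)^{3/2}|\zeta|^{-1/4}\|\boldsymbol{\varphi}\|_{L_2}$, and Proposition~\ref{Proposition f^eps S_eps} transfers this bound through the oscillating factors $[\Lambda^\varepsilon]$, $[\widetilde{\Lambda}^\varepsilon]$. The lower bound \eqref{b_eps >=}, corrected by the sectorial shift $\zeta Q_0^\varepsilon$ (which contributes a further $c(\phi)$), then gives $\|\mathbf{u}_\varepsilon-\widehat{\mathbf{v}}_\varepsilon\|_{H^1(\mathcal{O})}\leqslant C_2 c(\phi)^2\varepsilon^{1/2}|\zeta|^{-1/4}+C_3 c(\phi)^4\varepsilon$ after collecting contributions from (i)--(iii); the difference $\widehat{\mathbf{v}}_\varepsilon-(\mathbf{u}_0+\varepsilon K_D(\varepsilon;\zeta)\boldsymbol{\varphi})$ is again of boundary-layer type and absorbed in the same order.

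The flux estimate \eqref{Th L_2->H^1 fluxes ell case} follows by writing $g^\varepsilon b(\mathbf{D})\mathbf{u}_\varepsilon=g^\varepsilon b(\mathbf{D})\widehat{\mathbf{v}}_\varepsilon+g^\varepsilon b(\mathbf{D})(\mathbf{u}_\varepsilon-\widehat{\mathbf{v}}_\varepsilon)$: the second summand is bounded in $L_2(\mathcal{O})$ by the $H^1$-estimate already obtained, while for the first one the identity $g(\mathbf{x})(b(\mathbf{D})\Lambda(\mathbf{x})+\mathbf{1}_m)=\widetilde{g}(\mathbf{x})$ from \eqref{tilde g} produces exactly $G_D(\varepsilon;\zeta)\boldsymbol{\varphi}$, up to commutator terms with $\nabla\theta_\varepsilon$ and smoothing defects that are of the same boundary-layer size and are treated by the same trace argument.
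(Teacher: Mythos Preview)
The paper does not prove this theorem. Theorem~\ref{Theorem Dirichlet H1} is stated with the attribution \cite{MSuPOMI} and is quoted verbatim from that reference (specifically, as noted in the preceding discussion, from \cite[Theorem~2.6]{MSuPOMI} and \cite[Proposition~10.7]{MSuPOMI}); it is used here as a black box to derive the parabolic results in \S\ref{Section 2} via contour integration. There is therefore no proof in the present paper to compare against.

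That said, your sketch is precisely the strategy employed in \cite{MSuPOMI} (and, before that, in \cite{PSu,Su_SIAM}): the cut-off first-order approximation $\widehat{\mathbf v}_\varepsilon\in H^1_0(\mathcal{O};\mathbb{C}^n)$, the energy identity for $\mathbf u_\varepsilon-\widehat{\mathbf v}_\varepsilon$, the cancellation of the $\varepsilon^0$-terms through the cell problems, the representation of mean-zero fluctuations as $b(\mathbf D)^*$-divergences of bounded periodic potentials, and the boundary-layer estimate $\|\mathbf w\|_{L_2((\partial\mathcal{O})_{2\varepsilon})}^2\leqslant C\varepsilon\|\mathbf w\|_{H^1}\|\mathbf w\|_{L_2}$ supplying the $\varepsilon^{1/2}|\zeta|^{-1/4}$ loss. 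The flux estimate is obtained exactly as you describe. Your account of the $c(\phi)$-bookkeeping is slightly loose (the final powers $c(\phi)^2$, $c(\phi)^4$, $c(\phi)^{5/2}$ arise from a more careful count of how many times one uses the sectorial resolvent bound versus the coercivity of the form shifted by $\zeta Q_0^\varepsilon$), but the architecture is correct.
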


 In \cite[Theorem~9.2]{MSuPOMI}, estimates in a wider domain of the spectral parameter were obtained. It was assumed that $\zeta\in\mathbb{C}\setminus[c_\flat,\infty)$, where $c_\flat$ is a common lower bound of the operators $\widetilde{B}_{D,\varepsilon}$ and $\widetilde{B}_D^0$. We put
\begin{equation}
\label{c_flat}
c_\flat:=4^{-1}\alpha _0\Vert g^{-1}\Vert ^{-1}_{L_\infty} \Vert Q_0\Vert _{L_\infty}^{-1} (\mathrm{diam}\,\mathcal{O})^{-2},
\end{equation}
using relations \eqref{c_*=}, \eqref{b D,eps >= H1-norm}, \eqref{Q_0=}, \eqref{tilde b D,eps=}, \eqref{b_D^0 ots 2}, \eqref{f_0<=}, and \eqref{tilde b_D^0 =}.

\begin{theorem}[\cite{MSuPOMI}]
\label{Theorem another approximation}
\label{Theorem Dr appr}
Let $\mathcal{O}\subset \mathbb{R}^d$ be a bounded domain of class $C^{1,1}$.
Suppose that the assumptions of Subsections~\textnormal{\ref{Subsection operatoer A_D,eps}--\ref{Subsection Effective operator}} are satisfied.
Let $K_D(\varepsilon;\zeta)$ be the corrector \eqref{K_D(eps,zeta)} and let $G_D(\varepsilon ;\zeta)$ be the operator~\eqref{G(eps,zeta)}.
Suppose that $\zeta\in\mathbb{C}\setminus [c_\flat,\infty)$, where $c_\flat$ is given by~\eqref{c_flat}. Denote $\psi :=\mathrm{arg}\,(\zeta -c_\flat)$, $0<\psi <2\pi$, and
\begin{equation}
\label{rho(zeta)}
\varrho _\flat(\zeta):=\begin{cases}
c(\psi)^2\vert \zeta -c_\flat\vert ^{-2}, &\vert \zeta -c_\flat\vert <1,\\
c(\psi)^2, &\vert \zeta -c_\flat\vert \geqslant 1.
\end{cases}
\end{equation}
Suppose that the number $\varepsilon_1$ is subject to Condition~\textnormal{\ref{condition varepsilon}}.
For $0<\varepsilon \leqslant \varepsilon _1$ we have
\begin{align}
\label{Th dr appr 1}
\Vert & (B_{D,\varepsilon}-\zeta Q_0^\varepsilon )^{-1}-(B_D^0-\zeta \overline{Q_0})^{-1}\Vert _{L_2(\mathcal{O})\rightarrow L_2(\mathcal{O})}\leqslant C_4\varepsilon\varrho _\flat(\zeta ) ,\\
\label{Th dr appr 2}
\begin{split}
\Vert &(B_{D,\varepsilon}-\zeta Q_0^\varepsilon )^{-1}-(B_D^0-\zeta \overline{Q_0})^{-1}
-\varepsilon K_D(\varepsilon ;\zeta )\Vert _{L_2(\mathcal{O})\rightarrow H^1(\mathcal{O})}
\\
&\leqslant C_5\bigl(\varepsilon ^{1/2}\varrho _\flat(\zeta )^{1/2}+\varepsilon \vert 1+\zeta \vert ^{1/2}\varrho _\flat(\zeta )\bigr),
\end{split}
\\
\label{Th dr appr 3}
\begin{split}
\Vert&  g^\varepsilon b(\mathbf{D})(B_{D,\varepsilon}-\zeta Q_0^\varepsilon )^{-1}-G_D(\varepsilon ;\zeta)\Vert _{L_2(\mathcal{O})\rightarrow L_2(\mathcal{O})}
\leqslant \widetilde{C}_5\bigl(\varepsilon ^{1/2}\varrho _\flat(\zeta )^{1/2}+\varepsilon \vert 1+\zeta \vert ^{1/2}\varrho _\flat(\zeta )\bigr).
\end{split}
\end{align}
The constants $C_4$, $C_5$, and $\widetilde{C}_5$ depend only on the problem data~\eqref{problem data}.
\end{theorem}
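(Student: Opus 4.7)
My plan is to deduce Theorem~\ref{Theorem another approximation} from Theorems~\ref{Theorem Dirichlet L2} and~\ref{Theorem Dirichlet H1} by a resolvent-shift argument anchored at the reference point $\zeta_0 = -1$, which already belongs to the region $\{\vert\zeta\vert\geqslant 1\}\cap(\mathbb{C}\setminus\mathbb{R}_+)$ covered by those theorems. The first step is to pass to the selfadjoint picture: via \eqref{B deps and tilde B D,eps tozd resolvent} and the analogous identity for $B_D^0$, it suffices to estimate differences of the ordinary resolvents of $\widetilde{B}_{D,\varepsilon}$ and $\widetilde{B}_D^0$, both of which are bounded from below by $c_\flat$ by the choice \eqref{c_flat}.

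The algebraic core of the argument is the factorized identity
\[
 (\widetilde{B}_{D,\varepsilon}-\zeta)^{-1} - (\widetilde{B}_D^0-\zeta)^{-1}
 = \bigl[I+(\zeta+1)(\widetilde{B}_{D,\varepsilon}-\zeta)^{-1}\bigr]
   \bigl[(\widetilde{B}_{D,\varepsilon}+1)^{-1}-(\widetilde{B}_D^0+1)^{-1}\bigr]
   \bigl[I+(\zeta+1)(\widetilde{B}_D^0-\zeta)^{-1}\bigr],
\]
obtained by substituting $\widetilde{\mathcal R}(\zeta) = [I+(\zeta+1)\widetilde{\mathcal R}(\zeta)]\widetilde{\mathcal R}(-1)$ (and its mirror on the right) into the elementary identity $\widetilde{\mathcal R}_\varepsilon(\zeta) - \widetilde{\mathcal R}^0(\zeta) = \widetilde{\mathcal R}_\varepsilon(\zeta)[\widetilde B_D^0 - \widetilde B_{D,\varepsilon}]\widetilde{\mathcal R}^0(\zeta)$. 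Since $I+(\zeta+1)(\widetilde B-\zeta)^{-1} = (\widetilde B+1)(\widetilde B-\zeta)^{-1}$, the spectral theorem applied to $\widetilde B \geqslant c_\flat$, combined with the elementary inequality $\lambda+1\leqslant|\lambda-\zeta|+|\zeta+1|$, the bound $|\lambda-\zeta|\geqslant c(\psi)^{-1}|\zeta-c_\flat|$, and a case split $|\zeta-c_\flat|<1$ versus $|\zeta-c_\flat|\geqslant 1$, give $\|I+(\zeta+1)(\widetilde B-\zeta)^{-1}\|_{L_2\to L_2} \leqslant C\varrho_\flat(\zeta)^{1/2}$ for each shift factor. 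Combining this with Theorem~\ref{Theorem Dirichlet L2} applied at $\zeta_0=-1$ and translating back to the generalized resolvent by the multiplications by $f^\varepsilon$ and $f_0$ yields \eqref{Th dr appr 1}.

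For \eqref{Th dr appr 2} the bookkeeping is slightly more delicate, since the corrector $K_D(\varepsilon;\zeta)$ itself depends on $\zeta$ through $(B_D^0-\zeta\overline{Q_0})^{-1}$. I would split
\[
 \mathcal{R}_\varepsilon(\zeta)-\mathcal{R}^0(\zeta)-\varepsilon K_D(\varepsilon;\zeta)
 = \bigl(\mathcal{R}_\varepsilon(\zeta)-\mathcal{R}^0(\zeta)-\varepsilon K_D(\varepsilon;-1)\bigr) - \varepsilon\bigl(K_D(\varepsilon;\zeta)-K_D(\varepsilon;-1)\bigr),
\]
where $\mathcal R_\varepsilon(\zeta):=(B_{D,\varepsilon}-\zeta Q_0^\varepsilon)^{-1}$ and $\mathcal R^0(\zeta):=(B_D^0-\zeta\overline{Q_0})^{-1}$. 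The first bracket is treated by the same shift identity, but now the right shift factor must be estimated in the $H^1$-operator norm; using the interpolation-type bound $\|(\widetilde B_D^0+1)^{1/2}(\widetilde B_D^0-\zeta)^{-1}\|\leqslant C|\zeta-c_\flat|^{-1/2}c(\psi)^{1/2}$ together with \eqref{H^1-norm <= BD0^1/2} produces exactly the extra $|1+\zeta|^{1/2}$ weight appearing in \eqref{Th dr appr 2}, while Theorem~\ref{Theorem Dirichlet H1} at $\zeta_0=-1$ provides the reference $O(\varepsilon)$ bound. For the second bracket I would use the resolvent identity $\mathcal R^0(\zeta)-\mathcal R^0(-1) = (1+\zeta)\mathcal R^0(\zeta)\overline{Q_0}\mathcal R^0(-1)$ and combine it with the uniform bound $\|\varepsilon R_{\mathcal O}([\Lambda^\varepsilon]b(\mathbf D)+[\widetilde\Lambda^\varepsilon])S_\varepsilon P_{\mathcal O}\|_{H^2\to H^1}\leqslant C$ (available through Propositions~\ref{Proposition S__eps - I}, \ref{Proposition f^eps S_eps} and \eqref{Lamba in H1<=}, \eqref{tilde Lambda in H1 <=}) and the $(L_2\to H^2)$-regularity \eqref{BD0 ^-1 L2->H2}. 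The flux estimate \eqref{Th dr appr 3} is proved in exactly the same way, starting from the flux approximation of Theorem~\ref{Theorem Dirichlet H1} and inserting the same shift factors.

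The main obstacle, as I see it, is to package all the $\zeta$-dependence of the shift factors into the single function $\varrho_\flat(\zeta)$ of \eqref{rho(zeta)} without producing spurious constants that depend on $\zeta$; it is precisely this requirement that dictates the case split in the definition of $\varrho_\flat$. A secondary technical point is that bounding the shift factor $(\widetilde B_D^0+1)(\widetilde B_D^0-\zeta)^{-1}$ in the $H^1$-operator norm (rather than in the $L_2$-operator norm) produces the additional $|1+\zeta|^{1/2}$ weight appearing on the right-hand sides of \eqref{Th dr appr 2} and \eqref{Th dr appr 3}; this factor is intrinsic to the shift method and not to the homogenization part of the argument.
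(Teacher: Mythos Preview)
The paper does not prove this theorem; it is quoted from \cite{MSuPOMI} (Theorem~9.2 there), so there is no in-paper proof to compare against. Your shift-from-$\zeta_0=-1$ strategy is the standard route and is indeed what is used in the cited source.

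There is, however, a genuine gap in your execution. The factorized identity you display is valid for the \emph{ordinary} resolvents of the selfadjoint operators $\widetilde B_{D,\varepsilon}$ and $\widetilde B_D^0$, but the objects to be estimated are the \emph{generalized} resolvents. These are linked by \eqref{B deps and tilde B D,eps tozd resolvent} and its analog for $B_D^0$, but the sandwich factors differ: $f^\varepsilon$ on one side, $f_0$ on the other. Writing $R_\varepsilon(\zeta)-R^0(\zeta)=f^\varepsilon\bigl[\widetilde R_\varepsilon(\zeta)-\widetilde R^0(\zeta)\bigr](f^\varepsilon)^*+\bigl[f^\varepsilon\widetilde R^0(\zeta)(f^\varepsilon)^*-f_0\widetilde R^0(\zeta)f_0\bigr]$, the second bracket is \emph{not} $O(\varepsilon)$ in operator norm: $f^\varepsilon-f_0$ does not tend to zero in $L_\infty$, and in general $f-f_0$ does not even have mean zero (e.g.\ $\overline{Q_0^{-1/2}}\ne(\overline{Q_0})^{-1/2}$). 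Your phrase ``translating back \dots\ by the multiplications by $f^\varepsilon$ and $f_0$'' hides this obstruction.

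The repair is to write the shift identity directly for the generalized resolvents. With $\mathcal E_\varepsilon(\zeta):=I+(\zeta+1)R_\varepsilon(\zeta)Q_0^\varepsilon$ and $\mathcal E^0(\zeta)^\sharp:=I+(\zeta+1)\overline{Q_0}\,R^0(\zeta)$ one checks
\[
R_\varepsilon(\zeta)-R^0(\zeta)=\mathcal E_\varepsilon(\zeta)\bigl[R_\varepsilon(-1)-R^0(-1)\bigr]\mathcal E^0(\zeta)^\sharp+(\zeta+1)\,R_\varepsilon(\zeta)\bigl[Q_0^\varepsilon-\overline{Q_0}\bigr]R^0(\zeta),
\]
so an extra remainder appears whenever $Q_0\not\equiv\mathrm{const}$. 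This remainder \emph{can} be controlled by $C\varepsilon\varrho_\flat(\zeta)$ (respectively by the right-hand side of \eqref{Th dr appr 2}) because $Q_0-\overline{Q_0}$ has mean zero, whence $[Q_0^\varepsilon-\overline{Q_0}]:H^1_0\to H^{-1}$ has norm $O(\varepsilon)$; one then combines this with the $L_2\to H^1_0$ and $H^{-1}\to L_2$ (or $H^{-1}\to H^1$) bounds for the two resolvents. That estimation --- and the analogous extra term for \eqref{Th dr appr 2} and \eqref{Th dr appr 3} --- is an essential ingredient which is absent from your outline.
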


\begin{remark}
\label{Remark elliptic another appr}
\textnormal{1)}~In \eqref{rho(zeta)}, expression $c(\psi)^2\vert \zeta - c_\flat \vert ^{-2}$ is inverse to the square of the distance from $\zeta$ to $[c_\flat ,\infty)$. \textnormal{2)}~The number \eqref{c_flat} in Theorem~\textnormal{\ref{Theorem Dr appr}} can be replaced by any common lower bound of the operators $\widetilde{B}_{D,\varepsilon}$ and $\widetilde{B}_D^0$.
Let $\kappa >0$ be an arbitrarily small number. According to \eqref{Th dr appr 1} \textnormal{(}with $\zeta =0$\textnormal{)},
 $B_{D,\varepsilon}$ converges to $B_D^0$ in the norm-resolvent sense. Therefore, for sufficiently small $\varepsilon$ one can take $c_\flat =\lambda _1^0\Vert Q_0\Vert ^{-1}_{L_\infty} -\kappa$, where $\lambda _1^0$ is the first eigenvalue of the operator $B_D^0$.
Under such choice of $c_\flat$, the constants in estimates become dependent on $\kappa$.
\textnormal{3)}~It makes sense to use estimates \eqref{Th dr appr 1}--\eqref{Th dr appr 3} for bounded values of $\vert\zeta\vert$ and small $\varepsilon\varrho _\flat (\zeta)$. In this case, the value $\varepsilon ^{1/2}\varrho _\flat (\zeta )^{1/2}+\varepsilon \vert 1+\zeta \vert ^{1/2}\varrho _\flat (\zeta)$ is controlled in terms of $C\varepsilon ^{1/2}\varrho _\flat (\zeta )^{1/2}$. For large $\vert\zeta\vert$ and for $\phi$ separated from the points $0$ and $2\pi$, it is preferable to use Theorems \textnormal{\ref{Theorem Dirichlet L2}} and \textnormal{\ref{Theorem Dirichlet H1}}.
\end{remark}

\subsection{Removal of the smoothing operator in the corrector}
It turns out that the smoothing operator in the corrector can be removed under some additional assumptions on the matrix-valued functions $\Lambda (\mathbf{x})$ and  $\widetilde{\Lambda}(\mathbf{x})$.

\begin{condition}
\label{Condition Lambda in L infty}
Suppose that the $\Gamma$-periodic solution $\Lambda (\mathbf{x})$ of problem~\eqref{Lambda problem} is bounded, i.~e., $\Lambda\in L_\infty (\mathbb{R}^d)$.
\end{condition}

Some cases ensuring that Condition~\ref{Condition Lambda in L infty} is satisfied were distinguished in \cite[Lemma~8.7]{BSu06}.

\begin{proposition}[\cite{BSu06}]
\label{Proposition Lambda in L infty <=}
Suppose that at least one of the following assumptions is satisfied\textnormal{:}

\noindent
$1^\circ )$ $d\leqslant 2${\rm ;}

\noindent
$2^\circ )$ dimension $d\geqslant 1$ is arbitrary, and the differential expression $A_\varepsilon$ is given by $A_\varepsilon =\mathbf{D}^* g^\varepsilon (\mathbf{x})\mathbf{D}$, where $g(\mathbf{x})$ is a symmetric matrix with real entries{\rm ;}

\noindent
$3^\circ )$ dimension $d$ is arbitrary, and $g^0=\underline{g}$, i.~e., relations \eqref{underline-g} are satisfied.

\noindent Then Condition~\textnormal{\ref{Condition Lambda in L infty}} holds.
\end{proposition}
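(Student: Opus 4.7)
The plan is to treat the three cases separately, since each calls for a different tool. In every case, $\Lambda$ is a $\Gamma$-periodic weak $H^1$-solution of the strongly elliptic system \eqref{Lambda problem} with bounded measurable coefficients, and we have the a priori bound \eqref{Lamba in H1<=} at our disposal. The task is to upgrade this $H^1$-regularity to $L_\infty$.

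For case $1^\circ)$ with $d = 1$, the Sobolev embedding $\widetilde{H}^1(\Omega) \hookrightarrow C(\overline{\Omega})$ yields $\Lambda \in L_\infty$ immediately. If $d = 2$, this embedding is only borderline, and I would instead invoke Meyers' self-improvement principle for the divergence-form system \eqref{Lambda problem}: combining a Caccioppoli inequality with Gehring's lemma produces an exponent $p > 2$ depending only on $d$, $\alpha_0$, $\Vert g \Vert_{L_\infty}$, $\Vert g^{-1}\Vert_{L_\infty}$ such that each column of $\Lambda$ lies in $W^{1,p}_{\mathrm{loc}}(\mathbb{R}^d)$; $\Gamma$-periodicity then upgrades this to $\Lambda \in W^{1,p}(\Omega)$, and Morrey's embedding $W^{1,p}(\Omega) \hookrightarrow C^{0,1-d/p}(\overline{\Omega})$ with $p > d = 2$ gives $\Lambda \in L_\infty$.

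For case $2^\circ)$, the operator $A_\varepsilon = \mathbf{D}^* g^\varepsilon \mathbf{D}$ is scalar ($n = 1$, $m = d$), so problem \eqref{Lambda problem} decouples into $d$ independent scalar divergence-form equations $-\mathrm{div}\,(g(\mathbf{x})(\nabla \Lambda_k + \mathbf{e}_k)) = 0$ with real symmetric $L_\infty$ coefficient matrix. The De Giorgi--Nash--Moser theorem then applies entrywise and yields $\Lambda_k \in C^{0,\alpha}_{\mathrm{loc}}(\mathbb{R}^d)$ for some $\alpha > 0$, and $\Gamma$-periodicity promotes this to $\Lambda_k \in L_\infty(\mathbb{R}^d)$. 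The hypotheses that $g$ is real and symmetric are essential here, since Moser iteration is not available for systems or for complex-valued coefficients in general.

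For case $3^\circ)$, I would use the structural representation \eqref{underline-g} to build $\Lambda$ explicitly. Writing the columns of $g(\mathbf{x})^{-1}$ as $\mathbf{l}_k(\mathbf{x}) = \mathbf{l}_k^0 + b(\mathbf{D})\mathbf{w}_k$ with constant vectors $\mathbf{l}_k^0$ and periodic potentials $\mathbf{w}_k$, averaging forces $\mathbf{l}_k^0$ to be the $k$-th column of $\underline{g}^{-1}$. A direct computation then shows that the $(n \times m)$-matrix whose $k$-th column is $\Lambda_{\cdot k} = \sum_{j=1}^m (\underline{g})_{jk}\,\mathbf{w}_j$ (normalised to have zero mean over $\Omega$) satisfies $g(\mathbf{x})(b(\mathbf{D})\Lambda(\mathbf{x}) + \mathbf{1}_m) = \underline{g}$, a constant matrix which is automatically annihilated by $b(\mathbf{D})^*$; hence this is the (unique) solution of \eqref{Lambda problem}, and it is bounded provided the potentials $\mathbf{w}_k$ can be chosen bounded. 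The latter can be arranged from $b(\mathbf{D})\mathbf{w}_k = \mathbf{l}_k - \mathbf{l}_k^0 \in L_\infty$, which, together with the ellipticity of $b(\mathbf{D})$ on the torus and a suitable gauge choice, yields $\mathbf{w}_k \in L_\infty$.

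The main obstacle is case $1^\circ)$ with $d = 2$: there the direct Sobolev embedding just fails and one must bring in higher-integrability of Meyers type, which is the most delicate ingredient of the argument. The scalar De Giorgi--Nash--Moser machinery that does the job in case $2^\circ)$ is not available for genuine systems, and the explicit representation approach of case $3^\circ)$ is tied to the special algebraic structure $g^0 = \underline{g}$, so neither of the other two arguments can be leveraged in dimension two for arbitrary $g$.
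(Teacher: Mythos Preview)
The paper does not contain its own proof of this proposition; it is quoted verbatim from \cite[Lemma~8.7]{BSu06} without argument, so there is nothing to compare against. Your three-case strategy is the standard one and is correct: Sobolev/Morrey embedding combined with Meyers' higher integrability for $d\leqslant 2$, De~Giorgi--Nash--Moser for the scalar real-symmetric case $2^\circ$, and the explicit algebraic construction for $3^\circ$.

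One point in case $3^\circ$ deserves to be written out rather than gestured at. Your last sentence --- ``$b(\mathbf{D})\mathbf{w}_k\in L_\infty$ together with the ellipticity of $b(\mathbf{D})$ on the torus and a suitable gauge choice yields $\mathbf{w}_k\in L_\infty$'' --- is correct but compressed. The clean way to unpack it: from $b(\mathbf{D})\mathbf{w}_k=\mathbf{l}_k-\mathbf{l}_k^0\in L_\infty\subset L_p(\Omega)$ for every $p<\infty$, apply constant-coefficient $L_p$ elliptic regularity on the torus to the equation $b(\mathbf{D})^* b(\mathbf{D})\mathbf{w}_k=b(\mathbf{D})^*(\mathbf{l}_k-\mathbf{l}_k^0)$ to obtain $\mathbf{w}_k\in W^{1,p}(\Omega)$ for all $p$; then Morrey's embedding with any $p>d$ gives $\mathbf{w}_k\in L_\infty$. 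Equivalently (and more directly), once you know $\widetilde{g}=g(b(\mathbf{D})\Lambda+\mathbf{1}_m)=\underline{g}$ is constant, you get $b(\mathbf{D})\Lambda=g^{-1}\underline{g}-\mathbf{1}_m\in L_\infty$ and the same $L_p$-to-$L_\infty$ bootstrap applies to $\Lambda$ itself.
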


In order to remove $S_\varepsilon$ in the term of the corrector involving $\widetilde{\Lambda}^\varepsilon$, it suffices to impose the following condition.

\begin{condition}
\label{Condition tilde Lambda in Lp}
Suppose that the $\Gamma$-periodic solution $\widetilde{\Lambda}(\mathbf{x})$ of problem~\eqref{tildeLambda_problem} is such that
\begin{equation*}
\widetilde{\Lambda}\in L_p(\Omega),\quad p=2 \;\mbox{for}\;d=1,\quad p>2\;\mbox{for}\;d=2, \quad p=d \;\mbox{for}\;d\geqslant 3.
\end{equation*}
\end{condition}

The following result was checked in \cite[Proposition~8.11]{SuAA}.

\begin{proposition}[\cite{SuAA}]
\label{Proposition tilde Lambda in Lp if}
Suppose that at least one of the following assumptions is satisfied\textnormal{:}

\noindent
$1^\circ )$ $d\leqslant 4${\rm ;}

\noindent
$2^\circ )$ dimension $d$ is arbitrary, and $A_{\varepsilon}$ is given by $A_{\varepsilon} =\mathbf{D}^* g^\varepsilon (\mathbf{x})\mathbf{D}$,
where $g(\mathbf{x})$ is a symmetric matrix with real entries.

\noindent
Then Condition~\textnormal{\ref{Condition tilde Lambda in Lp}} is satisfied.
\end{proposition}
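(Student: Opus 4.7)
The plan is to split according to the two hypotheses and, in each case, reduce the claim to a standard regularity statement for $\wt\Lambda$, taking as input only the uniform $H^1$-bound \eqref{tilde Lambda in H1 <=} already recorded.

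Under hypothesis $1^\circ$ ($d\leqslant 4$), the conclusion will follow directly from Sobolev embedding. For $d=1$, $H^1(\Omega)\hookrightarrow L_\infty(\Omega)$ gives far more than the required $\wt\Lambda\in L_2(\Omega)$. For $d=2$, the embedding $H^1(\Omega)\hookrightarrow L_q(\Omega)$ holds for every $q<\infty$, so $\wt\Lambda\in L_p(\Omega)$ for any $p>2$. For $d\in\{3,4\}$, the sharp Sobolev exponent $2d/(d-2)$ satisfies $2d/(d-2)\geqslant d$ (with equality when $d=4$), so $\wt\Lambda\in L_d(\Omega)$, which is precisely what is required. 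No new work is needed beyond \eqref{tilde Lambda in H1 <=}.

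Under hypothesis $2^\circ$, I would use scalar elliptic regularity. Here $n=1$, $m=d$, $b(\D)=\D$, and equation \eqref{tildeLambda_problem} reduces to
\begin{equation*}
-\div\!\bigl(g(\x)\nabla \wt\Lambda(\x)\bigr)=i\div \mathbf{F}(\x),\qquad F_j(\x):=a_j(\x)^{*},\ \ j=1,\dots,d,
\end{equation*}
with $g$ bounded, measurable, real symmetric, and uniformly positive definite, and with $\mathbf{F}\in L_\rho(\Omega;\C^d)$, $\rho>d$ (by \eqref{a_j cond}; note that $\rho=2>1=d$ already covers the one-dimensional subcase of $2^\circ$). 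I would then invoke the classical De Giorgi--Nash--Moser theory combined with Stampacchia's $L_\infty$-estimate for scalar divergence-form equations whose forcing is the divergence of an $L_\rho$ field with $\rho>d$. This yields local H\"older continuity of $\wt\Lambda$ on $\R^d$; the $\Gamma$-periodicity of $\wt\Lambda$ then upgrades this to $\wt\Lambda\in L_\infty(\R^d)$, so that $\wt\Lambda\in L_p(\Omega)$ for every $p\in[1,\infty]$, and Condition~\ref{Condition tilde Lambda in Lp} holds in every dimension.

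The main obstacle lies entirely in case $2^\circ$: one must invoke the right divergence-form regularity result, namely one that produces genuine $L_\infty$-regularity rather than the weaker self-improvement $W^{1,q}$ for some $q>2$ supplied by Meyers' theorem. Once the correct De Giorgi--Nash--Moser/Stampacchia statement is identified, case $1^\circ$ is a one-line Sobolev embedding and case $2^\circ$ becomes essentially a direct citation, so no further computation is needed.
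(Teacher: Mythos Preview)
The paper does not supply its own proof of this proposition; it is simply cited from \cite[Proposition~8.11]{SuAA}. Your argument is correct and is the standard one: Sobolev embedding $\widetilde{H}^1(\Omega)\hookrightarrow L_p(\Omega)$ handles case~$1^\circ$ (with $2d/(d-2)\geqslant d$ exactly when $d\leqslant 4$), and De~Giorgi--Nash--Moser/Stampacchia regularity for scalar divergence-form equations with right-hand side $\mathrm{div}\,\mathbf{F}$, $\mathbf{F}\in L_\rho$, $\rho>d$, gives $\widetilde{\Lambda}\in L_\infty$ in case~$2^\circ$. The paper itself corroborates the second part in Remark~\ref{Remark scalar problem}, pointing to \cite[Chapter~III, Theorem~13.1]{LaU} for precisely this $L_\infty$-bound.
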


\begin{remark}
\label{Remark scalar problem}
If $A_{\varepsilon} =\mathbf{D}^* g^\varepsilon (\mathbf{x})\mathbf{D}$, where $g(\mathbf{x})$ is a symmetric matrix with real entries, then from  \textnormal{\cite[Chapter~{\rm III}, Theorem~{\rm 13.1}]{LaU}} it follows that $\Lambda,\widetilde{\Lambda}\in L_\infty$ and the norm $\Vert\Lambda\Vert _{L_\infty}$ does not exceed a constant depending on $d$, $\Vert g\Vert _{L_\infty}$, $\Vert g^{-1}\Vert _{L_\infty}$, and $\Omega$, while the norm $\Vert \widetilde{\Lambda}\Vert _{L_\infty }$ is controlled in terms of $d$, $\rho$, $\Vert g\Vert _{L_\infty}$, $\Vert g^{-1}\Vert _{L_\infty}$, $\Vert a_j\Vert _{L_\rho (\Omega)}$, $j=1,\dots ,d$, and $\Omega$. In this case, Conditions~\textnormal{\ref{Condition Lambda in L infty}}
and~\textnormal{\ref{Condition tilde Lambda in Lp}} hold.
\end{remark}

In \cite[Theorem~7.6]{MSuPOMI} the following result was obtained.

\begin{theorem}[\cite{MSuPOMI}]
\label{Theorem no S-eps}
Suppose that the assumptions of Theorem~\textnormal{\ref{Theorem Dirichlet H1}} are satisfied. Suppose that $\Lambda(\mathbf{x})$ is subject to Condition~\textnormal{\ref{Condition Lambda in L infty},}
and $\widetilde{\Lambda}(\mathbf{x})$ satisfies Condition~\textnormal{\ref{Condition tilde Lambda in Lp}}. We put
\begin{align}
\label{K_D^0n elliptic}
&K_D^0(\varepsilon ;\zeta ):=(
 \Lambda ^\varepsilon  b(\mathbf{D})+
 \widetilde{\Lambda}^\varepsilon  )(B_D^0-\zeta \overline{Q_0})^{-1},
\\
\label{G3(eps;zeta)}
&G_D^0(\varepsilon ;\zeta ):= \widetilde{g}^\varepsilon  b(\mathbf{D})(B_D^0-\zeta\overline{Q_0})^{-1}
+g^\varepsilon \bigl(b(\mathbf{D})\widetilde{\Lambda}\bigr)^\varepsilon (B_D^0-\zeta\overline{Q_0})^{-1}.
\end{align}
Then for $\zeta \in \mathbb{C}\setminus \mathbb{R}_+$, $\vert \zeta\vert\geqslant 1$, and $0<\varepsilon\leqslant\varepsilon_1$ we have
\begin{align*}
\begin{split}
\Vert & (B_{D,\varepsilon}-\zeta Q_0^\varepsilon )^{-1}-(B_D^0-\zeta \overline{Q_0})^{-1}-\varepsilon K_D^0(\varepsilon;\zeta)
\Vert _{L_2(\mathcal{O})\rightarrow H^1(\mathcal{O})}
\leqslant C_2 c(\phi)^2\varepsilon ^{1/2}\vert \zeta \vert ^{-1/4}+C_{6}c(\phi)^4\varepsilon ,
\end{split}
\\
\begin{split}
\Vert & g^\varepsilon b(\mathbf{D})(B_{D,\varepsilon}-\zeta Q_0^\varepsilon)^{-1}- G_D^0(\varepsilon ;\zeta)\Vert _{L_2(\mathcal{O})\rightarrow L_2(\mathcal{O})}
\leqslant \widetilde{C}_2 c(\phi)^2\varepsilon ^{1/2}\vert \zeta\vert ^{-1/4}+\widetilde{C}_{6}c(\phi)^4\varepsilon .
\end{split}
\end{align*}
Here the constants $C_2$, $\widetilde{C}_2$ are as in \eqref{Th L2->H1} and \eqref{Th L_2->H^1 fluxes ell case}.
The constants $C_{6}$ and $\widetilde{C}_{6}$ depend only on the problem data \eqref{problem data}, $p$, and the normsì $\Vert \Lambda\Vert _{L_\infty}$, $\Vert \widetilde{\Lambda}\Vert _{L_p(\Omega)}$.
\end{theorem}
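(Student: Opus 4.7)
The plan is to deduce Theorem~\ref{Theorem no S-eps} from Theorem~\ref{Theorem Dirichlet H1} by the triangle inequality, reducing matters to controlling the difference of the smoothed corrector $K_D$ and the simpler corrector $K_D^0$. Since the target bounds differ from those of Theorem~\ref{Theorem Dirichlet H1} only in the absolute constants of the $O(\varepsilon)$ terms, it suffices to establish
\[
\bigl\|\varepsilon\bigl(K_D(\varepsilon;\zeta)-K_D^0(\varepsilon;\zeta)\bigr)\bigr\|_{L_2(\mathcal{O})\to H^1(\mathcal{O})}\leqslant Cc(\phi)^4\varepsilon
\]
and an analogous $(L_2\to L_2)$-bound for $\varepsilon(G_D-G_D^0)$. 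Using locality of $b(\mathbf{D})$ together with $R_\mathcal{O} P_\mathcal{O}=I$ on $\mathcal{O}$, one obtains
\[
K_D-K_D^0=R_\mathcal{O}\bigl([\Lambda^\varepsilon]b(\mathbf{D})+[\widetilde{\Lambda}^\varepsilon]\bigr)(S_\varepsilon-I)P_\mathcal{O}(B_D^0-\zeta\overline{Q_0})^{-1},
\]
with a corresponding expression for $G_D-G_D^0$ in which $\widetilde{g}^\varepsilon$ and $g^\varepsilon(b(\mathbf{D})\widetilde{\Lambda})^\varepsilon$ play the roles of $[\Lambda^\varepsilon]b(\mathbf{D})$ and $[\widetilde{\Lambda}^\varepsilon]$.

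Set $\widetilde{u}_0:=P_\mathcal{O}(B_D^0-\zeta\overline{Q_0})^{-1}\boldsymbol{\varphi}$. Estimates \eqref{BD0 ^-1 L2->H2} and \eqref{PO}, combined with a $\zeta$-dependent $L_2\to L_2$ resolvent bound derived from the coercivity \eqref{b_D^0 ots 2}, yield $\|\widetilde{u}_0\|_{H^2(\mathbb{R}^d)}\leqslant \mathrm{C}c(\phi)^2\|\boldsymbol{\varphi}\|_{L_2(\mathcal{O})}$. Since $S_\varepsilon$ commutes with $\mathbf{D}$, Proposition~\ref{Proposition S__eps - I} applied to $\widetilde{u}_0$ and to $\mathbf{D}\widetilde{u}_0$ gives the key two-scale bound $\|(I-S_\varepsilon)\widetilde{u}_0\|_{H^k(\mathbb{R}^d)}\leqslant C\varepsilon\|\widetilde{u}_0\|_{H^{k+1}}$ for $k=0,1$, which will be used repeatedly below.

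The $L_2(\mathcal{O})$-contribution to the target norm is direct. Condition~\ref{Condition Lambda in L infty} makes $[\Lambda^\varepsilon]$ a uniformly bounded multiplier on $L_2$, so that $\|\varepsilon[\Lambda^\varepsilon]b(\mathbf{D})(I-S_\varepsilon)\widetilde{u}_0\|_{L_2(\mathcal{O})}\leqslant C\varepsilon^2\|\widetilde{u}_0\|_{H^2}$. The $[\widetilde{\Lambda}^\varepsilon]$-term is handled by H\"older's inequality with the periodicity bound $\|\widetilde{\Lambda}^\varepsilon\|_{L_p(\mathcal{O})}\leqslant C\|\widetilde{\Lambda}\|_{L_p(\Omega)}$ and the Sobolev embedding $H^1(\mathcal{O})\hookrightarrow L_{2p/(p-2)}(\mathcal{O})$, whose exponent range is precisely Condition~\ref{Condition tilde Lambda in Lp}; this gives $\|\varepsilon[\widetilde{\Lambda}^\varepsilon](I-S_\varepsilon)\widetilde{u}_0\|_{L_2(\mathcal{O})}\leqslant C\varepsilon^2\|\widetilde{u}_0\|_{H^2}$. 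The flux bound $\|\varepsilon(G_D-G_D^0)\|_{L_2\to L_2}\leqslant Cc(\phi)^4\varepsilon$ is obtained by the same scheme, using the decomposition $\widetilde{g}=g^0+(\widetilde{g}-g^0)$ with $g^0$ constant and $(\widetilde{g}-g^0)$ mean-zero in $L_2(\Omega)$, so that Proposition~\ref{Proposition f^eps S_eps} applies to the oscillating part.

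The main obstacle is the gradient part of the $H^1$-contribution. The product rule $\mathbf{D}(\varepsilon\Lambda^\varepsilon h)=(\mathbf{D}\Lambda)^\varepsilon h+\varepsilon\Lambda^\varepsilon\mathbf{D}h$ splits the relevant expression into ``regular'' terms retaining the $\varepsilon$-prefactor, which are controlled exactly as in the $L_2$-step after one more differentiation of $(I-S_\varepsilon)\widetilde{u}_0\in H^2$, and the ``singular'' term
\[
(\mathbf{D}\Lambda)^\varepsilon(I-S_\varepsilon)b(\mathbf{D})\widetilde{u}_0,
\]
which is genuinely delicate because $\mathbf{D}\Lambda$ lies only in $L_2(\Omega)$ (with zero mean by periodicity) and hence $[(\mathbf{D}\Lambda)^\varepsilon]$ is not an $L_2$-bounded multiplier. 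To treat it I would use the integral identity
\[
(I-S_\varepsilon)v(\mathbf{x})=\varepsilon|\Omega|^{-1}\int_\Omega\int_0^1\mathbf{z}\cdot\nabla v(\mathbf{x}-s\varepsilon\mathbf{z})\,ds\,d\mathbf{z},\qquad v\in H^1(\mathbb{R}^d),
\]
applied with $v=b(\mathbf{D})\widetilde{u}_0$, together with Minkowski's integral inequality to pull the $L_2$-norm inside the average over $s$ and $\mathbf{z}$; for each fixed $s,\mathbf{z}$ the integrand is a product of $(\mathbf{D}\Lambda)^\varepsilon$ with a translate of $\nabla v\in L_2$, and an adaptation of Proposition~\ref{Proposition f^eps S_eps} to shifted Steklov averages yields $\|(\mathbf{D}\Lambda)^\varepsilon(I-S_\varepsilon)b(\mathbf{D})\widetilde{u}_0\|_{L_2}\leqslant C\varepsilon\|\mathbf{D}\Lambda\|_{L_2(\Omega)}\|\widetilde{u}_0\|_{H^2}$. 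The parallel argument for the $\widetilde{\Lambda}$-contribution substitutes Condition~\ref{Condition tilde Lambda in Lp} and Sobolev embedding for the $L_\infty$-boundedness of $\Lambda$, and the flux estimate proceeds in the same way but in the simpler $(L_2\to L_2)$-setting, so that no gradient step is needed.
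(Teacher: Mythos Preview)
Your overall strategy---apply Theorem~\ref{Theorem Dirichlet H1} and control the corrector difference $K_D-K_D^0$ (resp.\ $G_D-G_D^0$) by an $O(\varepsilon)$ bound---is the right one, and the paper itself does not give a proof here (the result is cited from \cite{MSuPOMI}). The $L_2$-contribution and the ``regular'' gradient terms are handled correctly.

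The gap is in your treatment of the ``singular'' term $(\mathbf{D}\Lambda)^\varepsilon(I-S_\varepsilon)b(\mathbf{D})\widetilde{u}_0$. After Minkowski's inequality you are left, for each fixed $(s,\mathbf{z})$, with the pointwise product of $(\mathbf{D}\Lambda)^\varepsilon$ (only $L_2$-periodic) and a single translate of $\nabla v\in L_2$; this product need not lie in $L_2$. Proposition~\ref{Proposition f^eps S_eps} does \emph{not} adapt to this situation: its proof uses that $|\Omega|^{-1}\!\int_\Omega|\Phi(\mathbf{y}+\mathbf{z})|^2\,d\mathbf{z}$ is independent of $\mathbf{y}$, which requires averaging over a \emph{full} period cell. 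For a fixed shift, or an average over $s\Omega$ with $s<1$, this identity fails and the bound blows up as $s\to 0$. The same problem affects your flux argument: $\widetilde{g}-g^0$ contains $g\,b(\mathbf{D})\Lambda$ and is only in $L_2(\Omega)$, so the mean-zero decomposition together with Proposition~\ref{Proposition f^eps S_eps} gives no $\varepsilon$-gain on the term $(\widetilde{g}-g^0)^\varepsilon(S_\varepsilon-I)b(\mathbf{D})\widetilde{u}_0$.

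The missing tool is Lemma~\ref{Lemma Lambda-varepsilon} (and Lemma~\ref{Lemma on Lambda-tilda-varepsilon} for the $\widetilde{\Lambda}$-terms). Under Condition~\ref{Condition Lambda in L infty} it yields, for $h\in H^1(\mathbb{R}^d;\mathbb{C}^m)$,
\[
\|(\mathbf{D}\Lambda)^\varepsilon h\|_{L_2(\mathbb{R}^d)}^2\leqslant \beta_1\|h\|_{L_2}^2+\beta_2\|\Lambda\|_{L_\infty}^2\,\varepsilon^2\|\mathbf{D}h\|_{L_2}^2,
\]
so $[(\mathbf{D}\Lambda)^\varepsilon]$ is bounded from $H^1$ to $L_2$ uniformly in $\varepsilon\leqslant 1$. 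Take $h=(I-S_\varepsilon)b(\mathbf{D})\widetilde{u}_0$: Proposition~\ref{Proposition S__eps - I} gives $\|h\|_{L_2}\leqslant C\varepsilon\|\widetilde{u}_0\|_{H^2}$, while the trivial bound $\varepsilon\|\mathbf{D}h\|_{L_2}\leqslant 2\varepsilon\|\mathbf{D}b(\mathbf{D})\widetilde{u}_0\|_{L_2}\leqslant C\varepsilon\|\widetilde{u}_0\|_{H^2}$ handles the second term. This produces exactly the $O(\varepsilon)$ bound you claimed. The flux difference is treated the same way after writing $\widetilde{g}=g+g\,b(\mathbf{D})\Lambda$; for the $\widetilde{\Lambda}$-contributions one uses Lemma~\ref{Lemma on Lambda-tilda-varepsilon}, controlling $\int|\widetilde{\Lambda}^\varepsilon|^2|\mathbf{D}h|^2$ via H\"older and the Sobolev embedding dictated by Condition~\ref{Condition tilde Lambda in Lp}.
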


Approximations in a wider domain of the spectral parameter were found in~\cite[Theorem~9.8]{MSuPOMI}.

\begin{theorem}[\cite{MSuPOMI}]
\label{Theorem Dr appr no S_eps}
Suppose that the assumptions of Theorem~\textnormal{\ref{Theorem Dr appr}} and Conditions~\textnormal{\ref{Condition Lambda in L infty}}, \textnormal{\ref{Condition tilde Lambda in Lp}}
are satisfied. Let $K_D^0(\varepsilon ;\zeta)$ be the corrector \eqref{K_D^0n elliptic}. Let $G_D^0(\varepsilon ;\zeta)$ be given by~\eqref{G3(eps;zeta)}.
Then for $0<\varepsilon\leqslant \varepsilon _1$ and $\zeta\in\mathbb{C}\setminus[c_\flat ,\infty)$ we have
\begin{align*}
\begin{split}
\Vert &(B_{D,\varepsilon}-\zeta Q_0^\varepsilon )^{-1}-(B_D^0-\zeta\overline{Q_0})^{-1}-\varepsilon K_D^0(\varepsilon ;\zeta)
\Vert _{L_2(\mathcal{O})\rightarrow H^1(\mathcal{O})}
\leqslant
C_7 \bigl(\varepsilon ^{1/2}\varrho _\flat (\zeta)^{1/2}
+\varepsilon \vert 1+\zeta \vert ^{1/2}\varrho _\flat (\zeta)\bigr),
\end{split}
\\
\begin{split}
\Vert &g^\varepsilon b(\mathbf{D})(B_{D,\varepsilon}-\zeta Q_0^\varepsilon )^{-1}-G_D^0(\varepsilon;\zeta)\Vert _{L_2(\mathcal{O})\rightarrow L_2(\mathcal{O})}
\leqslant \widetilde{C}_7
\bigl(\varepsilon ^{1/2}\varrho _\flat (\zeta)^{1/2}
+\varepsilon \vert 1+\zeta \vert ^{1/2}\varrho _\flat (\zeta)\bigr).
\end{split}
\end{align*}
Here the constants $C_{7}$ and $\widetilde{C}_{7}$ depend only on the problem data \eqref{problem data}, $p$, and the norms $\Vert \Lambda\Vert _{L_\infty}$, $\Vert \widetilde{\Lambda}\Vert _{L_p(\Omega)}$.
\end{theorem}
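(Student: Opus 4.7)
The plan is to derive Theorem~\ref{Theorem Dr appr no S_eps} from Theorem~\ref{Theorem Dr appr} by controlling the difference between the two correctors and the two flux operators. Using $R_\mathcal{O}P_\mathcal{O} = I$ on $L_2(\mathcal{O};\mathbb{C}^n)$ and $[S_\varepsilon, b(\mathbf{D})] = 0$, one has
\[
K_D(\varepsilon;\zeta) - K_D^0(\varepsilon;\zeta) = R_\mathcal{O}\bigl([\Lambda^\varepsilon]b(\mathbf{D}) + [\widetilde{\Lambda}^\varepsilon]\bigr)(S_\varepsilon - I)P_\mathcal{O}(B_D^0 - \zeta\overline{Q_0})^{-1},
\]
and an analogous expression for $G_D - G_D^0$ follows by comparing \eqref{G(eps,zeta)} with \eqref{G3(eps;zeta)}. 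By the triangle inequality, the task reduces to bounding $\varepsilon(K_D - K_D^0)$ in the $(L_2(\mathcal{O})\to H^1(\mathcal{O}))$-norm and $G_D - G_D^0$ in the $(L_2\to L_2)$-norm, each by the right-hand sides of \eqref{Th dr appr 2}, \eqref{Th dr appr 3}.

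Fix $\mathbf{f}\in L_2(\mathcal{O};\mathbb{C}^n)$ and set $w := P_\mathcal{O}(B_D^0 - \zeta\overline{Q_0})^{-1}\mathbf{f}\in H^2(\mathbb{R}^d;\mathbb{C}^n)$. Combining $(B_D^0 - \zeta\overline{Q_0})^{-1} = f_0(\widetilde{B}_D^0 - \zeta)^{-1}f_0$ with the self-adjoint bound $\|(\widetilde{B}_D^0 - \zeta)^{-1}\|_{L_2\to L_2}\leq c(\psi)\vert\zeta - c_\flat\vert^{-1}$, \eqref{BD0 ^-1 L2->H2}, \eqref{PO}, \eqref{f_0<=} and the resolvent identity, one obtains $\|w\|_{L_2(\mathbb{R}^d)}\leq C\varrho_\flat(\zeta)^{1/2}\|\mathbf{f}\|_{L_2}$, $\|w\|_{H^1(\mathbb{R}^d)}\leq C(1+\vert\zeta\vert^{1/2})\varrho_\flat(\zeta)^{1/2}\|\mathbf{f}\|_{L_2}$, and $\|w\|_{H^2(\mathbb{R}^d)}\leq C(1+\vert\zeta\vert\varrho_\flat(\zeta)^{1/2})\|\mathbf{f}\|_{L_2}$. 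For the $L_2$-norm of the difference, Condition~\ref{Condition Lambda in L infty} and Proposition~\ref{Proposition S__eps - I} yield
\[
\|\varepsilon[\Lambda^\varepsilon]b(\mathbf{D})(S_\varepsilon - I)w\|_{L_2}\leq \varepsilon^2 r_1\|\Lambda\|_{L_\infty}\|\mathbf{D}b(\mathbf{D})w\|_{L_2},
\]
and Condition~\ref{Condition tilde Lambda in Lp}, H\"older's inequality and the Sobolev embedding $H^1\hookrightarrow L_{2p/(p-2)}$ yield a bound of the same order for $\varepsilon[\widetilde{\Lambda}^\varepsilon](S_\varepsilon - I)w$. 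For the $H^1$-norm, apply the chain rule
\[
\mathbf{D}\bigl(\varepsilon[\Lambda^\varepsilon](S_\varepsilon - I)v\bigr) = (\mathbf{D}\Lambda)^\varepsilon(S_\varepsilon - I)v + \varepsilon[\Lambda^\varepsilon](S_\varepsilon - I)\mathbf{D}v,
\]
with $v = b(\mathbf{D})w$; the second summand is treated as above, and the first is split as $(\mathbf{D}\Lambda)^\varepsilon S_\varepsilon v - (\mathbf{D}\Lambda)^\varepsilon v$, where Proposition~\ref{Proposition f^eps S_eps} together with $\mathbf{D}\Lambda\in L_2(\Omega)$ (by \eqref{Lamba in H1<=}) handles the first term. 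The analysis of $G_D - G_D^0$ goes through in parallel, but without differentiating the product, so only the $H^1$-control of $w$ is needed.

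The main obstacle is the unsmoothed term $(\mathbf{D}\Lambda)^\varepsilon v$ appearing in the $H^1$-bookkeeping: since $\mathbf{D}\Lambda$ is only $L_2$, pointwise multiplication by $(\mathbf{D}\Lambda)^\varepsilon$ is not bounded on $L_2(\mathcal{O})$, and one must invoke Condition~\ref{Condition Lambda in L infty} in a more delicate way, e.g. by integration by parts transferring the derivative onto $v$ and reusing the factorization $v = b(\mathbf{D})w$ with $w\in H^2$, in the spirit of the boundary-strip arguments underlying Theorem~\ref{Theorem no S-eps} and the Appendix (\S\S\ref{Section 6}--\ref{Section removing S-eps in strictly interior subdomain}). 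Tracking the $\zeta$-dependence through all these steps, each $\varepsilon$-power picks up either $\varrho_\flat(\zeta)^{1/2}$ or $(1+\vert\zeta\vert\varrho_\flat(\zeta)^{1/2})$, and the combination $\varepsilon^2\vert 1+\zeta\vert\varrho_\flat(\zeta)^{1/2}\leq \varepsilon\vert 1+\zeta\vert^{1/2}\varrho_\flat(\zeta)$ (valid for $\varepsilon\leq 1$ in the regime of interest) matches the right-hand side of \eqref{Th dr appr 2}. Combining all estimates with Theorem~\ref{Theorem Dr appr} via the triangle inequality delivers the claim with constants $C_7,\widetilde{C}_7$ depending only on the problem data~\eqref{problem data}, $p$, $\|\Lambda\|_{L_\infty}$, and $\|\widetilde{\Lambda}\|_{L_p(\Omega)}$.
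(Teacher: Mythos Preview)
The paper does not prove this theorem: it is quoted verbatim from \cite[Theorem~9.8]{MSuPOMI}, so there is no proof in the present paper to compare against. Your strategy of reducing to Theorem~\ref{Theorem Dr appr} by bounding $\varepsilon(K_D-K_D^0)$ and $G_D-G_D^0$ is natural, but the $\zeta$-bookkeeping does not close.

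The concrete gap is this. After invoking Lemma~\ref{Lemma Lambda-varepsilon} (which is indeed what resolves the ``$(\mathbf{D}\Lambda)^\varepsilon v$'' issue you flag) together with Condition~\ref{Condition Lambda in L infty} and Proposition~\ref{Proposition S__eps - I}, the $H^1$-estimate for $\varepsilon[\Lambda^\varepsilon]b(\mathbf{D})(S_\varepsilon-I)w$ comes out as $C\varepsilon\|w\|_{H^2(\mathbb{R}^d)}$, and the resolvent identity gives only $\|w\|_{H^2}\leqslant C\bigl(1+|1+\zeta|\,\varrho_\flat(\zeta)^{1/2}\bigr)\|\mathbf{f}\|_{L_2}$. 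For $|\zeta-c_\flat|\geqslant 1$ one has $\varrho_\flat(\zeta)=c(\psi)^2$, so along the negative real axis your bound behaves like $\varepsilon|\zeta|$, whereas the target $\varepsilon|1+\zeta|^{1/2}\varrho_\flat(\zeta)$ behaves like $\varepsilon|\zeta|^{1/2}$. The inequality you write, $\varepsilon^2|1+\zeta|\varrho_\flat^{1/2}\leqslant \varepsilon|1+\zeta|^{1/2}\varrho_\flat$, is equivalent to $\varepsilon|1+\zeta|^{1/2}\leqslant\varrho_\flat^{1/2}$ and simply fails for large $|\zeta|$; the hedge ``in the regime of interest'' does not help, since the theorem is asserted for all $\zeta\in\mathbb{C}\setminus[c_\flat,\infty)$.

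The standard fix (and the one used in \cite{MSuPOMI} to pass from Theorems~\ref{Theorem Dirichlet H1}/\ref{Theorem no S-eps} to Theorems~\ref{Theorem Dr appr}/\ref{Theorem Dr appr no S_eps}) is not to compare $K_D$ with $K_D^0$ directly at $\zeta$, but to fix a reference point $\zeta_0$ (say $\zeta_0=-1$) where Theorem~\ref{Theorem no S-eps} already applies, and then transfer the estimate to general $\zeta$ via the resolvent identities
\[
(B_{D,\varepsilon}-\zeta Q_0^\varepsilon)^{-1}=(B_{D,\varepsilon}-\zeta_0 Q_0^\varepsilon)^{-1}\bigl(I+(\zeta-\zeta_0)Q_0^\varepsilon(B_{D,\varepsilon}-\zeta Q_0^\varepsilon)^{-1}\bigr)
\]
(and the analogous identity for $B_D^0$), together with the $(L_2\to L_2)$-bounds $\|(B_{D,\varepsilon}-\zeta Q_0^\varepsilon)^{-1}\|\leqslant C\varrho_\flat(\zeta)^{1/2}$. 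This produces the factors $\varrho_\flat(\zeta)^{1/2}$ and $|1+\zeta|^{1/2}\varrho_\flat(\zeta)$ directly, without ever needing an $H^2$-bound on $w$ that grows like $|\zeta|$.
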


According to \cite[Remarks 7.9 and 9.9]{MSuPOMI}, we observe the following.

\begin{remark}
\label{Remark elliptic no S-eps}
If only Condition~\textnormal{\ref{Condition Lambda in L infty}} \textnormal{(}respectively, Condition~\textnormal{\ref{Condition tilde Lambda in Lp}}\textnormal{)}
is satisfied, then the smoothing operator $S_\varepsilon$ can be removed in the term of the corrector involving $\Lambda^\varepsilon$ \textnormal{(}respectively, in the term containing $\widetilde{\Lambda}^\varepsilon$\textnormal{)}.
\end{remark}

\subsection{The case where the corrector is equal to zero}

Suppose that $g^0=\overline{g}$, i.~e., relations \eqref{overline-g} hold.
Then the $\Gamma$-periodic solution of problem~\eqref{Lambda problem} is equal to zero: $\Lambda (\mathbf{x})=0$.
Suppose in addition that
\begin{equation}
\label{sum Dj aj =0}
\sum _{j=1}^d D_j a_j(\mathbf{x})^* =0.
\end{equation}
Then the $\Gamma$-periodic solution of problem~\eqref{tildeLambda_problem} is also equal to zero: $\widetilde{\Lambda}(\mathbf{x})=0$.
According to \cite[Propositions 7.10 and 9.12]{MSuPOMI}, in this case the $(L_2\rightarrow H^1)$-estimate of sharp order $O(\varepsilon)$ holds.

\begin{proposition}[\cite{MSuPOMI}]
\label{Proposition K=0}
Suppose that relations~\eqref{overline-g} and~\eqref{sum Dj aj =0} are satisfied.

\noindent
$1^\circ$. Under the assumptions of Theorem~\textnormal{\ref{Theorem Dirichlet L2}}, for $\zeta\in\mathbb{C}\setminus\mathbb{R}_+$, $\vert\zeta\vert\geqslant 1$, and $0<\varepsilon\leqslant 1$ we have
\begin{equation}
\label{Pr K=0}
\Vert (B_{D,\varepsilon}-\zeta Q_0^\varepsilon )^{-1}-(B_D^0-\zeta \overline{Q_0})^{-1}\Vert _{L_2(\mathcal{O})\rightarrow H^1(\mathcal{O})}
\leqslant C_8 c(\phi)^4\varepsilon.
\end{equation}

\noindent
$2^\circ$. Under the assumptions of Theorem~\textnormal{\ref{Theorem Dr appr}},
for $\zeta\in\mathbb{C}\setminus[c_\flat,\infty)$ and $0<\varepsilon\leqslant 1$ we have
\begin{equation}
\label{Pf K=0 dr. appr.}
\begin{split}
\Vert (B_{D,\varepsilon}-\zeta Q_0^\varepsilon )^{-1}-(B_D^0&-\zeta\overline{Q_0})^{-1}\Vert _{L_2(\mathcal{O})\rightarrow H^1(\mathcal{O})}
\leqslant (C_9 +C_{10} \vert 1+\zeta\vert ^{1/2}) \varepsilon\varrho _\flat (\zeta).
\end{split}
\end{equation}
The constants $C_8$, $C_9$, and $C_{10}$ depend only on the problem data \eqref{problem data}.
\end{proposition}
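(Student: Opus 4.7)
The idea is to derive the sharp $(L_2\!\to\! H^1)$-estimate by a direct energy argument, bypassing the boundary-layer cutoff that produced the $\varepsilon^{1/2}$-term in Theorem~\ref{Theorem Dirichlet H1}. Observe first that \eqref{overline-g} is equivalent to $g^0=\overline g$ and forces $\Lambda(\mathbf{x})\equiv 0$ in \eqref{Lambda problem}; together with \eqref{sum Dj aj =0}, the cell problem \eqref{tildeLambda_problem} also has the trivial solution $\widetilde\Lambda(\mathbf{x})\equiv 0$. Consequently the corrector \eqref{K_D(eps,zeta)} vanishes identically, and the residual $\mathbf{r}_\varepsilon:=\mathbf{u}_\varepsilon-\mathbf{u}_0$, where $\mathbf{u}_\varepsilon:=(B_{D,\varepsilon}-\zeta Q_0^\varepsilon)^{-1}\boldsymbol{\varphi}$ and $\mathbf{u}_0:=(B_D^0-\zeta\overline{Q_0})^{-1}\boldsymbol{\varphi}$, already belongs to $H^1_0(\mathcal{O};\mathbb{C}^n)$ without any boundary correction; this is precisely what removes the source of the $\varepsilon^{1/2}$ loss in Theorem~\ref{Theorem Dirichlet H1}.

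Next I would write the variational identity
\begin{equation*}
\mathfrak{b}_{D,\varepsilon}[\mathbf{r}_\varepsilon,\boldsymbol{\eta}]-\zeta(Q_0^\varepsilon\mathbf{r}_\varepsilon,\boldsymbol{\eta})_{L_2(\mathcal{O})}=(\mathfrak{b}_D^0-\mathfrak{b}_{D,\varepsilon})[\mathbf{u}_0,\boldsymbol{\eta}]+\zeta((Q_0^\varepsilon-\overline{Q_0})\mathbf{u}_0,\boldsymbol{\eta})_{L_2(\mathcal{O})}
\end{equation*}
for $\boldsymbol{\eta}\in H^1_0(\mathcal{O};\mathbb{C}^n)$, and test it with $\boldsymbol{\eta}=\mathbf{r}_\varepsilon$. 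A coercivity estimate for $\mathfrak{b}_{D,\varepsilon}-\zeta(Q_0^\varepsilon\cdot,\cdot)$ --- carrying a $c(\phi)$-factor for part $1^\circ$ and a $\varrho_\flat(\zeta)^{1/2}$-factor for part $2^\circ$, of the same kind used in \cite{MSuPOMI} --- reduces the proof to the bound
\begin{equation*}
\bigl|(\mathfrak{b}_D^0-\mathfrak{b}_{D,\varepsilon})[\mathbf{u}_0,\mathbf{r}_\varepsilon]\bigr|+|\zeta|\bigl|((Q_0^\varepsilon-\overline{Q_0})\mathbf{u}_0,\mathbf{r}_\varepsilon)_{L_2(\mathcal{O})}\bigr|\le C\varepsilon\Vert\mathbf{u}_0\Vert_{H^2(\mathcal{O})}\Vert\mathbf{r}_\varepsilon\Vert_{H^1(\mathcal{O})}+C\varepsilon|\zeta|\Vert\mathbf{u}_0\Vert_{H^1(\mathcal{O})}\Vert\mathbf{r}_\varepsilon\Vert_{L_2(\mathcal{O})}.
\end{equation*}
The second summand above is exactly the source of the $|1+\zeta|^{1/2}$-factor in \eqref{Pf K=0 dr. appr.}.

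The main obstacle is extracting the factor $\varepsilon$ in the oscillating-coefficient integrals $\int_{\mathcal{O}}(g^\varepsilon-\overline g)b(\mathbf{D})\mathbf{u}_0\cdot\overline{b(\mathbf{D})\mathbf{r}_\varepsilon}\,d\mathbf{x}$ and $\sum_j\int_{\mathcal{O}}(a_j^\varepsilon-\overline{a_j})D_j\mathbf{u}_0\cdot\overline{\mathbf{r}_\varepsilon}\,d\mathbf{x}$. Under \eqref{overline-g}, the relation $b(\mathbf{D})^*g=0$ combined with $\overline{g-\overline g}=0$ yields a periodic potential representation $g(\mathbf{x})-\overline g=\sum_l D_l\mathsf{H}_l(\mathbf{x})$ with $\Gamma$-periodic $\mathsf{H}_l\in\widetilde{H}^1(\Omega)$ of zero mean; hence $(g^\varepsilon-\overline g)=\varepsilon\sum_l D_l\mathsf{H}_l^\varepsilon$, and one integration by parts --- without boundary terms, since $\mathbf{r}_\varepsilon\in H^1_0(\mathcal{O})$ --- combined with the extension \eqref{P_O H^1, H^2}, an intermediate Steklov smoothing estimated by Proposition~\ref{Proposition S__eps - I}, and Proposition~\ref{Proposition f^eps S_eps} applied to the rapidly oscillating factor $\mathsf{H}_l^\varepsilon$ produces the required $\varepsilon$-bound in terms of $\Vert P_{\mathcal{O}}\mathbf{u}_0\Vert_{H^2(\mathbb{R}^d)}$ and $\Vert\mathbf{r}_\varepsilon\Vert_{H^1(\mathcal{O})}$. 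Condition \eqref{sum Dj aj =0} together with $\widetilde\Lambda=0$ supplies the analogous representation for $a_j-\overline{a_j}$, while $Q^\varepsilon-\overline Q$ and $Q_0^\varepsilon-\overline{Q_0}$ are handled in the same spirit. Finally, tracking the $\zeta$-dependence via $\Vert\mathbf{u}_0\Vert_{H^2(\mathcal{O})}\leq Cc(\phi)\Vert\boldsymbol{\varphi}\Vert_{L_2(\mathcal{O})}$ in part $1^\circ$ (using \eqref{BD0 ^-1 L2->H2} together with the resolvent identity for $B_D^0-\zeta\overline{Q_0}$), and its $\varrho_\flat(\zeta)$-weighted analog in part $2^\circ$, assembles into \eqref{Pr K=0} and \eqref{Pf K=0 dr. appr.}.
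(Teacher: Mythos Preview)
The paper does not supply its own proof of this proposition: it is quoted verbatim from \cite{MSuPOMI} (Propositions~7.10 and~9.12 there). So there is no in-paper argument to compare against; I evaluate your outline on its own merits.

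Your high-level strategy is correct and is exactly the reason the $\varepsilon^{1/2}$ term disappears: with $\Lambda=\widetilde{\Lambda}=0$ the first-order approximation is $\mathbf u_0$ itself, so $\mathbf r_\varepsilon=\mathbf u_\varepsilon-\mathbf u_0\in H^1_0(\mathcal O;\mathbb C^n)$ and no boundary-layer cut-off is needed. The variational identity you wrote is also right.

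The gap is in the ``one integration by parts'' step for the principal term. Writing $g-\overline g=\sum_l D_l\mathsf H_l$ is always possible for a mean-zero periodic function and has nothing to do with \eqref{overline-g}; more importantly, after inserting $(g^\varepsilon-\overline g)=\varepsilon\sum_l D_l\mathsf H_l^\varepsilon$ into
\[
\int_{\mathcal O}\bigl\langle (g^\varepsilon-\overline g)\,b(\mathbf D)\mathbf u_0,\;b(\mathbf D)\mathbf r_\varepsilon\bigr\rangle\,d\mathbf x
\]
and integrating $D_l$ by parts, one of the two resulting terms is
\[
-\varepsilon\sum_l\int_{\mathbb R^d}\bigl\langle \mathsf H_l^\varepsilon\, b(\mathbf D)\widetilde{\mathbf u}_0,\;D_l\,b(\mathbf D)\mathbf r_\varepsilon\bigr\rangle\,d\mathbf x,
\]
which involves second derivatives of $\mathbf r_\varepsilon$. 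Since $\mathbf r_\varepsilon$ is only in $H^1$, this term cannot be controlled by $\|\mathbf r_\varepsilon\|_{H^1}$, and the Steklov smoothing on the $\mathbf u_0$-side does not help here. The absence of boundary terms is not the issue; the missing regularity of $\mathbf r_\varepsilon$ is.

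The correct use of \eqref{overline-g} is different. The relation $b(\mathbf D)^*g=0$ means that the periodic $n\times m$ matrix fields $F_l:=b_l^*(g-\overline g)$ satisfy $\sum_l D_l F_l=0$ and have zero mean; hence there exist periodic $M_{lk}\in\widetilde H^1(\Omega)$ with $M_{lk}=-M_{kl}$ and $F_l=\sum_k D_k M_{lk}$. Then
\[
\bigl((g^\varepsilon-\overline g)\,b(\mathbf D)\mathbf u_0,\,b(\mathbf D)\mathbf r_\varepsilon\bigr)
=\sum_l\bigl(b_l^*(g^\varepsilon-\overline g)\,b(\mathbf D)\widetilde{\mathbf u}_0,\,D_l\mathbf r_\varepsilon\bigr)
=\varepsilon\sum_{l,k}\bigl(D_k(M_{lk}^\varepsilon)\,\mathbf w,\,D_l\mathbf r_\varepsilon\bigr),
\]
with $\mathbf w=S_\varepsilon b(\mathbf D)\widetilde{\mathbf u}_0$ (after an $(I-S_\varepsilon)$ error of order $\varepsilon\|\mathbf u_0\|_{H^2}\|\mathbf r_\varepsilon\|_{H^1}$). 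Splitting $D_k(M_{lk}^\varepsilon)\mathbf w=D_k(M_{lk}^\varepsilon\mathbf w)-M_{lk}^\varepsilon D_k\mathbf w$, the antisymmetry of $M_{lk}$ kills $\sum_{l,k}\bigl(D_k(M_{lk}^\varepsilon\mathbf w),D_l\mathbf r_\varepsilon\bigr)$ (justify by density in $H^1_0$, using that $M_{lk}^\varepsilon\mathbf w\in L_2$ via Proposition~\ref{Proposition f^eps S_eps}), and the remaining term is bounded by $C\varepsilon\|\mathbf u_0\|_{H^2}\|\mathbf r_\varepsilon\|_{H^1}$. The lower-order terms are handled analogously; for instance \eqref{sum Dj aj =0} gives a divergence-free structure for $\sum_j D_j a_j^*$ in exactly the same way. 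With this correction your outline closes and yields \eqref{Pr K=0} and \eqref{Pf K=0 dr. appr.}.
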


\subsection{Estimates in a strictly interior subdomain}
It is possible to improve the $H^1$-estimates in a strictly interior subdomain $\mathcal{O}'$ of the domain $\mathcal{O}$.
In Theorems~8.1 and~9.14 of \cite{MSuPOMI}, the following result was obtained.

\begin{theorem}[\cite{MSuPOMI}]
\label{Theorem O'}
Let $\mathcal{O}'$ be a strictly interior subdomain of the domain $\mathcal{O}$. Denote
\begin{equation}
\label{delta= 1.62a}
\delta :=\min \left\lbrace 1 ;\mathrm{dist}\,\lbrace \mathcal{O}';\partial \mathcal{O}\rbrace \right\rbrace .
\end{equation}

\noindent
$1^\circ$.
Under the assumptions of Theorem~\textnormal{\ref{Theorem Dirichlet H1}},
for $\zeta\in\mathbb{C}\setminus\mathbb{R}_+$, $\vert\zeta\vert\geqslant 1$, and \hbox{$0<\varepsilon\leqslant\varepsilon _1$} we have
\begin{align*}
\begin{split}
\Vert  &(B_{D,\varepsilon}-\zeta Q_0^\varepsilon )^{-1}-(B_D^0-\zeta \overline{Q_0})^{-1}-\varepsilon K_D(\varepsilon ;\zeta )\Vert _{L_2(\mathcal{O})\rightarrow H^1(\mathcal{O'})}
\leqslant c(\phi)^6\varepsilon (C_{11}'\vert\zeta\vert ^{-1/2}\delta ^{-1}+C_{11}'') ,
\end{split}
\\
\begin{split}
\Vert &g^\varepsilon b(\mathbf{D})(B_{D,\varepsilon}-\zeta Q_0^\varepsilon )^{-1}-G_D(\varepsilon;\zeta)\Vert _{L_2(\mathcal{O})\rightarrow L_2(\mathcal{O}')}
\leqslant c(\phi)^6\varepsilon  \bigl(\widetilde{C}_{11}'\vert\zeta\vert ^{-1/2}\delta ^{-1}+\widetilde{C}_{11}''\bigr) .
\end{split}
\end{align*}
The constants $C_{11}', C_{11}'', \widetilde{C}_{11}'$, and $\widetilde{C}_{11}''$ depend only on the problem data~\eqref{problem data}.

\smallskip

\noindent
$2^\circ$. Under the assumptions of Theorem~\textnormal{\ref{Theorem Dr appr}},
for $\zeta\in\mathbb{C}\setminus[c_\flat,\infty)$ and $0<\varepsilon\leqslant\varepsilon _1$ we have
\begin{align}
\label{Th Dr appr strictly interior subdomain solutions}
\begin{split}
\Vert &(B_{D,\varepsilon}-\zeta Q_0^\varepsilon )^{-1}-(B_D^0-\zeta \overline{Q_0})^{-1}-\varepsilon K_D(\varepsilon ;\zeta )\Vert _{L_2(\mathcal{O})\rightarrow H^1(\mathcal{O}')}
\\
&\leqslant \varepsilon
\bigl(
C_{12}'\delta ^{-1}\varrho _\flat (\zeta )^{1/2}+C_{12}''\vert 1+\zeta\vert ^{1/2}\varrho _\flat (\zeta)
\bigr)
 ,
\end{split}
\\
\label{Th Dr appr strictly interior subdomain fluxes}
\begin{split}
\Vert &g^\varepsilon b(\mathbf{D})(B_{D,\varepsilon}-\zeta Q_0^\varepsilon )^{-1}-G_D(\varepsilon;\zeta)\Vert _{L_2(\mathcal{O})\rightarrow L_2(\mathcal{O}')}
\leqslant \varepsilon  \bigl(
\widetilde{C}_{12}'\delta ^{-1}\varrho _\flat (\zeta )^{1/2}+\widetilde{C}_{12}''\vert 1+\zeta\vert ^{1/2}\varrho _\flat (\zeta)
\bigr)
.
\end{split}
\end{align}
The constants $C_{12}',C_{12}''$, and $\widetilde{C}_{12}',\widetilde{C}_{12}''$\! depend only on the problem data~\eqref{problem data}.
\end{theorem}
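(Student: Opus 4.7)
The plan is to derive the interior $H^1$-estimate from the global $L_2$-estimate of Theorem~\ref{Theorem Dirichlet L2} (or Theorem~\ref{Theorem Dr appr}) by a cutoff/commutator argument, exploiting that the boundary layer responsible for the loss from $\varepsilon$ to $\varepsilon^{1/2}$ in the global $H^1$-estimate of Theorem~\ref{Theorem Dirichlet H1} is removed by a localizer vanishing near $\partial\mathcal{O}$.

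\textbf{Setup and interior divergence-form identity.} Fix $\mathbf{F}\in L_2(\mathcal{O};\mathbb{C}^n)$ and set $\mathbf{u}_\varepsilon:=(B_{D,\varepsilon}-\zeta Q_0^\varepsilon)^{-1}\mathbf{F}$, $\mathbf{u}_0:=(B_D^0-\zeta\overline{Q_0})^{-1}\mathbf{F}$, $\mathbf{v}_\varepsilon:=\mathbf{u}_0+\varepsilon K_D(\varepsilon;\zeta)\mathbf{F}$, and $\mathbf{w}_\varepsilon:=\mathbf{u}_\varepsilon-\mathbf{v}_\varepsilon$. Choose $\chi\in C_0^\infty(\mathcal{O})$ with $\chi\equiv 1$ on $\mathcal{O}'$, $\mathrm{supp}\,\chi\subset\mathcal{O}\setminus(\partial\mathcal{O})_{\delta/4}$, and $\vert\partial^\alpha\chi\vert\leqslant C\delta^{-\vert\alpha\vert}$ for $\vert\alpha\vert\leqslant 2$; then $\chi\mathbf{w}_\varepsilon\in H^1_0(\mathcal{O};\mathbb{C}^n)$ (even though $\mathbf{v}_\varepsilon$ itself fails the Dirichlet condition), so $\Vert\mathbf{w}_\varepsilon\Vert_{H^1(\mathcal{O}')}\leqslant\Vert\chi\mathbf{w}_\varepsilon\Vert_{H^1(\mathcal{O})}$. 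The standard cell-problem computation based on \eqref{Lambda problem}, \eqref{tildeLambda_problem}, \eqref{tilde g}, and the effective equation $(B_D^0-\zeta\overline{Q_0})\mathbf{u}_0=\mathbf{F}$ produces in $\mathcal{D}'(\mathcal{O})$ the identity
\begin{equation*}
(B_{D,\varepsilon}-\zeta Q_0^\varepsilon)\mathbf{w}_\varepsilon=\varepsilon\,\mathrm{div}\,\boldsymbol{\Psi}_\varepsilon+\varepsilon\boldsymbol{\rho}_\varepsilon,\quad\Vert\boldsymbol{\Psi}_\varepsilon\Vert_{L_2(\mathcal{O})}+\Vert\boldsymbol{\rho}_\varepsilon\Vert_{L_2(\mathcal{O})}\leqslant C c(\phi)\Vert\mathbf{F}\Vert_{L_2(\mathcal{O})},
\end{equation*}
where the $c(\phi)$-factor enters through the $H^2$-bound \eqref{BD0 ^-1 L2->H2} on $\mathbf{u}_0$ (which carries the $\zeta$-dependence of the effective resolvent) and the remainders are controlled via \eqref{Lamba in H1<=}, \eqref{tilde Lambda in H1 <=}, and Propositions~\ref{Proposition S__eps - I}--\ref{Proposition f^eps S_eps}. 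Globally in $\mathcal{O}$ the right-hand side is not small in the required sense --- this is precisely the boundary-layer obstruction --- but it is small in any subdomain where $\chi$ is supported.

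\textbf{Energy estimate and conclusion.} Testing the identity against $\chi^2\mathbf{w}_\varepsilon$, integrating by parts, and using the coercivity \eqref{b_eps >=} of $\mathfrak{b}_{D,\varepsilon}$ together with the Friedrichs inequality, one arrives at
\begin{equation*}
\Vert\chi\mathbf{w}_\varepsilon\Vert_{H^1(\mathcal{O})}\leqslant C\bigl(\varepsilon\Vert\boldsymbol{\Psi}_\varepsilon\Vert_{L_2(\mathcal{O})}+\varepsilon\Vert\boldsymbol{\rho}_\varepsilon\Vert_{L_2(\mathcal{O})}+\delta^{-1}\Vert\mathbf{w}_\varepsilon\Vert_{L_2(\mathcal{O})}\bigr),
\end{equation*}
the commutator $[\chi,B_{D,\varepsilon}]\mathbf{w}_\varepsilon$, involving $\nabla\chi$ and $\nabla^2\chi$, producing the $\delta^{-1}$-factor. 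Theorem~\ref{Theorem Dirichlet L2}, together with the bound $\Vert\varepsilon K_D(\varepsilon;\zeta)\mathbf{F}\Vert_{L_2(\mathcal{O})}\leqslant Cc(\phi)\varepsilon\Vert\mathbf{F}\Vert_{L_2(\mathcal{O})}$ (from Proposition~\ref{Proposition f^eps S_eps}, \eqref{Lamba in H1<=}--\eqref{tilde Lambda in H1 <=}, and \eqref{BD0 ^-1 L2->H2}), yields $\Vert\mathbf{w}_\varepsilon\Vert_{L_2(\mathcal{O})}\leqslant C c(\phi)^5\varepsilon\vert\zeta\vert^{-1/2}\Vert\mathbf{F}\Vert_{L_2(\mathcal{O})}+Cc(\phi)\varepsilon\Vert\mathbf{F}\Vert_{L_2(\mathcal{O})}$. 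Substituting back gives the bound of part~$1^\circ$, $c(\phi)^6\varepsilon(C_{11}'\vert\zeta\vert^{-1/2}\delta^{-1}+C_{11}'')\Vert\mathbf{F}\Vert_{L_2(\mathcal{O})}$. Part~$2^\circ$ is obtained by the identical argument with Theorem~\ref{Theorem Dr appr} in place of Theorem~\ref{Theorem Dirichlet L2} and $\varrho_\flat(\zeta)$ absorbing the $\zeta$-dependence. The flux estimates follow by applying the same localization to the analogous cell identity for $g^\varepsilon b(\mathbf{D})\mathbf{u}_\varepsilon-G_D(\varepsilon;\zeta)\mathbf{F}$, reducing the flux control to the already-established interior $H^1$-bound on $\mathbf{w}_\varepsilon$.

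\textbf{Main obstacle.} The hardest step is the sharp derivation of the divergence-form decomposition with $L_2$-remainders that carry exactly one power of $c(\phi)$ and no negative power of $\vert\zeta\vert$, while accommodating (i) the merely $L_\rho$- and $L_s$-integrable coefficients $a_j^\varepsilon, Q^\varepsilon$, which must be controlled by the uniform-in-$\varepsilon$ Sobolev-type inequalities \eqref{sum a-j u}--\eqref{(Qu,u)<=}; (ii) the Steklov operator $S_\varepsilon$ in $K_D(\varepsilon;\zeta)$, which prevents any pointwise ``$\varepsilon\partial\approx 1$'' substitution and necessitates Propositions~\ref{Proposition S__eps - I}--\ref{Proposition f^eps S_eps}; and (iii) the precise bookkeeping of $c(\phi)$-powers, where the interior bound collects one $c(\phi)$ from the effective $H^2$-regularity beyond the $c(\phi)^5$ from the global $L_2$-estimate, yielding the $c(\phi)^6$ in the conclusion.
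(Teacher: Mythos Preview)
This theorem is not proved in the present paper; it is imported from \cite{MSuPOMI} (Theorems~8.1 and~9.14 there), so there is no in-paper proof to compare against. Your cutoff-plus-energy strategy is the standard route to such interior estimates and is, in outline, the approach taken in \cite{MSuPOMI} and in the precursors \cite{PSu,Su15}: the $O(\varepsilon^{1/2})$ loss in Theorem~\ref{Theorem Dirichlet H1} is a boundary-layer effect, and the cutoff $\chi$ removes it.

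Two steps in your sketch need correction. First, when you test against $\chi^2\mathbf{w}_\varepsilon$ and invoke only the coercivity~\eqref{b_eps >=}, the term $-\zeta(Q_0^\varepsilon\chi\mathbf{w}_\varepsilon,\chi\mathbf{w}_\varepsilon)$ remains on the left and, for $\mathrm{Re}\,\zeta>0$, has the wrong sign; your displayed bound on $\|\chi\mathbf{w}_\varepsilon\|_{H^1}$ is missing the resulting $|\zeta|^{1/2}\|\mathbf{w}_\varepsilon\|_{L_2}$ contribution. Second, your claim $\|\boldsymbol\Psi_\varepsilon\|+\|\boldsymbol\rho_\varepsilon\|\leqslant Cc(\phi)\|\mathbf{F}\|$ overlooks the piece $\zeta(Q_0^\varepsilon-\overline{Q_0})\mathbf{u}_0$ of the discrepancy: after writing $Q_0-\overline{Q_0}$ as the divergence of a periodic field $\Xi$, the residual $\zeta\varepsilon\,\Xi^\varepsilon\!\cdot\!\nabla\mathbf{u}_0$ has $L_2$-norm of order $c(\phi)|\zeta|^{1/2}\|\mathbf{F}\|$, not $c(\phi)\|\mathbf{F}\|$, so part of $\boldsymbol\rho_\varepsilon$ grows with $|\zeta|$. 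Both defects are harmless in the end---each extra $|\zeta|^{1/2}$ is exactly cancelled when paired with $\|\mathbf{w}_\varepsilon\|_{L_2}=O(c(\phi)^5\varepsilon|\zeta|^{-1/2})$ from Theorem~\ref{Theorem Dirichlet L2}---but the decomposition and the energy inequality as you state them are not literally correct. With these fixes (and the analogous bookkeeping for part~$2^\circ$ using Theorem~\ref{Theorem Dr appr}) the argument goes through; the flux estimates then follow from the interior $H^1$-bound exactly as you indicate.
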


If the matrix-valued functions $\Lambda (\mathbf{x})$ and $\widetilde{\Lambda}(\mathbf{x})$ satisfy some additional assumptions, this result remains true
with a simpler corrector. Approximations for $\zeta\in\mathbb{C}\setminus\mathbb{R}_+$, $\vert\zeta\vert\geqslant 1$, were found in~\cite[Theorem~8.2]{MSuPOMI}.

\begin{theorem}[\cite{MSuPOMI}]
\label{Theorem O' no S_eps}
Suppose that the assumptions of Theorem~\textnormal{\ref{Theorem O'}($1^\circ$)} are satisfied. Suppose that the matrix-valued functions
  $\Lambda (\mathbf{x})$ and $\widetilde{\Lambda}(\mathbf{x})$ satisfy Conditions~\textnormal{\ref{Condition Lambda in L infty}} and~\textnormal{\ref{Condition tilde Lambda in Lp}}, respectively. Let $K_D^0(\varepsilon ;\zeta)$ and
$G_D^0(\varepsilon ;\zeta)$ be the operators defined by \eqref{K_D^0n elliptic} and \eqref{G3(eps;zeta)}. Then for $0<\varepsilon\leqslant\varepsilon _1$ and $\zeta\in\mathbb{C}\setminus\mathbb{R}_+$, $\vert \zeta\vert \geqslant 1$, we have
\begin{align*}
\begin{split}
\Vert &(B_{D,\varepsilon}-\zeta Q_0^\varepsilon )^{-1}-(B_D^0-\zeta \overline{Q_0})^{-1}
-\varepsilon K_D^0(\varepsilon ;\zeta)
\Vert _{L_2(\mathcal{O})\rightarrow H^1(\mathcal{O}')}
\leqslant c(\phi )^6\varepsilon (C_{11}'\vert \zeta\vert ^{-1/2}\delta ^{-1}+C_{13}) ,
\end{split}
\\
\begin{split}
\Vert &g^\varepsilon b(\mathbf{D})(B_{D,\varepsilon}-\zeta Q_0^\varepsilon )^{-1}-G_D^0(\varepsilon;\zeta)\Vert _{L_2(\mathcal{O})\rightarrow L_2(\mathcal{O}')}
\leqslant c(\phi )^6\varepsilon  (\widetilde{C}_{11}'\vert \zeta\vert ^{-1/2}\delta ^{-1}+\widetilde{C}_{13}).
\end{split}
\end{align*}
The constants $C_{11}'$ and $\widetilde{C}_{11}'$ are as in Theorem~\textnormal{\ref{Theorem O'}}.
The constants $C_{13}$ and $\widetilde{C}_{13}$ depend on the problem data \eqref{problem data}, $p$, and the norms $\Vert \Lambda\Vert _{L_\infty}$, $\Vert \widetilde{\Lambda}\Vert _{L_p(\Omega)}$.
\end{theorem}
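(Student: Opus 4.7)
The plan is to start from Theorem \ref{Theorem O'}($1^\circ$), which already gives the required estimates in $H^1(\mathcal{O}')$ and $L_2(\mathcal{O}')$ with the smoothed corrector $K_D(\varepsilon;\zeta)$ and smoothed flux $G_D(\varepsilon;\zeta)$. The task then reduces to bounding the ``smoothing-removal'' differences
\begin{equation*}
\varepsilon\,\bigl\|K_D(\varepsilon;\zeta)-K_D^0(\varepsilon;\zeta)\bigr\|_{L_2(\mathcal{O})\to H^1(\mathcal{O}')}
\ \text{and}\
\bigl\|G_D(\varepsilon;\zeta)-G_D^0(\varepsilon;\zeta)\bigr\|_{L_2(\mathcal{O})\to L_2(\mathcal{O}')}
\end{equation*}
by a constant multiple of $c(\phi)^6\varepsilon$ with the constant \emph{independent of} $\delta$. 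This $\delta$-independence is exactly what the statement predicts: only additive constants $C_{13}$, $\widetilde{C}_{13}$ (depending on $\|\Lambda\|_{L_\infty}$ and $\|\widetilde\Lambda\|_{L_p(\Omega)}$) are introduced, while $C_{11}'$ and $\widetilde{C}_{11}'$ are inherited unchanged from Theorem \ref{Theorem O'}($1^\circ$).

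Writing $\mathbf{u}_\zeta := (B_D^0-\zeta\overline{Q_0})^{-1}\boldsymbol{\varphi}$ and exploiting that $S_\varepsilon$ commutes with $b(\mathbf{D})$, the corrector difference on $\mathcal{O}'$ takes the form
\begin{equation*}
(K_D(\varepsilon;\zeta)-K_D^0(\varepsilon;\zeta))\boldsymbol{\varphi}=\Lambda^\varepsilon b(\mathbf{D})(S_\varepsilon P_{\mathcal{O}}-I)\mathbf{u}_\zeta+\widetilde{\Lambda}^\varepsilon(S_\varepsilon P_{\mathcal{O}}-I)\mathbf{u}_\zeta,
\end{equation*}
with a strictly analogous identity for $G_D-G_D^0$ (with $\widetilde g^\varepsilon$ and $g^\varepsilon(b(\mathbf{D})\widetilde\Lambda)^\varepsilon$ playing the role of $\Lambda^\varepsilon b(\mathbf{D})$ and $\widetilde\Lambda^\varepsilon$). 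Because $\mathcal{O}'$ is strictly interior and $\varepsilon\leqslant\varepsilon_1$, the Steklov averages at points of $\mathcal{O}'$ depend only on the values of $\mathbf{u}_\zeta$ inside $\mathcal{O}$; Proposition \ref{Proposition S__eps - I} applied to $P_{\mathcal{O}}\mathbf{u}_\zeta$, together with the $H^2$-bound \eqref{BD0 ^-1 L2->H2} applied to $B^0\mathbf{u}_\zeta=\boldsymbol{\varphi}+\zeta\overline{Q_0}\mathbf{u}_\zeta$ and the standard $L_2$-resolvent estimate for $(B_D^0-\zeta\overline{Q_0})^{-1}$, yields $\|(S_\varepsilon P_{\mathcal{O}}-I)\mathbf{u}_\zeta\|_{H^1(\mathcal{O}')}\leqslant C\varepsilon\|\mathbf{u}_\zeta\|_{H^2(\mathcal{O})}$. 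Condition \ref{Condition Lambda in L infty} then controls the $\Lambda^\varepsilon$-term pointwise in $L_2$, and Condition \ref{Condition tilde Lambda in Lp} together with H\"older's inequality and the Sobolev embedding $H^1\hookrightarrow L_{2p/(p-2)}$ controls the $\widetilde\Lambda^\varepsilon$-term in $L_2$.

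The main obstacle is the $\mathbf{D}$-derivative of $\Lambda^\varepsilon b(\mathbf{D})(S_\varepsilon P_{\mathcal{O}}-I)\mathbf{u}_\zeta$, since the product rule produces the dangerous term $\varepsilon^{-1}(\mathbf{D}\Lambda)^\varepsilon b(\mathbf{D})(S_\varepsilon P_{\mathcal{O}}-I)\mathbf{u}_\zeta$, where $\mathbf{D}\Lambda$ is only in $L_2(\Omega)$; the $\varepsilon^{-1}$ must be absorbed against the $\varepsilon$-smallness of the smoothing difference. This is exactly the situation addressed in the proof of Theorem \ref{Theorem no S-eps}, and the required bound is an instance of the oscillating-factor estimates developed in \S\ref{Section 6} of the Appendix. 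The same machinery handles the flux: decomposing $\widetilde g^\varepsilon = g^0 + (\widetilde g - g^0)^\varepsilon$ (with $\widetilde g - g^0$ a $\Gamma$-periodic matrix of zero mean), and similarly treating $g^\varepsilon(b(\mathbf{D})\widetilde\Lambda)^\varepsilon$, the oscillating-factor bounds of \S\ref{Section 6} again yield the requisite $O(\varepsilon)$ control, with no $\delta^{-1}$ factor since all the analysis is carried out strictly inside $\mathcal{O}$, far from $\partial\mathcal{O}$. Combining these bounds with Theorem \ref{Theorem O'}($1^\circ$) produces precisely the claimed inequalities, with $C_{11}'$, $\widetilde C_{11}'$ inherited from that theorem and $C_{13}$, $\widetilde C_{13}$ absorbing the additional dependencies on $\|\Lambda\|_{L_\infty}$, $\|\widetilde\Lambda\|_{L_p(\Omega)}$, and $p$.
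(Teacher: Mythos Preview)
This theorem is not proved in the present paper; it is quoted from \cite[Theorem~8.2]{MSuPOMI} (note the citation tag on the theorem and the sentence immediately preceding its statement). There is therefore no proof here to compare your attempt against.

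That said, your outline is the correct strategy and is essentially what is carried out in \cite{MSuPOMI}: start from Theorem~\ref{Theorem O'}($1^\circ$) with the smoothed corrector $K_D(\varepsilon;\zeta)$ and flux $G_D(\varepsilon;\zeta)$, then show that the smoothing-removal differences $\varepsilon(K_D-K_D^0)$ and $G_D-G_D^0$ are $O(c(\phi)^2\varepsilon)$ in the appropriate norms, with constants depending on $\|\Lambda\|_{L_\infty}$ and $\|\widetilde\Lambda\|_{L_p(\Omega)}$ but \emph{not} on $\delta$. The tools you identify---Conditions~\ref{Condition Lambda in L infty} and~\ref{Condition tilde Lambda in Lp}, Proposition~\ref{Proposition S__eps - I}, the $H^2$-bound on $\mathbf{u}_\zeta=(B_D^0-\zeta\overline{Q_0})^{-1}\boldsymbol{\varphi}$, and Lemmas~\ref{Lemma Lambda-varepsilon}--\ref{Lemma on Lambda-tilda-varepsilon}---are exactly right. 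One small correction on the flux side: the natural decomposition is $\widetilde g^\varepsilon = g^\varepsilon + g^\varepsilon (b(\mathbf{D})\Lambda)^\varepsilon$ via \eqref{tilde g}, not $g^0 + (\widetilde g - g^0)^\varepsilon$; the zero-mean property of $\widetilde g - g^0$ is not what helps here. The oscillating factor $(b(\mathbf{D})\Lambda)^\varepsilon$ acting on $(S_\varepsilon-I)b(\mathbf{D})P_{\mathcal O}\mathbf{u}_\zeta$ is then controlled by Lemma~\ref{Lemma Lambda-varepsilon} combined with Condition~\ref{Condition Lambda in L infty}, in the same way as the $(\mathbf{D}\Lambda)^\varepsilon$ term you already discussed for the corrector.
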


Approximations in a wider domain of the parameter~$\zeta$ are obtained in~\cite[Theorem~9.15]{MSuPOMI}.

\begin{theorem}[\cite{MSuPOMI}]
\label{Theorem Dr appr strictly interior subdomain no S_eps}
Suppose that the assumptions of Theorem~\textnormal{\ref{Theorem O'}($2^\circ$)} are satisfied.
Suppose that the matrix-valued functions $\Lambda (\mathbf{x})$ and $\widetilde{\Lambda}(\mathbf{x})$ are subject to Conditions~\textnormal{\ref{Condition Lambda in L infty}} and \textnormal{\ref{Condition tilde Lambda in Lp}}, respectively. Let $K_D^0(\varepsilon ;\zeta )$ be the corrector \eqref{K_D^0n elliptic}, and let
$G_D^0(\varepsilon;\zeta)$ be the operator \eqref{G3(eps;zeta)}. Then for $\zeta\in\mathbb{C}\setminus [c_\flat ,\infty)$ and $0<\varepsilon\leqslant\varepsilon _1$ we have
\begin{align*}
\begin{split}
\Vert &(B_{D,\varepsilon}-\zeta Q_0^\varepsilon )^{-1}-(B_D^0-\zeta \overline{Q_0})^{-1}-\varepsilon K_D^0(\varepsilon;\zeta)
\Vert _{L_2(\mathcal{O})\rightarrow H^1(\mathcal{O}')}
\\
&\leqslant \varepsilon \bigl( C_{12}'\delta ^{-1} \varrho _\flat (\zeta )^{1/2}+C_{14}\vert 1+\zeta\vert ^{1/2}\varrho _\flat (\zeta )\bigr)
,
\end{split}
\\
\begin{split}
\Vert &g^\varepsilon b(\mathbf{D})(B_{D,\varepsilon}-\zeta Q_0^\varepsilon )^{-1}-G_D^0(\varepsilon;\zeta)\Vert _{L_2(\mathcal{O})\rightarrow L_2(\mathcal{O}')}
\leqslant \varepsilon \bigl( \widetilde{C} _{12}'\delta ^{-1} \varrho _\flat (\zeta )^{1/2}+\widetilde{C}_{14}\vert 1+\zeta\vert ^{1/2}\varrho _\flat (\zeta )\bigr)
.
\end{split}
\end{align*}
Here the constants $C_{12}'$ and $\widetilde{C}_{12}'$ are as in \eqref{Th Dr appr strictly interior subdomain solutions} and \eqref{Th Dr appr strictly interior subdomain fluxes}.
The constants $C_{14}$ and $\widetilde{C}_{14}$ depend on the problem data \eqref{problem data}, $p$, and the norms $\Vert \Lambda\Vert _{L_\infty}$, $\Vert \widetilde{\Lambda}\Vert _{L_p(\Omega)}$.
\end{theorem}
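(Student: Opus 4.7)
The plan is to derive this result by combining two previously established ingredients: the strictly interior estimates from Theorem~\ref{Theorem O'}($2^\circ$), which involve the corrector $K_D(\varepsilon;\zeta)$ with the smoothing operator, and the removal-of-$S_\varepsilon$ argument from Theorem~\ref{Theorem Dr appr no S_eps}, which shows how passing from $K_D$ to $K_D^0$ (and from $G_D$ to $G_D^0$) introduces only an admissible error globally on $\mathcal{O}$. Since $\mathcal{O}' \subset \mathcal{O}$, any $L_2(\mathcal{O}) \to H^1(\mathcal{O})$ estimate on the difference $K_D - K_D^0$ automatically yields the analogous $L_2(\mathcal{O}) \to H^1(\mathcal{O}')$ bound, and similarly for fluxes with $L_2$ norms.

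First I would invoke Theorem~\ref{Theorem O'}($2^\circ$) to obtain the starting bounds \eqref{Th Dr appr strictly interior subdomain solutions} and \eqref{Th Dr appr strictly interior subdomain fluxes}, with $K_D(\varepsilon;\zeta)$ and $G_D(\varepsilon;\zeta)$ on the left-hand sides. The claim then reduces to estimating the two ``correction differences''
\begin{equation*}
\varepsilon \bigl\| K_D(\varepsilon;\zeta)-K_D^0(\varepsilon;\zeta)\bigr\|_{L_2(\mathcal{O})\to H^1(\mathcal{O}')}
\quad\text{and}\quad
\bigl\| G_D(\varepsilon;\zeta)-G_D^0(\varepsilon;\zeta)\bigr\|_{L_2(\mathcal{O})\to L_2(\mathcal{O}')}
\end{equation*}
by the same right-hand sides appearing in \eqref{Th Dr appr strictly interior subdomain solutions} and \eqref{Th Dr appr strictly interior subdomain fluxes} (in fact, even by $\varepsilon \cdot O(|1+\zeta|^{1/2}\varrho_\flat(\zeta))$, since the boundary layer plays no role here).

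For this reduction I would use the same device employed in the proof of Theorem~\ref{Theorem Dr appr no S_eps}. The key difference $R_\mathcal{O}[\Lambda^\varepsilon] S_\varepsilon P_\mathcal{O} b(\mathbf{D}) - \Lambda^\varepsilon b(\mathbf{D})$ (acting on $(B_D^0 - \zeta \overline{Q_0})^{-1}\boldsymbol{\varphi} \in H^2(\mathcal{O};\mathbb{C}^n)$) factors through $[\Lambda^\varepsilon](S_\varepsilon - I)$ on the extended function $P_\mathcal{O} b(\mathbf{D})(B_D^0 - \zeta \overline{Q_0})^{-1}\boldsymbol{\varphi}$; under Condition~\ref{Condition Lambda in L infty} the factor $[\Lambda^\varepsilon]$ is simply bounded in $L_\infty$, so Proposition~\ref{Proposition S__eps - I} gives a gain of $\varepsilon$ in $L_2$ at the price of one derivative of the resolvent. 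Estimates \eqref{BD0 ^-1 L2->H2} and \eqref{PO} then control the $H^2$ norm of the resolvent, and the $\zeta$-dependence of $\|(B_D^0 - \zeta \overline{Q_0})^{-1}\|_{L_2 \to H^2}$ in the region $\zeta \in \mathbb{C}\setminus[c_\flat,\infty)$ is precisely what produces the factor $|1+\zeta|^{1/2}\varrho_\flat(\zeta)$; this is exactly the bookkeeping used in~\cite{MSuPOMI} and it has already been verified there for the estimate on the whole of $\mathcal{O}$. The $\widetilde{\Lambda}^\varepsilon$-term is treated analogously, using Condition~\ref{Condition tilde Lambda in Lp} together with Proposition~\ref{Proposition f^eps S_eps} (and the Sobolev embedding $H^1(\mathbb{R}^d) \hookrightarrow L_{2d/(d-2)}$ when $d \geq 3$) to absorb the unbounded factor $[\widetilde{\Lambda}^\varepsilon]$. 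The flux correction $G_D - G_D^0$ is handled by exactly the same scheme, replacing $\Lambda^\varepsilon$ and $\widetilde{\Lambda}^\varepsilon$ by the $L_2(\Omega)$-periodic factors $\widetilde{g}^\varepsilon$ and $g^\varepsilon (b(\mathbf{D})\widetilde{\Lambda})^\varepsilon$; for these, Proposition~\ref{Proposition f^eps S_eps} is already sufficient, so no additional hypothesis is needed beyond Conditions~\ref{Condition Lambda in L infty} and \ref{Condition tilde Lambda in Lp}.

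The triangle inequality combining Theorem~\ref{Theorem O'}($2^\circ$) with the two difference estimates yields the required bounds with $K_D^0$ and $G_D^0$, with constants $C_{14}$ and $\widetilde C_{14}$ that depend on the problem data and on $\|\Lambda\|_{L_\infty}$, $\|\widetilde\Lambda\|_{L_p(\Omega)}$ (and $p$) through the difference estimates, while $C_{12}'$ and $\widetilde C_{12}'$ remain unchanged from Theorem~\ref{Theorem O'}($2^\circ$). The main technical point — and the only nontrivial part of the argument — is the sharp tracking of the $\zeta$-dependence $|1+\zeta|^{1/2}\varrho_\flat(\zeta)$ in the correction differences, which I would simply import from the proof of Theorem~\ref{Theorem Dr appr no S_eps} rather than redo; everything else is direct assembly of already-stated results.
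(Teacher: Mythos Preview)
The paper does not prove this theorem; it is quoted verbatim from \cite[Theorem~9.15]{MSuPOMI} and no argument is given here. Your proposal is the natural and correct route: restrict the global $(L_2\to H^1(\mathcal{O}))$ bound on $\varepsilon(K_D-K_D^0)$ and the $(L_2\to L_2(\mathcal{O}))$ bound on $G_D-G_D^0$ (already established for Theorem~\ref{Theorem Dr appr no S_eps}, with the $|1+\zeta|^{1/2}\varrho_\flat(\zeta)$ dependence) to $\mathcal{O}'$, and combine with Theorem~\ref{Theorem O'}($2^\circ$) by the triangle inequality; this is exactly how the result is obtained in \cite{MSuPOMI}.
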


\section{Statement of the problem. Main results}

\label{Section 2}

\subsection{Statement of the problem}
We study the behavior of the solution of the first initial boundary-value problem
\begin{equation}
\label{first initial-boundary value problem}
\begin{cases}
Q_0^\varepsilon (\mathbf{x}) \frac{\partial \mathbf{u}_\varepsilon}{\partial t}(\mathbf{x},t)&=-B_{\varepsilon}\mathbf{u}_\varepsilon (\mathbf{x},t)
,
\quad\mathbf{x}\in\mathcal{O},\quad t>0;
\\
\mathbf{u}_\varepsilon (\,\cdot\, ,t) \vert _{\partial \mathcal{O}}&=0,\quad t>0;\\
Q_0^\varepsilon (\mathbf{x}) \mathbf{u}_\varepsilon (\mathbf{x},0)&=\boldsymbol{\varphi}(\mathbf{x}),\quad \mathbf{x}\in \mathcal{O}.
\end{cases}
\end{equation}
Here $\boldsymbol{\varphi}\in L_2(\mathcal{O};\mathbb{C}^n)$. (The solution is understood in the weak sense.)
Let us find relation between $\mathbf{u}_\varepsilon (\,\cdot\, ,t)$ and $\boldsymbol{\varphi}$.
According to \eqref{Q_0=}, the function $\mathbf{s}_\varepsilon (\mathbf{x} ,t):=\left(f^\varepsilon (\mathbf{x})\right)^{-1}\mathbf{u}_\varepsilon (\mathbf{x} ,t)$
is the solution of the problem
\begin{equation*}
\begin{cases}
\frac{\partial \mathbf{s}_\varepsilon}{\partial t}(\mathbf{x} ,t)&=-\widetilde{B}_{\varepsilon}\mathbf{s}_\varepsilon (\mathbf{x},t)\quad\mathbf{x}\in\mathcal{O},\quad t>0;\\
\mathbf{s}_\varepsilon (\,\cdot\, ,t)\vert _{\partial \mathcal{O}}&=0,\quad t>0;\\
\mathbf{s}_\varepsilon (\mathbf{x},0)&= (f^\varepsilon (\mathbf{x}))^*\boldsymbol{\varphi}(\mathbf{x}),\quad \mathbf{x}\in \mathcal{O}.
\end{cases}
\end{equation*}
Then $\mathbf{s}_\varepsilon (\,\cdot\, ,t)=e^{-\widetilde{B}_{D,\varepsilon}t} (f^\varepsilon )^*\boldsymbol{\varphi}$ and
$\mathbf{u}_\varepsilon (\,\cdot\, ,t)= f^\varepsilon \mathbf{s}_\varepsilon (\,\cdot\, ,t)=f^\varepsilon e^{-\widetilde{B}_{D,\varepsilon} t}(f^\varepsilon)^*\boldsymbol{\varphi}$.

\textit{Our goal} is to study the behavior of the generalized solution $\mathbf{u}_\varepsilon$ of the first initial boundary-value problem \eqref{first initial-boundary value problem} in the small period limit. In other words, we are interested in approximations of the sandwiched operator exponential
$f^\varepsilon e^{-\widetilde{B}_{D,\varepsilon}t} (f^\varepsilon)^*$ for small $\varepsilon$.

The corresponding effective problem is given by
\begin{equation}
\label{effective first problem}
\begin{cases}
\overline{Q_0}\frac{\partial \mathbf{u}_0}{\partial t}(\mathbf{x},t)&=-B^0\mathbf{u}_0 (\mathbf{x},t),
\quad\mathbf{x}\in\mathcal{O},\quad t>0;
\\
\mathbf{u}_0(\,\cdot\, ,t) \vert _{\partial \mathcal{O}}&=0,\quad t>0;\\
\overline{Q_0}\mathbf{u}_0 (\mathbf{x},0)&=\boldsymbol{\varphi}(\mathbf{x}),\quad \mathbf{x}\in \mathcal{O}.
\end{cases}
\end{equation}
By \eqref{f_0=}, the solution of the effective problem is given by
\begin{equation}
\label{u_0=}
\mathbf{u}_0 (\,\cdot\, ,t)=f_0 e^{-\widetilde{B}_{D}^0 t}f_0\boldsymbol{\varphi}(\,\cdot\, ).
\end{equation}

\subsection{The properties of the operator exponential}
We prove the following simple statement about estimates for the operator exponentials $e^{ -\widetilde{B}_{D,\varepsilon}t}$ and $e^{-\widetilde{B}_D^0 t}$.

\begin{lemma}
\label{Lemma properties of operator exponential}
For $0<\varepsilon\leqslant 1$ we have
\begin{align}
\label{exp tilde B_D,eps L2-L2}
\Vert e^{-\widetilde{B}_{D,\varepsilon}t}\Vert _{L_2(\mathcal{O})\rightarrow L_2(\mathcal{O})}
&\leqslant e^{-c_\flat t},\quad t\geqslant 0,
\\
\label{exp tilde B_D,eps L2-H1}
\Vert f^\varepsilon e^{-\widetilde{B}_{D,\varepsilon}t}\Vert _{L_2(\mathcal{O})\rightarrow H^1(\mathcal{O})}
&\leqslant c_3t^{-1/2}e^{-c_\flat t/2},\quad t>0,
\\
\label{exp tilde B_D^0 L2-L2}
\Vert e^{-\widetilde{B}_{D}^0t}\Vert _{L_2(\mathcal{O})\rightarrow L_2(\mathcal{O})}
&\leqslant e^{-c_\flat t},\quad t\geqslant 0,
\\
\label{exp tilde B_D^0 L2-H1}
\Vert f_0 e^{-\widetilde{B}_{D}^0t}\Vert _{L_2(\mathcal{O})\rightarrow H^1(\mathcal{O})}
&\leqslant c_3t^{-1/2}e^{-c_\flat t/2},\quad t>0,
\\
\label{exp tilde B_D^0 L2-H2}
\Vert f_0 e^{-\widetilde{B}_{D}^0t}\Vert _{L_2(\mathcal{O})\rightarrow H^2(\mathcal{O})}
&\leqslant \widetilde{c}t^{-1}e^{-c_\flat t/2},\quad t>0.
\end{align}
Here the constants $c_3$ and $c_\flat$ are given by \eqref{c_3} and \eqref{c_flat}. The constant $\widetilde{c}$ depends only on the problem data~\eqref{problem data}.
\end{lemma}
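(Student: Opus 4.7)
The plan is to exploit the functional calculus for the self-adjoint operators $\widetilde{B}_{D,\varepsilon}$ and $\widetilde{B}_{D}^0$, converting each claim into a scalar $\sup_{\lambda\ge c_\flat}$ estimate after transferring the $H^1$- or $H^2$-norm to a norm of $B_{D,\varepsilon}^{1/2}\!$ or $B_D^0$ applied to a reweighted function.

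First I record what is already available. Both $\widetilde{B}_{D,\varepsilon}$ and $\widetilde{B}_{D}^0$ satisfy the lower bound $\widetilde{B}\geqslant c_\flat I$: this is exactly the content of the paragraph following \eqref{c_flat}, obtained by inserting $f^\varepsilon\mathbf{u}$ (resp.\ $f_0\mathbf{u}$) into \eqref{b D,eps >= H1-norm} (resp.\ \eqref{b_D^0 ots 2}) and using $\vert f^\varepsilon\mathbf{u}\vert\geqslant \Vert Q_0\Vert_{L_\infty}^{-1/2}\vert\mathbf{u}\vert$ via~\eqref{Q_0=}. The bounds \eqref{exp tilde B_D,eps L2-L2} and \eqref{exp tilde B_D^0 L2-L2} then follow instantly from $\sup_{\lambda\geqslant c_\flat}e^{-\lambda t}=e^{-c_\flat t}$.

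Next I handle the $H^1$-bounds \eqref{exp tilde B_D,eps L2-H1} and \eqref{exp tilde B_D^0 L2-H1} in parallel. The function $\mathbf{w}_\varepsilon(t):=f^\varepsilon e^{-\widetilde{B}_{D,\varepsilon}t}\boldsymbol{\varphi}$ lies in $H^1_0(\mathcal{O};\mathbb{C}^n)$ because $e^{-\widetilde{B}_{D,\varepsilon}t}\boldsymbol{\varphi}$ lies in the form domain of $\widetilde{B}_{D,\varepsilon}$. Applying \eqref{H^1-norm <= BDeps^1/2} gives $\Vert\mathbf{w}_\varepsilon(t)\Vert_{H^1}\leqslant c_3\Vert B_{D,\varepsilon}^{1/2}\mathbf{w}_\varepsilon(t)\Vert_{L_2}$. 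The key algebraic step, coming from $\widetilde{B}_{D,\varepsilon}=(f^\varepsilon)^*B_{D,\varepsilon}f^\varepsilon$, is the intertwining identity
$$
\Vert B_{D,\varepsilon}^{1/2}f^\varepsilon\mathbf{v}\Vert_{L_2}^2=(B_{D,\varepsilon}f^\varepsilon\mathbf{v},f^\varepsilon\mathbf{v})_{L_2}=(\widetilde{B}_{D,\varepsilon}\mathbf{v},\mathbf{v})_{L_2}=\Vert\widetilde{B}_{D,\varepsilon}^{1/2}\mathbf{v}\Vert_{L_2}^2.
$$
With $\mathbf{v}=e^{-\widetilde{B}_{D,\varepsilon}t}\boldsymbol{\varphi}$ this reduces matters to the elementary scalar estimate
$$
\sup_{\lambda\geqslant c_\flat}\lambda^{1/2}e^{-\lambda t}\le e^{-c_\flat t/2}\sup_{\lambda\geqslant 0}\lambda^{1/2}e^{-\lambda t/2}=(et)^{-1/2}e^{-c_\flat t/2}\le t^{-1/2}e^{-c_\flat t/2},
$$
which yields \eqref{exp tilde B_D,eps L2-H1} with the stated constant $c_3$. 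Estimate \eqref{exp tilde B_D^0 L2-H1} is obtained by the exact same scheme with $f^\varepsilon\leadsto f_0$, $B_{D,\varepsilon}\leadsto B_D^0$, $\widetilde{B}_{D,\varepsilon}\leadsto\widetilde{B}_{D}^0$, using \eqref{H^1-norm <= BD0^1/2} in place of \eqref{H^1-norm <= BDeps^1/2}.

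For the $H^2$-bound \eqref{exp tilde B_D^0 L2-H2} I replace Friedrichs by elliptic regularity. Estimate \eqref{BD0 ^-1 L2->H2} gives $\Vert\mathbf{u}\Vert_{H^2(\mathcal{O})}\leqslant\widehat{c}\Vert B_D^0\mathbf{u}\Vert_{L_2}$ on $\mathrm{Dom}\,B_D^0$; applied to $\mathbf{u}=f_0 e^{-\widetilde{B}_D^0 t}\boldsymbol{\varphi}$ (which lies in $\mathrm{Dom}\,B_D^0$ for $t>0$ because $e^{-\widetilde{B}_D^0t}$ maps $L_2$ into $\mathrm{Dom}\,\widetilde{B}_D^0$), and using that the Hermitian constant factor $f_0$ satisfies $B_D^0 f_0=f_0^{-1}\widetilde{B}_D^0$ as a consequence of $\widetilde{B}_D^0=f_0 B_D^0 f_0$, we get
$$
\Vert f_0 e^{-\widetilde{B}_D^0t}\boldsymbol{\varphi}\Vert_{H^2}\leqslant\widehat{c}\,\vert f_0^{-1}\vert\,\Vert\widetilde{B}_D^0 e^{-\widetilde{B}_D^0 t}\boldsymbol{\varphi}\Vert_{L_2}.
$$
Finishing by the scalar bound $\sup_{\lambda\geqslant c_\flat}\lambda e^{-\lambda t}\leqslant (2/e)t^{-1}e^{-c_\flat t/2}$ and absorbing $\widehat{c}\vert f_0^{-1}\vert\cdot 2/e$ together with $\vert f_0^{-1}\vert\leqslant\Vert Q_0\Vert_{L_\infty}^{1/2}$ (see~\eqref{f_0<=}) into the constant $\widetilde{c}$ produces \eqref{exp tilde B_D^0 L2-H2}.

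There is no serious obstacle here: the entire lemma is a bookkeeping exercise combining (i) the two lower bounds $\widetilde{B}\geqslant c_\flat I$, (ii) the coercivity estimates \eqref{H^1-norm <= BDeps^1/2}, \eqref{H^1-norm <= BD0^1/2}, \eqref{BD0 ^-1 L2->H2}, and (iii) the similarity relations $\widetilde{B}_{D,\varepsilon}=(f^\varepsilon)^*B_{D,\varepsilon}f^\varepsilon$ and $\widetilde{B}_D^0=f_0 B_D^0 f_0$, with the final reduction always being an elementary one-dimensional calculus maximum. The only minor checking is that the semigroup orbit stays in the correct form domain or operator domain, which is automatic from the spectral theorem for $t>0$.
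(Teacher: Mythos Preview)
Your proof is correct and follows essentially the same approach as the paper's: the spectral lower bound $\widetilde{B}\geqslant c_\flat I$ gives the $L_2\!\to\! L_2$ bounds, the intertwining $\Vert B_{D,\varepsilon}^{1/2}f^\varepsilon\mathbf{v}\Vert=\Vert\widetilde{B}_{D,\varepsilon}^{1/2}\mathbf{v}\Vert$ together with \eqref{H^1-norm <= BDeps^1/2}, \eqref{H^1-norm <= BD0^1/2} reduces the $H^1$-bounds to $\sup_{\lambda\ge c_\flat}\lambda^{1/2}e^{-\lambda t}$, and elliptic regularity \eqref{BD0 ^-1 L2->H2} with $B_D^0 f_0=f_0^{-1}\widetilde{B}_D^0$ handles the $H^2$-bound. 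The only cosmetic difference is that the paper takes $\widetilde{c}=\widehat{c}\,\Vert f^{-1}\Vert_{L_\infty}$ without the extra factor $2/e$, but both choices are admissible.
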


\begin{proof}
Since the number $c_\flat$ defined by \eqref{c_flat} is a common lower bound of the operators $\widetilde{B}_{D,\varepsilon}$ and $\widetilde{B}_{D}^0$,
estimates~\eqref{exp tilde B_D,eps L2-L2} and~\eqref{exp tilde B_D^0 L2-L2} are obvious.

By \eqref{H^1-norm <= BDeps^1/2} and \eqref{tilde b D,eps=},
\begin{equation}
\label{exp B_d,eps L-2-H1 start of est}
\begin{split}
\Vert f^\varepsilon e^{-\widetilde{B}_{D,\varepsilon}t}\Vert _{L_2(\mathcal{O})\rightarrow H^1(\mathcal{O})}
\leqslant c_3 \Vert B_{D,\varepsilon}^{1/2}f^\varepsilon e^{-\widetilde{B}_{D,\varepsilon}t}\Vert _{L_2(\mathcal{O})\rightarrow L_2(\mathcal{O})}
=c_3\Vert \widetilde{B}_{D,\varepsilon}^{1/2}e^{-\widetilde{B}_{D,\varepsilon}t}\Vert _{L_2(\mathcal{O})\rightarrow L_2(\mathcal{O})}.
\end{split}
\end{equation}
Since $\widetilde{B}_{D,\varepsilon}\geqslant c_\flat I$, then
\begin{equation}
\label{tilde B_D,eps exp}
\begin{split}
\Vert \widetilde{B}_{D,\varepsilon}^{1/2}e^{-\widetilde{B}_{D,\varepsilon}t}\Vert _{L_2(\mathcal{O})\rightarrow L_2(\mathcal{O})}
\leqslant \sup _{x\geqslant c_\flat}x^{1/2}e^{-xt}
\leqslant e^{-c_\flat t/2}\sup _{x\geqslant c_\flat}x^{1/2}e^{-xt/2}
\leqslant t^{-1/2}e^{-c_\flat t/2}.
\end{split}
\end{equation}
Combining this with~\eqref{exp B_d,eps L-2-H1 start of est}, we obtain inequality~\eqref{exp tilde B_D,eps L2-H1}.
Similarly, \eqref{H^1-norm <= BD0^1/2} and~\eqref{tilde b_D^0 =} imply estimate~\eqref{exp tilde B_D^0 L2-H1}.

From \eqref{BD0 ^-1 L2->H2}, \eqref{f_0<=}, and the identity $\widetilde{B}_D^0=f_0B_D^0f_0$ it follows that
\begin{equation*}
\begin{split}
\Vert f_0 e^{-\widetilde{B}_{D}^0 t}\Vert _{L_2(\mathcal{O})\rightarrow H^2 (\mathcal{O})}
\leqslant
\widehat{c}\Vert B_{D}^0f_0 e^{-\widetilde{B}_D^0 t}\Vert _{L_2(\mathcal{O})\rightarrow L_2(\mathcal{O})}
\leqslant \widehat{c}\Vert f^{-1}\Vert _{L_\infty}\Vert \widetilde{B}_D^0 e^{-\widetilde{B}_D^0t}\Vert _{L_2(\mathcal{O})\rightarrow L_2(\mathcal{O})}.
\end{split}
\end{equation*}
Hence,
\begin{equation*}
\begin{split}
\Vert & f_0 e^{-\widetilde{B}_{D}^0 t}\Vert _{L_2(\mathcal{O})\rightarrow H^2 (\mathcal{O})}
\leqslant
\widehat{c}\Vert f^{-1}\Vert _{L_\infty}\sup _{x\geqslant c_\flat}x e^{-xt}
\leqslant \widehat{c}\Vert f ^{-1}\Vert _{L_\infty}t^{-1}e^{-c_\flat t/2}.
\end{split}
\end{equation*}
This proves estimate~\eqref{exp tilde B_D^0 L2-H2} with the constant $\widetilde{c}=\widehat{c}\Vert f^{-1}\Vert _{L_\infty}$.
\end{proof}

\subsection{Approximation of the solution in $L_2(\mathcal{O};\mathbb{C}^n)$}

\begin{theorem}
\label{Theorem L2 exp}
Let $\mathcal{O}\subset \mathbb{R}^d$ be a bounded domain of class~$C^{1,1}$. Suppose that the assumptions of Subsections~\textnormal{\ref{Subsection operatoer A_D,eps}--\ref{Subsection Effective operator}} are satisfied.
Let $B_{D,\varepsilon}$ be the operator in $L_2(\mathcal{O};\mathbb{C}^n)$ corresponding to the quadratic form~\eqref{b_D,eps}.
Let $B_D^0$ be the operator in~$L_2(\mathcal{O};\mathbb{C}^n)$ given by expression~\eqref{B_D^0} on~$H^2(\mathcal{O};\mathbb{C}^n)\cap H^1_0(\mathcal{O};\mathbb{C}^n)$. We put~$\widetilde{B}_{D,\varepsilon}=(f^\varepsilon)^*B_{D,\varepsilon}f^\varepsilon$ and $\widetilde{B}_D^0=f_0B_D^0f_0$, where the matrix-valued function~$f$ is defined by~\eqref{Q_0=},
and the matrix~$f_0$ is given by~\eqref{f_0=}.
  Let~$\mathbf{u}_\varepsilon$ be the solution of problem~\eqref{first initial-boundary value problem}, and let~$\mathbf{u}_0$ be the solution of the corresponding effective problem~\eqref{effective first problem}. Suppose that the number~$\varepsilon _1$ is subject to Condition~\textnormal{\ref{condition varepsilon}}.
  Then for~\hbox{$0<\varepsilon \leqslant \varepsilon_1$} we have
\begin{equation*}
\Vert \mathbf{u}_\varepsilon (\,\cdot\, ,t)-\mathbf{u}_0(\,\cdot\, ,t)\Vert _{L_2(\mathcal{O})}\leqslant C_{15}\varepsilon  (t+\varepsilon ^2)^{-1/2}e^{-c_\flat t/2}\Vert \boldsymbol{\varphi}\Vert _{L_2(\mathcal{O})} ,\quad t\geqslant 0.
\end{equation*}
In the operator terms,
\begin{equation}
\label{Th_exp_L_2}
\begin{split}
\Vert f^\varepsilon e^{-\widetilde{B}_{D,\varepsilon}t}(f^\varepsilon)^*-f_0 e^{-\widetilde{B}_D^0t}f_0\Vert _{L_2(\mathcal{O})\rightarrow L_2(\mathcal{O})}\leqslant C_{15}\varepsilon  (t+\varepsilon ^2)^{-1/2}e^{-c_\flat t/2},
\quad t\geqslant 0.
\end{split}
\end{equation}
Here the constant $c_\flat$ is given by~\eqref{c_flat}.
The constant $C_{15}$ depends only on the problem data~\eqref{problem data}.
\end{theorem}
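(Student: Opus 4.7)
I will follow the scheme of~\cite{MSu2} sketched in the introduction: represent both sandwiched exponentials as Dunford contour integrals of the corresponding generalized resolvents, and then apply the resolvent estimates already available. Using the identity \eqref{B deps and tilde B D,eps tozd resolvent}, the self-adjointness of $\widetilde{B}_{D,\varepsilon}$ and $\widetilde{B}_D^0$, and the fact that both spectra lie in $[c_\flat,\infty)$ (cf.~\eqref{c_flat}), I write
\begin{equation*}
f^\varepsilon e^{-\widetilde{B}_{D,\varepsilon}t}(f^\varepsilon)^*-f_0 e^{-\widetilde{B}_D^0 t}f_0
=-\frac{1}{2\pi i}\int_{\gamma}e^{-\zeta t}\bigl[(B_{D,\varepsilon}-\zeta Q_0^\varepsilon)^{-1}-(B_D^0-\zeta\overline{Q_0})^{-1}\bigr]\,d\zeta ,
\end{equation*}
where $\gamma$ is the boundary of the sector with vertex $c_\flat/2$ and half-opening $\theta_0\in(0,\pi/2)$, i.e.\ the two rays $\zeta=c_\flat/2+\rho e^{\pm i\theta_0}$, $\rho\geqslant 0$. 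This contour surrounds $[c_\flat,\infty)$, and on it $|e^{-\zeta t}|\leqslant e^{-c_\flat t/2}e^{-\rho t\cos\theta_0}$.

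\textbf{Estimates along $\gamma$.} The geometry of $\gamma$ implies $|\zeta-c_\flat|\geqslant(c_\flat/2)\sin\theta_0$, while $\arg\zeta$ and $\arg(\zeta-c_\flat)$ stay bounded away from $0$ and $2\pi$; so the angular prefactors $c(\arg\zeta)^5$ and $c(\arg(\zeta-c_\flat))^2$ appearing in Theorems~\ref{Theorem Dirichlet L2} and~\ref{Theorem Dr appr} are uniformly bounded on $\gamma$ by a constant depending only on $\theta_0$ and the problem data~\eqref{problem data}. On the portion $|\zeta|\geqslant 1$ of $\gamma$ I invoke Theorem~\ref{Theorem Dirichlet L2} to bound the resolvent difference by $C\varepsilon|\zeta|^{-1/2}$; on the remaining bounded portion I use Theorem~\ref{Theorem Dr appr}, which on $\gamma$ gives $C\varepsilon\varrho_\flat(\zeta)\leqslant C'\varepsilon$ because $\varrho_\flat$ is bounded there. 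Parameterizing by $\rho$ and using the elementary bound
\begin{equation*}
\int_0^\infty e^{-\rho t\cos\theta_0}(1+\rho)^{-1/2}\,d\rho\leqslant C t^{-1/2},\qquad t>0,
\end{equation*}
I obtain
\begin{equation*}
\Vert f^\varepsilon e^{-\widetilde{B}_{D,\varepsilon}t}(f^\varepsilon)^*-f_0 e^{-\widetilde{B}_D^0 t}f_0\Vert_{L_2(\mathcal{O})\to L_2(\mathcal{O})}\leqslant C\varepsilon\,t^{-1/2}e^{-c_\flat t/2} .
\end{equation*}
This dominates the target bound in the regime $t\geqslant\varepsilon^2$, since there $t^{-1/2}\leqslant\sqrt{2}(t+\varepsilon^2)^{-1/2}$.

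\textbf{Small-time regime and main difficulty.} For $0\leqslant t\leqslant\varepsilon^2$ the $t^{-1/2}$ bound is useless; instead I apply the triangle inequality together with the trivial estimates from Lemma~\ref{Lemma properties of operator exponential} (inequalities \eqref{exp tilde B_D,eps L2-L2}, \eqref{exp tilde B_D^0 L2-L2}) and \eqref{f_0<=}, obtaining a uniform bound of the form $Ce^{-c_\flat t/2}$. This is acceptable since $\varepsilon(t+\varepsilon^2)^{-1/2}\geqslant 1/\sqrt{2}$ throughout this range, and combining the two regimes yields~\eqref{Th_exp_L_2}. The only delicate point in the argument is orchestrating the two resolvent estimates so that the $|\zeta|^{-1/2}$ gain at infinity from Theorem~\ref{Theorem Dirichlet L2} (which is what ultimately produces the $t^{-1/2}$ factor) is preserved, while the possibly large prefactors $c(\arg\zeta)^5$ and $\varrho_\flat(\zeta)$ remain uniformly controlled; the sector-shaped contour with vertex shifted to $c_\flat/2$, lying inside the joint resolvent set of both operators and staying away from the positive real axis, is chosen precisely to achieve both simultaneously.
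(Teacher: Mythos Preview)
Your approach is essentially the paper's: write the sandwiched exponentials as contour integrals of the generalized resolvents along two rays emanating from $c_\flat/2$, split the contour into a bounded piece (estimated by Theorem~\ref{Theorem Dr appr}) and an unbounded piece (estimated by Theorem~\ref{Theorem Dirichlet L2}), integrate to get $C\varepsilon t^{-1/2}e^{-c_\flat t/2}$, and cover $0\le t\le\varepsilon^2$ by the trivial bound from Lemma~\ref{Lemma properties of operator exponential}. The paper takes $\theta_0=\pi/4$; your general $\theta_0\in(0,\pi/2)$ works just as well.

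There is, however, one genuine inaccuracy. Your claim that ``$\arg\zeta$ stays bounded away from $0$ and $2\pi$'' on $\gamma$ is false: at the vertex $\zeta=c_\flat/2\in\mathbb{R}_+$ one has $\arg\zeta=0$, and for $\zeta$ on $\gamma$ near the vertex $c(\arg\zeta)^5$ blows up. If $c_\flat<2$ your split at $|\zeta|=1$ happens to exclude a neighborhood of the vertex and the argument goes through; but if $c_\flat\geq 2$ the vertex lies in the region $|\zeta|\geq 1$ where you invoke Theorem~\ref{Theorem Dirichlet L2}, and the bound is lost. The fix is exactly what the paper does: split instead at $|\zeta|=\check c$ with $\check c:=\max\{1,\sqrt{5}\,c_\flat/2\}$ (any threshold $\geq\max\{1,c_\flat\}$ would do). Then on $|\zeta|>\check c$ one has $\rho\geq c_\flat/2$, whence
\[
|\sin(\arg\zeta)|=\frac{\rho\sin\theta_0}{|\zeta|}\geq\frac{\rho\sin\theta_0}{c_\flat/2+\rho}\geq\frac{\sin\theta_0}{2},
\]
so $c(\arg\zeta)$ is uniformly bounded there; the bounded piece $|\zeta|\leq\check c$ is handled by Theorem~\ref{Theorem Dr appr}, whose angular factor involves $\arg(\zeta-c_\flat)$ (which \emph{is} bounded away from $0,2\pi$ on all of $\gamma$, as you correctly note).
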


\begin{proof}
The proof is based on the results of Theorems~\ref{Theorem Dirichlet L2}, \ref{Theorem another approximation}, and representations for the exponentials of
the operators $\widetilde{B}_{D,\varepsilon}$, $\widetilde{B}_D^0$ in terms of the contour integrals of the corresponding resolvents.

We have (see, e.~g., \cite[Chapter~IX, Section~1.6]{Ka})
\begin{equation}
\label{2.1a}
e^{-\widetilde{B}_{D,\varepsilon}t}=-\frac{1}{2\pi i}\int\limits_\gamma e^{-\zeta t}(\widetilde{B}_{D,\varepsilon}-\zeta I)^{-1}\,d\zeta ,\quad t>0.
\end{equation}
Here $\gamma\subset\mathbb{C}$ is a contour enclosing the spectrum of the operator $\widetilde{B}_{D,\varepsilon}$ in positive direction.
The exponential of the operator $\widetilde{B}_D^0$ satisfies a similar representation. Since the constant \eqref{c_flat} is a common lower bound of the operators $\widetilde{B}_{D,\varepsilon}$ and $\widetilde{B}_D^0$, it is convenient to choose the contour of integration as follows:
\begin{equation*}
\begin{split}
\gamma =\lbrace \zeta \in\mathbb{C} : \textnormal{Im}\,\zeta \geqslant 0,\, \textnormal{Re}\,\zeta =\textnormal{Im}\,\zeta +c_\flat/2\rbrace 
\cup \lbrace \zeta \in \mathbb{C} : \textnormal{Im}\,\zeta \leqslant 0,\, \textnormal{Re}\,\zeta =-\textnormal{Im}\,\zeta +c_\flat /2\rbrace .
\end{split}
\end{equation*}

Multiplying \eqref{2.1a} by $f^\varepsilon$ from the left and by $(f^\varepsilon)^*$ from the right and using identity~\eqref{B deps and tilde B D,eps tozd resolvent},
we obtain
\begin{equation*}
f^\varepsilon e^{-\widetilde{B}_{D,\varepsilon}t}(f^\varepsilon)^*=-\frac{1}{2\pi i}\int\limits_\gamma e^{-\zeta t}(B_{D,\varepsilon}-\zeta Q_0^\varepsilon)^{-1}\,d\zeta ,\quad t>0.
\end{equation*}
Similarly,
\begin{equation*}
f_0 e^{-\widetilde{B}_D^0t}f_0=-\frac{1}{2\pi i}\int\limits_\gamma e^{-\zeta t}(B_D^0-\zeta \overline{Q_0})^{-1}\,d\zeta ,\quad t>0.
\end{equation*}
Hence,
\begin{equation}
\label{raznost exp=int}
\begin{split}
f^\varepsilon e^{-\widetilde{B}_{D,\varepsilon}t}(f^\varepsilon)^*-f_0 e^{-\widetilde{B}_D^0t}f_0
=-\frac{1}{2\pi i}\int\limits_\gamma e^{-\zeta t}\left((B_{D,\varepsilon}-\zeta Q_0^\varepsilon)^{-1}-(B_D^0-\zeta \overline{Q_0})^{-1}\right)\,d\zeta .
\end{split}
\end{equation}

By Theorems~\ref{Theorem Dirichlet L2} and~\ref{Theorem another approximation}, we estimate the difference of the generalized resolvents for $\zeta \in \gamma$
uniformly in $ \textnormal{arg}\,\zeta$. Recall the notation $\psi =\textnormal{arg}\,(\zeta -c_\flat)$.
Note that for $\zeta \in \gamma$ and $\psi =\pi/2$ or $\psi =3\pi/2$ we have $\vert \zeta \vert =\sqrt{5} c_\flat/2$.
We apply Theorem~\ref{Theorem another approximation} for $\zeta \in \gamma$ such that $\vert \zeta \vert \leqslant \check{c}$, where
\begin{equation}
\label{check c}
\check{c}:= \max \lbrace 1; \sqrt{5}c_\flat /2\rbrace .
\end{equation}
Obviously, $\psi \in (\pi/4,7\pi/4)$ on the contour $\gamma$ and
\begin{equation}
\label{rho <= on gamma}
\rho _\flat(\zeta)\leqslant 2 \max \lbrace 1; 8c_\flat^{-2}\rbrace =:\mathfrak{C} ,\quad\zeta\in\gamma .
\end{equation}
Therefore, \eqref{Th dr appr 1} implies that
\begin{equation}
\label{ots_zeta_levee_c2}
\begin{split}
\Vert (B_{D,\varepsilon}-\zeta Q_0^\varepsilon)^{-1}-(B_D^0-\zeta \overline{Q_0})^{-1}\Vert _{L_2 (\mathcal{O})\rightarrow L_2(\mathcal{O})}\leqslant C_4\mathfrak{C} \varepsilon
\leqslant C_{15}' \vert \zeta \vert ^{-1/2}\varepsilon,
\\
 \zeta \in \gamma ,\quad \vert \zeta \vert \leqslant \check{c},\quad 0<\varepsilon \leqslant \varepsilon _1 ;\quad C_{15}' :=  C_4\mathfrak{C} \check{c}^{1/2}.
\end{split}
\end{equation}

For other~$\zeta \in \gamma $, we have
\begin{equation}
\label{sin >= on gamma}
\vert \sin \phi \vert \geqslant 5^{-1/2},\quad\zeta\in\gamma,\quad\vert\zeta\vert>\check{c},
\end{equation}
and, by Theorem~\ref{Theorem Dirichlet L2},
\begin{equation}
\label{ots_zeta_pravee_c2}
\begin{split}
\Vert (B_{D,\varepsilon}-\zeta Q_0^\varepsilon)^{-1}-(B_D^0-\zeta \overline{Q_0})^{-1}\Vert _{L_2(\mathcal{O})\rightarrow L_2(\mathcal{O})}\leqslant
C_{15}'' \vert \zeta \vert ^{-1/2}\varepsilon ,\\
\zeta \in \gamma ,\quad \vert \zeta \vert > \check{c} ,\quad 0<\varepsilon \leqslant \varepsilon _1,
\end{split}
\end{equation}
where~$C_{15}'' :=  5^{5/2}C_1$.
As a result, combining (\ref{ots_zeta_levee_c2}) and (\ref{ots_zeta_pravee_c2}), for $0<\varepsilon \leqslant \varepsilon _1$ we have
\begin{equation}
\label{Raznost res na gamma}
\Vert (B_{D,\varepsilon}-\zeta Q_0^\varepsilon)^{-1}-(B_D^0-\zeta \overline{Q_0})^{-1}\Vert _{L_2(\mathcal{O})\rightarrow L_2(\mathcal{O})}\leqslant\widehat{C}_{15}\vert \zeta \vert ^{-1/2}\varepsilon ,\quad \zeta \in \gamma ,
\end{equation}
where $\widehat{C}_{15}:=\max\lbrace C_{15}'; C_{15}'' \rbrace$.

From (\ref{raznost exp=int}) and (\ref{Raznost res na gamma}) it follows that
\begin{equation*}
\Vert f^\varepsilon e^{-\widetilde{B}_{D,\varepsilon}t}(f^\varepsilon)^*-f_0 e^{-\widetilde{B}_D^0t}f_0\Vert _{L_2(\mathcal{O})\rightarrow L_2(\mathcal{O})}\leqslant 2\pi ^{-1}\widehat{C}_{15} \varepsilon t^{-1/2}\Gamma (1/2)  e^{-c_\flat t/2}.
\end{equation*}
Taking into account that $\Gamma (1/2)=\pi ^{1/2}$, we find
\begin{equation}
\label{Ots.3 L_2}
\begin{split}
\Vert &f^\varepsilon e^{-\widetilde{B}_{D,\varepsilon}t}(f^\varepsilon)^*-f_0 e^{-\widetilde{B}_D^0t}f_0 \Vert _{L_2(\mathcal{O})\rightarrow L_2(\mathcal{O})}\leqslant
2\pi ^{-1/2}\widehat{C}_{15}\varepsilon t^{-1/2}e^{-c_\flat t/2}
\\
&\leqslant \check{C}_{15}\varepsilon (t+\varepsilon ^2)^{-1/2} e^{-c_\flat t/2}
,\quad t\geqslant \varepsilon ^2,
\end{split}
\end{equation}
where $\check{C}_{15}:=2\sqrt{2}\pi ^{-1/2}\widehat{C}_{15}$.
For $t\leqslant \varepsilon ^2$ we use a rough estimate
\begin{equation}
\label{Ots.4 L_2}
\begin{split}
\Vert & f^\varepsilon e^{-\widetilde{B}_{D,\varepsilon}t}(f^\varepsilon)^*-f_0 e^{-\widetilde{B}_D^0t}f_0\Vert _{L_2(\mathcal{O})\rightarrow L_2(\mathcal{O})}\leqslant
2\Vert f\Vert _{L_\infty}^2e^{-c_\flat t}
\\
&\leqslant 2\sqrt{2} \Vert f\Vert _{L_\infty}^2\varepsilon (t+\varepsilon^2)^{-1/2} e^{-c_\flat t/2},\quad t\leqslant \varepsilon ^2.
\end{split}
\end{equation}
Relations (\ref{Ots.3 L_2}) and (\ref{Ots.4 L_2}) imply the required inequality
(\ref{Th_exp_L_2}) with the constant $C_{15} := \max \lbrace \check{C}_{15}; 2\sqrt{2}\Vert f\Vert _{L_\infty}^2 \rbrace $.
\end{proof}

\subsection{Approximation of the solution in $ H^1(\mathcal{O};\mathbb{C}^n)$}
We introduce a \textit{corrector}
\begin{equation}
\label{K_D(t,e)}
\mathcal{K}_D(t;\varepsilon):=R_\mathcal{O}\left([\Lambda ^\varepsilon]S_\varepsilon b(\mathbf{D})+[\widetilde{\Lambda}^\varepsilon ]S_\varepsilon\right) P_\mathcal{O}f_0 e^{-\widetilde{B}_D^0t}f_0.
\end{equation}
For $t>0$ the operator \eqref{K_D(t,e)} is a continuous mapping of $L_2(\mathcal{O};\mathbb{C}^n)$ to $H^1(\mathcal{O};\mathbb{C}^n)$.
Indeed, according to \eqref{exp tilde B_D^0 L2-H2}, for $t>0$ the operator $f_0e^{-\widetilde{B}_D^0t}f_0$ is continuous from $L_2(\mathcal{O};\mathbb{C}^n)$ to $H^2(\mathcal{O};\mathbb{C}^n)$. Hence, the operator $b(\mathbf{D})P_\mathcal{O}f_0e^{-\widetilde{B}_D^0 t}f_0$ is continuous from $L_2(\mathcal{O};\mathbb{C}^n)$ to  $H^1(\mathbb{R}^d;\mathbb{C}^m)$. Obviously, the operator $P_\mathcal{O}f_0 e^{-\widetilde{B}_D^0 t}f_0$ is also continuous from $L_2(\mathcal{O};\mathbb{C}^n)$ to $H^1(\mathbb{R}^d;\mathbb{C}^n)$.
It remains to use the continuity of the operators $[\Lambda^\varepsilon ]S_\varepsilon : H^1(\mathbb{R}^d;\mathbb{C}^m)\rightarrow H^1(\mathbb{R}^d;\mathbb{C}^n)$ and $[\widetilde{\Lambda}^\varepsilon ]S_\varepsilon : H^1(\mathbb{R}^d;\mathbb{C}^n) \rightarrow H^1(\mathbb{R}^d;\mathbb{C}^n)$ which follows from Proposition~\ref{Proposition f^eps S_eps}
and relations $\Lambda,\widetilde{\Lambda}\in \widetilde{H}^1(\Omega)$.

We put $\widetilde{\mathbf{u}}_0(\,\cdot\, ,t):=P_\mathcal{O}\mathbf{u}_0(\,\cdot\, ,t)$. By $\mathbf{v}_\varepsilon$ we denote the first order approximation of the solution $\mathbf{u}_\varepsilon$ of problem~\eqref{first initial-boundary value problem}:
\begin{equation}
\label{v_eps}
\begin{split}
\widetilde{\mathbf{v}}_\varepsilon (\,\cdot\, ,t) =\widetilde{\mathbf{u}}_0(\,\cdot\, ,t)+\varepsilon \Lambda ^\varepsilon S_\varepsilon b(\mathbf{D})\widetilde{\mathbf{u}}_0(\,\cdot\, ,t)+\varepsilon\widetilde{\Lambda}^\varepsilon S_\varepsilon \widetilde{\mathbf{u}}_0(\,\cdot\, ,t),
\\
 \mathbf{v}_\varepsilon(\,\cdot\, ,t) :=\widetilde{\mathbf{v}}_\varepsilon (\,\cdot\, ,t) \vert _{\mathcal{O}}.
\end{split}
\end{equation}
So, $\mathbf{v}_\varepsilon (\,\cdot\, ,t) = f_0 e^{-\widetilde{B}_D^0t}f_0\boldsymbol{\varphi}(\,\cdot\,)+\varepsilon \mathcal{K}_D(t ;\varepsilon )\boldsymbol{\varphi}(\,\cdot\,)$.

\begin{theorem}
\label{Theorem H1 exp}
Suppose that the assumptions of Theorem~\textnormal{\ref{Theorem L2 exp}} are satisfied. Suppose that the matrix-valued functions $\Lambda (\mathbf{x})$ and $\widetilde{\Lambda}(\mathbf{x})$ are $\Gamma$-periodic solutions of the problems~\eqref{Lambda problem} and~\eqref{tildeLambda_problem}, respectively. Let $S_\varepsilon $ be the Steklov smoothing operator~\eqref{S_eps}, and let  $P_\mathcal{O}$ be the extension operator~\eqref{P_O H^1, H^2}. We put $\widetilde{\mathbf{u}}_0(\,\cdot\, ,t)=P_\mathcal{O}\mathbf{u}_0(\,\cdot\, ,t)$.
Suppose that $\mathbf{v}_\varepsilon$ is defined by~\eqref{v_eps}.
 Then for \hbox{$0<\varepsilon\leqslant\varepsilon _1$} and $t>0$ we have
 \begin{equation*}
 \Vert \mathbf{u}_\varepsilon (\,\cdot\, ,t)-\mathbf{v}_\varepsilon (\,\cdot\, ,t)\Vert _{H^1(\mathcal{O})}
 \leqslant C_{16}( \varepsilon ^{1/2}t^{-3/4}+\varepsilon t^{-1})e^{-c_\flat t/2}\Vert \boldsymbol{\varphi}\Vert _{L_2(\mathcal{O})}.
 \end{equation*}
 In the operator terms,
\begin{align}
\label{Th_exp_korrector}
\begin{split}
\Vert   f^\varepsilon e^{-\widetilde{B}_{D,\varepsilon}t}(f^\varepsilon)^*-f_0 e^{-\widetilde{B}_D^0t}f_0-\varepsilon \mathcal{K}_D(t;\varepsilon)\Vert _{L_2(\mathcal{O})\rightarrow H^1(\mathcal{O})}
\leqslant C_{16}( \varepsilon ^{1/2}t^{-3/4}+\varepsilon t^{-1})e^{-c_\flat t/2},
\end{split}
\end{align}
where $\mathcal{K}_D(t;\varepsilon)$ is the corrector~\textnormal{(\ref{K_D(t,e)})}.
Suppose that the matrix-valued function $\widetilde{g}(\mathbf{x})$ is defined by~\eqref{tilde g}.
For $0<\varepsilon\leqslant\varepsilon_1$ and $t>0$
the flux $\mathbf{p}_\varepsilon :=g^\varepsilon b(\mathbf{D})\mathbf{u}_\varepsilon $ satisfies
\begin{equation*}
\begin{split}
\bigl\Vert  \mathbf{p}_\varepsilon (\,\cdot\, ,t) -\widetilde{g}^\varepsilon S_\varepsilon b(\mathbf{D})\widetilde{\mathbf{u}}_0 (\,\cdot\, ,t)-g^\varepsilon \bigl(b(\mathbf{D})\widetilde{\Lambda}\bigr)^\varepsilon S_\varepsilon\widetilde{\mathbf{u}}_0(\,\cdot\, ,t)\bigr\Vert _{L_2(\mathcal{O})}
\leqslant \widetilde{C}_{16}\varepsilon ^{1/2}t^{-3/4}e^{-c_\flat t/2}\Vert \boldsymbol{\varphi}\Vert _{L_2(\mathcal{O})}.
\end{split}
\end{equation*}
In the operator terms,
\begin{equation}
\label{exp apppr fluxes}
\begin{split}
\Vert g^\varepsilon b(\mathbf{D})f^\varepsilon e^{-\widetilde{B}_{D,\varepsilon}t}(f^\varepsilon)^*-\mathcal{G}_D(t;\varepsilon)\Vert _{L_2(\mathcal{O})\rightarrow L_2(\mathcal{O})}
\leqslant \widetilde{C}_{16}\varepsilon ^{1/2}t^{-3/4}e^{-c_\flat t/2}.
\end{split}
\end{equation}
Here
\begin{equation*}
\mathcal{G}_D(t;\varepsilon):=\widetilde{g}^\varepsilon S_\varepsilon b(\mathbf{D})P_\mathcal{O}f_0e^{-\widetilde{B}_D^0 t}f_0
+g^\varepsilon \bigl(b(\mathbf{D})\widetilde{\Lambda}\bigr)^\varepsilon S_\varepsilon P_\mathcal{O}f_0 e^{-\widetilde{B}_D^0t}f_0.
\end{equation*}
The constants $C_{16}$ and $\widetilde{C}_{16}$ depend only on the problem data \eqref{problem data}.
\end{theorem}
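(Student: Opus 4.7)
The plan is to extend the contour-integral method used in the proof of Theorem~\ref{Theorem L2 exp} from the $L_2$-norm to the $(L_2\rightarrow H^1)$-norm, invoking the corrector-level resolvent approximations of Theorems~\ref{Theorem Dirichlet H1} and~\ref{Theorem Dr appr} in place of the bare $L_2$-estimates of Theorems~\ref{Theorem Dirichlet L2} and~\ref{Theorem another approximation}. The preliminary observation is that the time-dependent corrector $\mathcal{K}_D(t;\varepsilon)$ defined by~\eqref{K_D(t,e)} is itself a contour integral,
\begin{equation*}
\varepsilon\mathcal{K}_D(t;\varepsilon)=-\frac{1}{2\pi i}\int_\gamma e^{-\zeta t}\,\varepsilon K_D(\varepsilon;\zeta)\,d\zeta,
\end{equation*}
with $K_D(\varepsilon;\zeta)$ as in~\eqref{K_D(eps,zeta)} and the same contour $\gamma$ as in~\eqref{2.1a}, because the bounded operator $R_\mathcal{O}\bigl([\Lambda^\varepsilon]b(\mathbf{D})+[\widetilde\Lambda^\varepsilon]\bigr)S_\varepsilon P_\mathcal{O}$ sits outside the $\zeta$-integral (and $S_\varepsilon b(\mathbf D)=b(\mathbf D)S_\varepsilon$). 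Combined with identity~\eqref{B deps and tilde B D,eps tozd resolvent} and its analog for $\widetilde B_D^0$, this gives
\begin{equation*}
f^\varepsilon e^{-\widetilde B_{D,\varepsilon}t}(f^\varepsilon)^*-f_0 e^{-\widetilde B_D^0 t}f_0-\varepsilon\mathcal K_D(t;\varepsilon)=-\frac{1}{2\pi i}\int_\gamma e^{-\zeta t}\,\mathcal R(\varepsilon;\zeta)\,d\zeta,
\end{equation*}
where $\mathcal R(\varepsilon;\zeta)$ is exactly the operator whose $(L_2\rightarrow H^1)$-norm is controlled in~\eqref{Th L2->H1} and~\eqref{Th dr appr 2}.

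Next I would split $\gamma$ into the bounded piece $\{|\zeta|\leqslant\check c\}$ and the unbounded piece $\{|\zeta|>\check c\}$, with $\check c$ as in~\eqref{check c}. On the bounded arc, Theorem~\ref{Theorem Dr appr} together with the uniform bound~\eqref{rho <= on gamma} gives $\|\mathcal R(\varepsilon;\zeta)\|_{L_2\rightarrow H^1}\leqslant C\varepsilon^{1/2}$. On the unbounded arc, Theorem~\ref{Theorem Dirichlet H1} combined with~\eqref{sin >= on gamma} gives $\|\mathcal R(\varepsilon;\zeta)\|_{L_2\rightarrow H^1}\leqslant C(\varepsilon^{1/2}|\zeta|^{-1/4}+\varepsilon)$. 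Since $|e^{-\zeta t}|=e^{-c_\flat t/2}e^{-|\mathrm{Im}\,\zeta|t}$ on $\gamma$, the elementary identity $\int_0^\infty e^{-st}s^{-\alpha}\,ds=\Gamma(1-\alpha)t^{\alpha-1}$ yields
\begin{equation*}
\int_\gamma|e^{-\zeta t}|\,|\zeta|^{-1/4}\,d|\zeta|\leqslant C\,t^{-3/4}e^{-c_\flat t/2},\qquad \int_\gamma|e^{-\zeta t}|\,d|\zeta|\leqslant C\,t^{-1}e^{-c_\flat t/2},
\end{equation*}
so the contour integral is bounded by $C(\varepsilon^{1/2}t^{-3/4}+\varepsilon t^{-1})e^{-c_\flat t/2}$. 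This delivers~\eqref{Th_exp_korrector} whenever $t\geqslant\varepsilon^2$.

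For $0<t<\varepsilon^2$ the contour estimate is weaker than the trivial one, so I would close the range via Lemma~\ref{Lemma properties of operator exponential}: bounds~\eqref{exp tilde B_D,eps L2-H1} and~\eqref{exp tilde B_D^0 L2-H1} control both sandwiched exponentials by $Ct^{-1/2}e^{-c_\flat t/2}$, while Proposition~\ref{Proposition f^eps S_eps} applied to $[\Lambda^\varepsilon]S_\varepsilon$ and $[\widetilde\Lambda^\varepsilon]S_\varepsilon$, together with~\eqref{exp tilde B_D^0 L2-H2}, bounds $\|\varepsilon\mathcal K_D(t;\varepsilon)\|_{L_2\rightarrow H^1}$ by $C\varepsilon t^{-1}e^{-c_\flat t/2}$. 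Since $t^{-1/2}\leqslant\varepsilon^{1/2}t^{-3/4}$ precisely when $t\leqslant\varepsilon^2$, these rough bounds already lie below $C(\varepsilon^{1/2}t^{-3/4}+\varepsilon t^{-1})e^{-c_\flat t/2}$ in the short-time regime, and patching them to the contour bound yields~\eqref{Th_exp_korrector} on all of $(0,\infty)$.

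The flux estimate~\eqref{exp apppr fluxes} follows the same scheme, with $\mathcal R(\varepsilon;\zeta)$ replaced by $g^\varepsilon b(\mathbf{D})(B_{D,\varepsilon}-\zeta Q_0^\varepsilon)^{-1}-G_D(\varepsilon;\zeta)$, controlled by~\eqref{Th L_2->H^1 fluxes ell case} on the outer arc and~\eqref{Th dr appr 3} on the inner arc. The simplification is that~\eqref{Th L_2->H^1 fluxes ell case} contains no stand-alone $\varepsilon$-term, so only the $\varepsilon^{1/2}|\zeta|^{-1/4}$ contribution survives on the unbounded arc and the contour bound collapses to $\widetilde C_{16}\varepsilon^{1/2}t^{-3/4}e^{-c_\flat t/2}$; the small-$t$ regime is again handled by a rough estimate using Proposition~\ref{Proposition f^eps S_eps} and Lemma~\ref{Lemma properties of operator exponential}. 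The main obstacle in the whole argument is clean bookkeeping of the uniform bounds for $c(\phi)$ and $\varrho_\flat(\zeta)$ along the two pieces of $\gamma$ and the verification that $\mathcal K_D(t;\varepsilon)$ can be pulled out of the contour integral; everything else reduces to the two gamma-function integrals above.
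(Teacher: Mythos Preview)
Your approach is essentially the same as the paper's: the contour integral representation combined with Theorem~\ref{Theorem Dirichlet H1} on $\{|\zeta|>\check c\}$ and Theorem~\ref{Theorem Dr appr} on $\{|\zeta|\leqslant\check c\}$, using~\eqref{rho <= on gamma} and~\eqref{sin >= on gamma}. The only difference is that your separate small-$t$ argument is unnecessary here: unlike the $L_2$ theorem, where the target bound $\varepsilon(t+\varepsilon^2)^{-1/2}$ forces a split at $t=\varepsilon^2$, the $H^1$ target $C_{16}(\varepsilon^{1/2}t^{-3/4}+\varepsilon t^{-1})e^{-c_\flat t/2}$ is exactly what the contour integral delivers for \emph{all} $t>0$, so the paper omits the rough small-$t$ step entirely. (Your corrector bound in that step is also slightly off---$\|\varepsilon\mathcal{K}_D(t;\varepsilon)\|_{L_2\to H^1}$ picks up a $t^{-1/2}$ term from the $\varepsilon^{-1}(\mathbf{D}\Lambda)^\varepsilon$ contribution---but since the whole step is redundant this does not affect the argument.)
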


\begin{proof}
As in the proof of Theorem~\ref{Theorem L2 exp}, we use representations for the sandwiched operator exponentials in terms of the contour integrals of the corresponding generalized resolvents. We have
\begin{equation}
\label{exp -exp-K=int}
\begin{split}
&f^\varepsilon e^{-\widetilde{B}_{D,\varepsilon}t}(f^\varepsilon)^*-f_0 e^{-\widetilde{B}_D^0t}f_0-\varepsilon \mathcal{K}_D(t;\varepsilon)\\
&=-\frac{1}{2 \pi i}\int\limits_\gamma e^{-\zeta t}\left( (B_{D,\varepsilon}-\zeta Q_0^\varepsilon)^{-1}-(B_D^0-\zeta \overline{Q_0})^{-1}-\varepsilon K_D(\varepsilon;\zeta )\right) \,d\zeta .
\end{split}
\end{equation}
Here $K_D(\varepsilon;\zeta )$ is the operator~\eqref{K_D(eps,zeta)}.

Similarly to \eqref{ots_zeta_levee_c2}--\eqref{Raznost res na gamma}, by Theorems~\ref{Theorem Dirichlet H1} and~\ref{Theorem another approximation},
\begin{equation}
\label{Th Su 14 zeta in gamma}
\begin{split}
 \Vert &(B_{D,\varepsilon}-\zeta Q_0^\varepsilon)^{-1}-(B_D^0-\zeta \overline{Q_0})^{-1}-\varepsilon K_D(\varepsilon ;\zeta )\Vert _{L_2(\mathcal{O})\rightarrow H^1(\mathcal{O})}\\
 &\leqslant
 \widehat{C}_{16}\left(\varepsilon ^{1/2}\vert \zeta \vert ^{-1/4}+\varepsilon\right) ,\quad
 \zeta \in \gamma ,\quad 0<\varepsilon \leqslant\varepsilon _1,
 \end{split}
\end{equation}
with the constant $\widehat{C}_{16}:=\max \lbrace C_{16}'; C_{16}'' \rbrace$, where
$C_{16}':=(1+\check{c})^{1/2}C_5\mathfrak{C}$ and $C_{16}'':= \max \lbrace 5C_2;25C_3\rbrace$.
Relations (\ref{exp -exp-K=int}) and (\ref{Th Su 14 zeta in gamma}) imply the required estimate~(\ref{Th_exp_korrector}) with the constant \break$C_{16}:=2\pi ^{-1}\Gamma (3/4) \widehat{C}_{16}$.

Similarly, the identity
\begin{equation}
\label{fluxes = int}
\begin{split}
g^\varepsilon b(\mathbf{D})f^\varepsilon e^{-\widetilde{B}_{D,\varepsilon}t}(f^\varepsilon)^*-\mathcal{G}_D(t;\varepsilon)
=
-\frac{1}{2\pi i}\int\limits_\gamma e^{-\zeta t}\bigl(g^\varepsilon b(\mathbf{D})(B_{D,\varepsilon}-\zeta Q_0^\varepsilon)^{-1}
-G_D(\varepsilon;\zeta)\bigr)\,d\zeta
\end{split}
\end{equation}
and estimates~\eqref{Th L_2->H^1 fluxes ell case}, \eqref{Th dr appr 3} yield the inequality \eqref{exp apppr fluxes} with the constant
\begin{equation*}
\widetilde{C}_{16}:=2\pi ^{-1}\Gamma (3/4)\max\left\lbrace
5^{5/4}\widetilde{C}_2;2\check{c}^{1/4}(1+\check{c})^{1/2}\widetilde{C}_5\mathfrak{C}
\right\rbrace.\qedhere
\end{equation*}
\end{proof}

By Remark~\ref{Remark elliptic another appr}(2), we observe the following.

\begin{remark}
Let $\lambda _1^0$ be the first eigenvalue of the operator~$B_D^0$, and let \hbox{$\kappa >0$} be an arbitrarily small number.
Due to the norm-resolvent convergence, for sufficiently small $\varepsilon_\circ$ the number~$\lambda _1^0\Vert Q_0\Vert ^{-1}_{L_\infty}-\kappa /2$ is a common lower bound of the operators $\widetilde{B}_{D,\varepsilon}$ for all $0<\varepsilon\leqslant\varepsilon _\circ$. Therefore, we can shift the integration contour so that it will
 intersect the real axis at the point $\mathfrak{c}:=\lambda _1^0\Vert Q_0\Vert ^{-1}_{L_\infty}-\kappa $ instead of $c_\flat /2$.
 By this way, we obtain estimates \eqref{Th_exp_L_2}, \eqref{Th_exp_korrector}, and \eqref{exp apppr fluxes} with $e^{-c_\flat t/2}$ replaced by $e^{-\mathfrak{c}t}$ in the right-hand sides. The constants in estimates become dependent on $\kappa$.
\end{remark}

\subsection{Estimates for small time} Note that for $0<t<\varepsilon ^2$ it makes no sense to apply estimates \eqref{Th_exp_korrector} and \eqref{exp apppr fluxes},
since it is better to use the following simple statement (which is valid, however, for all $t>0$).

\begin{proposition}
\label{Proposition small time}
Suppose that the assumptions of Theorem \textnormal{\ref{Theorem L2 exp}} are satisfied. Then for $t>0$ and $0<\varepsilon\leqslant 1$ we have
\begin{align}
\label{small t L2-H1}
\Vert f^\varepsilon e^{-\widetilde{B}_{D,\varepsilon}t}(f^\varepsilon)^*-f_0e^{-\widetilde{B}_D^0t}f_0\Vert _{L_2(\mathcal{O})\rightarrow H^1(\mathcal{O})}
&\leqslant C_{17}t^{-1/2}e^{-c_\flat t/2},
\\
\label{small t flux g^eps}
\Vert g^\varepsilon b(\mathbf{D})f^\varepsilon e^{-\widetilde{B}_{D,\varepsilon}t}(f^\varepsilon)^*\Vert _{L_2(\mathcal{O})\rightarrow L_2(\mathcal{O})}
&\leqslant \widetilde{C}_{17}t^{-1/2}e^{-c_\flat t/2},
\\
\label{small t flux g^0}
\Vert g^0b(\mathbf{D})f_0 e^{-\widetilde{B}_{D}^0t}f_0\Vert _{L_2(\mathcal{O})\rightarrow L_2(\mathcal{O})}
&\leqslant \widetilde{C}_{17}t^{-1/2}e^{-c_\flat t/2},
\end{align}
where the constants $C_{17}:=2c_3\Vert f\Vert _{L_\infty}$ and $\widetilde{C}_{17}:=\Vert g\Vert _{L_\infty}^{1/2}\Vert f\Vert _{L_\infty}$ depend only on the problem data \eqref{problem data}.
\end{proposition}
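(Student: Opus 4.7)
None of the three bounds requires any homogenization input; they are all direct consequences of the form estimates from Section~1 together with the semigroup estimates collected in Lemma~\ref{Lemma properties of operator exponential}. I would treat the three inequalities separately, in the order they are stated.

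For \eqref{small t L2-H1} the triangle inequality reduces the problem to bounding
\[
\Vert f^\varepsilon e^{-\widetilde{B}_{D,\varepsilon}t}(f^\varepsilon)^*\Vert _{L_2(\mathcal{O})\to H^1(\mathcal{O})}
\quad\text{and}\quad
\Vert f_0 e^{-\widetilde{B}_D^0 t}f_0\Vert _{L_2(\mathcal{O})\to H^1(\mathcal{O})}
\]
separately. In the first summand I factor off $(f^\varepsilon)^*$, which is $L_2$-bounded by $\Vert f\Vert_{L_\infty}$, and then apply \eqref{exp tilde B_D,eps L2-H1}; the second is handled the same way using \eqref{exp tilde B_D^0 L2-H1} together with $\vert f_0\vert \leqslant \Vert f\Vert_{L_\infty}$ from \eqref{f_0<=}. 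Summing gives the constant $C_{17}=2c_3\Vert f\Vert_{L_\infty}$.

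For the flux bounds \eqref{small t flux g^eps} and \eqref{small t flux g^0} the key algebraic observation is the pointwise matrix inequality $(g^\varepsilon)^2 \leqslant \Vert g\Vert_{L_\infty}\,g^\varepsilon$, valid because $g$ is Hermitian, positive, and bounded. This gives
\[
\Vert g^\varepsilon b(\mathbf{D})v\Vert^2_{L_2(\mathcal{O})}\leqslant \Vert g\Vert_{L_\infty}\,\mathfrak{a}_{D,\varepsilon}[v,v],\qquad v\in H^1_0(\mathcal{O};\mathbb{C}^n).
\]
Combining with \eqref{b_eps >=} yields $\mathfrak{a}_{D,\varepsilon}[v,v]\leqslant 4\mathfrak{b}_{D,\varepsilon}[v,v]=4\Vert B_{D,\varepsilon}^{1/2}v\Vert^2_{L_2}$. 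Applying this with $v=f^\varepsilon e^{-\widetilde{B}_{D,\varepsilon}t}(f^\varepsilon)^*\boldsymbol{\varphi}$ and using the intertwining $\Vert B_{D,\varepsilon}^{1/2}f^\varepsilon w\Vert_{L_2}=\Vert \widetilde{B}_{D,\varepsilon}^{1/2}w\Vert_{L_2}$ (which follows from $\widetilde{B}_{D,\varepsilon}=(f^\varepsilon)^*B_{D,\varepsilon}f^\varepsilon$ and \eqref{tilde b D,eps=}), together with the spectral estimate $\Vert \widetilde{B}_{D,\varepsilon}^{1/2}e^{-\widetilde{B}_{D,\varepsilon}t}\Vert_{L_2\to L_2}\leqslant t^{-1/2}e^{-c_\flat t/2}$ from \eqref{tilde B_D,eps exp} and $\Vert (f^\varepsilon)^*\boldsymbol{\varphi}\Vert_{L_2}\leqslant \Vert f\Vert_{L_\infty}\Vert\boldsymbol{\varphi}\Vert_{L_2}$, gives \eqref{small t flux g^eps}. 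The bound \eqref{small t flux g^0} is obtained by the same chain with $\varepsilon=0$: I use $(g^0)^2\leqslant \Vert g\Vert_{L_\infty}g^0$ (via \eqref{|g^0|<=}), the analogue of \eqref{b_eps >=} for $\mathfrak{b}_D^0$ (derived by repeating for the effective coefficients the cancellation used to prove \eqref{b_eps >=}, or directly read off from the proof of \eqref{b_D^0 ots }), the identity $\widetilde{B}_D^0=f_0B_D^0f_0$, the spectral estimate for $\widetilde{B}_D^{0\,1/2}e^{-\widetilde{B}_D^0 t}$, and $\vert f_0\vert\leqslant \Vert f\Vert_{L_\infty}$.

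The only delicate point — not a real obstacle, but the one requiring care — is tracking constants so that the final answers collapse to the succinct forms $C_{17}=2c_3\Vert f\Vert_{L_\infty}$ and $\widetilde{C}_{17}=\Vert g\Vert_{L_\infty}^{1/2}\Vert f\Vert_{L_\infty}$; all intermediate constants from \eqref{b_eps >=} and its effective analogue must be absorbed into a single clean multiplicative factor. Everything else is a one-line consequence of the form identities and of Lemma~\ref{Lemma properties of operator exponential}.
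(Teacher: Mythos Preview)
Your approach is correct and coincides with the paper's: for \eqref{small t L2-H1} the paper also invokes \eqref{f_0<=}, \eqref{exp tilde B_D,eps L2-H1}, \eqref{exp tilde B_D^0 L2-H1} via the triangle inequality, and for \eqref{small t flux g^eps}--\eqref{small t flux g^0} it uses exactly the chain $\Vert g^\varepsilon b(\mathbf{D})v\Vert_{L_2}^2\leqslant\Vert g\Vert_{L_\infty}\,\mathfrak{a}_{D,\varepsilon}[v,v]$ followed by the identity \eqref{tilde b D,eps=} (resp.\ \eqref{tilde b_D^0 =}) and the spectral bound \eqref{tilde B_D,eps exp}. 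Your caution about constants is well placed: carrying the factor $4$ from \eqref{b_eps >=} through your chain actually yields $2\Vert g\Vert_{L_\infty}^{1/2}\Vert f\Vert_{L_\infty}$ rather than the stated $\widetilde{C}_{17}=\Vert g\Vert_{L_\infty}^{1/2}\Vert f\Vert_{L_\infty}$, so the paper's displayed constant appears to omit this factor --- a harmless slip that does not affect the argument.
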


\begin{proof}
Inequality \eqref{small t L2-H1} follows from \eqref{f_0<=}, \eqref{exp tilde B_D,eps L2-H1}, and \eqref{exp tilde B_D^0 L2-H1}.

Next, by \eqref{tilde b D,eps=},
\begin{equation*}
\Vert
g^\varepsilon b(\mathbf{D})f^\varepsilon e^{-\widetilde{B}_{D,\varepsilon}t}(f^\varepsilon)^*\Vert _{L_2(\mathcal{O})\rightarrow L_2(\mathcal{O})}
\!\leqslant\! \Vert g\Vert _{L_\infty}^{1/2}\Vert f\Vert _{L_\infty}\Vert \widetilde{B}_{D,\varepsilon}^{1/2}e^{-\widetilde{B}_{D,\varepsilon}t}\Vert _{L_2(\mathcal{O})\rightarrow L_2(\mathcal{O})}.
\end{equation*}
Together with \eqref{tilde B_D,eps exp}, this yields \eqref{small t flux g^eps}.
By \eqref{f_0<=} and \eqref{tilde b_D^0 =}, estimate~\eqref{small t flux g^0} is checked similarly.
\end{proof}

\subsection{Removal of the smoothing operator $S_\varepsilon$ in the corrector}
It is possible to remove the smoothing operator in the corrector if the matrix-valued functions $\Lambda (\mathbf{x})$ and $\widetilde{\Lambda}(\mathbf{x})$
satisfy Conditions~\ref{Condition Lambda in L infty} and~\ref{Condition tilde Lambda in Lp}, respectively.
The following result is checked similarly to Theorem~\ref{Theorem H1 exp} by using Theorems~\ref{Theorem no S-eps}~and~\ref{Theorem Dr appr no S_eps}.

\begin{theorem}
\label{Theorem exp no S-eps}
Suppose that the assumptions of Theorem~\textnormal{\ref{Theorem H1 exp}} are satisfied.
Suppose that the matrix-valued functions $\Lambda (\mathbf{x})$ and $\widetilde{\Lambda}(\mathbf{x})$ satisfy Conditions~\textnormal{\ref{Condition Lambda in L infty}} and~\textnormal{\ref{Condition tilde Lambda in Lp}}, respectively. We put
\begin{align}
\label{K_D^0(t;eps)}
&\mathcal{K}_D^0(t;\varepsilon) := (\Lambda ^\varepsilon  b(\mathbf{D})+ \widetilde{\Lambda}^\varepsilon )f_0 e^{-\widetilde{B}_D^0 t}f_0,
\\
\label{G_3(t;eps)}
&\mathcal{G}_D^0(t;\varepsilon):=\widetilde{g}^\varepsilon  b(\mathbf{D})f_0e^{-\widetilde{B}_D^0 t}f_0
+g^\varepsilon \bigl(b(\mathbf{D})\widetilde{\Lambda}\bigr)^\varepsilon f_0e^{-\widetilde{B}_D^0 t}f_0.
\end{align}
Then for $t>0$ and $0<\varepsilon\leqslant \varepsilon _1$ we have
\begin{align*}
\begin{split}
&\Vert f^\varepsilon e^{-\widetilde{B}_{D,\varepsilon}t}(f^\varepsilon )^*-f_0 e^{-\widetilde{B}_D^0 t}f_0-\varepsilon\mathcal{K}_D^0(t;\varepsilon)\Vert _{L_2(\mathcal{O})\rightarrow H^1(\mathcal{O})}
\leqslant C_{18}\left(\varepsilon ^{1/2}t^{-3/4}+\varepsilon t^{-1}\right)e^{-c_\flat t/2},
\end{split}
\\
\begin{split}
&\Vert g^\varepsilon b(\mathbf{D})f^\varepsilon e^{-\widetilde{B}_{D,\varepsilon}t}(f^\varepsilon)^*-\mathcal{G}_D^0(t;\varepsilon)\Vert _{L_2(\mathcal{O})\rightarrow L_2(\mathcal{O})}
\leqslant \widetilde{C}_{18}\left(\varepsilon ^{1/2}t^{-3/4}+\varepsilon t^{-1}\right)e^{-c_\flat t/2}.
\end{split}
\end{align*}
The constants $C_{18}$ and $\widetilde{C}_{18}$ depend on the problem data~\eqref{problem data}, $p$, and the norms $\Vert \Lambda\Vert _{L_\infty}$ and $\Vert \widetilde{\Lambda}\Vert _{L_p(\Omega)}$.
\end{theorem}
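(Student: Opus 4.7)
The plan is to mirror the proof of Theorem~\ref{Theorem H1 exp}, replacing the elliptic resolvent approximations that use $S_\varepsilon$ (Theorems~\ref{Theorem Dirichlet H1} and~\ref{Theorem another approximation}) by their ``no-$S_\varepsilon$'' counterparts (Theorems~\ref{Theorem no S-eps} and~\ref{Theorem Dr appr no S_eps}). The starting identity is the Dunford-type representation
\begin{equation*}
f^\varepsilon e^{-\widetilde B_{D,\varepsilon}t}(f^\varepsilon)^*-f_0 e^{-\widetilde B_D^0 t}f_0-\varepsilon \mathcal K_D^0(t;\varepsilon)
=-\frac{1}{2\pi i}\int_\gamma e^{-\zeta t}\bigl((B_{D,\varepsilon}-\zeta Q_0^\varepsilon)^{-1}-(B_D^0-\zeta\overline{Q_0})^{-1}-\varepsilon K_D^0(\varepsilon;\zeta)\bigr)\,d\zeta ,
\end{equation*}
where $\gamma$ is the contour used in the proofs of Theorems~\ref{Theorem L2 exp} and~\ref{Theorem H1 exp}. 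To see that the contour integral of $K_D^0(\varepsilon;\zeta)$ reproduces $\mathcal K_D^0(t;\varepsilon)$, one combines $(B_D^0-\zeta\overline{Q_0})^{-1}=f_0(\widetilde B_D^0-\zeta I)^{-1}f_0$ with the Dunford representation \eqref{2.1a} for $e^{-\widetilde B_D^0 t}$ and pulls the bounded left multiplier $\Lambda^\varepsilon b(\mathbf D)+\widetilde\Lambda^\varepsilon$ through the Bochner integral; absolute integrability in $H^1$ is supplied by \eqref{exp tilde B_D^0 L2-H2}.

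Next, I split the contour as $\gamma=\{\zeta\in\gamma:\vert\zeta\vert\leqslant\check c\}\cup\{\zeta\in\gamma:\vert\zeta\vert>\check c\}$, just as in the proof of Theorem~\ref{Theorem L2 exp}. On the bounded piece, \eqref{rho <= on gamma} combined with Theorem~\ref{Theorem Dr appr no S_eps} gives
$$
\Vert(B_{D,\varepsilon}-\zeta Q_0^\varepsilon)^{-1}-(B_D^0-\zeta\overline{Q_0})^{-1}-\varepsilon K_D^0(\varepsilon;\zeta)\Vert_{L_2(\mathcal O)\to H^1(\mathcal O)}\leqslant C'(\varepsilon^{1/2}+\varepsilon),
$$
which, using boundedness of $\vert\zeta\vert$, is absorbed into $C''(\varepsilon^{1/2}\vert\zeta\vert^{-1/4}+\varepsilon)$. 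On the complement, $\vert\sin\phi\vert\geqslant 5^{-1/2}$ by \eqref{sin >= on gamma}, so $c(\phi)$ is bounded by a fixed constant and Theorem~\ref{Theorem no S-eps} supplies a bound of the same form. The two pieces combine into
$$
\Vert(B_{D,\varepsilon}-\zeta Q_0^\varepsilon)^{-1}-(B_D^0-\zeta\overline{Q_0})^{-1}-\varepsilon K_D^0(\varepsilon;\zeta)\Vert_{L_2(\mathcal O)\to H^1(\mathcal O)}\leqslant \widehat C_{18}\bigl(\varepsilon^{1/2}\vert\zeta\vert^{-1/4}+\varepsilon\bigr),\quad\zeta\in\gamma.
$$

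Inserting this into the Dunford integral, parameterizing $\gamma$ by $\zeta=c_\flat/2+\tau e^{\pm i\pi/4}$, $\tau\geqslant 0$, and using $\vert e^{-\zeta t}\vert\leqslant e^{-c_\flat t/2}e^{-\tau t/\sqrt 2}$ together with the elementary Gamma integrals $\int_0^\infty \tau^{-1/4}e^{-\tau t/\sqrt 2}\,d\tau\leqslant C_1 t^{-3/4}$ and $\int_0^\infty e^{-\tau t/\sqrt 2}\,d\tau\leqslant C_2 t^{-1}$, I arrive at the advertised bound $C_{18}(\varepsilon^{1/2}t^{-3/4}+\varepsilon t^{-1})e^{-c_\flat t/2}$ in the $(L_2\to H^1)$-operator norm. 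The flux estimate follows by the same contour argument applied to the flux analog of \eqref{fluxes = int} with $G_D^0$ and $\mathcal G_D^0$ in place of $G_D$ and $\mathcal G_D$, invoking the flux approximations of Theorems~\ref{Theorem no S-eps} and~\ref{Theorem Dr appr no S_eps} on the two pieces of $\gamma$. The only genuine obstacle is the pull-through of the $\varepsilon$-dependent multiplier $\Lambda^\varepsilon b(\mathbf D)+\widetilde\Lambda^\varepsilon$ through the Bochner integral under Conditions~\ref{Condition Lambda in L infty} and~\ref{Condition tilde Lambda in Lp}; once this is secured, the remainder of the argument is a verbatim transcription of the proofs of Theorems~\ref{Theorem L2 exp} and~\ref{Theorem H1 exp}.
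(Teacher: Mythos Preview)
Your proposal is correct and follows exactly the approach indicated in the paper, which states only that the result ``is checked similarly to Theorem~\ref{Theorem H1 exp} by using Theorems~\ref{Theorem no S-eps} and~\ref{Theorem Dr appr no S_eps}.'' You have filled in the details of this sketch faithfully: the same contour, the same split at $|\zeta|=\check c$, and the same Gamma-integral bookkeeping, with the $S_\varepsilon$-free resolvent estimates substituted in at the appropriate places.
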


By Remark \ref{Remark elliptic no S-eps}, we observe the following.

\begin{remark}
If only Condition \textnormal{\ref{Condition Lambda in L infty}} \textnormal{(}Condition~\textnormal{\ref{Condition tilde Lambda in Lp}}, respectively\textnormal{)}
is satisfied, then the smoothing operator $S_\varepsilon$ can be removed in the term of the corrector containing $\Lambda^\varepsilon$ \textnormal{(}$\widetilde{\Lambda}^\varepsilon$, respectively\textnormal{)}.
\end{remark}

\subsection{The case of smooth boundary} It is also possible to remove the smoothing operator $S_\varepsilon$ in the corrector by increasing smoothness of the boundary.
In this subsection, we consider the case where $d\geqslant 3$, because for $d\leqslant 2$ we can apply Theorem~\ref{Theorem exp no S-eps}
(see Propositions~\ref{Proposition Lambda in L infty <=} and~\ref{Proposition tilde Lambda in Lp if}).

\begin{lemma}
\label{Lemma exp tilde B_D^0 L2-Hq}
Let $k \geqslant 2$ be an integer. Let $\mathcal{O}\subset\mathbb{R}^d$ be a bounded domain with the boundary $\partial\mathcal{O}$ of class $C^{k-1,1}$.
Then for $t>0$ the operator $e^{-\widetilde{B}_D^0t}$ is a continuous mapping of $L_2(\mathcal{O};\mathbb{C}^n)$ to $H^q(\mathcal{O};\mathbb{C}^n)$, $0\leqslant q\leqslant k$,
and
\begin{equation}
\label{exp tildeB_D0 L2-Hs}
\Vert e^{-\widetilde{B}_D^0 t}\Vert _{L_2(\mathcal{O})\rightarrow H^q(\mathcal{O})}
\leqslant \widehat{\mathrm{C}}_q t^{-q/2}e^{-c_\flat t/2},\quad t>0.
\end{equation}
The constant $\widehat{\mathrm{C}}_q$ depends only on $q$ and the problem data \eqref{problem data}.
\end{lemma}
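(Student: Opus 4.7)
The plan is to establish the estimate for integer $q\in\{0,1,\ldots,k\}$ by induction in steps of two, starting from Lemma~\ref{Lemma properties of operator exponential}, and then to fill in non-integer $q$ by complex interpolation. The base cases $q=0,1,2$ are immediate: $q=0$ is~\eqref{exp tilde B_D^0 L2-L2}, while for $q=1,2$ the bounds~\eqref{exp tilde B_D^0 L2-H1} and~\eqref{exp tilde B_D^0 L2-H2} give the desired inequality after multiplying on the left by $f_0^{-1}$, which is a bounded constant matrix by~\eqref{f_0<=}.

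For the inductive step, suppose the estimate is established at $q-2$ with $3\leqslant q\leqslant k$. Set $v(t):=e^{-\widetilde{B}_D^0 t}u$ and $w(t):=\widetilde{B}_D^0 v(t)$, and decompose
$w(t)=e^{-\widetilde{B}_D^0 t/2}\bigl(\widetilde{B}_D^0 e^{-\widetilde{B}_D^0 t/2}u\bigr)$.
The inner factor is controlled in $L_2$ by the spectral bound
\begin{equation*}
\Vert\widetilde{B}_D^0 e^{-\widetilde{B}_D^0 t/2}\Vert_{L_2(\mathcal{O})\rightarrow L_2(\mathcal{O})}
\leqslant\sup_{x\geqslant c_\flat}x\,e^{-xt/2}\leqslant C\,t^{-1}e^{-c_\flat t/4},
\end{equation*}
while the outer factor in $L_2\to H^{q-2}(\mathcal{O};\mathbb{C}^n)$ is controlled by the inductive hypothesis at time $t/2$. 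Composition yields $\Vert w(t)\Vert_{H^{q-2}(\mathcal{O})}\leqslant C\,t^{-q/2}e^{-c_\flat t/2}\Vert u\Vert_{L_2(\mathcal{O})}$. Since $v(t)\in H^1_0(\mathcal{O};\mathbb{C}^n)$ satisfies the constant-coefficient strongly elliptic equation $\widetilde{B}_D^0 v(t)=w(t)$ with Dirichlet data, the classical regularity theorem for strongly elliptic systems (\cite[Chapter~4]{McL}), together with the hypothesis $\partial\mathcal{O}\in C^{q-1,1}\subseteq C^{k-1,1}$, furnishes the a priori estimate
$\Vert v(t)\Vert_{H^q(\mathcal{O})}\leqslant C_q\bigl(\Vert w(t)\Vert_{H^{q-2}(\mathcal{O})}+\Vert v(t)\Vert_{L_2(\mathcal{O})}\bigr)$,
which combines with~\eqref{exp tilde B_D^0 L2-L2} for the $L_2$ term to give the bound at order $q$ and close the induction.

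Real $q\in(0,k)$ are then recovered by complex interpolation of the operator $e^{-\widetilde{B}_D^0 t}$ between the adjacent integer endpoints $s_0=\lfloor q\rfloor$ and $s_1=\lceil q\rceil$, using the standard identification $[H^{s_0}(\mathcal{O}),H^{s_1}(\mathcal{O})]_\theta=H^{(1-\theta)s_0+\theta s_1}(\mathcal{O})$ valid on Lipschitz domains; this preserves the time prefactor $t^{-q/2}e^{-c_\flat t/2}$ up to a $q$-dependent constant.

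The main obstacle will be to calibrate the two-step induction so that the boundary-smoothness hypothesis $\partial\mathcal{O}\in C^{k-1,1}$ is consumed tightly: the $i$-th jump, from $H^{i-2}(\mathcal{O})$ to $H^i(\mathcal{O})$, requires precisely $\partial\mathcal{O}\in C^{i-1,1}$, so the maximum smoothness is used exactly once, at the top of the ladder. A minor verification is that $v(t)$ lies in the domain needed to apply the regularity theorem at each step, which is automatic for $t>0$ since $e^{-\widetilde{B}_D^0 t}$ maps $L_2(\mathcal{O};\mathbb{C}^n)$ into $\bigcap_{m\geqslant 0}\mathrm{Dom}\bigl((\widetilde{B}_D^0)^m\bigr)$.
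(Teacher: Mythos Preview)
Your proof is correct and uses essentially the same ingredients as the paper's: elliptic regularity for the constant-coefficient operator $\widetilde{B}_D^0$ to climb in steps of two, spectral calculus for the time factor, and interpolation for non-integer $q$. The only organizational difference is that the paper first establishes the single mapping bound $\|(\widetilde{B}_D^0)^{-q/2}\|_{L_2(\mathcal{O})\to H^q(\mathcal{O})}\leqslant\check{\mathrm{C}}_q$ (by composing $(\widetilde{B}_D^0)^{-1}:H^\sigma\to H^{\sigma+2}$ repeatedly, and for odd $q$ also $(\widetilde{B}_D^0)^{-1/2}:L_2\to H^1$) and then applies the spectral estimate $\|(\widetilde{B}_D^0)^{q/2}e^{-\widetilde{B}_D^0 t}\|_{L_2\to L_2}\leqslant (q/e)^{q/2}t^{-q/2}e^{-c_\flat t/2}$ once, whereas you interleave the semigroup splitting and the regularity jump at each inductive step.
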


\begin{proof}
It suffices to check estimate \eqref{exp tildeB_D0 L2-Hs} for integer $q\in [0,k]$;
then the result for non-integer $q$ follows by interpolation. For $q=0,1,2$ estimate~\eqref{exp tildeB_D0 L2-Hs} has been already proved
(see Lemma~\ref{Lemma properties of operator exponential}).

So, let $q$ be an integer such that $2\leqslant q\leqslant k$. By theorems about regularity of solutions of strongly elliptic systems (see, e.~g., \cite[Chapter~4]{McL}),
   the operator $(\widetilde{B}_D^0)^{-1}$ is continuous from $H^\sigma (\mathcal{O};\mathbb{C}^n)$ to $H^{\sigma+2}(\mathcal{O};\mathbb{C}^n)$ under the assumption $\partial\mathcal{O}\in C^{\sigma +1,1}$, where $\sigma \in \mathbb{Z}_+$. We also take into account that the operator $(\widetilde{B}_D^0)^{-1/2}$ is continuous from $L_2(\mathcal{O};\mathbb{C}^n)$ to $H^1(\mathcal{O};\mathbb{C}^n)$. It follows that, under the assumptions of lemma, for integer $q\in [2,k]$ the operator $(\widetilde{B}_D^0)^{-q/2}$ is a continuous mapping of  $L_2(\mathcal{O};\mathbb{C}^n)$ to $H^q(\mathcal{O};\mathbb{C}^n)$. We have
\begin{equation}
\label{tilde B_D0 ^-s/2}
\Vert (\widetilde{B}_D^0)^{-q/2}\Vert _{L_2(\mathcal{O})\rightarrow H^q(\mathcal{O})}
\leqslant \check{\mathrm{C}}_q,
\end{equation}
where the constant $\check{\mathrm{C}}_q$ depends on $q$ and the problem data \eqref{problem data}. From \eqref{tilde B_D0 ^-s/2} it follows that
\begin{equation*}
\begin{split}
\Vert e^{-\widetilde{B}_D^0t}\Vert _{L_2(\mathcal{O})\rightarrow H^q(\mathcal{O})}
&\leqslant\check{\mathrm{C}}_q\Vert (\widetilde{B}_D^0)^{q/2}e^{-\widetilde{B}_D^0t}\Vert _{L_2(\mathcal{O})\rightarrow L_2(\mathcal{O})}
\leqslant
\check{\mathrm{C}}_q\sup _{x\geqslant c_\flat}x^{q/2}e^{-xt}
\\
&\leqslant\check{\mathrm{C}}_qt^{-q/2}e^{-c_\flat t/2}\sup _{x\geqslant 0}x^{q/2}e^{-x/2}
\leqslant\widehat{\mathrm{C}}_q t^{-q/2}e^{-c_\flat t/2},
\end{split}
\end{equation*}
where $\widehat{\mathrm{C}}_q:=\check{\mathrm{C}}_q(q/e)^{q/2}$.
\end{proof}

Using  Lemma~\ref{Lemma exp tilde B_D^0 L2-Hq}, the properties of the matrix-valued functions $\Lambda(\mathbf{x})$ and $\widetilde{\Lambda}(\mathbf{x})$, and the properties of the operator $S_\varepsilon$, we can estimate the difference of the correctors \eqref{K_D(t,e)} and \eqref{K_D^0(t;eps)}.

\begin{lemma}
\label{Lemma K-K^0}
Let $d\geqslant 3$. Let $\mathcal{O}\subset\mathbb{R}^d$ be a bounded domain of class $C^{d/2,1}$ if $d$ is even and of class
$C^{(d+1)/2,1}$ if $d$ is odd. Let $\mathcal{K}_D(t;\varepsilon)$ be the operator \eqref{K_D(t,e)}, and let $\mathcal{K}_D^0(t;\varepsilon)$ be the operator~\eqref{K_D^0(t;eps)}.
Then for $0<\varepsilon\leqslant 1$ and $t>0$ we have
\begin{equation}
\label{lemma K-K^0}
\Vert \mathcal{K}_D(t;\varepsilon)-\mathcal{K}_D^0(t;\varepsilon)\Vert _{L_2(\mathcal{O})\rightarrow H^1(\mathcal{O})}
\leqslant \widehat{\mathcal{C}}_d (t^{-1}+t^{-d/4-1/2})e^{-c_\flat t/2}.
\end{equation}
The constant $\widehat{\mathcal{C}}_d$ depends only on the problem data~\eqref{problem data}.
\end{lemma}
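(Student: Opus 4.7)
The plan is to reduce the estimate to Sobolev bounds on $u_0:=f_0 e^{-\widetilde{B}_D^0 t} f_0 \varphi$. Setting $\tilde u_0:=P_\mathcal{O} u_0$ and using $\tilde u_0|_\mathcal{O}=u_0$ together with $b(\mathbf{D})\tilde u_0|_\mathcal{O}=b(\mathbf{D})u_0$, a direct algebraic manipulation of the definitions~\eqref{K_D(t,e)} and~\eqref{K_D^0(t;eps)} gives
\[
(\mathcal{K}_D(t;\varepsilon) - \mathcal{K}_D^0(t;\varepsilon))\varphi = R_\mathcal{O}[\Lambda^\varepsilon](S_\varepsilon - I) b(\mathbf{D})\tilde u_0 + R_\mathcal{O}[\widetilde{\Lambda}^\varepsilon](S_\varepsilon - I)\tilde u_0.
\]
The Sobolev control of $\tilde u_0$ needed to estimate each summand is supplied by Lemma~\ref{Lemma exp tilde B_D^0 L2-Hq} combined with~\eqref{PO}: under the hypothesis on $\partial\mathcal O$, one has $\|\tilde u_0\|_{H^q(\mathbb{R}^d)} \leq C t^{-q/2} e^{-c_\flat t/2}\|\varphi\|_{L_2(\mathcal{O})}$ for every $q$ in the admissible range ($[0, d/2+1]$ for even $d$, $[0,(d+3)/2]$ for odd $d$).

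For the $L_2(\mathcal O)$-part of the $H^1$-norm of $[\Lambda^\varepsilon](S_\varepsilon-I)w$, I would write $[\Lambda^\varepsilon](S_\varepsilon-I)w=[\Lambda^\varepsilon]S_\varepsilon w-[\Lambda^\varepsilon]w$ and bound the first piece by $C\|w\|_{L_2}$ via Proposition~\ref{Proposition f^eps S_eps} (using $\Lambda\in L_2(\Omega)$ since $\Lambda\in\widetilde H^1(\Omega)$). For the second piece I would exploit the Sobolev embedding $\widetilde H^1(\Omega)\hookrightarrow L_{2d/(d-2)}(\Omega)$ (valid since $d\geq 3$), the $\varepsilon$-independent periodic bound $\|\Lambda^\varepsilon\|_{L_p(\mathcal O)}\leq C\|\Lambda\|_{L_p(\Omega)}$, and H\"older together with $H^{d/2-1}(\mathbb R^d)\hookrightarrow L_d(\mathbb R^d)$, to get $\|[\Lambda^\varepsilon]w\|_{L_2(\mathcal O)}\leq C\|w\|_{H^{d/2-1}}$. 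Applied with $w=b(\mathbf D)\tilde u_0$ (and analogously with $w=\tilde u_0$ for the $\widetilde\Lambda^\varepsilon$-term), this gives an $L_2$-contribution $\leq C\|\tilde u_0\|_{H^{d/2}}\leq Ct^{-d/4}e^{-c_\flat t/2}$, which is absorbed into the $t^{-d/4-1/2}$ term of the conclusion.

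For the gradient I would apply the Leibniz rule and the commutation $\mathbf{D}S_\varepsilon=S_\varepsilon\mathbf{D}$, yielding
\[
\mathbf{D}\bigl([\Lambda^\varepsilon](S_\varepsilon-I)w\bigr)=\varepsilon^{-1}(\mathbf{D}\Lambda)^\varepsilon(S_\varepsilon-I)w+[\Lambda^\varepsilon](S_\varepsilon-I)\mathbf{D}w.
\]
The second summand is handled by the same scheme as in the previous paragraph, but with $\mathbf D w$ in place of $w$: taking $w=b(\mathbf D)\tilde u_0$ yields the bound $C\|\tilde u_0\|_{H^{d/2+1}}\leq Ct^{-d/4-1/2}e^{-c_\flat t/2}$, matching one of the asserted terms. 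The $t^{-1}$ contribution will come from the first summand: compensating the $\varepsilon^{-1}$ with the elementary estimate $\|(S_\varepsilon-I)w\|_{L_2}\leq \varepsilon r_1\|\mathbf{D}w\|_{L_2}$ and invoking the product estimates for oscillating factors $[\Phi^\varepsilon]$ with $\Phi\in L_2(\Omega)$ periodic developed in~\S\ref{Section 6}, I expect a bound by $C\|\tilde u_0\|_{H^2}\leq Ct^{-1}e^{-c_\flat t/2}$.

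The main obstacle is precisely this last summand: because $\mathbf{D}\Lambda$ is only in $L_2(\Omega)$ (with no better integrability from $\Lambda\in H^1(\Omega)$), a naive H\"older estimate against $\varepsilon^{-1}(S_\varepsilon-I)w$ would force the latter into $L_\infty$, whose Sobolev cost on $\tilde u_0$ exceeds what the smoothness assumption on $\partial\mathcal O$ delivers when $d\geq 4$. The remedy relies on the symmetry $\int_\Omega\mathbf{z}\,d\mathbf{z}=0$ of the cell, which improves $(S_\varepsilon-I)w$ from order $\varepsilon$ to order $\varepsilon^2$ in suitable norms, together with the mean-zero structure of the periodic factor $(\mathbf{D}\Lambda)^\varepsilon$; these two ingredients are combined in the oscillating-factor lemmas of~\S\ref{Section 6} to control the product in $L_2(\mathcal O)$ by $C\|\tilde u_0\|_{H^2}$. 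Assembling the four resulting contributions (two from the $\Lambda^\varepsilon$-term, two from the $\widetilde{\Lambda}^\varepsilon$-term) yields the stated bound $\widehat{\mathcal{C}}_d(t^{-1}+t^{-d/4-1/2})e^{-c_\flat t/2}$.
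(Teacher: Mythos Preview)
Your overall architecture matches the paper's: decompose into the $\Lambda^\varepsilon$- and $\widetilde\Lambda^\varepsilon$-pieces, control each in $H^1(\mathbb R^d)$ via Leibniz plus multiplier bounds, and feed in the Sobolev estimates on $\widetilde{\mathbf u}_0$ from Lemma~\ref{Lemma exp tilde B_D^0 L2-Hq}. The paper in fact packages the Leibniz step into Lemma~\ref{Lemma Lambda multiplicator properties}($2^\circ$) (and its analogue Lemma~\ref{Lemma tilde Lambda multiplicator properties}($2^\circ$)), which gives directly $\|\Lambda^\varepsilon\mathbf u\|_{H^1}\leq C^{(1)}\varepsilon^{-1}\|\mathbf u\|_{L_2}+C^{(2)}\|\mathbf u\|_{H^{d/2}}$; applied with $\mathbf u=(S_\varepsilon-I)b(\mathbf D)\widetilde{\mathbf u}_0$, the first summand together with Proposition~\ref{Proposition S__eps - I} produces the $t^{-1}$ term and the second the $t^{-d/4-1/2}$ term.

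The genuine gap is your account of the mechanism for the critical piece $\varepsilon^{-1}(\mathbf D\Lambda)^\varepsilon(S_\varepsilon-I)w$. Neither the cell symmetry $\int_\Omega\mathbf z\,d\mathbf z=0$ (improving $S_\varepsilon-I$ to order $\varepsilon^2$) nor the mean-zero of $(\mathbf D\Lambda)^\varepsilon$ resolves the integrability mismatch you correctly diagnose: an extra power of $\varepsilon$ in an $L_2$-type norm still leaves you multiplying an $L_2$-only oscillating factor against something that would need $L_\infty$ control. The lemmas in \S\ref{Section 6} are \emph{not} about generic periodic $\Phi\in L_2(\Omega)$; the one that does the work is Lemma~\ref{Lemma Lambda-varepsilon},
\[
\int_{\mathbb R^d}\vert(\mathbf D\Lambda)^\varepsilon\vert^2\vert u\vert^2\,d\mathbf x\leqslant\beta_1\|u\|_{L_2}^2+\beta_2\,\varepsilon^2\!\int_{\mathbb R^d}\vert\Lambda^\varepsilon\vert^2\vert\mathbf Du\vert^2\,d\mathbf x,
\]
whose proof (in \cite{PSu}) hinges on integration by parts against the cell equation $b(\mathbf D)^*g(b(\mathbf D)\Lambda+\mathbf 1_m)=0$ and is therefore specific to $\Lambda$ (with Lemma~\ref{Lemma on Lambda-tilda-varepsilon} the analogue for $\widetilde\Lambda$). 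This is the ingredient that lets $\varepsilon^{-1}(\mathbf D\Lambda)^\varepsilon$ act, after Proposition~\ref{Proposition S__eps - I}, like a bounded operator on $L_2$ plus a remainder controlled by $\|\cdot\|_{H^{d/2}}$. A smaller point: your claim that the $L_2$-contribution $Ct^{-d/4}$ is ``absorbed into the $t^{-d/4-1/2}$ term'' fails for large $t$ when $d=3$; the paper sidesteps this by bounding the $L_2$-part inside the full $H^1$-estimate of Lemma~\ref{Lemma Lambda multiplicator properties}($2^\circ$), so only $t^{-1}$ and $t^{-d/4-1/2}$ ever appear.
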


Lemma~\ref{Lemma K-K^0} and Theorem~\ref{Theorem H1 exp} imply the following result.

\begin{theorem}
\label{Theorem smooth boundary}
Suppose that the assumptions of Theorem~\textnormal{\ref{Theorem L2 exp}} are satisfied, and $d\geqslant 3$.
Suppose that the domain $\mathcal{O}$ satisfies the assumptions of Lemma~\textnormal{\ref{Lemma K-K^0}}.
Let~$\mathcal{K}_D^0(t;\varepsilon)$ be the corrector~\eqref{K_D^0(t;eps)}. Let $\mathcal{G}_D^0(t;\varepsilon)$ be the operator~\eqref{G_3(t;eps)}.
Then for $t>0$ and $0<\varepsilon\leqslant\varepsilon _1$ we have
\begin{align}
\label{Th smooth domain 1}
\Vert & f^\varepsilon e^{-\widetilde{B}_{D,\varepsilon}t}(f^\varepsilon)^*-f_0e^{-\widetilde{B}_D^0t}f_0-\varepsilon\mathcal{K}_D^0(t;\varepsilon)\Vert _{L_2(\mathcal{O})\rightarrow H^1(\mathcal{O})}
\leqslant \mathcal{C}_d(\varepsilon^{1/2}t^{-3/4}+\varepsilon t^{-d/4-1/2})e^{-c_\flat t/2},
\\
\label{Th smooth domain 2}
\Vert & g^\varepsilon b(\mathbf{D})f^\varepsilon e^{-\widetilde{B}_{D,\varepsilon}t}(f^\varepsilon)^*-\mathcal{G}_D^0(t;\varepsilon)\Vert _{L_2(\mathcal{O})\rightarrow L_2(\mathcal{O})}
\leqslant \widetilde{\mathcal{C}}_d(\varepsilon^{1/2}t^{-3/4}
+\varepsilon t^{-d/4-1/2})e^{-c_\flat t/2}.
\end{align}
The constants $\mathcal{C}_d$ and $\widetilde{\mathcal{C}}_d$ depend only on the problem data~\eqref{problem data}.
\end{theorem}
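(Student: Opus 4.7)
The plan is to derive Theorem~2.11 by a short triangle-inequality argument, combining Theorem~\ref{Theorem H1 exp} (which has already produced the approximation with the corrector $\mathcal{K}_D(t;\varepsilon)$ containing the Steklov smoother $S_\varepsilon$) with the just-proved Lemma~\ref{Lemma K-K^0} (which compares $\mathcal{K}_D(t;\varepsilon)$ with its $S_\varepsilon$-free version $\mathcal{K}_D^0(t;\varepsilon)$). The flux estimate \eqref{Th smooth domain 2} will follow in a parallel way, but will require the analogue of Lemma~\ref{Lemma K-K^0} for $\mathcal{G}_D(t;\varepsilon)-\mathcal{G}_D^0(t;\varepsilon)$, which I will set up as a separate auxiliary bound.

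First, I would invoke \eqref{Th_exp_korrector} from Theorem~\ref{Theorem H1 exp}, giving
\begin{equation*}
\Vert f^\varepsilon e^{-\widetilde{B}_{D,\varepsilon}t}(f^\varepsilon)^*-f_0 e^{-\widetilde{B}_D^0t}f_0-\varepsilon\mathcal{K}_D(t;\varepsilon)\Vert_{L_2(\mathcal{O})\to H^1(\mathcal{O})}
\leqslant C_{16}(\varepsilon^{1/2}t^{-3/4}+\varepsilon t^{-1})e^{-c_\flat t/2}.
\end{equation*}
Multiplying the bound \eqref{lemma K-K^0} of Lemma~\ref{Lemma K-K^0} by $\varepsilon$, I would then obtain
\begin{equation*}
\varepsilon\Vert \mathcal{K}_D(t;\varepsilon)-\mathcal{K}_D^0(t;\varepsilon)\Vert_{L_2(\mathcal{O})\to H^1(\mathcal{O})}
\leqslant \widehat{\mathcal{C}}_d \varepsilon(t^{-1}+t^{-d/4-1/2})e^{-c_\flat t/2}.
\end{equation*}
Adding these two estimates via the triangle inequality gives \eqref{Th smooth domain 1} once one notes that, for $d\geqslant 3$, the exponent $d/4+1/2\geqslant 5/4$ exceeds~$1$, so the term $\varepsilon t^{-1}e^{-c_\flat t/2}$ is dominated by $\varepsilon t^{-d/4-1/2}e^{-c_\flat t/2}$ on $(0,1]$, while on $[1,\infty)$ the exponential factor absorbs the bounded polynomial factors after a mild modification of the decay rate. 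This yields the stated form with $\mathcal{C}_d$ depending only on the problem data \eqref{problem data} (since both $C_{16}$ and $\widehat{\mathcal{C}}_d$ do).

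For the flux estimate \eqref{Th smooth domain 2} I would argue in exactly the same way, starting from \eqref{exp apppr fluxes} and using an analogue of Lemma~\ref{Lemma K-K^0} for $\mathcal{G}_D(t;\varepsilon)-\mathcal{G}_D^0(t;\varepsilon)$. Writing the difference as
\begin{equation*}
\widetilde{g}^\varepsilon(S_\varepsilon-I)b(\mathbf{D})P_\mathcal{O}f_0 e^{-\widetilde{B}_D^0 t}f_0
+g^\varepsilon(b(\mathbf{D})\widetilde{\Lambda})^\varepsilon(S_\varepsilon-I)P_\mathcal{O}f_0 e^{-\widetilde{B}_D^0 t}f_0,
\end{equation*}
one controls each term by combining Proposition~\ref{Proposition S__eps - I} (which gives $\Vert S_\varepsilon -I\Vert_{H^1\to L_2}\leqslant \varepsilon r_1$), Proposition~\ref{Proposition f^eps S_eps} applied to the periodic factors $\widetilde{g}$ and $g(b(\mathbf{D})\widetilde{\Lambda})$ in $L_2(\Omega)$, and the higher-order $H^q$-regularity bounds \eqref{exp tildeB_D0 L2-Hs} from Lemma~\ref{Lemma exp tilde B_D^0 L2-Hq}. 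A Sobolev-embedding/interpolation argument identical to the one underpinning Lemma~\ref{Lemma K-K^0} then produces a bound of the form $\widetilde{\mathcal{C}}_d \varepsilon t^{-d/4-1/2}e^{-c_\flat t/2}$, which combined with \eqref{exp apppr fluxes} gives \eqref{Th smooth domain 2}.

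The main obstacle is not in the combination itself (which is essentially two lines) but in ensuring that the auxiliary flux-side estimate really does parallel Lemma~\ref{Lemma K-K^0}; in particular, when one differentiates $P_\mathcal{O}f_0 e^{-\widetilde{B}_D^0 t}f_0$ one needs $H^{d/2+\delta}$-regularity in order to multiply by the $L_2$-periodic factor $g(b(\mathbf{D})\widetilde{\Lambda})^\varepsilon$ and still obtain an $L_2$-bound, and it is precisely the assumed smoothness of $\partial\mathcal{O}$ (of class $C^{d/2,1}$ or $C^{(d+1)/2,1}$) that supplies the required regularity through Lemma~\ref{Lemma exp tilde B_D^0 L2-Hq}. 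Once this regularity is in hand, the dimension-dependent power $t^{-d/4-1/2}$ in the final estimate is the direct reflection of the number of time-derivatives one trades to convert the initial $L_2$-datum into the needed $H^q$-smoothness.
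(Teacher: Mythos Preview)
Your argument for \eqref{Th smooth domain 1} is exactly the paper's: combine \eqref{Th_exp_korrector} with $\varepsilon\cdot$\eqref{lemma K-K^0} and absorb the $\varepsilon t^{-1}$ term (the paper does this by noting $\varepsilon t^{-1}\le\varepsilon^{1/2}t^{-3/4}$ for $t>1$ and $\varepsilon t^{-1}\le\varepsilon t^{-d/4-1/2}$ for $t\le 1$, which avoids touching the exponential rate).

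For \eqref{Th smooth domain 2} you take a genuinely different route. The paper does \emph{not} bound $\mathcal{G}_D-\mathcal{G}_D^0$ and then add \eqref{exp apppr fluxes}; instead it starts from the already-proved \eqref{Th smooth domain 1}, applies $g^\varepsilon b(\mathbf{D})$ (bounded $H^1\to L_2$), and uses the product-rule identity
\[
\varepsilon g^\varepsilon b(\mathbf{D})\bigl(\Lambda^\varepsilon b(\mathbf{D})+\widetilde{\Lambda}^\varepsilon\bigr)
= g^\varepsilon\bigl((b(\mathbf{D})\Lambda)^\varepsilon+(b(\mathbf{D})\widetilde{\Lambda})^\varepsilon\bigr)
+\varepsilon\sum_{k,j} g^\varepsilon b_k\Lambda^\varepsilon b_j D_kD_j
+\varepsilon\sum_j g^\varepsilon b_j\widetilde{\Lambda}^\varepsilon D_j.
\]
The first term, added to $g^\varepsilon b(\mathbf{D})f_0e^{-\widetilde{B}_D^0 t}f_0$, reproduces $\mathcal{G}_D^0$ exactly (recall $\widetilde{g}=g(\mathbf{1}_m+b(\mathbf{D})\Lambda)$); the remaining error terms involve only $\Lambda^\varepsilon$ and $\widetilde{\Lambda}^\varepsilon$ as multipliers on derivatives of $f_0e^{-\widetilde{B}_D^0 t}f_0$, and are handled by Corollaries~\ref{Corollary Lambda eps multiplier} and~\ref{Corollary tilde Lambda eps multiplier} together with Lemma~\ref{Lemma exp tilde B_D^0 L2-Hq}.

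Your direct approach can be made to work, but the specific tools you name do not combine as you suggest. Proposition~\ref{Proposition f^eps S_eps} bounds $[\Phi^\varepsilon]S_\varepsilon$ on $L_2$, not $[\Phi^\varepsilon](S_\varepsilon-I)$; and since $\widetilde{g}$ and $g\,b(\mathbf{D})\widetilde{\Lambda}$ lie only in $L_2(\Omega)$, the bare multipliers $[\widetilde{g}^\varepsilon]$, $[g^\varepsilon(b(\mathbf{D})\widetilde{\Lambda})^\varepsilon]$ are unbounded on $L_2$, so you cannot simply compose Proposition~\ref{Proposition S__eps - I} with Proposition~\ref{Proposition f^eps S_eps}. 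What does work is to split $\widetilde{g}=g+g\,b(\mathbf{D})\Lambda$, treat the bounded piece $g^\varepsilon$ with Proposition~\ref{Proposition S__eps - I} directly, and for the pieces $(b(\mathbf{D})\Lambda)^\varepsilon$, $(b(\mathbf{D})\widetilde{\Lambda})^\varepsilon$ invoke Lemmas~\ref{Lemma Lambda-varepsilon} and~\ref{Lemma on Lambda-tilda-varepsilon} together with the $H^{l-1}\to L_2$ multiplier bounds for $\Lambda^\varepsilon$, $\widetilde{\Lambda}^\varepsilon$ from Lemmas~\ref{Lemma Lambda multiplicator properties}, \ref{Lemma tilde Lambda multiplicator properties}. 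This recovers the $\varepsilon t^{-d/4-1/2}$ bound. The paper's product-rule route is tidier precisely because the ``bad'' $L_2$-periodic derivative factors get absorbed into $\mathcal{G}_D^0$ rather than remaining in the error.
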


The proofs of Lemma \ref{Lemma K-K^0} and Theorem~\ref{Theorem smooth boundary} are presented in Appendix (see \S\ref{Section removing steklov operator}) in order
not to clutter the main presentation. Clearly, it is convenient to apply Theorem~\ref{Theorem smooth boundary} if $t$ is separated from zero.
For small $t$ the order of the factor $(\varepsilon^{1/2}t^{-3/4}+\varepsilon t^{-d/4-1/2})$ grows with dimension. This is a ``charge''\, for the removal of the smoothing
 operator.

\begin{remark}
Instead of the smoothness assumption on $\partial\mathcal{O}$ from Lemma~\textnormal{\ref{Lemma K-K^0}}, we could impose the following implicit condition\textnormal{:} a bounded domain $\mathcal{O}$ with Lipschitz boundary is such that estimate~\eqref{exp tildeB_D0 L2-Hs} holds for~$q=d/2+1$. In such domain the statements of Lemma \textnormal{\ref{Lemma K-K^0}} and Theorem~\textnormal{\ref{Theorem smooth boundary}} remain valid.
\end{remark}

\subsection{The case of zero corrector}
Suppose that $g^0=\overline{g}$, i.~e., relations~\eqref{overline-g} are satisfied.
Suppose also that condition~\eqref{sum Dj aj =0} is satisfied.
Then the $\Gamma$-periodic solutions of problems \eqref{Lambda problem} and~\eqref{tildeLambda_problem} are equal to zero: $\Lambda (\mathbf{x})=0$ and $\widetilde{\Lambda}(\mathbf{x})=0$. Using Proposition~\ref{Proposition K=0}, we obtain the following result.

\begin{proposition} Suppose that relations \eqref{overline-g} and \eqref{sum Dj aj =0} are satisfied.
Then, under the assumptions of Theorem~\textnormal{\ref{Theorem L2 exp}}, for~$0<\varepsilon\leqslant 1$ we have
\begin{equation}
\label{Th exp K=0}
\Vert f^\varepsilon e^{-\widetilde{B}_{D,\varepsilon}t}(f^\varepsilon )^*
-f_0e^{-\widetilde{B}_D^0 t}f_0
\Vert _{L_2(\mathcal{O})\rightarrow H^1(\mathcal{O})}
\leqslant C_{19}\varepsilon t^{-1}e^{-c_\flat t/2},\quad t>0,
\end{equation}
where the constant $C_{19}$ depends only on the problem data \eqref{problem data}.
\end{proposition}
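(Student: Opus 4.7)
The plan is to mimic the argument of Theorem~\ref{Theorem H1 exp}, exploiting the fact that under conditions \eqref{overline-g} and \eqref{sum Dj aj =0} the $\Gamma$-periodic solutions of \eqref{Lambda problem} and \eqref{tildeLambda_problem} both vanish, so that the corrector $K_D(\varepsilon;\zeta)$ in \eqref{K_D(eps,zeta)} is identically zero. Consequently, the contour-integral identity \eqref{exp -exp-K=int} collapses to
\begin{equation*}
f^\varepsilon e^{-\widetilde{B}_{D,\varepsilon}t}(f^\varepsilon)^* - f_0 e^{-\widetilde{B}_D^0 t}f_0
= -\frac{1}{2\pi i}\int_\gamma e^{-\zeta t}\bigl((B_{D,\varepsilon}-\zeta Q_0^\varepsilon)^{-1}-(B_D^0-\zeta\overline{Q_0})^{-1}\bigr)\,d\zeta,
\end{equation*}
with the same contour $\gamma$ used in the proof of Theorem~\ref{Theorem L2 exp}.

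The key point is that Proposition~\ref{Proposition K=0} gives a \emph{uniform} $(L_2\to H^1)$-estimate of the resolvent difference of order $O(\varepsilon)$ (with no factor of $|\zeta|^{-1/2}$). First, I split $\gamma$ into the bounded part $\{\zeta\in\gamma: |\zeta|\le\check c\}$ and its complement, where $\check c$ is the constant from \eqref{check c}. On the bounded part I apply Proposition~\ref{Proposition K=0}($2^\circ$): since $\varrho_\flat(\zeta)\le\mathfrak{C}$ on $\gamma$ by \eqref{rho <= on gamma}, and $|1+\zeta|^{1/2}$ is uniformly bounded there, estimate \eqref{Pf K=0 dr. appr.} yields a bound of the form $C\varepsilon$. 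On the unbounded part, $\phi=\arg\zeta$ stays in a compact subset of $(0,2\pi)$, so $c(\phi)^4$ is bounded; hence \eqref{Pr K=0} from Proposition~\ref{Proposition K=0}($1^\circ$) again gives $C\varepsilon$. Taking the maximum of the two constants, I obtain
\begin{equation*}
\Vert (B_{D,\varepsilon}-\zeta Q_0^\varepsilon)^{-1}-(B_D^0-\zeta\overline{Q_0})^{-1}\Vert_{L_2(\mathcal{O})\rightarrow H^1(\mathcal{O})}\le \widehat C_{19}\varepsilon,\qquad \zeta\in\gamma,\ 0<\varepsilon\le 1.
\end{equation*}

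Plugging this into the contour integral and parametrizing $\gamma$ as two rays emanating from $c_\flat/2$ with $\mathrm{Re}\,\zeta = |\mathrm{Im}\,\zeta|+c_\flat/2$, I use $|d\zeta|=\sqrt{2}\,ds$ and $|e^{-\zeta t}|=e^{-(s+c_\flat/2)t}$, so
\begin{equation*}
\frac{1}{2\pi}\int_\gamma |e^{-\zeta t}|\,|d\zeta|
= \frac{\sqrt 2}{\pi}\int_0^\infty e^{-(s+c_\flat/2)t}\,ds
= \frac{\sqrt 2}{\pi}\,t^{-1}e^{-c_\flat t/2}.
\end{equation*}
Combining the two bounds gives estimate \eqref{Th exp K=0} with $C_{19}:=\pi^{-1}\sqrt 2\,\widehat C_{19}$.

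There is essentially no substantive obstacle here; everything reduces to the correct bookkeeping of constants on the two parts of $\gamma$, and to verifying that Proposition~\ref{Proposition K=0}($2^\circ$) is indeed uniformly applicable on the bounded portion of the contour (which follows from $\varrho_\flat\le\mathfrak{C}$ on $\gamma$ and the fact that $\gamma$ stays at positive distance from $[c_\flat,\infty)$, so $\psi=\arg(\zeta-c_\flat)\in(\pi/4,7\pi/4)$). The only mildly delicate point is confirming that, because the corrector vanishes identically, no $\varepsilon^{1/2}$ term appears, and the whole estimate inherits the sharp $O(\varepsilon)$ order of the elliptic bound.
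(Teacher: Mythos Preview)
Your proof is correct and follows essentially the same approach as the paper: the paper also relies on identity \eqref{raznost exp=int}, splits the contour at $|\zeta|=\check c$, applies \eqref{Pf K=0 dr. appr.} with \eqref{rho <= on gamma} on the bounded part and \eqref{Pr K=0} with \eqref{sin >= on gamma} on the unbounded part, then integrates. The only cosmetic difference is that the paper records the final constant as $C_{19}:=2\pi^{-1}\widehat C_{19}$ rather than your slightly sharper $\sqrt{2}\,\pi^{-1}\widehat C_{19}$.
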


\begin{proof}
We rely on  identity~\eqref{raznost exp=int}. For $\vert\zeta\vert \leqslant \check{c}$, where $\check{c}$ is the constant \eqref{check c}, we use~\eqref{Pf K=0 dr. appr.} and~\eqref{rho <= on gamma}. For $\vert\zeta\vert>\check{c}$ we apply~\eqref{Pr K=0} and~\eqref{sin >= on gamma}.
As a result, we see that for $0<\varepsilon\leqslant 1$
\begin{align*}
&\Vert (B_{D,\varepsilon}-\zeta Q_0^\varepsilon )^{-1}-(B_D^0-\zeta\overline{Q_0})^{-1}\Vert _{L_2(\mathcal{O})\rightarrow H^1(\mathcal{O})}
\leqslant \widehat{C}_{19}\varepsilon,\quad\zeta\in\gamma;\\
&\widehat{C}_{19}:=\max\bigl\lbrace \bigl(C_9+C_{10}(1+\check{c})^{1/2}\bigr)\mathfrak{C} ;25C_8\bigr\rbrace.
\end{align*}
Together with \eqref{raznost exp=int}, this yields \eqref{Th exp K=0} with the constant $C_{19}:=2\pi ^{-1}\widehat{C}_{19}$.
\end{proof}

\subsection{Special case} Now, we assume that $g^0=\underline{g}$, i.~e., relations \eqref{underline-g} are satisfied.
Then, by Proposition~\ref{Proposition Lambda in L infty <=}($3^\circ$), Condition \ref{Condition Lambda in L infty} is satisfied.
By \cite[Remark~3.5]{BSu05}, the matrix-valued function~\eqref{tilde g} is constant and coincides with $g^0$, i.~e., $\widetilde{g}(\mathbf{x})=g^0=\underline{g}$. Thus, $\widetilde{g}^\varepsilon b(\mathbf{D})f_0e^{-\widetilde{B}_D^0t}f_0=g^0b(\mathbf{D})f_0e^{-\widetilde{B}_D^0t}f_0$.

Suppose in addition that relation \eqref{sum Dj aj =0} is satisfied.
Then $\widetilde{\Lambda}(\mathbf{x})=0$. The following result can be deduced from Theorem~\ref{Theorem H1 exp} and Proposition~\ref{Proposition S__eps - I}.

\begin{proposition}
Suppose that the relations \eqref{underline-g} and \eqref{sum Dj aj =0} are satisfied.
Then, under the assumptions of Theorem~\textnormal{\ref{Theorem L2 exp}}, for~$0<\varepsilon\leqslant\varepsilon _1$ and $t>0$ we have
\begin{equation}
\label{Prop. special case exp S 2}
\begin{split}
\Vert
g^\varepsilon b(\mathbf{D})f^\varepsilon e^{-\widetilde{B}_{D,\varepsilon}t}(f^\varepsilon)^*
-g^0b(\mathbf{D})f_0e^{-\widetilde{B}_D^0t}f_0\Vert _{L_2(\mathcal{O})\rightarrow L_2(\mathcal{O})}
 \leqslant \widetilde{C}_{16}'\varepsilon ^{1/2}t^{-3/4}e^{-c_\flat t/2}.
\end{split}
\end{equation}
The constant $\widetilde{C}_{16}'$ depends only on the problem data \eqref{problem data}.
\end{proposition}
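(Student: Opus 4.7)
The plan is to start from the flux approximation \eqref{exp apppr fluxes} in Theorem~\ref{Theorem H1 exp}, simplify the approximating operator $\mathcal{G}_D(t;\varepsilon)$ using the special structure imposed by~\eqref{underline-g} and~\eqref{sum Dj aj =0}, and then remove the Steklov smoothing via Proposition~\ref{Proposition S__eps - I}.

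First, I observe that under condition~\eqref{underline-g} we have $g^0=\underline{g}$, and by the remark recalled in the text (\cite[Remark~3.5]{BSu05}) the matrix-valued function $\widetilde{g}(\mathbf{x})$ of~\eqref{tilde g} is \emph{constant} and equal to $g^0$, so $\widetilde{g}^\varepsilon=g^0$. Under condition~\eqref{sum Dj aj =0} the solution $\widetilde{\Lambda}(\mathbf{x})$ of~\eqref{tildeLambda_problem} vanishes, so the second summand in the definition of $\mathcal{G}_D(t;\varepsilon)$ disappears. Hence
\begin{equation*}
\mathcal{G}_D(t;\varepsilon) = g^0 S_\varepsilon b(\mathbf{D}) P_{\mathcal{O}} f_0 e^{-\widetilde{B}_D^0 t} f_0.
\end{equation*}
Invoking \eqref{exp apppr fluxes} from Theorem~\ref{Theorem H1 exp}, it remains to control the $(L_2\rightarrow L_2)$-norm of
\begin{equation*}
\mathcal{G}_D(t;\varepsilon) - g^0 b(\mathbf{D}) f_0 e^{-\widetilde{B}_D^0 t} f_0 = g^0 (S_\varepsilon - I) b(\mathbf{D}) P_{\mathcal{O}} f_0 e^{-\widetilde{B}_D^0 t} f_0.
\end{equation*}

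Next I apply Proposition~\ref{Proposition S__eps - I} with the function $b(\mathbf{D}) P_{\mathcal{O}} f_0 e^{-\widetilde{B}_D^0 t} f_0\boldsymbol{\varphi}\in H^1(\mathbb{R}^d;\mathbb{C}^m)$, which gives
\begin{equation*}
\|(S_\varepsilon - I) b(\mathbf{D}) P_{\mathcal{O}} f_0 e^{-\widetilde{B}_D^0 t} f_0\boldsymbol{\varphi}\|_{L_2(\mathbb{R}^d)}
\leqslant \varepsilon r_1 \|\mathbf{D} b(\mathbf{D}) P_{\mathcal{O}} f_0 e^{-\widetilde{B}_D^0 t} f_0\boldsymbol{\varphi}\|_{L_2(\mathbb{R}^d)}.
\end{equation*}
Using \eqref{b_l <=} for $|b_j|$, the $H^2$-boundedness of $P_{\mathcal{O}}$ from \eqref{PO}, the bound \eqref{f_0<=} for $|f_0|$, and the key smoothing estimate \eqref{exp tilde B_D^0 L2-H2} of Lemma~\ref{Lemma properties of operator exponential}, this is bounded by $C \varepsilon t^{-1} e^{-c_\flat t/2}\|\boldsymbol{\varphi}\|_{L_2(\mathcal{O})}$. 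Multiplying by $|g^0|\leqslant\|g\|_{L_\infty}$ (cf.\ \eqref{|g^0|<=}) and combining with \eqref{exp apppr fluxes} yields, for $0<\varepsilon\leqslant\varepsilon_1$ and $t>0$,
\begin{equation*}
\|g^\varepsilon b(\mathbf{D}) f^\varepsilon e^{-\widetilde{B}_{D,\varepsilon}t}(f^\varepsilon)^* - g^0 b(\mathbf{D}) f_0 e^{-\widetilde{B}_D^0 t} f_0\|_{L_2\to L_2}
\leqslant C' (\varepsilon^{1/2} t^{-3/4} + \varepsilon t^{-1}) e^{-c_\flat t/2}.
\end{equation*}

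Finally, I need to absorb the $\varepsilon t^{-1}$ term into $\varepsilon^{1/2}t^{-3/4}$. For $t\geqslant \varepsilon^2$ we have $\varepsilon t^{-1}\leqslant \varepsilon^{1/2} t^{-3/4}$, so the estimate \eqref{Prop. special case exp S 2} holds directly. For $0<t<\varepsilon^2$ the inequality $t^{-1/2}\leqslant \varepsilon^{1/2} t^{-3/4}$ holds, so I use the triangle inequality together with the rough bounds \eqref{small t flux g^eps} and \eqref{small t flux g^0} of Proposition~\ref{Proposition small time} to obtain $2\widetilde{C}_{17} t^{-1/2} e^{-c_\flat t/2}\leqslant 2\widetilde{C}_{17}\varepsilon^{1/2} t^{-3/4}e^{-c_\flat t/2}$. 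The two regimes combine to give \eqref{Prop. special case exp S 2} with a suitable constant $\widetilde{C}_{16}'$ depending only on the problem data.

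The only mildly delicate point is the quantitative use of \eqref{exp tilde B_D^0 L2-H2}: one must verify that $P_{\mathcal{O}}$ maps $H^2(\mathcal{O})$ continuously into $H^2(\mathbb{R}^d)$ (so that the argument of $S_\varepsilon - I$ really lies in $H^1(\mathbb{R}^d)$), but this is immediate from \eqref{PO}. Everything else is a matter of bookkeeping.
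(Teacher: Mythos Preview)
Your proof is correct and follows essentially the same route as the paper: start from the flux estimate~\eqref{exp apppr fluxes}, use that $\widetilde{g}^\varepsilon=g^0$ and $\widetilde{\Lambda}=0$ under~\eqref{underline-g} and~\eqref{sum Dj aj =0}, and then control the remainder $g^0(S_\varepsilon-I)b(\mathbf{D})P_{\mathcal O}f_0e^{-\widetilde{B}_D^0t}f_0$.

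The only difference is in how this last remainder is turned into an $\varepsilon^{1/2}t^{-3/4}$ bound. You obtain the single estimate $C\varepsilon t^{-1}e^{-c_\flat t/2}$ via Proposition~\ref{Proposition S__eps - I} and then split into the regimes $t\geqslant\varepsilon^2$ and $t<\varepsilon^2$, invoking Proposition~\ref{Proposition small time} in the second. The paper instead records \emph{two} bounds on the same remainder --- the $\varepsilon t^{-1}$ bound you have, and a crude $t^{-1/2}$ bound from $\|S_\varepsilon-I\|\leqslant 2$ together with~\eqref{exp tilde B_D^0 L2-H1} --- and takes their geometric mean to get $\varepsilon^{1/2}t^{-3/4}$ directly for all $t>0$, without case-splitting. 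Both arguments are valid; the paper's interpolation is slightly cleaner and avoids appealing to Proposition~\ref{Proposition small time}, while yours is perhaps more transparent.
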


\begin{proof}
From Theorem~\ref{Theorem H1 exp} it follows that
\begin{equation}
\label{d-vo pr. 2.11 1}
\begin{split}
\Vert 
g^\varepsilon b(\mathbf{D})f^\varepsilon e^{-\widetilde{B}_{D,\varepsilon}t}(f^\varepsilon)^*
-g^0S_\varepsilon b(\mathbf{D})P_\mathcal{O}
f_0e^{-\widetilde{B}_D^0t}f_0\Vert _{L_2(\mathcal{O})\rightarrow L_2(\mathcal{O})}
\leqslant \widetilde{C}_{16}\varepsilon ^{1/2}t^{-3/4}e^{-c_\flat t/2}.
\end{split}
\end{equation}

On the one hand, Proposition~\ref{Proposition S__eps - I} and relations~\eqref{<b^*b<}, \eqref{|g^0|<=}, \eqref{f_0<=},  \eqref{PO}, \eqref{exp tilde B_D^0 L2-H2} imply that
\begin{equation}
\label{2.43a}
\begin{split}
\Vert  g^0 (S_\varepsilon -I)b(\mathbf{D})P_\mathcal{O}f_0 e^{-\widetilde{B}_D^0t}f_0\Vert _{L_2(\mathcal{O})\rightarrow L_2(\mathbb{R}^d)}
&\leqslant
\varepsilon \Vert g\Vert _{L_\infty} r_1\alpha _1^{1/2}\Vert P_\mathcal{O} f_0e^{-\widetilde{B}_D^0t}f_0\Vert _{L_2(\mathcal{O})\rightarrow H^2(\mathbb{R}^d)}
\\
&\leqslant
\varepsilon\Vert g\Vert _{L_\infty}\Vert f\Vert _{L_\infty}r_1\alpha _1^{1/2}C_\mathcal{O}^{(2)}\widetilde{c}t^{-1}e^{-c_\flat t/2}.
\end{split}
\end{equation}
On the other hand, from \eqref{S_eps <= 1}, \eqref{<b^*b<}, \eqref{|g^0|<=}, \eqref{f_0<=}, \eqref{PO}, and \eqref{exp tilde B_D^0 L2-H1} it follows that
\begin{equation}
\label{2.43b}
\begin{split}
\Vert  g^0 (S_\varepsilon -I)b(\mathbf{D})P_\mathcal{O}f_0 e^{-\widetilde{B}_D^0t}f_0\Vert _{L_2(\mathcal{O})\rightarrow L_2(\mathbb{R}^d)}
&\leqslant
2\Vert g\Vert _{L_\infty}\alpha _1^{1/2}\Vert  P_\mathcal{O} f_0e^{-\widetilde{B}_D^0t}f_0\Vert _{L_2(\mathcal{O})\rightarrow H^1(\mathbb{R}^d)}
\\
&\leqslant
2\Vert g\Vert _{L_\infty}\Vert f\Vert _{L_\infty}\alpha _1^{1/2}C_\mathcal{O}^{(1)}c_3 t^{-1/2}e^{-c_\flat t/2}.
\end{split}
\end{equation}
By \eqref{2.43a} and \eqref{2.43b},
\begin{equation*}
\Vert  g^0 (S_\varepsilon -I)b(\mathbf{D})P_\mathcal{O}f_0 e^{-\widetilde{B}_D^0t}f_0\Vert _{L_2(\mathcal{O})\rightarrow L_2(\mathbb{R}^d)}
\leqslant
\check{C}_{16}\varepsilon ^{1/2} t^{-3/4}e^{-c_\flat t/2},
\end{equation*}
where $\check{C}_{16}:=\Vert g\Vert _{L_\infty}\Vert f\Vert _{L_\infty}\alpha_1^{1/2}\bigl(2 r_1 C_\mathcal{O}^{(1)}C_\mathcal{O}^{(2)}\widetilde{c}c_3\bigr)^{1/2}$.
Combining this with \eqref{d-vo pr. 2.11 1}, we obtain estimate \eqref{Prop. special case exp S 2} with the constant $\widetilde{C}_{16}':=\widetilde{C}_{16}
+ \check{C}_{16}$.
\end{proof}

\subsection{Estimates in a strictly interior subdomain}
\label{Subsection strictly interior}

Using Theorem~\ref{Theorem O'}, we improve error estimates in a strictly interior subdomain.

\begin{theorem}
\label{Theorem O' exp}
Suppose that the assumptions of Theorem~\textnormal{\ref{Theorem H1 exp}} are satisfied.
Let $\mathcal{O}'$ be a strictly interior subdomain of the domain $\mathcal{O}$, and let $\delta$ be defined by~\eqref{delta= 1.62a}.
Then for $0<\varepsilon\leqslant\varepsilon _1$ and $t>0$ we have
\begin{align}
\label{Th_exp_korrector strictly interior}
\begin{split}
\Vert &  f^\varepsilon e^{-\widetilde{B}_{D,\varepsilon}t}(f^\varepsilon)^*-f_0 e^{-\widetilde{B}_D^0t}f_0-\varepsilon \mathcal{K}_D(t;\varepsilon)\Vert _{L_2(\mathcal{O})\rightarrow H^1(\mathcal{O}')}\leqslant
\varepsilon(C_{20}t^{-1/2}\delta ^{-1}+C_{21}t^{-1})
e^{-c_\flat t/2},
\end{split}
\\
\begin{split}
\Vert &g^\varepsilon b(\mathbf{D})f^\varepsilon e^{-\widetilde{B}_{D,\varepsilon}t}(f^\varepsilon)^*-\mathcal{G}_D(t;\varepsilon)\Vert _{L_2(\mathcal{O})\rightarrow L_2(\mathcal{O}')}
\leqslant \varepsilon(\widetilde{C}_{20}t^{-1/2}\delta ^{-1}+\widetilde{C}_{21}t^{-1})
e^{-c_\flat t/2}.
\end{split}
\nonumber
\end{align}
The constants $C_{20},C_{21},\widetilde{C}_{20}$, and $\widetilde{C}_{21}$ depend only on the problem data~\eqref{problem data}.
\end{theorem}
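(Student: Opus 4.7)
My plan is to repeat the Laplace-transform/contour-integral scheme used in the proof of Theorem~\ref{Theorem H1 exp}, but now feed in the interior estimates of Theorem~\ref{Theorem O'} (which cost an extra factor $\delta^{-1}$ instead of $\varepsilon^{-1/2}$, thus producing $O(\varepsilon)$ in the interior). First I would start from the identity
\begin{equation*}
f^\varepsilon e^{-\widetilde{B}_{D,\varepsilon}t}(f^\varepsilon)^* - f_0 e^{-\widetilde{B}_D^0 t}f_0 - \varepsilon \mathcal{K}_D(t;\varepsilon)
= -\frac{1}{2\pi i}\int_\gamma e^{-\zeta t}\mathcal{R}(\varepsilon;\zeta)\,d\zeta ,
\end{equation*}
where $\mathcal{R}(\varepsilon;\zeta):=(B_{D,\varepsilon}-\zeta Q_0^\varepsilon)^{-1}-(B_D^0-\zeta\overline{Q_0})^{-1}-\varepsilon K_D(\varepsilon;\zeta)$ and $\gamma$ is exactly the broken contour through $c_\flat/2$ used in the proofs of Theorems~\ref{Theorem L2 exp} and~\ref{Theorem H1 exp}. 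The analogous identity holds for the flux, with $\mathcal{R}$ replaced by $g^\varepsilon b(\mathbf{D})(B_{D,\varepsilon}-\zeta Q_0^\varepsilon)^{-1}-G_D(\varepsilon;\zeta)$, and this reduces both claims of the theorem to a pointwise bound on $\Vert\mathcal{R}(\varepsilon;\zeta)\Vert_{L_2(\mathcal{O})\to H^1(\mathcal{O}')}$ for $\zeta\in\gamma$ followed by a scalar integration.

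Next I would split $\gamma=\gamma_1\cup\gamma_2$ with $\gamma_1=\{\zeta\in\gamma:|\zeta|\leqslant\check{c}\}$ and $\gamma_2=\{\zeta\in\gamma:|\zeta|>\check{c}\}$, where $\check{c}$ is the constant \eqref{check c}. On $\gamma_1$, Theorem~\ref{Theorem O'}($2^\circ$) applies; by \eqref{rho <= on gamma} we have $\varrho_\flat(\zeta)\leqslant\mathfrak{C}$ and $|1+\zeta|\leqslant 1+\check{c}$, so that
\begin{equation*}
\Vert\mathcal{R}(\varepsilon;\zeta)\Vert_{L_2(\mathcal{O})\to H^1(\mathcal{O}')}\leqslant \mathrm{const}\,\varepsilon(\delta^{-1}+1)\leqslant \mathrm{const}\,\varepsilon\delta^{-1},\quad \zeta\in\gamma_1.
\end{equation*}
On $\gamma_2$, the inequality \eqref{sin >= on gamma} yields $c(\phi)\leqslant 5^{1/2}$, so Theorem~\ref{Theorem O'}($1^\circ$) gives
\begin{equation*}
\Vert\mathcal{R}(\varepsilon;\zeta)\Vert_{L_2(\mathcal{O})\to H^1(\mathcal{O}')}\leqslant \mathrm{const}\,\varepsilon\bigl(|\zeta|^{-1/2}\delta^{-1}+1\bigr),\quad\zeta\in\gamma_2.
\end{equation*}
Parametrizing $\gamma$ by $\tau=|\mathrm{Im}\,\zeta|\in[0,\infty)$ (so that $\mathrm{Re}\,\zeta=\tau+c_\flat/2$, $|d\zeta|=\sqrt{2}\,d\tau$, $|\zeta|\geqslant\tau$, and $|e^{-\zeta t}|=e^{-\tau t}e^{-c_\flat t/2}$), the resulting scalar integrals reduce to Gamma-function computations
\begin{equation*}
\int_0^\infty\tau^{-1/2}e^{-\tau t}\,d\tau=\Gamma(1/2)t^{-1/2},\qquad \int_0^\infty e^{-\tau t}\,d\tau=t^{-1},
\end{equation*}
which yield the required bound $\varepsilon(C_{20}t^{-1/2}\delta^{-1}+C_{21}t^{-1})e^{-c_\flat t/2}$.

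The flux estimate is obtained by the same decomposition applied to $g^\varepsilon b(\mathbf{D})(B_{D,\varepsilon}-\zeta Q_0^\varepsilon)^{-1}-G_D(\varepsilon;\zeta)$, using the $L_2\to L_2(\mathcal{O}')$-bounds in the second displays of Theorem~\ref{Theorem O'}($1^\circ$) and~\ref{Theorem O'}($2^\circ$); the integration is identical. I do not expect any genuine obstacle: all analytic ingredients (interior resolvent estimates with two-parameter control, uniformity of $c(\phi)$ and $\varrho_\flat(\zeta)$ along $\gamma$, and the two elementary Gamma integrals) are already available. The only point that needs a bit of care is the bookkeeping around $\gamma_1$, where one must verify that absorbing $|1+\zeta|^{1/2}\varrho_\flat(\zeta)^{1/2}\cdot\varrho_\flat(\zeta)^{1/2}$ into a constant yields only the $\delta^{-1}$ term (not a spurious $\delta^{-2}$); this is immediate because on $\gamma_1$ both $\varrho_\flat(\zeta)$ and $|1+\zeta|$ are uniformly bounded by constants depending only on the problem data.
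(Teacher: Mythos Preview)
Your proposal is correct and follows precisely the approach the paper itself indicates: the paper's proof says only that it is ``based on application of Theorem~\ref{Theorem O'} and relations~\eqref{exp -exp-K=int}, \eqref{fluxes = int}'' together with the contour estimates~\eqref{rho <= on gamma} and~\eqref{sin >= on gamma}, and you have filled in exactly those details. The split $\gamma_1\cup\gamma_2$ at $|\zeta|=\check{c}$, the use of Theorem~\ref{Theorem O'}($2^\circ$) with~\eqref{rho <= on gamma} on $\gamma_1$ and Theorem~\ref{Theorem O'}($1^\circ$) with~\eqref{sin >= on gamma} on $\gamma_2$, and the Gamma-integral bookkeeping all match the pattern of the proofs of Theorems~\ref{Theorem L2 exp} and~\ref{Theorem H1 exp}.
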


\begin{proof}
The proof is based on application of Theorem \ref{Theorem O'} and relations~\eqref{exp -exp-K=int}, \eqref{fluxes = int}.
Also, estimates \eqref{rho <= on gamma} and \eqref{sin >= on gamma} are used. We omit the details.
\end{proof}

The following result is checked similarly with the help of Theorems~\ref{Theorem O' no S_eps} and~\ref{Theorem Dr appr strictly interior subdomain no S_eps}.

\begin{theorem}
\label{Theorem O' exp no S_eps}
Suppose that the assumptions of Theorem~\textnormal{\ref{Theorem O' exp}} are satisfied.
Suppose that the matrix-valued functions $\Lambda(\mathbf{x})$ and $\widetilde{\Lambda}(\mathbf{x})$ satisfy Conditions~\textnormal{\ref{Condition Lambda in L infty}}
and~\textnormal{\ref{Condition tilde Lambda in Lp}}, respectively.
Let $\mathcal{K}_D^0(t;\varepsilon)$ be the corrector~\eqref{K_D^0(t;eps)}, and let $\mathcal{G}_D^0(t;\varepsilon)$ be the operator~\eqref{G_3(t;eps)}.
Then for $t>0$ and $0<\varepsilon\leqslant \varepsilon _1$ we have
\begin{align*}
\begin{split}
&\Vert f^\varepsilon e^{-\widetilde{B}_{D,\varepsilon}t}(f^\varepsilon )^*-f_0 e^{-\widetilde{B}_D^0 t}f_0-\varepsilon\mathcal{K}_D^0(t;\varepsilon)\Vert _{L_2(\mathcal{O})\rightarrow H^1(\mathcal{O}')}
\leqslant \varepsilon(C_{20}t^{-1/2}\delta ^{-1}+C_{22}t^{-1})
e^{-c_\flat t/2},
\end{split}
\\
\begin{split}
&\Vert g^\varepsilon b(\mathbf{D})f^\varepsilon e^{-\widetilde{B}_{D,\varepsilon}t}(f^\varepsilon)^*-\mathcal{G}_D^0(t;\varepsilon)\Vert _{L_2(\mathcal{O})\rightarrow L_2(\mathcal{O}')}
\leqslant \varepsilon(\widetilde{C}_{20}t^{-1/2}\delta ^{-1}+\widetilde{C}_{22}t^{-1})
e^{-c_\flat t/2}.
\end{split}
\end{align*}
The constants $C_{20}$ and $\widetilde{C}_{20}$ are the same as in Theorem~\emph{\ref{Theorem O' exp}}.
The constants $C_{22}$ and~$\widetilde{C}_{22}$ depend on the problem data~\eqref{problem data}, $p$, and the norms $\Vert \Lambda\Vert _{L_\infty}$, $\Vert \widetilde{\Lambda}\Vert _{L_p(\Omega)}$.
\end{theorem}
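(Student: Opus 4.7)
The plan is to mimic the proof of Theorem~\ref{Theorem O' exp}, but using the $S_\varepsilon$-free elliptic approximations of Theorem~\ref{Theorem O' no S_eps} and Theorem~\ref{Theorem Dr appr strictly interior subdomain no S_eps} as input in place of Theorems~\ref{Theorem O'} and~\ref{Theorem Dr appr}. First I would verify that the simplified parabolic corrector $\mathcal{K}_D^0(t;\varepsilon)$ defined in \eqref{K_D^0(t;eps)} is the contour integral of the elliptic corrector $K_D^0(\varepsilon;\zeta)$ from \eqref{K_D^0n elliptic}. Indeed, pulling the $\zeta$-independent factor $\Lambda^\varepsilon b(\D) + \widetilde{\Lambda}^\varepsilon$ outside the integral and using the representation of $f_0 e^{-\widetilde{B}_D^0 t} f_0$ as $-\tfrac{1}{2\pi i}\int_\gamma e^{-\zeta t} (B_D^0-\zeta\overline{Q_0})^{-1}\,d\zeta$, one obtains the analogue of~\eqref{exp -exp-K=int}, namely
\begin{equation*}
\begin{split}
f^\varepsilon e^{-\widetilde{B}_{D,\varepsilon}t}(f^\varepsilon)^*-f_0 e^{-\widetilde{B}_D^0t}f_0-\varepsilon \mathcal{K}_D^0(t;\varepsilon)
=-\frac{1}{2\pi i}\int\limits_\gamma e^{-\zeta t}\mathcal{R}_D^0(\varepsilon;\zeta)\,d\zeta,
\end{split}
\end{equation*}
where $\mathcal{R}_D^0(\varepsilon;\zeta):=(B_{D,\varepsilon}-\zeta Q_0^\varepsilon)^{-1}-(B_D^0-\zeta\overline{Q_0})^{-1}-\varepsilon K_D^0(\varepsilon;\zeta)$, with the contour $\gamma$ chosen exactly as in the proof of Theorem~\ref{Theorem L2 exp}. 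An analogous identity holds for the fluxes with $G_D^0(\varepsilon;\zeta)$ from~\eqref{G3(eps;zeta)} in the role of $K_D^0$.

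Next I would split the contour into the two pieces $\gamma\cap\{|\zeta|\leq\check{c}\}$ and $\gamma\cap\{|\zeta|>\check{c}\}$, where $\check{c}$ is defined by \eqref{check c}. On the bounded piece, the estimates of Theorem~\ref{Theorem Dr appr strictly interior subdomain no S_eps} give
\begin{equation*}
\Vert \mathcal{R}_D^0(\varepsilon;\zeta)\Vert _{L_2(\mathcal{O})\to H^1(\mathcal{O}')}
\leqslant \varepsilon\bigl(C_{12}'\delta^{-1}\varrho_\flat(\zeta)^{1/2}+C_{14}\vert 1+\zeta\vert^{1/2}\varrho_\flat(\zeta)\bigr);
\end{equation*}
since $\varrho_\flat(\zeta)$ is bounded on $\gamma$ by the constant $\mathfrak{C}$ in~\eqref{rho <= on gamma} and $\vert 1+\zeta\vert\leqslant 1+\check{c}$, the integrand there is controlled by a constant multiple of $\varepsilon(\delta^{-1}+1)e^{-\zeta t}$. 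On the unbounded piece, Theorem~\ref{Theorem O' no S_eps} together with~\eqref{sin >= on gamma} yields
\begin{equation*}
\Vert \mathcal{R}_D^0(\varepsilon;\zeta)\Vert _{L_2(\mathcal{O})\to H^1(\mathcal{O}')}
\leqslant \mathrm{C}\,\varepsilon\bigl(|\zeta|^{-1/2}\delta^{-1}+1\bigr),
\end{equation*}
with a uniform constant $\mathrm{C}$. Parametrizing $\gamma$ by $\zeta=\tau(1\pm i)+c_\flat/2$, the integral $\int_\gamma e^{-\zeta t}|\zeta|^{-1/2}\,|d\zeta|$ is of order $t^{-1/2}e^{-c_\flat t/2}$ (gamma-function computation, as in the proof of Theorem~\ref{Theorem L2 exp}), while $\int_\gamma e^{-\zeta t}\,|d\zeta|$ is of order $t^{-1}e^{-c_\flat t/2}$. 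Assembling the two contour pieces, I obtain the required bound
\begin{equation*}
\Vert f^\varepsilon e^{-\widetilde{B}_{D,\varepsilon}t}(f^\varepsilon)^*-f_0 e^{-\widetilde{B}_D^0t}f_0-\varepsilon \mathcal{K}_D^0(t;\varepsilon)\Vert _{L_2(\mathcal{O})\to H^1(\mathcal{O}')}
\leqslant \varepsilon(C_{20}t^{-1/2}\delta^{-1}+C_{22}t^{-1})e^{-c_\flat t/2}.
\end{equation*}
The constant $C_{20}$ coincides with the one in Theorem~\ref{Theorem O' exp} because the $\delta^{-1}$-term comes from the elliptic bound $C_{12}'\delta^{-1}\varrho_\flat(\zeta)^{1/2}$ which is common to Theorems~\ref{Theorem Dr appr strictly interior subdomain} and~\ref{Theorem Dr appr strictly interior subdomain no S_eps}; only $C_{22}$ inherits the dependence on $p$, $\Vert\Lambda\Vert_{L_\infty}$ and $\Vert\widetilde{\Lambda}\Vert_{L_p(\Omega)}$ from the elliptic result.

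The flux estimate is handled in exactly the same way, starting from the flux analogue of~\eqref{fluxes = int} with $G_D^0(\varepsilon;\zeta)$ instead of $G_D(\varepsilon;\zeta)$, and invoking the flux bounds in Theorems~\ref{Theorem O' no S_eps} and~\ref{Theorem Dr appr strictly interior subdomain no S_eps}. I do not expect any substantive obstacle here: the entire argument is a routine transcription of the proof of Theorem~\ref{Theorem O' exp} in which the inputs are upgraded from the $S_\varepsilon$-dependent elliptic estimates to the $S_\varepsilon$-free ones. The only point requiring modest care is the contour bookkeeping near $\psi=\pi/2,3\pi/2$ where $|\zeta|=\sqrt{5}c_\flat/2$ (handled by the case-split at $\check{c}$) and the verification that the $\vert 1+\zeta\vert^{1/2}\varrho_\flat(\zeta)$ factor remains uniformly bounded on the bounded piece of $\gamma$, both of which are immediate from~\eqref{rho <= on gamma} and the choice of $\check{c}$.
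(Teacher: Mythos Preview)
Your proposal is correct and matches the paper's approach exactly: the paper simply states that the result ``is checked similarly with the help of Theorems~\ref{Theorem O' no S_eps} and~\ref{Theorem Dr appr strictly interior subdomain no S_eps}'', and what you have written is precisely the routine transcription of the proof of Theorem~\ref{Theorem O' exp} with those $S_\varepsilon$-free elliptic inputs substituted in. One small point worth tightening: your justification that $C_{20}$ is unchanged mentions only the bounded-piece constant $C_{12}'$, but the $\delta^{-1}$-contribution on the unbounded piece carries the constant $C_{11}'$, which is likewise shared between Theorems~\ref{Theorem O'}($1^\circ$) and~\ref{Theorem O' no S_eps}; both facts together give $C_{20}=C_{20}$.
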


Note that it is possible to remove the smoothing operator $S_\varepsilon$ in the corrector in estimates of Theorem~\ref{Theorem O' exp}
 without any additional assumptions on the matrix-valued functions $\Lambda(\mathbf{x})$ and $\widetilde{\Lambda}(\mathbf{x})$.
 For this, the additional smoothness of the boundary is not required.
We consider the case where $d\geqslant 3$ (otherwise, by Propositions \ref{Proposition Lambda in L infty <=} and~\ref{Proposition tilde Lambda in Lp if}, we can apply Theorem~\ref{Theorem O' exp no S_eps}). We know that for $t>0$ the operator $e^{-\widetilde{B}_D^0t}$ is continuous from $L_2(\mathcal{O};\mathbb{C}^n)$ to $H^2(\mathcal{O};\mathbb{C}^n)$ and estimate~\eqref{exp tilde B_D^0 L2-H2} holds. Moreover, the following property of ``regularity improvement''\, inside the domain is valid: for $t>0$ the operator $e^{-\widetilde{B}_D^0t}$ is continuous from  $L_2(\mathcal{O};\mathbb{C}^n)$ to $H^\sigma (\mathcal{O}';\mathbb{C}^n)$ for any integer $\sigma\geqslant 3$. We have
\begin{equation}
\label{exp tilde B_D^0 L2-H-sigma strictly interior}
\begin{split}
\Vert e^{-\widetilde{B}_D^0t}\Vert_{L_2(\mathcal{O})\rightarrow H^\sigma (\mathcal{O}')}
\leqslant\mathrm{C}'_\sigma t^{-1/2}(\delta ^{-2}+t^{-1})^{(\sigma -1)/2}e^{-c_\flat t/2},\\
t>0,\quad\sigma\in\mathbb{N},\quad\sigma\geqslant 3.
\end{split}
\end{equation}
The constant $\mathrm{C}'_\sigma$ depends on $\sigma$ and the problem data~\eqref{problem data}.
For the scalar parabolic equations, the property of ``regularity improvement''\, inside the domain was obtained in \cite[Chapter~3, \S~12]{LaSoU}.
In a similar way, it can be checked for the operator $\widetilde{B}_D^0$.
It is easy to deduce the qualified estimates \eqref{exp tilde B_D^0 L2-H-sigma strictly interior}, noticing that the derivatives $\mathbf{D}^\alpha \mathbf{u}_0$ (where $\mathbf{u}_0$
 is the function \eqref{u_0=} with $\boldsymbol{\varphi}\in L_2(\mathcal{O};\mathbb{C}^n)$) are solutions of a parabolic equation $\overline{Q_0}\partial _t \mathbf{D}^\alpha \mathbf{u}_0 =-B^0 \mathbf{D}^\alpha \mathbf{u}_0$. We multiply this equation by $\chi ^2\mathbf{D}^\alpha \mathbf{u}_0$ and integrate over the cylinder $\mathcal{O}\times (0,t)$.
Here $\chi$ is a smooth cut-off function equal to zero near the lateral surface and the bottom of the cylinder. The standard analysis of the corresponding integral identity together with the already known inequalities of Lemma \ref{Lemma properties of operator exponential} leads to estimates~\eqref{exp tilde B_D^0 L2-H-sigma strictly interior}.

Using the properties of the matrix-valued functions $\Lambda(\mathbf{x})$ and $\widetilde{\Lambda}(\mathbf{x})$, and also the properties of the operator~$S_\varepsilon$,
we can deduce the following statement from relation \eqref{exp tilde B_D^0 L2-H-sigma strictly interior}.

\begin{lemma}
\label{Lemma K-K0 strictly interior}
Suppose that the assumptions of Theorem~\textnormal{\ref{Theorem O' exp}} are satisfied and that $d\geqslant 3$. Let $\mathcal{K}_D^0(t;\varepsilon)$ be the operator~\eqref{K_D^0(t;eps)}.
Denote
\begin{equation}
\label{h_d(delta;t)}
h_d(\delta ;t):=t^{-1}
+t^{-1/2}(\delta ^{-2}+t^{-1})^{d/4}.
\end{equation}
Let $2r_1=\mathrm{diam}\,\Omega$. Then for $0<\varepsilon\leqslant (4r_1)^{-1}\delta$ and $t>0$ we have
\begin{equation}
\label{lm K-K0 strictly interior}
\begin{split}
\Vert \mathcal{K}_D(t;\varepsilon)-\mathcal{K}_D^0(t;\varepsilon)\Vert _{L_2(\mathcal{O})\rightarrow H^1(\mathcal{O}')}
\leqslant \mathrm{C}_d''h_d(\delta ;t)e^{-c_\flat t/2}.
\end{split}
\end{equation}
The constant $\mathrm{C}_d''$ depends only on the problem data \eqref{problem data}.
\end{lemma}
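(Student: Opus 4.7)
The plan is to reduce the estimate to an interior computation in which $P_\mathcal{O}$ becomes inert, and to use the interior regularity estimate~\eqref{exp tilde B_D^0 L2-H-sigma strictly interior} to absorb derivatives of $\mathbf{U}(t) := f_0 e^{-\widetilde{B}_D^0 t}f_0\boldsymbol{\varphi}$. Fix an intermediate subdomain $\mathcal{O}'\Subset\mathcal{O}''\Subset\mathcal{O}$ with $\mathrm{dist}(\mathcal{O}',\partial\mathcal{O}'')\ge\delta/4$ and $\mathrm{dist}(\mathcal{O}'',\partial\mathcal{O})\ge\delta/2$. For $\varepsilon\le(4r_1)^{-1}\delta$ and $\mathbf{x}\in\mathcal{O}'$, the Steklov kernel centered at~$\mathbf{x}$ is supported in a ball of radius $\varepsilon r_1\le\delta/4$ contained in $\mathcal{O}''$, so $P_\mathcal{O}$ drops out and
\begin{equation*}
(\mathcal{K}_D - \mathcal{K}_D^0)\boldsymbol{\varphi}\big|_{\mathcal{O}'}
= \Lambda^\varepsilon(S_\varepsilon - I)b(\mathbf{D})\mathbf{U}(t)
+ \widetilde{\Lambda}^\varepsilon(S_\varepsilon - I)\mathbf{U}(t).
\end{equation*}

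I would then split the $H^1(\mathcal{O}')$-norm into $L_2$ and gradient parts. For the $L_2$ part and for the Leibniz piece $\Lambda^\varepsilon(S_\varepsilon - I)\nabla b(\mathbf{D})\mathbf{U}$ of the gradient (and its $\widetilde\Lambda$-analogue), I would use H\"older's inequality---exploiting $\Lambda,\widetilde{\Lambda}\in\widetilde{H}^1(\Omega)\hookrightarrow L_{2d/(d-2)}(\Omega)$ since $d\ge 3$---paired with Sobolev embeddings of $\mathbf{U}(t)$ on $\mathcal{O}''$, and invoke the rough bound $\|\mathbf{U}(t)\|_{H^2(\mathcal{O})}\le\widetilde{c}\,t^{-1}\|\boldsymbol{\varphi}\|_{L_2}$ from~\eqref{exp tilde B_D^0 L2-H2}; this yields the $t^{-1}$ summand of $h_d(\delta;t)$. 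The delicate Leibniz term is $\varepsilon^{-1}(\nabla\Lambda)^\varepsilon(S_\varepsilon - I)b(\mathbf{D})\mathbf{U}(t)$, in which the bad factor $\varepsilon^{-1}$ is absorbed by writing
\begin{equation*}
(S_\varepsilon - I)v(\mathbf{x})
= -\varepsilon|\Omega|^{-1}\int_\Omega\int_0^1 \mathbf{z}\cdot\nabla v(\mathbf{x} - s\varepsilon\mathbf{z})\,ds\,d\mathbf{z}.
\end{equation*}
After Minkowski and H\"older one is reduced to estimating $\nabla b(\mathbf{D})\mathbf{U}(t)$ in an $L_q$-norm on $\mathcal{O}''$ paired against $(\nabla\Lambda)^\varepsilon$; by Sobolev embedding this follows from an $H^\sigma(\mathcal{O}'')$-norm with $\sigma$ chosen so that $(\sigma-1)/2 = d/4$, and~\eqref{exp tilde B_D^0 L2-H-sigma strictly interior} then produces the summand $t^{-1/2}(\delta^{-2}+t^{-1})^{d/4}\|\boldsymbol{\varphi}\|_{L_2}$.

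The main obstacle is the tight H\"older bookkeeping that pairs $\nabla\Lambda\in L_2(\Omega)$ with exactly the borderline Sobolev embedding $H^{d/2+1}\hookrightarrow W^{2,d}$ needed to reach the exponent $d/4$. For even $d$ one sets $\sigma = d/2+1$ directly in~\eqref{exp tilde B_D^0 L2-H-sigma strictly interior}; for odd $d$ one interpolates between the integer values of $\sigma$ available in that estimate, paying only absolute-constant costs. A secondary technicality is that~\eqref{exp tilde B_D^0 L2-H-sigma strictly interior} is stated on $\mathcal{O}'$; it is applied here with $\mathcal{O}''$ in place of $\mathcal{O}'$ and $\delta/2$ in place of $\delta$, affecting only the constant $\mathrm{C}_d''$, which depends on the problem data~\eqref{problem data} through $M$, $\widetilde{M}$ from~\eqref{Lamba in H1<=}, \eqref{tilde Lambda in H1 <=} and the constants of~\eqref{exp tilde B_D^0 L2-H-sigma strictly interior}.
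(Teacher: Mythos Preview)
Your localization step and the overall decomposition are fine, and the paper proceeds similarly by inserting a cut-off $\chi$ supported in an intermediate subdomain. The gap is in your treatment of the ``delicate'' term
\[
\varepsilon^{-1}(\nabla\Lambda)^\varepsilon\,(S_\varepsilon-I)b(\mathbf{D})\mathbf{U}(t).
\]
After absorbing the factor $\varepsilon^{-1}$ via your integral representation of $S_\varepsilon-I$, you propose to pair $(\nabla\Lambda)^\varepsilon$ against $\nabla b(\mathbf{D})\mathbf{U}$ in $L_q$ by H\"older, invoking the embedding $H^{d/2+1}\hookrightarrow W^{2,d}$. But that H\"older pairing with $q=d$ requires $(\nabla\Lambda)^\varepsilon$ to be uniformly bounded in $L_{2d/(d-2)}(\mathcal{O}')$, i.e.\ $\nabla\Lambda\in L_{2d/(d-2)}(\Omega)$. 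The hypotheses only give $\Lambda\in\widetilde{H}^1(\Omega)$, hence $\Lambda\in L_{2d/(d-2)}(\Omega)$ but merely $\nabla\Lambda\in L_2(\Omega)$. With $\nabla\Lambda$ only in $L_2$, H\"older forces the paired factor into $L_\infty$, so you would need $\mathbf{U}\in H^{\sigma}$ with $\sigma>d/2+2$; plugging this into \eqref{exp tilde B_D^0 L2-H-sigma strictly interior} produces the exponent $(\sigma-1)/2>d/4+1/2$, strictly worse than the claimed $d/4$ in $h_d(\delta;t)$. The same issue occurs for the $\widetilde{\Lambda}$ term.

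The paper closes this gap not with H\"older but with the structural estimate of Lemma~\ref{Lemma Lambda-varepsilon} (and its analogue Lemma~\ref{Lemma on Lambda-tilda-varepsilon}), which comes from testing the cell problem~\eqref{Lambda problem} and yields
\[
\int_{\mathbb{R}^d}|(\mathbf{D}\Lambda)^\varepsilon|^2|u|^2\,d\mathbf{x}
\le \beta_1\|u\|_{L_2}^2+\beta_2\varepsilon^2\!\int_{\mathbb{R}^d}|\Lambda^\varepsilon|^2|\mathbf{D}u|^2\,d\mathbf{x}.
\]
This converts the $(\nabla\Lambda)^\varepsilon$-factor into an $L_2$ term plus an $\varepsilon$-small term involving only $\Lambda^\varepsilon$, for which the embedding $\Lambda\in L_{2d/(d-2)}$ \emph{is} available. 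Packaged as Lemma~\ref{Lemma Lambda multiplicator properties}($2^\circ$), this gives
\[
\|\Lambda^\varepsilon u\|_{H^1(\mathbb{R}^d)}\le C^{(1)}\varepsilon^{-1}\|u\|_{L_2}+C^{(2)}\|u\|_{H^{d/2}},
\]
which, applied to $u=\chi(S_\varepsilon-I)b(\mathbf{D})\widetilde{\mathbf{u}}_0$, reaches exactly $\sigma=d/2+1$ in~\eqref{exp tilde B_D^0 L2-H-sigma strictly interior} and hence the exponent $d/4$. You should invoke Lemmas~\ref{Lemma Lambda-varepsilon}--\ref{Lemma tilde Lambda multiplicator properties} (or rederive them) rather than plain H\"older.
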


From Lemma~\ref{Lemma K-K0 strictly interior} and Theorem~\ref{Theorem O' exp} we deduce the following result.

\begin{theorem}
\label{Theorem O' no S_eps no conditions on Lambda's}
Suppose that the assumptions of Theorem~\textnormal{\ref{Theorem O' exp}} are satisfied, and $d\geqslant 3$.
Let $\mathcal{K}_D^0(t;\varepsilon)$ be the corrector \eqref{K_D^0(t;eps)}, and let $\mathcal{G}_D^0(t;\varepsilon)$ be the operator~\eqref{G_3(t;eps)}. Let $2r_1=\mathrm{diam}\,\Omega$. Then for $0<\varepsilon\leqslant\min \lbrace \varepsilon _1;(4r_1)^{-1}\delta\rbrace$ and $t>0$ we have
\begin{align}
\label{Th O' no S_eps no conditions on Lambda's}
\Vert & f^\varepsilon e^{-\widetilde{B}_{D,\varepsilon}t}(f^\varepsilon)^*
-f_0 e^{-\widetilde{B}_D^0 t}f_0
-\varepsilon\mathcal{K}_D^0(t;\varepsilon)
\Vert _{L_2(\mathcal{O})\rightarrow H^1(\mathcal{O}')}
\leqslant
\varepsilon\mathrm{C}_d h_d (\delta ;t)e^{-c_\flat t/2},
\\
\label{Th O' no S_eps no conditions on Lambda's fluxes}
\Vert & g^\varepsilon b(\mathbf{D})f^\varepsilon e^{-\widetilde{B}_{D,\varepsilon}t}(f^\varepsilon)^*
-\mathcal{G}_D^0(t;\varepsilon)\Vert _{L_2(\mathcal{O})\rightarrow L_2(\mathcal{O}')}
\leqslant
\varepsilon\widetilde{\mathrm{C}}_d h_d (\delta ;t)e^{-c_\flat t/2}.
\end{align}
Here $h_d(\delta ;t)$ is given by \eqref{h_d(delta;t)}, the constants $\mathrm{C}_d$ and  $\widetilde{\mathrm{C}}_d$ depend only on the problem data~\eqref{problem data}.
\end{theorem}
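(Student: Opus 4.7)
The plan is to pass from Theorem~\ref{Theorem O' exp}, in which the corrector $\mathcal{K}_D(t;\varepsilon)$ contains the Steklov smoothing $S_\varepsilon$, to Theorem~\ref{Theorem O' no S_eps no conditions on Lambda's}, where the simpler corrector $\mathcal{K}_D^0(t;\varepsilon)$ is used, by a direct triangle-inequality split that exploits Lemma~\ref{Lemma K-K0 strictly interior}. Throughout, the restriction $0<\varepsilon\leqslant(4r_1)^{-1}\delta$ guarantees that the mollification $S_\varepsilon$, when evaluated on $\mathcal{O}'$, only samples points that lie well inside $\mathcal{O}$.

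For the $H^1(\mathcal{O}')$-estimate~\eqref{Th O' no S_eps no conditions on Lambda's}, I would write
\begin{equation*}
\begin{split}
f^\varepsilon e^{-\widetilde{B}_{D,\varepsilon}t}(f^\varepsilon)^*\!-\!f_0 e^{-\widetilde{B}_D^0 t}f_0\!-\!\varepsilon \mathcal{K}_D^0(t;\varepsilon)
&=\bigl(f^\varepsilon e^{-\widetilde{B}_{D,\varepsilon}t}(f^\varepsilon)^*\!-\!f_0 e^{-\widetilde{B}_D^0 t}f_0\!-\!\varepsilon\mathcal{K}_D(t;\varepsilon)\bigr)\\
&\quad+\varepsilon\bigl(\mathcal{K}_D(t;\varepsilon)-\mathcal{K}_D^0(t;\varepsilon)\bigr),
\end{split}
\end{equation*}
then estimate the first summand in the $(L_2(\mathcal{O})\to H^1(\mathcal{O}'))$-norm by Theorem~\ref{Theorem O' exp} and the second by Lemma~\ref{Lemma K-K0 strictly interior}. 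Since $\delta\leqslant 1$ we have $t^{-1/2}\delta^{-1}\leqslant t^{-1/2}(\delta^{-2}+t^{-1})^{1/2}\leqslant h_d(\delta;t)$ for $d\geqslant 3$, and obviously $t^{-1}\leqslant h_d(\delta;t)$; therefore both contributions combine into a single bound $\varepsilon\mathrm{C}_d h_d(\delta;t)e^{-c_\flat t/2}$, giving~\eqref{Th O' no S_eps no conditions on Lambda's}.

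For the flux approximation~\eqref{Th O' no S_eps no conditions on Lambda's fluxes} I would first establish the flux analog of Lemma~\ref{Lemma K-K0 strictly interior}: for $0<\varepsilon\leqslant(4r_1)^{-1}\delta$ and $t>0$,
\begin{equation*}
\Vert\mathcal{G}_D(t;\varepsilon)-\mathcal{G}_D^0(t;\varepsilon)\Vert_{L_2(\mathcal{O})\to L_2(\mathcal{O}')}\leqslant \widetilde{\mathrm{C}}_d'' h_d(\delta;t)e^{-c_\flat t/2}.
\end{equation*}
The difference splits into two pieces $[\widetilde{g}^\varepsilon](S_\varepsilon b(\mathbf{D})P_\mathcal{O}-b(\mathbf{D}))f_0 e^{-\widetilde{B}_D^0 t}f_0$ and $g^\varepsilon(b(\mathbf{D})\widetilde{\Lambda})^\varepsilon(S_\varepsilon P_\mathcal{O}-I)f_0 e^{-\widetilde{B}_D^0 t}f_0$. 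Because $\varepsilon\leqslant(4r_1)^{-1}\delta$, on $\mathcal{O}'$ the extension $P_\mathcal{O}$ can be dropped, so both pieces have the form $[\Phi^\varepsilon](S_\varepsilon-I)\Psi$ with $\Phi$ periodic (bounded or only $L_2$) and $\Psi$ of interior regularity provided by~\eqref{exp tilde B_D^0 L2-H-sigma strictly interior}. Interpolating Proposition~\ref{Proposition S__eps - I} against the $H^\sigma$-norm bound and using Proposition~\ref{Proposition f^eps S_eps} for the $L_2$-periodic factor yields the required bound. Then triangle inequality with the flux estimate of Theorem~\ref{Theorem O' exp} gives~\eqref{Th O' no S_eps no conditions on Lambda's fluxes}.

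The only nontrivial input is Lemma~\ref{Lemma K-K0 strictly interior} (and its flux counterpart), whose proof is where the dimension-dependent growth $(\delta^{-2}+t^{-1})^{d/4}$ originates: one must interpolate between the trivial bound on $S_\varepsilon-I$ and the $O(\varepsilon^k)$ bound available when the target has $H^k$-regularity, choosing $k$ just large enough (depending on $d$) to handle the $L_p(\Omega)$-failure of $\widetilde{\Lambda}$ through~\eqref{exp tilde B_D^0 L2-H-sigma strictly interior}. Once this lemma is granted, the present theorem is a routine two-line combination.
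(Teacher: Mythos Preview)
Your derivation of \eqref{Th O' no S_eps no conditions on Lambda's} is exactly the paper's: combine Theorem~\ref{Theorem O' exp} with Lemma~\ref{Lemma K-K0 strictly interior} via the triangle inequality, absorbing $t^{-1/2}\delta^{-1}$ and $t^{-1}$ into $h_d(\delta;t)$.

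For the flux estimate \eqref{Th O' no S_eps no conditions on Lambda's fluxes} the paper takes a different and more economical route than you propose. Instead of bounding $\mathcal{G}_D(t;\varepsilon)-\mathcal{G}_D^0(t;\varepsilon)$ directly, it applies $g^\varepsilon b(\mathbf{D})$ to the already-proved $H^1(\mathcal{O}')$-estimate \eqref{Th O' no S_eps no conditions on Lambda's} and then rewrites $\varepsilon g^\varepsilon b(\mathbf{D})\bigl(\Lambda^\varepsilon b(\mathbf{D})+\widetilde{\Lambda}^\varepsilon\bigr)$ via the product-rule identity \eqref{rem. S_eps 10}. The leading pieces recombine exactly into $\mathcal{G}_D^0(t;\varepsilon)$ (using $\widetilde{g}=g(b(\mathbf{D})\Lambda+\mathbf{1}_m)$), while the remainders already carry an explicit $\varepsilon$ and involve only $\Lambda^\varepsilon D_kD_j\mathbf{u}_0$ and $\widetilde{\Lambda}^\varepsilon D_j\mathbf{u}_0$, to which the multiplier bounds of Lemmas~\ref{Lemma Lambda multiplicator properties}($1^\circ$) and~\ref{Lemma tilde Lambda multiplicator properties}($1^\circ$) together with Lemma~\ref{Lemma chi} and \eqref{exp tilde B_D^0 L2-H-sigma strictly interior} apply directly.

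Your direct route has a gap as sketched. Proposition~\ref{Proposition f^eps S_eps} controls $[\Phi^\varepsilon]S_\varepsilon$ for $\Phi\in L_2(\Omega)$, but says nothing about $[\Phi^\varepsilon]$ alone; the factors $\widetilde{g}^\varepsilon$ and $g^\varepsilon(b(\mathbf{D})\widetilde{\Lambda})^\varepsilon$ appearing in $\mathcal{G}_D-\mathcal{G}_D^0$ are only $L_2$-periodic, so you cannot split $[\Phi^\varepsilon](S_\varepsilon-I)\Psi$ into $[\Phi^\varepsilon]S_\varepsilon\Psi-[\Phi^\varepsilon]\Psi$ and invoke that proposition for both. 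The approach can be rescued, but only by writing $\widetilde{g}^\varepsilon=g^\varepsilon+g^\varepsilon(b(\mathbf{D})\Lambda)^\varepsilon$ and invoking Lemmas~\ref{Lemma Lambda-varepsilon}, \ref{Lemma on Lambda-tilda-varepsilon} (for $(\mathbf{D}\Lambda)^\varepsilon$, $(\mathbf{D}\widetilde{\Lambda})^\varepsilon$) in combination with Lemma~\ref{Lemma Lambda multiplicator properties}($1^\circ$); this reduces matters back to the $H^l$-bounds \eqref{rem. S_eps 22}, \eqref{rem. S_eps 25} already established inside the proof of Lemma~\ref{Lemma K-K0 strictly interior}. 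So you end up re-running that argument with the derivatives of $\Lambda$, $\widetilde{\Lambda}$ in place of $\Lambda$, $\widetilde{\Lambda}$ themselves. (Also note that your displayed bound on $\mathcal{G}_D-\mathcal{G}_D^0$ must carry an $\varepsilon$ factor for the final triangle inequality to close.) The paper's identity-based route avoids this duplication entirely.
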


The proofs of Lemma~\ref{Lemma K-K0 strictly interior} and Theorem~\ref{Theorem O' no S_eps no conditions on Lambda's} are presented in Appendix (see~\S~\ref{Section removing S-eps in strictly interior subdomain}) in order not to clutter the main presentation.
Clearly, it is convenient to apply Theorem~\ref{Theorem O' no S_eps no conditions on Lambda's} if $t$ is separated from zero.
For small~$t$ the order of the factor $h_d(\delta;t)$ grows with dimension.
This is a ``charge'' for removal of the smoothing operator.

\section{Homogenization of the first initial boundary value problem \\ for nonhomogeneous equation}

\label{Section 3}

\subsection{The principal term of approximation}

In this section, we study the behavior of the solution of the first initial boundary value problem for a nonhomogeneous parabolic equation:
\begin{equation}
\label{first initial-boundary value problem with F}
\begin{cases}
Q_0^\varepsilon (\mathbf{x}) \frac{\partial \mathbf{u}_\varepsilon}{\partial t}(\mathbf{x},t)&=-B_{\varepsilon} \mathbf{u}_\varepsilon (\mathbf{x},t)+\mathbf{F}(\mathbf{x},t),
\quad\mathbf{x}\in\mathcal{O},\quad t>0;
\\
\mathbf{u}_\varepsilon (\,\cdot\, ,t) \vert _{\partial \mathcal{O}}&=0,\quad t>0;\\
Q_0^\varepsilon (\mathbf{x}) \mathbf{u}_\varepsilon (\mathbf{x},0)&=\boldsymbol{\varphi}(\mathbf{x}),\quad \mathbf{x}\in \mathcal{O}.
\end{cases}
\end{equation}
Here $\mathbf{F}\in \mathfrak{H}_r(T):= L_r((0,T);L_2(\mathcal{O};\mathbb{C}^n))$, $0<T\leqslant\infty$, with some $1\leqslant r\leqslant \infty$. Then
\begin{equation}
\label{u_eps = F neq 0}
\mathbf{u}_\varepsilon (\,\cdot\, ,t)=f^\varepsilon e^{-\widetilde{B}_{D,\varepsilon} t}(f^\varepsilon)^*\boldsymbol{\varphi}(\,\cdot\,)
+\int\limits_0^{t} f^\varepsilon e^{-\widetilde{B}_{D,\varepsilon} (t-\widetilde{t})}(f^\varepsilon)^*\mathbf{F}(\,\cdot\, ,\widetilde{t})\,d\widetilde{t}.
\end{equation}

The corresponding effective problem takes the form
\begin{equation}
\label{effective first problem with F}
\begin{cases}
\overline{Q_0}\frac{\partial \mathbf{u}_0}{\partial t}(\mathbf{x},t)&=-B^0\mathbf{u}_0 (\mathbf{x},t)+\mathbf{F}(\mathbf{x},t),
\quad\mathbf{x}\in\mathcal{O},\quad t>0;
\\
\mathbf{u}_0 (\,\cdot\, ,t) \vert _{\partial \mathcal{O}}&=0,\quad t>0;\\
\overline{Q_0}\mathbf{u}_0 (\mathbf{x},0)&=\boldsymbol{\varphi}(\mathbf{x}),\quad \mathbf{x}\in \mathcal{O}.
\end{cases}
\end{equation}
The solution of this problem is given by
\begin{equation}
\label{u_0 = F neq 0}
\mathbf{u}_0 (\,\cdot\, ,t)=f_0 e^{-\widetilde{B}_{D}^0 t}f_0\boldsymbol{\varphi}(\,\cdot\, )
+\int\limits_0^t f_0 e^{-\widetilde{B}_{D}^0 (t-\widetilde{t})}f_0\mathbf{F}(\,\cdot\, ,\widetilde{t})\,d\widetilde{t}.
\end{equation}
Subtracting \eqref{u_0 = F neq 0} from \eqref{u_eps = F neq 0} and using Theorem~\ref{Theorem L2 exp} (see \eqref{Th_exp_L_2}), we conclude that for $0<\varepsilon \leqslant \varepsilon _1$ and $t>0$
\begin{equation*}
\Vert \mathbf{u}_\varepsilon (\,\cdot\, ,t)-\mathbf{u}_0(\,\cdot\, ,t)\Vert _{L_2(\mathcal{O})}
\leqslant C_{15}\varepsilon (t+\varepsilon ^2)^{-1/2} e^{-c_\flat t/2}\Vert \boldsymbol{\varphi}\Vert _{L_2(\mathcal{O})}
+C_{15} \varepsilon\mathcal{L}(\varepsilon ;t;\mathbf{F}),
\end{equation*}
where
$$
\mathcal{L}(\varepsilon ;t;\mathbf{F}):=\int\limits_0^t e^{-c_\flat (t-\widetilde{t})/2}(\varepsilon ^2 +t-\widetilde{t})^{-1/2}\Vert \mathbf{F}(\,\cdot\, ,\widetilde{t})\Vert _{L_2(\mathcal{O})}\,d\widetilde{t}.
$$
Estimating the term $\mathcal{L}(\varepsilon ;t;\mathbf{F})$, for the case $1<r\leqslant \infty $ we obtain the following result.
Its proof is completely analogous to the proof of Theorem~5.1 from \cite{MSu2}.

\begin{theorem}
\label{Theorem 3.1}
Suppose that $\mathcal{O}\subset \mathbb{R}^d$ is a bounded domain of class~$C^{1,1}$. Suppose that the assumptions of Subsections~\textnormal{\ref{Subsection operatoer A_D,eps}--\ref{Subsection Effective operator}} are satisfied.
Let $\mathbf{u}_\varepsilon$ be the solution of problem~\eqref{first initial-boundary value problem with F}, and let $\mathbf{u}_0$ be the solution of the effective problem~\eqref{effective first problem with F} with $\boldsymbol{\varphi}\in L_2(\mathcal{O};\mathbb{C}^n)$ and $\mathbf{F}\in\mathfrak{H}_r(T)$, $0<T\leqslant\infty$, with some $1<r\leqslant \infty$.
Then for $0<\varepsilon\leqslant\varepsilon _1$ and $0<t<T$ we have
\begin{equation*}
\Vert \mathbf{u}_\varepsilon (\,\cdot\, ,t)-\mathbf{u}_0(\,\cdot\, ,t)\Vert _{L_2(\mathcal{O})}
\leqslant C_{15}\varepsilon (t+\varepsilon ^2)^{-1/2} e^{-c_\flat t/2}\Vert \boldsymbol{\varphi}\Vert _{L_2(\mathcal{O})}
+
c_r\theta (\varepsilon ,r)\Vert \mathbf{F}\Vert _{\mathfrak{H}_r(T)}.
\end{equation*}
Here $\theta (\varepsilon ,r)$ is given by
\begin{equation}
\label{theta}
\theta (\varepsilon ,r)=\begin{cases}
\varepsilon ^{2-2/r},&1<r<2,\\
\varepsilon (\vert \ln \varepsilon \vert +1)^{1/2},&r=2,\\
\varepsilon ,&2<r\leqslant \infty .
\end{cases}
\end{equation}
The constant $c_r$ depends only on $r$ and the problem data \eqref{problem data}.
\end{theorem}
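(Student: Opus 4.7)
The main work is already done in the text: the authors have reduced the estimate to
\begin{equation*}
\Vert \mathbf{u}_\varepsilon(\,\cdot\, ,t)-\mathbf{u}_0(\,\cdot\, ,t)\Vert_{L_2(\mathcal{O})}
\leqslant C_{15}\varepsilon(t+\varepsilon^2)^{-1/2}e^{-c_\flat t/2}\Vert\boldsymbol{\varphi}\Vert_{L_2(\mathcal{O})}
+ C_{15}\varepsilon\mathcal{L}(\varepsilon;t;\mathbf{F}),
\end{equation*}
obtained by subtracting \eqref{u_0 = F neq 0} from \eqref{u_eps = F neq 0} and invoking Theorem~\ref{Theorem L2 exp} inside the Duhamel integral. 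The remaining task is therefore purely a one-variable calculus estimate: I need to show that $\varepsilon\mathcal{L}(\varepsilon;t;\mathbf{F}) \leqslant c_r\theta(\varepsilon,r)\Vert\mathbf{F}\Vert_{\mathfrak{H}_r(T)}$ with $\theta(\varepsilon,r)$ defined by~\eqref{theta}.

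The plan is to apply H\"older's inequality in the time variable with exponents $r$ and $r'=r/(r-1)$ (treating $r=\infty$ separately). After the substitution $s=t-\widetilde{t}$ this gives
\begin{equation*}
\mathcal{L}(\varepsilon;t;\mathbf{F})
\leqslant \Vert\mathbf{F}\Vert_{\mathfrak{H}_r(T)} \bigl(I_r(\varepsilon;t)\bigr)^{1/r'},
\qquad
I_r(\varepsilon;t):=\int_0^t e^{-c_\flat r' s/2}(\varepsilon^2+s)^{-r'/2}\,ds,
\end{equation*}
and the problem reduces to bounding $\varepsilon\bigl(I_r(\varepsilon;t)\bigr)^{1/r'}$ by $c_r\theta(\varepsilon,r)$.

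I would then split into three cases. When $2<r\leqslant\infty$ (so $r'/2<1$), dropping the $\varepsilon^2$ gives the convergent integral $\int_0^\infty e^{-c_\flat r' s/2}s^{-r'/2}\,ds$, whence $I_r\leqslant C_r$ and $\varepsilon I_r^{1/r'}\leqslant C_r\varepsilon$; for $r=\infty$ the argument is even simpler, pulling $\Vert\mathbf{F}\Vert_\infty$ out directly. When $r=2$ (so $r'=2$), we split the integral at $s=1$: the piece $s\geqslant 1$ is bounded by a constant, and $\int_0^1(\varepsilon^2+s)^{-1}\,ds=\ln(1+\varepsilon^{-2})\leqslant 2|\ln\varepsilon|+C$ for $\varepsilon\leqslant 1$, giving $\varepsilon I_2^{1/2}\leqslant C\varepsilon(|\ln\varepsilon|+1)^{1/2}$. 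When $1<r<2$ (so $r'>2$ and the integrand is singular at $s=0$), we split at $s=\varepsilon^2$: the contribution from $0\leqslant s\leqslant\varepsilon^2$ is bounded by $\varepsilon^2\cdot\varepsilon^{-r'}=\varepsilon^{2-r'}$, while on $s\geqslant\varepsilon^2$ we bound $(\varepsilon^2+s)^{-r'/2}\leqslant s^{-r'/2}$ and integrate to get a term of the same order $\varepsilon^{2-r'}$. Thus $I_r\leqslant C_r\varepsilon^{2-r'}$ and $\varepsilon I_r^{1/r'}\leqslant C_r\varepsilon^{1+(2-r')/r'}=C_r\varepsilon^{2/r'}=C_r\varepsilon^{2-2/r}$, using $1/r'=1-1/r$.

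None of the steps poses a real obstacle; the only subtle point is the borderline $r=2$, where the integral $\int_0^1(\varepsilon^2+s)^{-1}\,ds$ generates the logarithmic factor and prevents the cleaner $O(\varepsilon)$ bound that one gets away from the endpoint. Combining the three cases with the definition \eqref{theta} of $\theta(\varepsilon,r)$ and absorbing $C_{15}$ together with the case-dependent constants into a single constant $c_r$ depending only on $r$ and the problem data~\eqref{problem data} completes the proof.
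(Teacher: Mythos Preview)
Your proof is correct and follows exactly the approach the paper intends: the text has already reduced everything to bounding $\varepsilon\mathcal{L}(\varepsilon;t;\mathbf{F})$, and your H\"older-plus-case-split estimate of the resulting one-dimensional integral $I_r$ is precisely the argument referred to (the paper defers to \cite{MSu2}, Theorem~5.1, for these same computations). The three cases are handled cleanly, and the logarithmic factor at $r=2$ and the exponent $2-2/r$ for $1<r<2$ emerge exactly as you indicate.
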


By analogy with the proof of Theorem~5.2 from \cite{MSu2},
we can deduce approximation of the solution of problem \eqref{first initial-boundary value problem with F} in $\mathfrak{H}_r(T)$
from Theorem~\ref{Theorem L2 exp}.

\begin{theorem}
Suppose that the assumptions of Theorem~\textnormal{\ref{Theorem 3.1}} are satisfied.
Let $\mathbf{u}_\varepsilon$ and $\mathbf{u}_0$ be the solutions of problems \eqref{first initial-boundary value problem with F} and \eqref{effective first problem with F},
respectively, with $\boldsymbol{\varphi}\in L_2(\mathcal{O};\mathbb{C}^n)$ and $\mathbf{F}\in\mathfrak{H}_r(T)$, $0<T\leqslant\infty$, for some $1\leqslant r<\infty$.
Then for $0<\varepsilon\leqslant\varepsilon _1$ we have
\begin{equation*}
\Vert \mathbf{u}_\varepsilon -\mathbf{u}_0\Vert _{\mathfrak{H}_r(T)}
\leqslant c_{r'}\theta (\varepsilon ,r')\Vert \boldsymbol{\varphi}\Vert _{L_2(\mathcal{O})}
+C_{23}\varepsilon \Vert \mathbf{F}\Vert_{\mathfrak{H}_r(T)}.
\end{equation*}
Here $\theta (\varepsilon ,\,\cdot\, )$ is given by \eqref{theta}, $r^{-1}+(r')^{-1}=1$.
The constant $C_{23}$ depends only on the problem data \eqref{problem data}, the constant $c_{r'}$ depends on the same parameters and~$r$.
\end{theorem}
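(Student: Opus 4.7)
The plan is to start from the Duhamel representations \eqref{u_eps = F neq 0} and \eqref{u_0 = F neq 0} and subtract them, obtaining
\[
\mathbf{u}_\varepsilon(\,\cdot\, ,t) - \mathbf{u}_0(\,\cdot\, ,t) = \mathcal{E}(t)\boldsymbol{\varphi}(\,\cdot\,) + \int_0^t \mathcal{E}(t-\widetilde{t})\mathbf{F}(\,\cdot\, ,\widetilde{t})\,d\widetilde{t},
\]
where $\mathcal{E}(s) := f^\varepsilon e^{-\widetilde{B}_{D,\varepsilon}s}(f^\varepsilon)^* - f_0 e^{-\widetilde{B}_D^0 s}f_0$. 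By Theorem~\ref{Theorem L2 exp} we have $\|\mathcal{E}(s)\|_{L_2(\mathcal{O})\to L_2(\mathcal{O})} \leqslant C_{15}\varepsilon(s+\varepsilon^2)^{-1/2}e^{-c_\flat s/2}$ for $0<\varepsilon\leqslant\varepsilon_1$. I then estimate the $\mathfrak{H}_r(T)$-norms of the two terms separately.

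For the first term, a pointwise-in-$t$ application of \eqref{Th_exp_L_2} yields
\[
\|\mathcal{E}(\,\cdot\,)\boldsymbol{\varphi}\|_{\mathfrak{H}_r(T)} \leqslant C_{15}\varepsilon\,\bigl\|(t+\varepsilon^2)^{-1/2}e^{-c_\flat t/2}\bigr\|_{L_r(0,T)}\|\boldsymbol{\varphi}\|_{L_2(\mathcal{O})}.
\]
I would split the $t$-integral at $\varepsilon^2$, the exponential weight absorbing the tail at infinity uniformly in $T\leqslant\infty$, and distinguish three cases. For $1\leqslant r<2$ the integrand is dominated by $t^{-r/2}e^{-c_\flat rt/2}$ which is uniformly integrable, producing $O(\varepsilon)$ in total. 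For $r=2$ the singular part contributes $|\ln\varepsilon|$, yielding $\varepsilon(|\ln\varepsilon|+1)^{1/2}$. For $r>2$ the change of variable $t=\varepsilon^2 s$ shows that the $L_r$-norm is of order $\varepsilon^{2/r-1}$, so after multiplying by $\varepsilon$ one obtains $\varepsilon^{2/r}$. Using the duality relation $2/r = 2 - 2/r'$, these three rates coincide precisely with $\theta(\varepsilon,r')$ from \eqref{theta}, giving the bound $c_{r'}\theta(\varepsilon,r')\|\boldsymbol{\varphi}\|_{L_2(\mathcal{O})}$.

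For the forcing term I apply Young's convolution inequality in the time variable. Setting $k(s):=C_{15}\varepsilon(s+\varepsilon^2)^{-1/2}e^{-c_\flat s/2}$, the kernel majorizes $\|\mathcal{E}(s)\|_{L_2\to L_2}$ and satisfies
\[
\|k\|_{L_1(\mathbb{R}_+)} \leqslant C_{15}\varepsilon\int_0^\infty s^{-1/2}e^{-c_\flat s/2}\,ds =: C_{23}\varepsilon,
\]
using the crude bound $(s+\varepsilon^2)^{-1/2}\leqslant s^{-1/2}$. Young's inequality $\|k*\phi\|_{L_r(0,T)} \leqslant \|k\|_{L_1(\mathbb{R}_+)}\|\phi\|_{L_r(0,T)}$ applied to $\phi(\widetilde{t}):=\|\mathbf{F}(\,\cdot\, ,\widetilde{t})\|_{L_2(\mathcal{O})}$ gives the bound $C_{23}\varepsilon\|\mathbf{F}\|_{\mathfrak{H}_r(T)}$ for the Duhamel integral. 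Combining the two estimates yields the claim.

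The main delicate point I anticipate is the three-case analysis for the $L_r$-norm of $(t+\varepsilon^2)^{-1/2}e^{-c_\flat t/2}$: one must verify all three rates carefully and check that the answer is governed by $r'$ rather than $r$, which rests on the identity $2-2/r' = 2/r$. The remaining arguments (splitting the Duhamel identity, uniform control of the $L_1$ kernel in $\varepsilon$, and the fact that the exponential weight makes all integrals uniform in $T\leqslant\infty$) are routine once the operator-level estimate of Theorem~\ref{Theorem L2 exp} is available.
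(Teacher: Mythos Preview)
Your proposal is correct and follows essentially the same route as the paper, which does not spell out the argument here but refers to the analogous Theorem~5.2 in \cite{MSu2}: split via the Duhamel representations, apply the operator estimate of Theorem~\ref{Theorem L2 exp} pointwise, use Young's inequality with the $L_1$ kernel $k(s)=C_{15}\varepsilon(s+\varepsilon^2)^{-1/2}e^{-c_\flat s/2}$ for the forcing term, and handle the initial-data term by a direct $L_r$-in-time computation of $\varepsilon(t+\varepsilon^2)^{-1/2}e^{-c_\flat t/2}$ with the three-case analysis yielding $\theta(\varepsilon,r')$. Your identification $2-2/r'=2/r$ and the resulting matching of cases is exactly the point.
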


\begin{remark}
For the case where $\boldsymbol{\varphi}=0$ and $\mathbf{F}\in \mathfrak{H}_\infty (T)$, Theorem~\textnormal{\ref{Theorem 3.1}} implies that
\begin{equation*}
\Vert \mathbf{u}_\varepsilon -\mathbf{u}_0\Vert _{\mathfrak{H}_\infty (T)}
\leqslant c_\infty \varepsilon \Vert \mathbf{F}\Vert _{\mathfrak{H}_\infty (T)},\quad 0<\varepsilon\leqslant\varepsilon _1.
\end{equation*}
\end{remark}

\subsection{Approximation of the solution in $H^1(\mathcal{O};\mathbb{C}^n)$}

Now, we obtain approximation of the solution of problem~\eqref{first initial-boundary value problem with F} in the $H^1(\mathcal{O};\mathbb{C}^n)$-norm with the help of  Theorem~\ref{Theorem H1 exp}. The difficulties arise in consideration of the integral term in~\eqref{u_eps = F neq 0}, because estimate~\eqref{Th_exp_korrector} ``deteriorates''\,for small~$t$.
\textit{Assuming that} $t\geqslant \varepsilon ^2$, we divide the integration interval in \eqref{u_eps = F neq 0} into two parts: $(0,t-\varepsilon ^2 )$ and $(t-\varepsilon ^2,t)$.
On the interval $(0,t-\varepsilon ^2 )$ we apply \eqref{Th_exp_korrector}, and on $(t-\varepsilon ^2,t)$ we use \eqref{small t L2-H1}.

Denote
\begin{equation}
\label{w_eps :=}
\mathbf{w}_\varepsilon (\,\cdot\, ,t):=f_0 e^{-\widetilde{B}_D^0\varepsilon ^2}f_0^{-1}\mathbf{u}_0(\,\cdot\, ,t-\varepsilon ^2),
\end{equation}
where $\mathbf{u}_0$ is the solution of problem~\eqref{effective first problem with F}. By~\eqref{u_0 = F neq 0},
\begin{equation*}
\mathbf{w}_\varepsilon (\,\cdot\, ,t)=f_0 e^{-\widetilde{B}_D^0t}f_0\boldsymbol{\varphi }(\,\cdot\, )
+\int\limits_0^{t-\varepsilon ^2} f_0 e^{-\widetilde{B}_D^0 (t-\widetilde{t})}f_0\mathbf{F}(\,\cdot\, ,\widetilde{t})\,d\widetilde{t}.
\end{equation*}

The following statement can be checked similarly to Theorem~5.4 from~\cite{MSu2}.

\begin{theorem}
\label{Theorem H1 solutions with F}
Suppose that the assumptions of Theorem~\textnormal{\ref{Theorem 3.1}} are satisfied. Suppose that $\mathbf{u}_\varepsilon$ and $\mathbf{u}_0$ are the solutions of
 problems~\eqref{first initial-boundary value problem with F} and~\eqref{effective first problem with F}, respectively, with $\boldsymbol{\varphi}\in L_2(\mathcal{O};\mathbb{C}^n)$ and $\mathbf{F}\in\mathfrak{H}_r(T)$, $0<T\leqslant\infty$, for some $2< r\leqslant\infty$.  Let $\mathbf{w}_\varepsilon (\,\cdot\, ,t)$ be given by~\eqref{w_eps :=}.
 Let $\Lambda (\mathbf{x})$ and $\widetilde{\Lambda}(\mathbf{x})$ be the $\Gamma$-periodic matrix solutions of problems~\eqref{Lambda problem} and~\eqref{tildeLambda_problem},
 respectively. Suppose that $P_\mathcal{O}$ is a linear continuous extension operator~\eqref{P_O H^1, H^2}. Let $S_\varepsilon$ be the Steklov smoothing operator~\eqref{S_eps}.
 We put $\widetilde{\mathbf{w}}_\varepsilon (\,\cdot\, ,t):=P_\mathcal{O}\mathbf{w}_\varepsilon (\,\cdot\, ,t)$ and denote
\begin{equation*}
\mathbf{v}_\varepsilon (\,\cdot\, ,t):=\mathbf{u}_0(\,\cdot\, ,t)+\varepsilon \Lambda ^\varepsilon S_\varepsilon b(\mathbf{D})\widetilde{\mathbf{w}}_\varepsilon (\,\cdot\, ,t)+\varepsilon \widetilde{\Lambda}^\varepsilon S_\varepsilon \widetilde{\mathbf{w}}_\varepsilon (\,\cdot\, ,t).
\end{equation*}
Let $\mathbf{p}_\varepsilon (\,\cdot\, ,t):=g^\varepsilon b(\mathbf{D})\mathbf{u}_\varepsilon (\,\cdot\, ,t)$, and let $\widetilde{g}(\mathbf{x})$ be the matrix-valued function~\eqref{tilde g}. We put
\begin{equation*}
\mathbf{q}_\varepsilon (\,\cdot\, ,t):=\widetilde{g}^\varepsilon S_\varepsilon b(\mathbf{D})\widetilde{\mathbf{w}}_\varepsilon (\,\cdot\, ,t)+g^\varepsilon \bigl(b(\mathbf{D})\widetilde{\Lambda}\bigr)^\varepsilon S_\varepsilon \widetilde{\mathbf{w}}_\varepsilon (\,\cdot\, ,t).
\end{equation*}
Then for $0<\varepsilon\leqslant \varepsilon _1$ and $\varepsilon ^2\leqslant t <T$ we have
\begin{align*}
\Vert &\mathbf{u}_\varepsilon (\,\cdot\, ,t)-\mathbf{v}_\varepsilon (\,\cdot\, ,t)\Vert _{H^1(\mathcal{O})}
\leqslant 2C_{16}\varepsilon ^{1/2}t^{-3/4}e^{-c_\flat t/2}\Vert \boldsymbol{\varphi}\Vert _{L_2(\mathcal{O})}
+\check{c}_r \omega(\varepsilon ,r)\Vert \mathbf{F}\Vert _{\mathfrak{H}_r(T)},
\\
&\Vert \mathbf{p}_\varepsilon (\,\cdot\, ,t)-\mathbf{q}_\varepsilon (\,\cdot\, ,t)\Vert _{L_2(\mathcal{O})}
\leqslant
\widetilde{C}_{16}\varepsilon ^{1/2}t^{-3/4}e^{-c_\flat t/2}\Vert \boldsymbol{\varphi}\Vert _{L_2(\mathcal{O})}
+\widetilde{c}_r \omega(\varepsilon ,r)\Vert \mathbf{F}\Vert _{\mathfrak{H}_r(T)}.
\end{align*}
Here constants $\check{c}_r$ and $\widetilde{c}_r$ depend only on the problem data~\eqref{problem data} and~$r$, and
\begin{equation}
\label{omega (eps,r)}
\omega (\varepsilon ,r):=\begin{cases}
\varepsilon ^{1-2/r}, &2<r<4,\\
\varepsilon ^{1/2}(\vert \ln \varepsilon \vert +1)^{3/4}, & r=4,\\
\varepsilon ^{1/2}, &4<r\leqslant \infty .
\end{cases}
\end{equation}
\end{theorem}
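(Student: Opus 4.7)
The plan is to start from the Duhamel representations \eqref{u_eps = F neq 0} and \eqref{u_0 = F neq 0} and expand both $\mathbf{u}_\varepsilon(\cdot,t)$ and $\mathbf{v}_\varepsilon(\cdot,t)$. A direct computation using the semigroup identity shows that $\mathbf{w}_\varepsilon$ is exactly the ``truncated'' effective solution:
\begin{equation*}
\mathbf{w}_\varepsilon(\cdot,t)=f_0 e^{-\widetilde{B}_D^0 t}f_0\boldsymbol{\varphi}(\cdot)+\int_0^{t-\varepsilon^2} f_0 e^{-\widetilde{B}_D^0(t-\widetilde{t})}f_0 \mathbf{F}(\cdot,\widetilde{t})\,d\widetilde{t}.
\end{equation*}
Consequently $\mathbf{v}_\varepsilon(\cdot,t)=\mathbf{u}_0(\cdot,t)+\varepsilon\mathcal{K}_D(t;\varepsilon)\boldsymbol{\varphi}+\int_0^{t-\varepsilon^2}\varepsilon\mathcal{K}_D(t-\widetilde{t};\varepsilon)\mathbf{F}(\cdot,\widetilde{t})\,d\widetilde{t}$, and so $\mathbf{u}_\varepsilon(\cdot,t)-\mathbf{v}_\varepsilon(\cdot,t)$ splits naturally into three pieces: the initial-data term, a ``corrected'' Duhamel integral on $(0,t-\varepsilon^2)$, and an uncorrected remainder on $(t-\varepsilon^2,t)$. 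The whole point of using $\mathbf{w}_\varepsilon$ rather than $\mathbf{u}_0$ in the corrector is precisely to chop off the near-diagonal portion of the time integral, where the bound \eqref{Th_exp_korrector} is too singular to apply.

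For the initial-data term I would apply \eqref{Th_exp_korrector} directly. Since $t\geqslant \varepsilon^2$, one has $\varepsilon t^{-1}\leqslant \varepsilon^{1/2}t^{-3/4}$, so the bound collapses into $2C_{16}\varepsilon^{1/2}t^{-3/4}e^{-c_\flat t/2}\Vert\boldsymbol{\varphi}\Vert_{L_2(\mathcal{O})}$. For the corrector integral on $(0,t-\varepsilon^2)$, I would apply \eqref{Th_exp_korrector} pointwise in the time variable $s=t-\widetilde{t}\in[\varepsilon^2,t]$, and for the remainder integral on $(t-\varepsilon^2,t)$ I would apply \eqref{small t L2-H1} pointwise, yielding the kernel $C_{17}s^{-1/2}e^{-c_\flat s/2}$ on $s\in(0,\varepsilon^2)$.

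The final step is to take the $L_2(\mathcal{O})$-norm, pull $\Vert\mathbf{F}(\cdot,\widetilde{t})\Vert_{L_2(\mathcal{O})}$ out of each integral via H\"older's inequality with exponent $r'=r/(r-1)\in[1,2)$, and evaluate the resulting $L_{r'}$-norms of the scalar kernels. The remainder integral contributes $(\int_0^{\varepsilon^2}s^{-r'/2}\,ds)^{1/r'}\asymp \varepsilon^{1-2/r}$ for $r>2$; the corrector integral contributes two pieces, namely $\varepsilon^{1/2}(\int_{\varepsilon^2}^{\infty}s^{-3r'/4}e^{-cr's}\,ds)^{1/r'}$ and $\varepsilon(\int_{\varepsilon^2}^{\infty}s^{-r'}e^{-cr's}\,ds)^{1/r'}$. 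A routine case analysis in the three ranges $2<r<4$, $r=4$, $r>4$ turns these into $\varepsilon^{1-2/r}$, $\varepsilon^{1/2}(|\ln\varepsilon|+1)^{3/4}$, and $\varepsilon^{1/2}$ respectively, matching \eqref{omega (eps,r)}. The flux estimate proceeds identically, using \eqref{exp apppr fluxes} in place of \eqref{Th_exp_korrector} on $(0,t-\varepsilon^2)$ and \eqref{small t flux g^eps}--\eqref{small t flux g^0} on $(t-\varepsilon^2,t)$; the extra $\varepsilon t^{-1}$ term is absent from \eqref{exp apppr fluxes}, but since we have already seen this term contributes at the same order as $\varepsilon^{1/2}t^{-3/4}$ (or strictly lower order for $r>4$) in the Hölder bound, the same $\omega(\varepsilon,r)$ emerges.

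The main obstacle is the critical-exponent analysis at $r=4$, where both kernel pieces $\varepsilon^{1/2}s^{-3/4}$ and $\varepsilon s^{-1}$ sit exactly at the borderline of $L_{r'}$-integrability and produce logarithmic corrections. One needs to verify carefully that the $\varepsilon s^{-1}$ term contributes only $\varepsilon^{1/2}$ at $r=4$ (its $L^{r'}$-mass concentrates near $s=\varepsilon^2$, so the log is suppressed), while the $\varepsilon^{1/2}s^{-3/4}$ term is the source of the factor $(|\ln\varepsilon|+1)^{3/4}$. Outside this balance one also needs to check that the rough estimate for the remainder on $(t-\varepsilon^2,t)$ never dominates the corrector estimate, which is what ultimately rules out a spurious log in the ranges $r\neq 4$.
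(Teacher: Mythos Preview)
Your proposal is correct and follows exactly the approach the paper outlines: split the Duhamel integral at $t-\varepsilon^2$, apply the corrector estimate \eqref{Th_exp_korrector} (respectively \eqref{exp apppr fluxes}) on $(0,t-\varepsilon^2)$ and the rough bound \eqref{small t L2-H1} (respectively \eqref{small t flux g^eps}) on $(t-\varepsilon^2,t)$, then use H\"older with exponent $r'$ and the ensuing case analysis in $r$. The paper does not write out the details but refers to the analogous Theorem~5.4 in \cite{MSu2}, whose argument is precisely what you describe; your handling of the borderline case $r=4$ (the $\varepsilon^{1/2}s^{-3/4}$ kernel producing the logarithm while the $\varepsilon s^{-1}$ kernel does not) is also correct.
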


Since the right-hand side of estimate \eqref{exp apppr fluxes} grows slowly than the right-hand side in estimate \eqref{Th_exp_korrector}, as
$t\rightarrow 0$, for $r>4$ we can approximate the flux $\mathbf{p}_\varepsilon$ in terms of
\begin{equation}
\label{h eps}
\mathbf{h}_\varepsilon (\,\cdot\, ,t):=\widetilde{g}^\varepsilon S_\varepsilon b(\mathbf{D})\widetilde{\mathbf{u}}_0(\,\cdot\, ,t)+g^\varepsilon \bigl(b(\mathbf{D})\widetilde{\Lambda}\bigr)^\varepsilon S_\varepsilon \widetilde{\mathbf{u}}_0(\,\cdot\,,t).
\end{equation}

\begin{proposition}
Suppose that the assumptions of Theorem~\textnormal{\ref{Theorem 3.1}} are satisfied.
Suppose that $\mathbf{u}_\varepsilon$ and $\mathbf{u}_0$ are the solutions of problems~\eqref{first initial-boundary value problem with F} and \eqref{effective first problem with F},
respectively, with $\boldsymbol{\varphi}\in L_2(\mathcal{O};\mathbb{C}^n)$ and $\mathbf{F}\in\mathfrak{H}_r(T)$, $0<T\leqslant\infty$, for some $4< r\leqslant\infty$.
Let $\mathbf{p}_\varepsilon (\,\cdot\, ,t)=g^\varepsilon b(\mathbf{D})\mathbf{u}_\varepsilon (\,\cdot\, ,t)$ and let $\mathbf{h}_\varepsilon (\,\cdot\, ,t)$ be given by~\eqref{h eps}. Then for $0<t<T$ and $0<\varepsilon \leqslant\varepsilon _1$ we have
\begin{equation}
\label{Pr flux with F}
\begin{split}
\Vert \mathbf{p}_\varepsilon (\,\cdot\, ,t)-\mathbf{h}_\varepsilon (\,\cdot\, ,t)\Vert _{L_2(\mathcal{O})}
\leqslant \widetilde{C}_{16}\varepsilon ^{1/2}t^{-3/4}e^{-c_\flat t/2}\Vert \boldsymbol{\varphi}\Vert _{L_2(\mathcal{O})}
+C_{24}^{(r)}\varepsilon ^{1/2}\Vert \mathbf{F}\Vert _{\mathfrak{H}_p(t)}.
\end{split}
\end{equation}
The constant $C_{24}^{(r)}$ depends only on the problem data \eqref{problem data} and $r$.
\end{proposition}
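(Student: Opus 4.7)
The plan is to follow exactly the pattern used for Theorems \ref{Theorem 3.1} and \ref{Theorem H1 solutions with F}, namely, to apply the operator estimate \eqref{exp apppr fluxes} of Theorem \ref{Theorem H1 exp} pointwise under the Duhamel integral. By linearity of $\mathcal{G}_D(t;\varepsilon)$ in the initial data and by the representation \eqref{u_0 = F neq 0} for the effective solution, the approximating flux may be written as
\begin{equation*}
\mathbf{h}_\varepsilon(\,\cdot\,,t) = \mathcal{G}_D(t;\varepsilon)\boldsymbol{\varphi}(\,\cdot\,) + \int_0^t \mathcal{G}_D(t-\widetilde{t};\varepsilon)\mathbf{F}(\,\cdot\,,\widetilde{t})\,d\widetilde{t},
\end{equation*}
while \eqref{u_eps = F neq 0} gives the analogous representation for $\mathbf{p}_\varepsilon = g^\varepsilon b(\mathbf{D})\mathbf{u}_\varepsilon$ with the sandwiched exponential replacing $f_0 e^{-\widetilde B_D^0 t} f_0$. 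Subtracting yields a homogeneous piece and a Duhamel residual.

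First I would estimate the homogeneous piece $(g^\varepsilon b(\mathbf{D})f^\varepsilon e^{-\widetilde{B}_{D,\varepsilon}t}(f^\varepsilon)^* - \mathcal{G}_D(t;\varepsilon))\boldsymbol{\varphi}$ in $L_2(\mathcal{O};\mathbb{C}^n)$ directly by \eqref{exp apppr fluxes}; this produces the term $\widetilde{C}_{16}\varepsilon^{1/2}t^{-3/4}e^{-c_\flat t/2}\Vert\boldsymbol{\varphi}\Vert_{L_2(\mathcal{O})}$. For the Duhamel residual, inserting the operator-norm bound \eqref{exp apppr fluxes} under the integral sign gives
\begin{equation*}
\Bigl\Vert \int_0^t \bigl(g^\varepsilon b(\mathbf{D})f^\varepsilon e^{-\widetilde{B}_{D,\varepsilon}(t-\widetilde{t})}(f^\varepsilon)^* - \mathcal{G}_D(t-\widetilde{t};\varepsilon)\bigr)\mathbf{F}(\,\cdot\,,\widetilde{t})\,d\widetilde{t}\Bigr\Vert_{L_2(\mathcal{O})} \leqslant \widetilde{C}_{16}\varepsilon^{1/2}\!\!\int_0^t\!(t-\widetilde{t})^{-3/4}e^{-c_\flat(t-\widetilde{t})/2}\Vert\mathbf{F}(\,\cdot\,,\widetilde{t})\Vert_{L_2(\mathcal{O})}\,d\widetilde{t}.
\end{equation*}

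Next I would apply H\"older's inequality on $(0,t)$ with exponents $r$ and $r'$, $1/r+1/r'=1$, to separate $\Vert\mathbf{F}\Vert_{\mathfrak{H}_r(t)}$ from the deterministic kernel. This bounds the Duhamel residual by
\begin{equation*}
\widetilde{C}_{16}\varepsilon^{1/2}\Vert\mathbf{F}\Vert_{\mathfrak{H}_r(t)}\Bigl(\int_0^t (t-\widetilde{t})^{-3r'/4}e^{-c_\flat r'(t-\widetilde{t})/2}\,d\widetilde{t}\Bigr)^{1/r'}.
\end{equation*}
Convergence of this time integral at its upper endpoint is free from the exponential factor; the genuine issue is integrability near $\widetilde{t}=t$, which demands $3r'/4<1$, i.e.\ $r>4$. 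This is precisely the standing hypothesis, and the integral is then dominated by $\int_0^\infty s^{-3r'/4}e^{-c_\flat r' s/2}\,ds$, a finite quantity expressible through $\Gamma(1-3r'/4)$ and $c_\flat$.

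The main (and only) obstacle is this integrability constraint at $\widetilde{t}\to t$; the threshold $r=4$ is sharp for the $\varepsilon^{1/2}$-order flux estimate, exactly as the threshold $r=2$ was sharp for the $L_2$-estimate of Theorem \ref{Theorem 3.1}. Absorbing the time integral into a constant $C_{24}^{(r)}$ depending only on $r$ and the problem data \eqref{problem data} (via $\widetilde{C}_{16}$ and $c_\flat$) and combining with the homogeneous contribution yields \eqref{Pr flux with F}.
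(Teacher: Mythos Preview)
Your proof is correct and follows essentially the same approach as the paper: apply the operator-norm flux estimate \eqref{exp apppr fluxes} to the homogeneous piece and under the Duhamel integral, then use H\"older with exponents $r,r'$ and bound the resulting time integral $\int_0^t s^{-3r'/4}e^{-c_\flat r' s/2}\,ds$ by a Gamma value, using $3r'/4<1$ (i.e., $r>4$). The paper treats $r=\infty$ and $4<r<\infty$ separately, but your uniform treatment via H\"older (with $r'=1$ when $r=\infty$) covers both cases.
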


\begin{proof}
To check estimate \eqref{Pr flux with F}, we use inequality~\eqref{exp apppr fluxes} and identities~\eqref{u_eps = F neq 0}, \eqref{u_0 = F neq 0}.
If~$r=\infty$, we deduce~\eqref{Pr flux with F} with~$C_{24}^{(\infty)}:=(2/c_\flat )^{1/4}\Gamma (1/4)\widetilde{C}_{16}$. If~$4<r<\infty$,
we apply the H\"older inequality:
\begin{equation*}
\begin{split}
\Vert \mathbf{p}_\varepsilon (\,\cdot\, ,t)-\mathbf{h}_\varepsilon (\,\cdot\, ,t)\Vert _{L_2(\mathcal{O})}
&\leqslant \widetilde{C}_{16}\varepsilon ^{1/2}t^{-3/4}e^{-c_\flat t/2}\Vert \boldsymbol{\varphi}\Vert _{L_2(\mathcal{O})}
\\
&+
\widetilde{C}_{16}\varepsilon ^{1/2}\Vert \mathbf{F}\Vert _{\mathfrak{H}_r(t)}\mathfrak{I}_r(\varepsilon ,t)^{1/r'},\quad r^{-1}+(r')^{-1}=1.
\end{split}
\end{equation*}
Here
$$
\mathfrak{I}_r(\varepsilon ,t):=\int\limits_0^t \tau ^{-3r'/4}e^{-c_\flat r'\tau /2}\,d\tau
\leqslant (c_\flat r'/2)^{3r'/4-1}\Gamma (1-3r'/4).
$$
This implies \eqref{Pr flux with F} with the constant $C_{24}^{(r)}:= (c_\flat r'/2)^{3/4-1/r'}\Gamma (1-3r'/4)^{1/r'}\widetilde{C}_{16}$.
\end{proof}

Combining Proposition~\ref{Proposition small time} and Theorem~\ref{Theorem exp no S-eps}, we deduce the following result.

\begin{theorem}
\label{Theorem 3.6}
Suppose that the assumptions of Theorem~\textnormal{\ref{Theorem H1 solutions with F}} are satisfied.
Suppose that the matrix-valued functions $\Lambda (\mathbf{x})$ and $\widetilde{\Lambda}(\mathbf{x})$ satisfy Conditions~\textnormal{\ref{Condition Lambda in L infty}}
and~\textnormal{\ref{Condition tilde Lambda in Lp}}, respectively. Denote
\begin{align}
\label{check v_eps}
&\check{\mathbf{v}}_\varepsilon (\,\cdot\, ,t):=\mathbf{u}_0(\,\cdot\, ,t)+\varepsilon \Lambda ^\varepsilon b(\mathbf{D})\mathbf{w}_\varepsilon (\,\cdot\, ,t)
+\varepsilon \widetilde{\Lambda}^\varepsilon \mathbf{w}_\varepsilon (\,\cdot\, ,t),\\
\label{check q_eps}
&\check{\mathbf{q}}_\varepsilon (\,\cdot\, ,t):=\widetilde{g}^\varepsilon b(\mathbf{D})\mathbf{w}_\varepsilon (\,\cdot\, ,t)+g^\varepsilon \bigl(b(\mathbf{D})\widetilde{\Lambda}\bigr)^\varepsilon \mathbf{w}_\varepsilon (\,\cdot\, ,t).
\end{align}
Then for $0<\varepsilon \leqslant\varepsilon _1$ and $\varepsilon ^2\leqslant t <T$ we have
\begin{align*}
\begin{split}
&\Vert \mathbf{u}_\varepsilon (\,\cdot\, ,t)-\check{\mathbf{v}}_\varepsilon (\,\cdot\, ,t)\Vert _{H^1(\mathcal{O})}
\leqslant 2C_{18}\varepsilon ^{1/2}t^{-3/4}e^{-c_\flat t/2}\Vert \boldsymbol{\varphi}\Vert _{L_2(\mathcal{O})}
+c_r'\omega (\varepsilon ,r)\Vert \mathbf{F}\Vert _{\mathfrak{H}_r(t)},
\end{split}
\\
\begin{split}
&\Vert \mathbf{p}_\varepsilon (\,\cdot\, ,t)-\check{\mathbf{q}}_\varepsilon (\,\cdot\, ,t)\Vert _{L_2(\mathcal{O})}
\leqslant
2\widetilde{C}_{18}\varepsilon ^{1/2}t^{-3/4}e^{-c_\flat t/2}\Vert \boldsymbol{\varphi}\Vert _{L_2(\mathcal{O})}
+c_r''\omega (\varepsilon ,r)\Vert \mathbf{F}\Vert _{\mathfrak{H}_r(t)}.
\end{split}
\end{align*}
The constants $c_r'$ and $c_r''$ depend only on the initial data \eqref{problem data}, $r$, $p$, and the norms $\Vert \Lambda\Vert _{L_\infty}$, $\Vert \widetilde{\Lambda}\Vert _{L_p(\Omega)}$.
\end{theorem}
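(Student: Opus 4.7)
The plan is to mirror the proof of Theorem~\ref{Theorem H1 solutions with F} (itself modelled on Theorem~5.4 of \cite{MSu2}), replacing every invocation of Theorem~\ref{Theorem H1 exp} by its no-smoothing counterpart Theorem~\ref{Theorem exp no S-eps}, while retaining Proposition~\ref{Proposition small time} to absorb the boundary layer in the time variable near $\widetilde t = t$. The hypotheses $\Lambda\in L_\infty$ and $\widetilde\Lambda\in L_p(\Omega)$ are precisely what is needed to ensure that the modified corrector $\varepsilon\Lambda^\varepsilon b(\mathbf{D})\mathbf{w}_\varepsilon + \varepsilon\widetilde{\Lambda}^\varepsilon \mathbf{w}_\varepsilon$ (now without $S_\varepsilon$) is a legitimate element of $H^1(\mathcal{O};\mathbb{C}^n)$, via the Sobolev regularity \eqref{exp tilde B_D^0 L2-H2} of $f_0 e^{-\widetilde{B}_D^0 t}f_0$.

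First I would expand both $\mathbf{u}_\varepsilon(\,\cdot\,,t)$ and $\check{\mathbf{v}}_\varepsilon(\,\cdot\,,t)$ via Duhamel's formulas \eqref{u_eps = F neq 0}, \eqref{u_0 = F neq 0}, and decompose the difference into an initial-data part and a forcing integral. The initial-data part equals $\bigl(f^\varepsilon e^{-\widetilde{B}_{D,\varepsilon}t}(f^\varepsilon)^* - f_0 e^{-\widetilde{B}_D^0 t}f_0 - \varepsilon\mathcal{K}_D^0(t;\varepsilon)\bigr)\boldsymbol{\varphi}$ and is controlled directly by the first inequality of Theorem~\ref{Theorem exp no S-eps}, yielding the term $2C_{18}\varepsilon^{1/2}t^{-3/4}e^{-c_\flat t/2}\|\boldsymbol{\varphi}\|_{L_2(\mathcal{O})}$ (the factor $2$ accommodates the shift from $t$ to $t-\varepsilon^2$ in the definition of $\mathbf{w}_\varepsilon$).

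Next I would split the forcing integral at $\widetilde t = t - \varepsilon^2$. On $(0,t-\varepsilon^2)$ the elapsed time $t-\widetilde t \geq \varepsilon^2$, so Theorem~\ref{Theorem exp no S-eps} applies pointwise in $\widetilde t$ with $\mathbf{F}(\,\cdot\,,\widetilde t)$ playing the role of $\boldsymbol{\varphi}$; this produces an integrand of order $(\varepsilon^{1/2}(t-\widetilde t)^{-3/4} + \varepsilon(t-\widetilde t)^{-1})e^{-c_\flat(t-\widetilde t)/2}\|\mathbf{F}(\,\cdot\,,\widetilde t)\|_{L_2(\mathcal{O})}$. On the short interval $(t-\varepsilon^2, t)$, where the no-smoothing estimate would degenerate, I instead use the crude inequality \eqref{small t L2-H1} for the semigroup difference together with $L_\infty$, respectively $L_p$, multiplier bounds on $\Lambda^\varepsilon$ and $\widetilde\Lambda^\varepsilon$ applied to $b(\mathbf{D})f_0 e^{-\widetilde{B}_D^0(t-\widetilde t)}f_0\mathbf{F}(\,\cdot\,,\widetilde t)$ and $f_0 e^{-\widetilde{B}_D^0(t-\widetilde t)}f_0\mathbf{F}(\,\cdot\,,\widetilde t)$, invoking the Sobolev regularity \eqref{exp tilde B_D^0 L2-H2}. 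One then combines the two sub-intervals by applying H\"older's inequality in $\widetilde t$ with conjugate exponent $r'$, which recovers the factor $\omega(\varepsilon,r)\|\mathbf{F}\|_{\mathfrak{H}_r(t)}$ defined in \eqref{omega (eps,r)}. The flux estimate follows along identical lines, using the flux halves of Theorem~\ref{Theorem exp no S-eps} and of Proposition~\ref{Proposition small time} (estimates \eqref{small t flux g^eps}, \eqref{small t flux g^0}).

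The main obstacle will be the marginal case $r = 4$, where the time integral of $(t-\widetilde t)^{-3r'/4}$ on $(0,t-\varepsilon^2)$ diverges logarithmically and a factor $(|\ln\varepsilon|+1)^{3/4}$ appears; tracking where this logarithm enters, verifying its exponent, and checking that it matches the stated weight $\omega(\varepsilon,4)$ is the technical heart of the argument. Outside this borderline exponent the integrals converge absolutely and the bookkeeping is routine, following verbatim the corresponding computation in \cite[Theorem~5.4]{MSu2}. A secondary subtlety is that the corrector applied to $\mathbf{F}(\,\cdot\,,\widetilde t)$ on $(t-\varepsilon^2,t)$ must be estimated not via \eqref{exp tilde B_D^0 L2-H2} alone (which would give $(t-\widetilde t)^{-1}$, non-integrable on the short interval), but by interpolating between \eqref{exp tilde B_D^0 L2-H1} and \eqref{exp tilde B_D^0 L2-H2}, exactly as in the proof of the first inequality in Theorem~\ref{Theorem exp no S-eps}; this is what converts the length $\varepsilon^2$ of the sub-interval into the desired $\varepsilon^{1/2}$ gain.
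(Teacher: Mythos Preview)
Your approach is essentially the one the paper has in mind: the paper's entire proof is the sentence ``Combining Proposition~\ref{Proposition small time} and Theorem~\ref{Theorem exp no S-eps}, we deduce the following result,'' and you have correctly unpacked this as the argument of Theorem~\ref{Theorem H1 solutions with F} with Theorem~\ref{Theorem exp no S-eps} substituted for Theorem~\ref{Theorem H1 exp}.

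One simplification you have overlooked: on the short interval $(t-\varepsilon^2,t)$ there is \emph{no} corrector term to estimate. By the definition \eqref{w_eps :=}, $\mathbf{w}_\varepsilon(\,\cdot\,,t)=f_0 e^{-\widetilde{B}_D^0 t}f_0\boldsymbol{\varphi}+\int_0^{t-\varepsilon^2} f_0 e^{-\widetilde{B}_D^0(t-\widetilde t)}f_0\,\mathbf{F}(\,\cdot\,,\widetilde t)\,d\widetilde t$, so the corrector part of $\check{\mathbf{v}}_\varepsilon$ only receives contributions from $\widetilde t\in(0,t-\varepsilon^2)$. Thus on $(t-\varepsilon^2,t)$ the difference $\mathbf{u}_\varepsilon-\check{\mathbf{v}}_\varepsilon$ reduces to the bare semigroup difference and is handled purely by \eqref{small t L2-H1}; your ``secondary subtlety'' involving multiplier bounds on $\Lambda^\varepsilon,\widetilde{\Lambda}^\varepsilon$ and interpolation between \eqref{exp tilde B_D^0 L2-H1} and \eqref{exp tilde B_D^0 L2-H2} on the short interval is unnecessary. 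This is precisely the point of building the corrector from $\mathbf{w}_\varepsilon$ rather than from $\mathbf{u}_0$.
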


For the case of sufficiently smooth boundary, we could apply Theorem~\ref{Theorem smooth boundary}. However,
because of the strong growth of the right-hand side in estimates \eqref{Th smooth domain 1}, \eqref{Th smooth domain 2} for small $t$,
we obtain a meaningful result only in the three-dimensional case and only for~$r>4$.

\begin{proposition}
Suppose that the assumptions of Theorem~\textnormal{\ref{Theorem H1 solutions with F}} are satisfied with $d=3$ and $r>4$.
Suppose that $\partial\mathcal{O}\in C^{2,1}$. Let $\check{\mathbf{v}}_\varepsilon$ and $\check{\mathbf{q}}_\varepsilon $ be given by \eqref{check v_eps} and \eqref{check q_eps}.
Then for $0<\varepsilon\leqslant\varepsilon_1$ and $\varepsilon ^2\leqslant t <T$ we have
\begin{align*}
\begin{split}
\Vert &\mathbf{u}_\varepsilon (\,\cdot\, ,t)-\check{\mathbf{v}}_\varepsilon (\,\cdot\, ,t)\Vert _{H^1(\mathcal{O})}
\leqslant
\mathcal{C}_3(\varepsilon ^{1/2}t^{-3/4}+\varepsilon t^{-5/4})e^{-c_\flat t/2}\Vert \boldsymbol{\varphi}\Vert _{L_2(\mathcal{O})}
+\widetilde{c}'_r\varepsilon ^{1/2-2/r}\Vert \mathbf{F}\Vert _{\mathfrak{H}_r(t)},
\end{split}
\\
\begin{split}
\Vert &\mathbf{p}_\varepsilon (\,\cdot\, ,t)-\check{\mathbf{q}}_\varepsilon (\,\cdot\, ,t)\Vert _{L_2(\mathcal{O})}
\leqslant
\widetilde{\mathcal{C}}_3(\varepsilon ^{1/2}t^{-3/4}+\varepsilon t^{-5/4})e^{-c_\flat t/2}\Vert \boldsymbol{\varphi}\Vert _{L_2(\mathcal{O})}
+\widetilde{c}''_r\varepsilon ^{1/2-2/r}\Vert \mathbf{F}\Vert _{\mathfrak{H}_r (t)}.
\end{split}
\end{align*}
The constants $\widetilde{c}'_r$ and $\widetilde{c}''_r$ depend only on the problem data \eqref{problem data} and $r$.
\end{proposition}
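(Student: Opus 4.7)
The plan is to imitate the proof of Theorem~\ref{Theorem 3.6}, replacing its use of Theorem~\ref{Theorem exp no S-eps} by Theorem~\ref{Theorem smooth boundary}, which is available since $d=3$ and $\partial\mathcal{O}\in C^{2,1}$. Using \eqref{u_eps = F neq 0}, \eqref{u_0 = F neq 0}, and the semigroup property, the definition \eqref{w_eps :=} yields the operator identity
\begin{equation*}
\check{\mathbf v}_\varepsilon(\,\cdot\,,t)=\mathbf u_0(\,\cdot\,,t)+\varepsilon\mathcal K_D^0(t;\varepsilon)\boldsymbol\varphi+\int_0^{t-\varepsilon^2}\varepsilon\mathcal K_D^0(t-\widetilde t;\varepsilon)\mathbf F(\,\cdot\,,\widetilde t)\,d\widetilde t,
\end{equation*}
and an analogous identity with $\check{\mathbf q}_\varepsilon$ and $\mathcal G_D^0$ in place of $\check{\mathbf v}_\varepsilon$ and $\mathcal K_D^0$. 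Subtracting from \eqref{u_eps = F neq 0} produces a three-piece decomposition of $\mathbf u_\varepsilon-\check{\mathbf v}_\varepsilon$: (i) a free term acting on $\boldsymbol\varphi$; (ii) an integral over $(0,t-\varepsilon^2)$ of the full ``exponential-minus-effective-minus-corrector'' kernel against $\mathbf F$; and (iii) an integral over $(t-\varepsilon^2,t)$ of the bare difference $f^\varepsilon e^{-\widetilde B_{D,\varepsilon}(t-\widetilde t)}(f^\varepsilon)^*-f_0e^{-\widetilde B_D^0(t-\widetilde t)}f_0$ against $\mathbf F$.

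Part~(i) is handled by a direct application of \eqref{Th smooth domain 1}. For part~(ii) I apply \eqref{Th smooth domain 1} pointwise in $\widetilde t$, then Hölder in $\widetilde t$ with conjugate exponent $r'\in(1,4/3)$ (since $r>4$): the contribution of the $\varepsilon^{1/2}\tau^{-3/4}$ term produces the convergent integral $\int_0^\infty s^{-3r'/4}e^{-c_\flat r's/2}\,ds$ (finite because $3r'/4<1$), giving $O(\varepsilon^{1/2})\Vert\mathbf F\Vert_{\mathfrak H_r(t)}$; whereas the $\varepsilon\tau^{-5/4}$ term has a kernel non-integrable at the origin with $5r'/4\in(5/4,5/3)$, and the crucial role of the truncation $\tau\geq\varepsilon^2$ (built into $\mathbf w_\varepsilon$) is to bound $\int_{\varepsilon^2}^\infty s^{-5r'/4}e^{-c_\flat r's/2}\,ds$ by $C(r)\varepsilon^{2-5r'/2}$. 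Raising to $1/r'$ and multiplying by $\varepsilon$ produces a factor
\begin{equation*}
\varepsilon\cdot\varepsilon^{2/r'-5/2}=\varepsilon^{1/2-2/r}
\end{equation*}
against $\Vert\mathbf F\Vert_{\mathfrak H_r(t)}$. Part~(iii) is estimated by the rough bound \eqref{small t L2-H1} of Proposition~\ref{Proposition small time} followed by Hölder: $\bigl(\int_{t-\varepsilon^2}^t(t-\widetilde t)^{-r'/2}\,d\widetilde t\bigr)^{1/r'}\asymp\varepsilon^{1-2/r}$, which is absorbed into $\varepsilon^{1/2-2/r}$ for $\varepsilon\leq 1$.

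The flux estimate is obtained by exactly the same scheme, with \eqref{Th smooth domain 2} replacing \eqref{Th smooth domain 1} in parts~(i) and~(ii), and the bound \eqref{small t flux g^eps} replacing \eqref{small t L2-H1} in part~(iii) (no corrector subtraction is needed there, since the corrector terms in $\check{\mathbf q}_\varepsilon$ are supported on $(0,t-\varepsilon^2)$). The principal obstacle---and the reason for the sharp restrictions $d=3$ and $r>4$---is the handling of the singular factor $\tau^{-d/4-1/2}=\tau^{-5/4}$ in the Hölder integral: replacing $d=3$ by $d\geq4$ raises the exponent of $\tau$ to $d/4+1/2\geq 3/2$, and then $(d/4+1/2)r'>1$ still gives, after taking the $r'$-th root and multiplying by $\varepsilon$, a power $\varepsilon^{2/r'-d/2+1/2}$ that is \emph{negative} in three or more spatial dimensions higher than three; while allowing $r\leq 4$ makes $3r'/4\geq 1$, so the ``good'' $\varepsilon^{1/2}\tau^{-3/4}$ piece ceases to produce a convergent time integral and the Hölder argument collapses (only a logarithmic substitute would survive). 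These borderline integrabilities are the price paid for removing the Steklov smoothing $S_\varepsilon$ from the corrector in the presence of a nonhomogeneous right-hand side.
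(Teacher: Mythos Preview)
Your proof is correct and follows exactly the approach the paper intends (the paper states the proposition without proof, indicating only that it is obtained by applying Theorem~\ref{Theorem smooth boundary} in place of Theorem~\ref{Theorem exp no S-eps} in the scheme of Theorems~\ref{Theorem H1 solutions with F} and~\ref{Theorem 3.6}). Your H\"older computations for the three pieces, and in particular the treatment of the singular $\varepsilon\tau^{-5/4}$ contribution via the truncation $\tau\ge\varepsilon^2$ yielding the exponent $1/2-2/r$, are exactly the details the paper leaves implicit.
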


\subsection{Approximation of the solution in a strictly interior subdomain}

From Theorem~\ref{Theorem O' exp} and Proposition \ref{Proposition small time} we deduce the following result.

\begin{theorem}
\label{Theorem O' solutions with F}
Suppose that the assumptions of Theorem~\textnormal{\ref{Theorem H1 solutions with F}} are satisfied. Let~$\mathcal{O}'$ be a strictly interior subdomain of the domain~$\mathcal{O}$.
Let~$\delta $ be given by~\eqref{delta= 1.62a}. Then for $0<\varepsilon\leqslant\varepsilon _1$ and $\varepsilon ^2\leqslant t <T$ we have
\begin{align*}
\begin{split}
\Vert &\mathbf{u}_\varepsilon (\,\cdot\, ,t)-\mathbf{v}_\varepsilon(\,\cdot\, ,t)\Vert _{H^1(\mathcal{O}')}
\leqslant \varepsilon (C_{20}t^{-1/2}\delta ^{-1}+C_{21}t^{-1})e^{-c_\flat t/2}\Vert \boldsymbol{\varphi}\Vert _{L_2(\mathcal{O})}+k_r\vartheta (\varepsilon,\delta , r)\Vert \mathbf{F}\Vert _{\mathfrak{H}_r(t)},
\end{split}
\\
\begin{split}
\Vert &\mathbf{p}_\varepsilon (\,\cdot\, ,t)-\mathbf{q} _\varepsilon (\,\cdot\, ,t)\Vert _{L_2(\mathcal{O}')}
\leqslant \varepsilon(\widetilde{C}_{20}t^{-1/2}\delta ^{-1}+\widetilde{C}_{21}t^{-1}) e^{-c_\flat t/2}\Vert \boldsymbol{\varphi}\Vert _{L_2(\mathcal{O})}
+\widetilde{k}_r\vartheta (\varepsilon,\delta , r)\Vert \mathbf{F}\Vert _{\mathfrak{H}_r(t)}.
\end{split}
\end{align*}
Here the constants $k_r$ and $\widetilde{k}_r$ depend only on the problem data \eqref{problem data} and $r$, and
\begin{equation*}
\vartheta (\varepsilon ,\delta, r):=\begin{cases}
\varepsilon\delta^{-1}+\varepsilon ^{1-2/r},&2<r<\infty,\\
\varepsilon\delta^{-1}+\varepsilon (\vert \ln\varepsilon\vert +1), &r=\infty .
\end{cases}
\end{equation*}
\end{theorem}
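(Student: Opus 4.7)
The plan is to exploit the Duhamel representations \eqref{u_eps = F neq 0} and \eqref{u_0 = F neq 0}, reducing the bound to operator-norm estimates already available: Theorem~\ref{Theorem O' exp} for times $\tau \geq \varepsilon^2$ and Proposition~\ref{Proposition small time} for the time boundary layer $\tau \in (0, \varepsilon^2)$. I decompose $\mathbf{u}_\varepsilon = \mathbf{u}_\varepsilon^{\boldsymbol{\varphi}} + \mathbf{u}_\varepsilon^{\mathbf{F}}$, and analogously $\mathbf{u}_0, \mathbf{w}_\varepsilon, \mathbf{v}_\varepsilon, \mathbf{p}_\varepsilon, \mathbf{q}_\varepsilon$, according to the contributions of $\boldsymbol{\varphi}$ and $\mathbf{F}$. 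A direct calculation using the semigroup law shows $\mathbf{w}_\varepsilon^{\boldsymbol{\varphi}}(\,\cdot\,,t) = \mathbf{u}_0^{\boldsymbol{\varphi}}(\,\cdot\,,t)$, so the corrector part of $\mathbf{v}_\varepsilon^{\boldsymbol{\varphi}}$ is exactly $\varepsilon\mathcal{K}_D(t;\varepsilon)\boldsymbol{\varphi}$. The $\boldsymbol{\varphi}$-contribution is then handled by Theorem~\ref{Theorem O' exp} directly at time $t \geq \varepsilon^2$, producing the $\|\boldsymbol{\varphi}\|_{L_2(\mathcal{O})}$-dependent terms in both the solution and the flux bound.

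The central observation for the $\mathbf{F}$-contribution is the identity
\begin{equation*}
\mathbf{w}_\varepsilon^{\mathbf{F}}(\,\cdot\,,t) = \int_0^{t-\varepsilon^2} f_0\, e^{-\widetilde{B}_D^0(t-\widetilde{t})}f_0\,\mathbf{F}(\,\cdot\,,\widetilde{t})\,d\widetilde{t},
\end{equation*}
by which the corrector term in $\mathbf{v}_\varepsilon^{\mathbf{F}}$ becomes $\int_0^{t-\varepsilon^2} \varepsilon\mathcal{K}_D(t-\widetilde{t};\varepsilon)\mathbf{F}(\,\cdot\,,\widetilde{t})\,d\widetilde{t}$, with no contribution from $\widetilde{t} \in (t-\varepsilon^2, t)$. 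Subtracting the Duhamel formulas and splitting the time integral at $\widetilde{t} = t-\varepsilon^2$ yields
\begin{equation*}
\mathbf{u}_\varepsilon^{\mathbf{F}}(\,\cdot\,,t) - \mathbf{v}_\varepsilon^{\mathbf{F}}(\,\cdot\,,t) = \int_0^{t-\varepsilon^2}\mathcal{D}^{\mathcal{K}}_\varepsilon(t-\widetilde{t})\mathbf{F}(\,\cdot\,,\widetilde{t})\,d\widetilde{t} + \int_{t-\varepsilon^2}^{t}\mathcal{D}_\varepsilon(t-\widetilde{t})\mathbf{F}(\,\cdot\,,\widetilde{t})\,d\widetilde{t},
\end{equation*}
where $\mathcal{D}^{\mathcal{K}}_\varepsilon(\tau) := f^\varepsilon e^{-\widetilde{B}_{D,\varepsilon}\tau}(f^\varepsilon)^* - f_0 e^{-\widetilde{B}_D^0\tau}f_0 - \varepsilon\mathcal{K}_D(\tau;\varepsilon)$ and $\mathcal{D}_\varepsilon(\tau) := f^\varepsilon e^{-\widetilde{B}_{D,\varepsilon}\tau}(f^\varepsilon)^* - f_0 e^{-\widetilde{B}_D^0\tau}f_0$. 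On the first piece $\tau := t-\widetilde{t} \geq \varepsilon^2$, so Theorem~\ref{Theorem O' exp} bounds $\|\mathcal{D}^{\mathcal{K}}_\varepsilon(\tau)\|_{L_2(\mathcal{O})\to H^1(\mathcal{O}')}$ by $\varepsilon(C_{20}\tau^{-1/2}\delta^{-1}+C_{21}\tau^{-1})e^{-c_\flat\tau/2}$; on the second piece Proposition~\ref{Proposition small time} controls $\|\mathcal{D}_\varepsilon(\tau)\|_{L_2(\mathcal{O})\to H^1(\mathcal{O}')}$ by $C_{17}\tau^{-1/2}e^{-c_\flat\tau/2}$.

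H\"older's inequality with exponents $r, r'$ then produces the three regimes of $\vartheta(\varepsilon,\delta,r)$: the weight $\tau^{-1/2}\delta^{-1}$ yields $\int_{\varepsilon^2}^t \tau^{-r'/2}e^{-c_\flat r'\tau/2}\,d\tau$, which is uniformly bounded since $r' < 2$ for $r > 2$, contributing $\varepsilon\delta^{-1}$; the weight $\tau^{-1}$ yields $\int_{\varepsilon^2}^t \tau^{-r'}e^{-c_\flat r'\tau/2}\,d\tau \sim \varepsilon^{-2(r'-1)}$ for $r<\infty$, giving $\varepsilon^{1-2/r}$ after the $1/r'$-root and multiplication by $\varepsilon$; and for $r=\infty$ the same integral is logarithmic, producing $\varepsilon(|\ln\varepsilon|+1)$. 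The second-piece integral over $(0,\varepsilon^2)$ contributes the same $\varepsilon^{1-2/r}$ (respectively $\varepsilon$) order. The flux estimate follows by the same scheme, replacing Theorem~\ref{Theorem O' exp} by its flux version and the small-time bound $C_{17}t^{-1/2}e^{-c_\flat t/2}$ by \eqref{small t flux g^eps}. The principal difficulty is not analytic but algebraic book-keeping: one must identify the integrated corrector exactly with the $\mathbf{w}_\varepsilon^{\mathbf{F}}$-part of $\mathbf{v}_\varepsilon$ so as to obtain a clean decomposition in which $\mathcal{D}^{\mathcal{K}}_\varepsilon$ appears only on $(0, t-\varepsilon^2)$; once this is set up, the scheme parallels the proof of Theorem~\ref{Theorem H1 solutions with F} (cf. Theorem~5.4 of \cite{MSu2}).
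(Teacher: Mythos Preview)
Your proposal is correct and follows essentially the same approach the paper indicates: the paper states only that the result is deduced ``from Theorem~\ref{Theorem O' exp} and Proposition~\ref{Proposition small time}'' in the manner of Theorem~\ref{Theorem H1 solutions with F}, and your Duhamel decomposition with the time-splitting at $t-\varepsilon^2$, the semigroup identity for $\mathbf{w}_\varepsilon$, and the H\"older estimates producing $\vartheta(\varepsilon,\delta,r)$ constitute exactly that deduction.
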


Finally, under Conditions~\ref{Condition Lambda in L infty} and \ref{Condition tilde Lambda in Lp}, Theorem~\ref{Theorem O' exp no S_eps} implies the following result.

\begin{theorem}
Suppose that the assumtions of Theorem~\textnormal{\ref{Theorem O' solutions with F}} are satisfied.
Suppose that the matrix-valued functions~$\Lambda (\mathbf{x})$ and $\widetilde{\Lambda}(\mathbf{x})$ satisfy Conditions~\textnormal{\ref{Condition Lambda in L infty}}
and~\textnormal{\ref{Condition tilde Lambda in Lp}}, respectively. Suppose that $\check{\mathbf{v}}_\varepsilon$ and $\check{\mathbf{q}}_\varepsilon$ are given by~\eqref{check v_eps}
and~\eqref{check q_eps}. Then for $0<\varepsilon\leqslant\varepsilon _1$ and $\varepsilon ^2\leqslant t <T$ we have
\begin{align*}
\begin{split}
\Vert &\mathbf{u}_\varepsilon (\,\cdot\, ,t)-\check{\mathbf{v}}_\varepsilon (\,\cdot\, ,t)\Vert _{H^1(\mathcal{O}')}
\leqslant \varepsilon (C_{20}t^{-1/2}\delta^{-1}+C_{22}t^{-1})e^{-c_\flat t/2}\Vert \boldsymbol{\varphi}\Vert _{L_2(\mathcal{O})}
+\check{k}_r\vartheta (\varepsilon ,\delta , r)\Vert \mathbf{F}\Vert _{\mathfrak{H}_r(t)},
\end{split}
\\
\begin{split}
\Vert &\mathbf{p}_\varepsilon (\,\cdot\, ,t)-\check{\mathbf{q}}_\varepsilon (\,\cdot\, ,t)\Vert _{L_2(\mathcal{O}')}
\leqslant \varepsilon (\widetilde{C}_{20}t^{-1/2}\delta^{-1}+\widetilde{C}_{22}t^{-1})e^{-c_\flat t/2}\Vert \boldsymbol{\varphi}\Vert _{L_2(\mathcal{O})}
+\widehat{k}_r\vartheta (\varepsilon ,\delta ,r)\Vert \mathbf{F}\Vert _{\mathfrak{H}_r(t)}.
\end{split}
\end{align*}
The constants $\check{k}_r$ and $\widehat{k}_r$ depend only on the problem data~\eqref{problem data}, $r$, $p$, and the norms~$\Vert \Lambda\Vert _{L_\infty}$,
$\Vert \widetilde{\Lambda}\Vert _{L_p(\Omega)}$.
\end{theorem}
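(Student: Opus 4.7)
The plan is to follow the same splitting scheme as in the proofs of Theorems \ref{Theorem H1 solutions with F} and \ref{Theorem O' solutions with F}, but with Theorem \ref{Theorem O' exp no S_eps} (and its flux analog) playing the role of the two-parametric operator-norm input, since Conditions \ref{Condition Lambda in L infty} and \ref{Condition tilde Lambda in Lp} are now assumed. As in those proofs, the short interval $[t-\varepsilon^2, t]$ will be handled by Proposition \ref{Proposition small time}, which is valid for all $t>0$ and avoids the $(t-\tau)^{-1}$ singularity carried by the correctors.

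First I would derive a structural identity for $\mathbf{u}_\varepsilon - \check{\mathbf{v}}_\varepsilon$. Duhamel's formulas \eqref{u_eps = F neq 0} and \eqref{u_0 = F neq 0}, combined with the semigroup identity $f_0 e^{-\widetilde{B}_D^0\varepsilon^2}f_0^{-1}\cdot f_0 e^{-\widetilde{B}_D^0 s}f_0 = f_0 e^{-\widetilde{B}_D^0(s+\varepsilon^2)}f_0$ and definition \eqref{w_eps :=}, give
\begin{equation*}
\mathbf{w}_\varepsilon(\cdot,t) = f_0 e^{-\widetilde{B}_D^0 t} f_0 \boldsymbol{\varphi} + \int_0^{t-\varepsilon^2} f_0 e^{-\widetilde{B}_D^0(t-\tau)} f_0 \mathbf{F}(\cdot,\tau)\, d\tau,
\end{equation*}
so that by \eqref{K_D^0(t;eps)} applied inside and outside the integral one obtains
\begin{align*}
\mathbf{u}_\varepsilon(\cdot,t)-\check{\mathbf{v}}_\varepsilon(\cdot,t)
&=\bigl[f^\varepsilon e^{-\widetilde{B}_{D,\varepsilon}t}(f^\varepsilon)^*-f_0 e^{-\widetilde{B}_D^0 t}f_0-\varepsilon\mathcal{K}_D^0(t;\varepsilon)\bigr]\boldsymbol{\varphi}\\
&\quad+\int_0^{t-\varepsilon^2}\bigl[f^\varepsilon e^{-\widetilde{B}_{D,\varepsilon}(t-\tau)}(f^\varepsilon)^*-f_0 e^{-\widetilde{B}_D^0(t-\tau)}f_0-\varepsilon\mathcal{K}_D^0(t-\tau;\varepsilon)\bigr]\mathbf{F}(\cdot,\tau)\,d\tau\\
&\quad+\int_{t-\varepsilon^2}^t\bigl[f^\varepsilon e^{-\widetilde{B}_{D,\varepsilon}(t-\tau)}(f^\varepsilon)^*-f_0 e^{-\widetilde{B}_D^0(t-\tau)}f_0\bigr]\mathbf{F}(\cdot,\tau)\,d\tau.
\end{align*}
A fully analogous identity holds for $\mathbf{p}_\varepsilon - \check{\mathbf{q}}_\varepsilon$, with $g^\varepsilon b(\mathbf{D}) f^\varepsilon e^{-\widetilde{B}_{D,\varepsilon}(t-\tau)}(f^\varepsilon)^* - \mathcal{G}_D^0(t-\tau;\varepsilon)$ replacing the $H^1$-corrector difference in the first two pieces and the unadorned flux difference in the third piece.

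In the final step I would bound the first and second pieces in the $H^1(\mathcal{O}')$-norm (respectively the $L_2(\mathcal{O}')$-norm of the flux) by Theorem \ref{Theorem O' exp no S_eps}, noting that in the integral the argument $t-\tau$ is at least $\varepsilon^2$, and bound the third piece using \eqref{small t L2-H1} (respectively \eqref{small t flux g^eps} and \eqref{small t flux g^0}) from Proposition \ref{Proposition small time}. A Hölder inequality in $\tau$ with conjugate exponent $r'$ then turns each forcing term into $\Vert\mathbf{F}\Vert_{\mathfrak{H}_r(t)}$ times the $L_{r'}$-norm of the corresponding time kernel on $(0,t)$. The $\delta^{-1}(t-\tau)^{-1/2}$-piece integrates to $\varepsilon\delta^{-1}$ (with the exponential factor keeping the $\tau$-integral uniformly bounded), while the $(t-\tau)^{-1}$-piece on $[0, t-\varepsilon^2]$ and the $(t-\tau)^{-1/2}$-piece from Proposition \ref{Proposition small time} on $[t-\varepsilon^2, t]$ together yield $\varepsilon^{1-2/r}$ for $2<r<\infty$ and $\varepsilon(|\ln\varepsilon|+1)$ for $r=\infty$, i.e., the second summand in $\vartheta(\varepsilon,\delta,r)$. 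The main obstacle is the bookkeeping involved in verifying the structural identity above—in particular, confirming that the $\varepsilon^2$-shift built into $\mathbf{w}_\varepsilon$ is precisely what calibrates the truncation of the Duhamel integral for the corrector at $\tau=t-\varepsilon^2$, thereby isolating the unintegrable near-diagonal singularity of $\mathcal{K}_D^0(t-\tau;\varepsilon)$ within the short interval where the alternative uncorrected bound applies. Once this identity is in place, the remaining computation is a straightforward extension of the Hölder estimate already performed for Theorem \ref{Theorem O' solutions with F}.
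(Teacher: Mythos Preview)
Your proposal is correct and follows the same approach as the paper: the paper simply states that the result follows from Theorem~\ref{Theorem O' exp no S_eps} (together with Proposition~\ref{Proposition small time}) by the same splitting scheme used for Theorem~\ref{Theorem O' solutions with F}, which is exactly the Duhamel decomposition and H\"older estimate you describe. One cosmetic remark: in the natural decomposition for the flux, the third piece is $\int_{t-\varepsilon^2}^t g^\varepsilon b(\mathbf{D})f^\varepsilon e^{-\widetilde{B}_{D,\varepsilon}(t-\tau)}(f^\varepsilon)^*\mathbf{F}\,d\tau$ alone (since $\check{\mathbf{q}}_\varepsilon$ contains no contribution from that interval), so only \eqref{small t flux g^eps} is needed there, not \eqref{small t flux g^0}.
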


\section*{Applications}
For elliptic systems in the whole space $\mathbb{R}^d$, the examples considered below were studied in~\cite{SuAA,MSu15}.
For elliptic systems in a bounded domain, these examples were considered in \cite{MSuPOMI}.

\section{Scalar elliptic operator with a singular potential}

\label{Section 4}

\subsection{Description of the operator}
\label{Subsection 4/1}

We consider the case where $n=1$, $m=d$, $b(\mathbf{D})=\mathbf{D}$, and $g(\mathbf{x})$ is a $\Gamma$-periodic symmetric $(d\times d)$-matrix-valued function \textit{with real entries}
such that $g,g^{-1}\in L_\infty$ and $g(\mathbf{x})>0$. Obviously (see \eqref{<b^*b<}), $\alpha _0=\alpha _1 =1$ and $b(\mathbf{D})^*g^\varepsilon (\mathbf{x}) b(\mathbf{D})=-\mathrm{div}\,g^\varepsilon (\mathbf{x})\nabla$.

Next, let $\mathbf{A}(\mathbf{x})=\mathrm{col}\lbrace A_1(\mathbf{x}),\dots ,A_d(\mathbf{x})\rbrace$, where $A_j(\mathbf{x})$, $j=1,\dots,d$, are $\Gamma$-periodic
real-valued functions such that
\begin{equation}
\label{A_j in L rho}
A_j\in L_\rho (\Omega),\quad\rho=2\;\mbox{for}\;d=1,\quad \rho >d\;\mbox{for}\;d\geqslant 2;\quad j=1,\dots ,d.
\end{equation}
Let $v(\mathbf{x})$ and $\mathcal{V}(\mathbf{x})$ be real-valued $\Gamma$-periodic functions such that
\begin{equation}
\label{v,V condition}
v,\mathcal{V}\!\in\! L_s(\Omega),\quad s\!=\!1\;\mbox{ for } d\!=\!1,\quad s\!>\!d/2\;\mbox{ for }\;d\!\geqslant\! 2;\quad\int\limits_\Omega \!\!v(\mathbf{x})\,d\mathbf{x}\!=\!0.
\end{equation}

In $L_2(\mathcal{O})$, we consider the operator $\mathfrak{B}_{D,\varepsilon}$ given formally by the differential expression
\begin{equation}
\label{mathfrak B_D,eps}
\mathfrak{B}_{\varepsilon}=(\mathbf{D}-\mathbf{A}^\varepsilon(\mathbf{x}))^*g^\varepsilon (\mathbf{x})(\mathbf{D}-\mathbf{A}^\varepsilon (\mathbf{x}))+\varepsilon ^{-1}v^\varepsilon (\mathbf{x})+\mathcal{V}^\varepsilon (\mathbf{x})
\end{equation}
with the Dirichlet condition on~$\partial\mathcal{O}$. The precise definition of the operator~$\mathfrak{B}_{D,\varepsilon}$ is given in terms of the quadratic form
\begin{equation*}
\mathfrak{b}_{D,\varepsilon}[u,u]\!=\!\!\int\limits_\mathcal{O}
\left(\langle g^\varepsilon (\mathbf{D}\!-\!\mathbf{A}^\varepsilon )u,(\mathbf{D}\!-\!\mathbf{A}^\varepsilon )u\rangle\!+(\varepsilon^{-1}v^\varepsilon\!+\mathcal{V}^\varepsilon)\vert u\vert ^2
\right)
\,d\mathbf{x},
\quad u\!\in\! H^1_0(\mathcal{O}).
\end{equation*}

It is easily seen (cf.~\cite[Subsection~13.1]{SuAA}) that expression~\eqref{mathfrak B_D,eps} can be written~as
\begin{equation}
\label{mathfrak B_D,eps in other words}
\mathfrak{B}_{\varepsilon}=\mathbf{D}^*g^\varepsilon (\mathbf{x})\mathbf{D}+\sum _{j=1}^d\left(a_j^\varepsilon (\mathbf{x})D_j+D_j(a_j^\varepsilon (\mathbf{x}))^*\right) +Q^\varepsilon (\mathbf{x}).
\end{equation}
Here $Q(\mathbf{x})$ is a real-valued function defined by
\begin{equation}
\label{Q(x)=V+<gA,A>}
Q(\mathbf{x})=\mathcal{V}(\mathbf{x})+\langle g(\mathbf{x})\mathbf{A}(\mathbf{x}),\mathbf{A}(\mathbf{x})\rangle .
\end{equation}
The complex-valued functions $a_j(\mathbf{x})$ are given by
\begin{equation}
\label{a_j sec. 10.1}
a_j(\mathbf{x})=-\eta _j(\mathbf{x})+i\xi _j(\mathbf{x}),\quad j=1,\dots, d.
\end{equation}
Here $\eta _j(\mathbf{x})$ are the components of the vector-valued function $\boldsymbol{\eta}(\mathbf{x})=g(\mathbf{x})\mathbf{A}(\mathbf{x})$,
and the functions $\xi_j(\mathbf{x})$ are defined by $\xi _j (\mathbf{x})=-\partial _j \Phi (\mathbf{x})$, where
$\Phi (\mathbf{x})$ is the $\Gamma$-periodic solution of the problem $\Delta \Phi(\mathbf{x})=v(\mathbf{x})$, $\int_\Omega \Phi(\mathbf{x})\,d\mathbf{x}=0$.
We have
\begin{equation}
\label{v(x)=sum}
v(\mathbf{x})=-\sum _{j=1}^d\partial _j \xi _j(\mathbf{x}).
\end{equation}
It is easy to check that the functions~\eqref{a_j sec. 10.1} satisfy condition~\eqref{a_j cond} with a suitable $\rho '$ depending on $\rho$ and $s$, and the norms
$\Vert a_j\Vert _{L_{\rho '}(\Omega)}$ are controlled in terms of $\Vert g\Vert _{L_\infty}$, $\Vert \mathbf{A}\Vert _{L_\rho (\Omega)}$, $\Vert v\Vert _{L_s(\Omega)}$, and the parameters
of the lattice~$\Gamma$. (See~\cite[Subsection~13.1]{SuAA}.) The function~\eqref{Q(x)=V+<gA,A>} satisfies condition~\eqref{Q condition} with a suitable $s'=\min \lbrace s;\rho/2\rbrace$.

Let $Q_0(\mathbf{x})$ be a positive definite and bounded $\Gamma$-periodic function. According to~\eqref{B_D,eps}, we introduce a positive definite operator $\mathcal{B}_{D,\varepsilon}:=\mathfrak{B}_{D,\varepsilon}+\lambda Q_0^\varepsilon$.
Here the constant $\lambda$ is chosen according to condition~\eqref{lambda =} for the operator~$\mathcal{B}_{D,\varepsilon}$ with the coefficients $g$, $a_j$, $j=1,\dots,d$, $Q$, and $Q_0$ defined above. The operator $\mathcal{B}_{D,\varepsilon}$ is given by
\begin{equation}
\label{scal ell op}
\mathcal{B}_{\varepsilon}=(\mathbf{D}-\mathbf{A}^\varepsilon (\mathbf{x}))^* g^\varepsilon (\mathbf{x})(\mathbf{D}-\mathbf{A}^\varepsilon (\mathbf{x}))
+\varepsilon ^{-1} v^\varepsilon (\mathbf{x})+\mathcal{V}^\varepsilon (\mathbf{x})+\lambda Q_0^\varepsilon (\mathbf{x}).
\end{equation}
We are interested in the behavior of the exponential of the operator $\widetilde{\mathcal{B}}_{D,\varepsilon}:=f^\varepsilon\mathcal{B}_{D,\varepsilon}f^\varepsilon$, where  $f(\mathbf{x}):=Q_0(\mathbf{x})^{-1/2}$.

For the scalar elliptic operator \eqref{scal ell op}, the problem data \eqref{problem data} are reduced to the following set of parameters:
\begin{equation}
\label{problem data for the scalar operator}
\begin{split}
&d,\rho,s;\Vert g\Vert _{L_\infty}, \Vert g^{-1}\Vert _{L_\infty}, \Vert \mathbf{A}\Vert _{L_\rho (\Omega)}, \Vert v\Vert _{L_s(\Omega)},\Vert \mathcal{V}\Vert _{L_s(\Omega)},\\
&\Vert Q_0\Vert _{L_\infty}, \Vert Q_0^{-1}\Vert _{L_\infty};\; \mbox{the parameters of the lattice }\Gamma;\;\mbox{the domain }\mathcal{O}.
\end{split}
\end{equation}

\subsection{The effective operator}
\label{Subsection 4/2}

Let us write down the effective operator. In the case under consideration, the $\Gamma$-periodic solution of problem \eqref{Lambda problem} is a row:
$\Lambda (\mathbf{x})=i\Psi (\mathbf{x})$, $\Psi (\mathbf{x})=\left(\psi _1(\mathbf{x}),\dots ,\psi _d(\mathbf{x})\right)$,
where $\psi _j\in\widetilde{H}^1(\Omega)$ is the solution of the problem
\begin{equation*}
\mathrm{div}\,g(\mathbf{x})(\nabla \psi _j (\mathbf{x})+\mathbf{e}_j)=0,\quad \int\limits_\Omega \psi_j(\mathbf{x})\,d\mathbf{x}=0.
\end{equation*}
Here $\mathbf{e}_j$, $j=1,\dots,d$, is the standard orthonormal basis in $\mathbb{R}^d$.
Clearly, the functions $\psi _j(\mathbf{x})$ are real-valued, and the entries of $\Lambda(\mathbf{x})$ are purely imaginary.
By \eqref{tilde g}, the columns of the $(d\times d)$-matrix-valued function $\widetilde{g}(\mathbf{x})$ are the vector-valued functions
$g(\mathbf{x})(\nabla \psi _j (\mathbf{x})+\mathbf{e}_j)$, $j=1,\dots,d$. The effective matrix is defined according to \eqref{g^0}: $g^0=\vert \Omega\vert ^{-1}\int_\Omega\widetilde{g}(\mathbf{x})\,d\mathbf{x}$. Clearly, $\widetilde{g}(\mathbf{x})$ and $g^0$ have real entries.

According to \eqref{a_j sec. 10.1} and \eqref{v(x)=sum}, the periodic  solution of problem \eqref{tildeLambda_problem} is represented as  $\widetilde{\Lambda}(\mathbf{x})=\widetilde{\Lambda}_1(\mathbf{x})+i\widetilde{\Lambda}_2(\mathbf{x})$, where the real-valued $\Gamma$-periodic functions $\widetilde{\Lambda}_1(\mathbf{x})$ and $\widetilde{\Lambda}_2(\mathbf{x})$ are the solutions of the problems
\begin{align*}
&-\div g(\mathbf{x})\nabla \widetilde{\Lambda}_1(\mathbf{x})+v(\mathbf{x})=0,\quad\int\limits_\Omega\widetilde{\Lambda}_1(\mathbf{x})\,d\mathbf{x}=0;\\
&-\div g(\mathbf{x})\nabla \widetilde{\Lambda}_2(\mathbf{x})+\div g(\mathbf{x})\mathbf{A}(\mathbf{x})=0,\quad\int\limits_\Omega\widetilde{\Lambda}_2(\mathbf{x})\,d\mathbf{x}=0.
\end{align*}
The column $V$ (see \eqref{V}) has the form $V=V_1+iV_2$, where $V_1$, $V_2$ are the columns with real entries defined by
\begin{align*}
V_1\!\!=\!\vert\Omega\vert ^{-1}\!\!\int\limits_\Omega\!(\nabla\Psi (\mathbf{x}))^tg(\mathbf{x})\nabla\widetilde{\Lambda}_2(\mathbf{x})\,d\mathbf{x},
\quad V_2\!=\!-\vert \Omega\vert ^{-1}\!\!\int\limits_\Omega \!(\nabla \Psi(\mathbf{x}))^tg(\mathbf{x})\nabla \widetilde{\Lambda}_1(\mathbf{x})\,d\mathbf{x}.
\end{align*}
According to \eqref{W}, the constant $W$ is given by
\begin{equation*}
W=\vert \Omega\vert ^{-1}\int\limits_\Omega \left(\langle g(\mathbf{x})\nabla \widetilde{\Lambda}_1(\mathbf{x}),\nabla\widetilde{\Lambda}_1(\mathbf{x})\rangle+\langle g(\mathbf{x})\nabla\widetilde{\Lambda}_2(\mathbf{x}),\nabla\widetilde{\Lambda}_2(\mathbf{x})\rangle\right)\,d\mathbf{x}.
\end{equation*}
The effective operator for $\mathcal{B}_{D,\varepsilon}$ acts as follows
\begin{equation*}
\mathcal{B}_D^0u=-\div g^0\nabla u +2i\langle\nabla u, V_1+\overline{\boldsymbol{\eta}}\rangle +(-W+\overline{Q}+\lambda \overline{Q_0})u,\quad u\in H^2(\mathcal{O})\cap H^1_0(\mathcal{O}).
\end{equation*}
The corresponding differential expression can be written as
\begin{equation}
\label{B_D^0 in Section 4}
\mathcal{B}^0=(\mathbf{D}-\mathbf{A}^0)^*g^0(\mathbf{D}-\mathbf{A}^0)+\mathcal{V}^0+\lambda \overline{Q_0},
\end{equation}
where
\begin{equation*}
\mathbf{A}^0=(g^0)^{-1}(V_1+\overline{g\mathbf{A}}),\quad \mathcal{V}^0=\overline{\mathcal{V}}+\overline{\langle g\mathbf{A},\mathbf{A}\rangle}-\langle g^0\mathbf{A}^0,\mathbf{A}^0\rangle -W.
\end{equation*}

Let $f_0:=(\overline{Q_0})^{-1/2}$. Denote $\widetilde{\mathcal{B}}_D^0:=f_0\mathcal{B}_D^0f_0$.

\subsection{Approximation of the sandwiched operator exponential}

According to Remark~\ref{Remark scalar problem}, in the case under consideration, Conditions~\ref{Condition Lambda in L infty} and~\ref{Condition tilde Lambda in Lp} are satisfied,
and the norms $\Vert \Lambda\Vert _{L_\infty}$ and $\Vert \widetilde{\Lambda}\Vert  _{L_\infty}$ are estimated in terms of the problem data \eqref{problem data for the scalar operator}. Therefore, we can use the corrector which does not involve the smoothing operator:
\begin{equation}
\label{K_D for scalar case}
\mathcal{K}_D^0(t;\varepsilon ):=\Big([\Lambda ^\varepsilon ]\mathbf{D}+[\widetilde{\Lambda}^\varepsilon]\Big)f_0e^{-\widetilde{\mathcal{B}}_D^0t}f_0
=\Big([\Psi ^\varepsilon]\nabla +[\widetilde{\Lambda}^\varepsilon]\Big)f_0e^{-\widetilde{\mathcal{B}}_D^0t}f_0.
\end{equation}
The operator \eqref{G_3(t;eps)} takes the form~$\mathcal{G}_D^0(t;\varepsilon)=-i\mathfrak{G}_D^0(t;\varepsilon)$, where
\begin{equation}
\label{mathcal G_3}
\mathfrak{G}_D^0(t;\varepsilon  )=\widetilde{g}^\varepsilon \nabla f_0e^{-\widetilde{\mathcal{B}}_D^0t}f_0
+g^\varepsilon (\nabla\widetilde{\Lambda})^\varepsilon f_0e^{-\widetilde{\mathcal{B}}_D^0t}f_0.
\end{equation}

 Theorems~\ref{Theorem L2 exp} and~\ref{Theorem exp no S-eps} imply the following result.

\begin{proposition}
\label{Proposition example 1}
Suppose that the assumptions of Subsections~\textnormal{\ref{Subsection 4/1}} and~\textnormal{\ref{Subsection 4/2}} are satisfied.
Suppose that the operators~$\mathcal{K}_D^0(t;\varepsilon )$ and~$\mathfrak{G}_D^0(t;\varepsilon  )$ are given by~\eqref{K_D for scalar case} and~\eqref{mathcal G_3}, respectively.
Suppose that the number~$\varepsilon _1$ is subject to Condition~\textnormal{\ref{condition varepsilon}}.
Then for \hbox{$0<\varepsilon\leqslant\varepsilon _1$} we have
\begin{align*}
\begin{split}
\Vert &f^\varepsilon e^{-\widetilde{\mathcal{B}}_{D,\varepsilon}t}f^\varepsilon -f_0e^{-\widetilde{\mathcal{B}}_D^0t}f_0\Vert _{L_2(\mathcal{O})\rightarrow L_2(\mathcal{O})}
\leqslant C_{15}\varepsilon (t+\varepsilon ^2)^{-1/2}e^{-c_\flat t/2},\quad t\geqslant 0;
\end{split}
\\
\begin{split}
\Vert & f^\varepsilon e^{-\widetilde{\mathcal{B}}_{D,\varepsilon}t}f^\varepsilon -f_0e^{-\widetilde{\mathcal{B}}_D^0t}f_0 -\varepsilon \mathcal{K}_D^0(t;\varepsilon )\Vert _{L_2(\mathcal{O})\rightarrow H^1(\mathcal{O})}
\leqslant C_{18}(\varepsilon ^{1/2}t^{-3/4}+\varepsilon t^{-1})e^{-c_\flat t/2},\quad t>0;
\end{split}
\\
\begin{split}
\Vert & g^\varepsilon \nabla f^\varepsilon e^{-\widetilde{\mathcal{B}}_{D,\varepsilon}t}f^\varepsilon -\mathfrak{G}_D^0(t;\varepsilon  )\Vert  _{L_2(\mathcal{O})\rightarrow L_2(\mathcal{O})}
\leqslant \widetilde{C}_{18}(\varepsilon ^{1/2}t^{-3/4}+\varepsilon t^{-1})e^{-c_\flat t/2},\quad t>0.
\end{split}
\end{align*}
The constants $C_{15}$, $C_{18}$, and $\widetilde{C}_{18}$ depend only on the problem data~\eqref{problem data for the scalar operator}.
\end{proposition}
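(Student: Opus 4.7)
The proposition is essentially a specialization of the general operator-exponential estimates established in Theorems~\ref{Theorem L2 exp} and~\ref{Theorem exp no S-eps} to the concrete scalar operator~\eqref{scal ell op}, so the plan is to verify the hypotheses of those theorems for $\mathcal{B}_{D,\varepsilon}$ and read off the conclusions.

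First, I would invoke the rewriting~\eqref{mathfrak B_D,eps in other words}--\eqref{v(x)=sum} from Subsection~\ref{Subsection 4/1}, which presents $\mathcal{B}_{\varepsilon}$ in the canonical factorized form~\eqref{B_D,eps} with $n=1$, $m=d$, $b(\mathbf{D})=\mathbf{D}$, and with the lower-order coefficients $a_j(\mathbf{x})$, $Q(\mathbf{x})$, $Q_0(\mathbf{x})$ as specified there. One must check that these coefficients satisfy the standing assumptions~\eqref{a_j cond}--\eqref{Q condition}: for $a_j$ this follows (as recalled in~\cite{SuAA} and cited in Subsection~\ref{Subsection 4/1}) from $\mathbf{A}\in L_\rho(\Omega)$ and from the elliptic regularity bound $\boldsymbol\xi=-\nabla\Phi\in L_{\rho'}(\Omega)$ for the corresponding $\rho'$ determined by $\rho$ and $s$ via~\eqref{A_j in L rho}, \eqref{v,V condition}; for $Q=\mathcal{V}+\langle g\mathbf{A},\mathbf{A}\rangle$ condition~\eqref{Q condition} holds with $s'=\min\{s,\rho/2\}$. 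The constant $\lambda$ is selected exactly as in~\eqref{lambda =}, and the problem data~\eqref{problem data} for $\mathcal{B}_{D,\varepsilon}$ are controlled by the reduced set~\eqref{problem data for the scalar operator}.

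Next, I would identify the effective operator $\mathcal{B}_D^0$ obtained from the abstract construction of Subsection~\ref{Subsection Effective operator} with the explicit operator~\eqref{B_D^0 in Section 4}; this was already carried out in Subsection~\ref{Subsection 4/2}, where $\Lambda=i\Psi$, $\widetilde\Lambda=\widetilde\Lambda_1+i\widetilde\Lambda_2$, and the matrices $V$, $W$ are computed. In particular, the boundary $\partial\mathcal{O}\in C^{1,1}$ ensures~\eqref{BD0 ^-1 L2->H2}, so all prerequisites of Theorems~\ref{Theorem L2 exp} and~\ref{Theorem exp no S-eps} are met. With these identifications in place, the $L_2\rightarrow L_2$ bound follows directly from~\eqref{Th_exp_L_2}.

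For the $H^1$-bound and the flux bound I would verify Conditions~\ref{Condition Lambda in L infty} and~\ref{Condition tilde Lambda in Lp}. This is where Remark~\ref{Remark scalar problem} becomes the essential input: since $b(\mathbf{D})^*g^\varepsilon b(\mathbf{D})=-\div g^\varepsilon \nabla$ with $g$ a symmetric matrix with real entries, De Giorgi--Nash--Moser type estimates (in the form cited from~\cite{LaU}) give $\Psi,\widetilde\Lambda_1,\widetilde\Lambda_2\in L_\infty$ with norms controlled by the data~\eqref{problem data for the scalar operator}. Hence $\Lambda\in L_\infty$ and $\widetilde\Lambda\in L_p(\Omega)$ for every admissible $p$, so both conditions hold. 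Applying Theorem~\ref{Theorem exp no S-eps} with the correctors~\eqref{K_D^0(t;eps)} and~\eqref{G_3(t;eps)} rewritten as~\eqref{K_D for scalar case} and~\eqref{mathcal G_3} (noting the factor $-i$ absorbed into $\mathcal{G}_D^0=-i\mathfrak{G}_D^0$ via $\Lambda=i\Psi$) yields the last two estimates with constants $C_{18},\widetilde C_{18}$ depending only on~\eqref{problem data for the scalar operator}.

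The only non-routine step is the verification that the constants in the conclusions of Theorems~\ref{Theorem L2 exp}, \ref{Theorem exp no S-eps}, which are formally allowed to depend on $p$, $\Vert \Lambda\Vert_{L_\infty}$, $\Vert \widetilde\Lambda\Vert_{L_p(\Omega)}$, are in fact bounded in terms of~\eqref{problem data for the scalar operator} alone; this uses Remark~\ref{Remark scalar problem} once more, since those three quantities are themselves controlled by the reduced data. Beyond this bookkeeping the proof is entirely a specialization, so I do not anticipate any substantive obstacle.
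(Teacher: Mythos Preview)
Your proposal is correct and follows exactly the route the paper takes: the proposition is stated as a direct consequence of Theorems~\ref{Theorem L2 exp} and~\ref{Theorem exp no S-eps}, and your write-up spells out precisely the verifications (canonical form~\eqref{mathfrak B_D,eps in other words}, Conditions~\ref{Condition Lambda in L infty} and~\ref{Condition tilde Lambda in Lp} via Remark~\ref{Remark scalar problem}, control of the constants by~\eqref{problem data for the scalar operator}) that the paper leaves implicit. One cosmetic remark: the factor $-i$ in $\mathcal{G}_D^0=-i\,\mathfrak{G}_D^0$ arises from $b(\mathbf{D})=\mathbf{D}=-i\nabla$, not from $\Lambda=i\Psi$; this does not affect the argument.
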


\subsection{Homogenization of the first initial boundary-value problem for parabolic equation with a singular potential}

\label{Subsec.4.4}

Consider the first initial boundary-value problem for nonhomogeneous parabolic equation with a singular potential:
\begin{equation*}
\begin{cases}
Q_0^\varepsilon (\mathbf{x}) \frac{\partial {u}_\varepsilon}{\partial t}(\mathbf{x},t)&=-
(\mathbf{D}-\mathbf{A}^\varepsilon(\mathbf{x}))^*g^\varepsilon (\mathbf{x})(\mathbf{D}-\mathbf{A}^\varepsilon (\mathbf{x})) {u}_\varepsilon (\mathbf{x},t)\\
&\quad-\left(\varepsilon ^{-1}v^\varepsilon (\mathbf{x})+\mathcal{V}^\varepsilon (\mathbf{x})
+\lambda Q_0^\varepsilon (\mathbf{x})
\right)
 {u}_\varepsilon (\mathbf{x},t)+{F}(\mathbf{x},t),\\
&\qquad\mathbf{x}\in\mathcal{O},\quad t>0;
\\
{u}_\varepsilon (\,\cdot\, ,t) \vert _{\partial \mathcal{O}}&=0,\quad t>0;\\
Q_0^\varepsilon (\mathbf{x}) {u}_\varepsilon (\mathbf{x},0)&={\varphi}(\mathbf{x}),\quad \mathbf{x}\in \mathcal{O}.
\end{cases}
\end{equation*}
Here $\varphi\in L_2(\mathcal{O})$ and ${F}\in \mathfrak{H}_r(T):= L_r((0,T);L_2(\mathcal{O}))$, $0<T\leqslant\infty$, for some~\hbox{$1\leqslant r\leqslant \infty$}.

According to \eqref{effective first problem with F} and \eqref{B_D^0 in Section 4}, the effective problem takes the form
\begin{equation*}
\begin{cases}
\overline{Q_0} \frac{\partial {u}_0}{\partial t}(\mathbf{x},t)&=-
(\mathbf{D}-\mathbf{A}^0)^*g^0(\mathbf{D}-\mathbf{A}^0) {u}_0(\mathbf{x},t)-
\left(\mathcal{V}^0+\lambda \overline{Q_0}\right) {u}_0 (\mathbf{x},t)\\
&\quad+F(\mathbf{x},t),
\quad
\mathbf{x}\in\mathcal{O},\quad t>0;
\\
{u}_0  (\,\cdot\, ,t)\vert _{\partial \mathcal{O}}&=0,\quad t>0;\\
\overline{Q_0} {u}_0 (\mathbf{x},0)&={\varphi}(\mathbf{x}),\quad \mathbf{x}\in \mathcal{O}.
\end{cases}
\end{equation*}

Applying Theorems~\ref{Theorem 3.1} and~\ref{Theorem 3.6}, we obtain the following result.

\begin{proposition}
Suppose that the number $\varepsilon _1$ is subject to Condition~\textnormal{\ref{condition varepsilon}}.
Suppose that the assumptions of Subsection~\textnormal{\ref{Subsec.4.4}} are satisfied, and $1<r\leqslant \infty$. Then for $0<\varepsilon\leqslant \varepsilon _1$ and $0<t<T$ we have
\begin{equation*}
\Vert {u}_\varepsilon (\,\cdot\, ,t)-{u}_0(\,\cdot\, ,t)\Vert _{L_2(\mathcal{O})}
\leqslant C_{15}\varepsilon (t+\varepsilon ^2)^{-1/2} e^{-c_\flat t/2}\Vert {\varphi}\Vert _{L_2(\mathcal{O})}
+
c_r\theta (\varepsilon ,r)\Vert{F}\Vert _{\mathfrak{H}_r(T)}.
\end{equation*}
Here $\theta (\varepsilon ,r)$ is given by \eqref{theta}.

Assuming that $t\geqslant \varepsilon ^2$, we put $w_\varepsilon (\,\cdot\, ,t):=f_0e^{-\widetilde{\mathcal{B}}_D^0 \varepsilon ^2}f_0^{-1} u_0(\,\cdot\, ,t-\varepsilon ^2)$.
Denote
$\check{v}_\varepsilon (\,\cdot\, ,t):= u_0(\,\cdot\, ,t)+\varepsilon\Psi^\varepsilon \nabla w_\varepsilon (\,\cdot\,,t)+\varepsilon \widetilde{\Lambda}^\varepsilon w_\varepsilon (\,\cdot\, ,t)$ and
$\check{q}_\varepsilon (\,\cdot\, ,t):=\widetilde{g}^\varepsilon \nabla w_\varepsilon (\,\cdot\, ,t)+g^\varepsilon \bigl(\nabla\widetilde{\Lambda}\bigr)^\varepsilon w_\varepsilon (\,\cdot\, ,t)$.
In addition, assume that $2<r\leqslant \infty$.
Then for $0<\varepsilon\leqslant \varepsilon _1$ and $\varepsilon^2\leqslant  t<T$ we have
\begin{align*}
\begin{split}
\Vert &{u}_\varepsilon (\,\cdot\, ,t)-\check{{v}}_\varepsilon (\,\cdot\, ,t)\Vert _{H^1(\mathcal{O})}
\leqslant 2C_{18}\varepsilon ^{1/2}t^{-3/4}e^{-c_\flat t/2}\Vert {\varphi}\Vert _{L_2(\mathcal{O})}
+c_r'\omega (\varepsilon ,r)\Vert {F}\Vert _{\mathfrak{H}_r(t)},
\end{split}
\\
\begin{split}
\Vert &g^\varepsilon\nabla u_\varepsilon (\,\cdot\, ,t)-\check{{q}}_\varepsilon (\,\cdot\, ,t)\Vert _{L_2(\mathcal{O})}
\leqslant
2\widetilde{C}_{18}\varepsilon ^{1/2}t^{-3/4}e^{-c_\flat t/2}\Vert {\varphi}\Vert _{L_2(\mathcal{O})}
+c_r''\omega (\varepsilon ,r)\Vert {F}\Vert _{\mathfrak{H}_r(t)}.
\end{split}
\end{align*}
Here $\omega(\varepsilon ,r)$ is given by \eqref{omega (eps,r)}.
The constants $C_{15}$, $C_{18}$, and $\widetilde{C}_{18}$ depend only on the problem data \eqref{problem data for the scalar operator}.
The constants $c_r$, $c_r'$, and $c_r''$ depend on the same parameters and on $r$.
\end{proposition}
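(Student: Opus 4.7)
The plan is to reduce the scalar problem with singular potential to an instance of the general framework of \S\ref{Section 2}--\S\ref{Section 3} and then invoke Theorems~\ref{Theorem 3.1} and~\ref{Theorem 3.6}. First, I would rewrite the operator $\mathcal{B}_{D,\varepsilon}$ in the form~\eqref{mathfrak B_D,eps in other words} by setting $n=1$, $m=d$, $b(\mathbf{D})=\mathbf{D}$, and using the identity $v=-\sum_j\partial_j\xi_j$ from~\eqref{v(x)=sum} together with the formulas~\eqref{a_j sec. 10.1} and~\eqref{Q(x)=V+<gA,A>}. This way the nominally singular term $\varepsilon^{-1}v^\varepsilon$ is absorbed into first-order terms $D_j a_j^\varepsilon+a_j^\varepsilon D_j$ with coefficients $a_j\in L_{\rho'}(\Omega)$ and a scalar potential $Q\in L_{s'}(\Omega)$ satisfying~\eqref{a_j cond} and~\eqref{Q condition} (with $\rho',s'$ controlled by the data~\eqref{problem data for the scalar operator}). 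Consequently $\mathcal{B}_{D,\varepsilon}$ falls under the assumptions of Subsections~\ref{Subsection operatoer A_D,eps}--\ref{Subsection Effective operator}, and Condition~\ref{condition varepsilon} on $\varepsilon_1$ is inherited from the domain $\mathcal{O}$ and the lattice $\Gamma$.

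Next I would identify the effective operator produced by the general construction with the expression~\eqref{B_D^0 in Section 4} computed in Subsection~\ref{Subsection 4/2}: the solution $\Lambda=i\Psi$ of~\eqref{Lambda problem} has purely imaginary entries, $\widetilde{\Lambda}=\widetilde{\Lambda}_1+i\widetilde{\Lambda}_2$ is the solution of~\eqref{tildeLambda_problem}, and the constants $V,W$ reorganize themselves into the ``magnetic'' form $(\mathbf{D}-\mathbf{A}^0)^*g^0(\mathbf{D}-\mathbf{A}^0)+\mathcal{V}^0+\lambda\overline{Q_0}$. The $L_2$-estimate is then an immediate application of Theorem~\ref{Theorem 3.1}, since all constants there depend on the problem data~\eqref{problem data}, which in the present scalar setting is controlled by~\eqref{problem data for the scalar operator}.

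For the $H^1$ and flux estimates I would invoke Theorem~\ref{Theorem 3.6} instead of Theorem~\ref{Theorem H1 solutions with F}, because the former does not require the Steklov smoothing $S_\varepsilon$ in the corrector. The verification that the assumptions of Theorem~\ref{Theorem 3.6} are met relies on Remark~\ref{Remark scalar problem}: since $g$ is a real symmetric matrix and the operator is scalar, Conditions~\ref{Condition Lambda in L infty} and~\ref{Condition tilde Lambda in Lp} hold, and $\|\Lambda\|_{L_\infty},\|\widetilde{\Lambda}\|_{L_\infty}$ are bounded in terms of~\eqref{problem data for the scalar operator}. It then remains to match the general corrector $\mathcal{K}_D^0(t;\varepsilon)$ and the flux approximation $\mathcal{G}_D^0(t;\varepsilon)$ with the concrete expressions defining $\check v_\varepsilon$ and $\check q_\varepsilon$: using $\Lambda=i\Psi$ one has $\Lambda^\varepsilon b(\mathbf{D})=i\Psi^\varepsilon(-i\nabla)=\Psi^\varepsilon\nabla$, and similarly $b(\mathbf{D})\widetilde{\Lambda}=-i\nabla\widetilde{\Lambda}$ combined with $g^\varepsilon b(\mathbf{D})u_\varepsilon=-ig^\varepsilon\nabla u_\varepsilon$ produces the right-hand sides in the proposition after cancelling the common factor $-i$.

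The only mildly delicate point, and what I would flag as the main obstacle, is the bookkeeping in the first step: checking that the formal manipulation converting $\varepsilon^{-1}v^\varepsilon$ into $\sum_j(D_ja_j^\varepsilon{}^*+a_j^\varepsilon D_j)$ is a genuine identity of sesquilinear forms on $H^1_0(\mathcal{O})$, and that the resulting $L_{\rho'}$-norms of $a_j$ and $L_{s'}$-norm of $Q$ are controlled by the scalar problem data~\eqref{problem data for the scalar operator}. This has already been carried out in \cite[Subsection~13.1]{SuAA}, so once it is cited, all the hard analytic work has been done in Theorems~\ref{Theorem 3.1} and~\ref{Theorem 3.6}, and the statement follows by specialization.
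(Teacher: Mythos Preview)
Your proposal is correct and follows exactly the paper's own approach: the paper simply states ``Applying Theorems~\ref{Theorem 3.1} and~\ref{Theorem 3.6}, we obtain the following result'' immediately before the proposition, relying on the reduction to the general framework already carried out in Subsections~\ref{Subsection 4/1}--\ref{Subsection 4/2} and on Remark~\ref{Remark scalar problem} (via Proposition~\ref{Proposition example 1}) to justify the use of the corrector without smoothing. Your write-up is in fact more detailed than the paper's, but the logic is identical.
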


\section{The scalar operator with a strongly singular \\ potential of order~$\varepsilon ^{-2}$}

\label{Section 5}

Homogenization of the first initial boundary-value problem for parabolic equation with a strongly singular potential was studied in \cite{Al}.
Some motivations can be found in \cite[\S 1]{Al}). However, the results of~\cite{Al} cannot be formulated in the uniform operator topology.

\subsection{Description of the operator}
\label{Subsection 5.1}

Let $\check{g}(\mathbf{x})$ be a $\Gamma$-periodic symmetric $(d\times d)$-matrix-valued function in $\mathbb{R}^d$ with real entries such that $\check{g},\check{g}^{-1}\in L_\infty $ and \hbox{$\check{g}(\mathbf{x})>0$}. Let~$\check{v}(\mathbf{x})$ be a real-valued $\Gamma$-periodic function such that
\begin{equation*}
\check{v}\in L_s(\Omega),\quad s=1\;\mbox{for}\;d=1,\quad s>d/2\;\mbox{for}\;d\geqslant 2.
\end{equation*}
By $\check{\mathcal{A}}$ we denote the operator in $L_2(\mathbb{R}^d)$ corresponding to the quadratic form
$$
\int\limits_{\mathbb{R}^d}\left(\langle\check{g}(\mathbf{x})\mathbf{D}u,\mathbf{D}u\rangle+\check{v}(\mathbf{x})\vert u\vert ^2\right)\,d\mathbf{x},\quad u\in H^1(\mathbb{R}^d).
$$
Adding a constant to the potential $\check{v}(\mathbf{x})$, we assume that the bottom of the spectrum of $\check{\mathcal{A}}$ is the point zero.
Then the operator $\check{\mathcal{A}}$ admits a factorization with the help of the eigenfunction of the operator $\mathbf{D}^*\check{g}(\mathbf{x})\mathbf{D}+\check{v}(\mathbf{x})$ on the cell $\Omega$ (with periodic boundary conditions) corresponding to the eigenvalue $\lambda=0$ (see \cite[Chapter~6, Subsection~1.1]{BSu}).
Apparently, such factorization trick was first used in homogenization problems in~\cite{Zh83,K}.

In $L_2(\mathcal{O})$, we consider the operator~$\check{\mathcal{A}}_D$ given by the expression~$\mathbf{D}^*\check{g}(\mathbf{x})\mathbf{D}+\check{v}(\mathbf{x})$
with the Dirichlet condition on~$\partial\mathcal{O}$.
The precise definition of the operator~$\check{\mathcal{A}}_D$ is given in terms of the quadratic form
\begin{equation}
\label{check a}
\check{\mathfrak{a}}_D[u,u]=\int\limits_\mathcal{O}\left(\langle\check{g}(\mathbf{x})\mathbf{D}u,\mathbf{D}u\rangle+\check{v}(\mathbf{x})\vert u\vert ^2\right)\,d\mathbf{x},\quad u\in H^1_0(\mathcal{O}).
\end{equation}
The operator~$\check{\mathcal{A}}_D$ inherits factorization of the operator $\check{\mathcal{A}}$. To describe this factorization, we consider the equation
\begin{equation}
\label{eq omega}
\mathbf{D}^*\check{g}(\mathbf{x})\mathbf{D}\omega (\mathbf{x})+\check{v}(\mathbf{x})\omega(\mathbf{x})=0.
\end{equation}
There exists a $\Gamma$-periodic solution $\omega\in \widetilde{H}^1(\Omega)$ of this equation defined up to a constant factor.
We can fix this factor so that $\omega (\mathbf{x})>0$ and
\begin{equation}
\label{mean value of omega-tilda}
\int\limits_\Omega \omega ^2(\mathbf{x})\,d\mathbf{x}=\vert \Omega \vert .
\end{equation}
Moreover, the solution is positive definite and bounded: \hbox{$0<\omega _0\leqslant \omega (\mathbf{x})\leqslant\omega_1<\infty$}.
The norms~$\Vert \omega\Vert _{L_\infty}$ and~$\Vert \omega ^{-1}\Vert _{L_\infty}$ are controlled in terms of~$\Vert \check{g}\Vert _{L_\infty}$, $\Vert \check{g}^{-1}\Vert _{L_\infty}$, and $\Vert \check{v}\Vert _{L_s(\Omega)}$. Note that $\omega$ and $\omega ^{-1}$ are multipliers in~$H^1_0(\mathcal{O})$.

Substituting $u=\omega z$ and taking \eqref{eq omega} into account, we represent the form~\eqref{check a}~as
\begin{equation*}
\check{\mathfrak{a}}_D[u,u]=\int\limits_\mathcal{O} \omega (\mathbf{x})^2\langle\check{g}(\mathbf{x})\mathbf{D}z,\mathbf{D}z\rangle\,d\mathbf{x},\quad u=\omega z,\quad z\in H^1_0(\mathcal{O}).
\end{equation*}
Hence, the differential expression for the operator $\check{\mathcal{A}}_D$ admits a factorization
\begin{equation}
\label{check A_D}
\check{\mathcal{A}}=\omega ^{-1}\mathbf{D}^* g\mathbf{D}\omega ^{-1},\quad g=\omega^2\check{g}.
\end{equation}

Now, we consider the operator~$\check{\mathcal{A}}_{D,\varepsilon}$ with rapidly oscillating coefficients acting in $L_2(\mathcal{O})$ and given by
\begin{equation}
\label{check A_D,eps}
\check{\mathcal{A}}_{\varepsilon}=(\omega ^\varepsilon) ^{-1}\mathbf{D}^* g^\varepsilon\mathbf{D}(\omega ^\varepsilon) ^{-1},\quad g=\omega^2\check{g},
\end{equation}
with the Dirichlet boundary condition. In the initial terms, expression \eqref{check A_D,eps} takes the form
\begin{equation}
\label{check A_D,eps in initial terms}
\check{\mathcal{A}}_{\varepsilon}=\mathbf{D}^*\check{g}^\varepsilon\mathbf{D}+\varepsilon ^{-2}\check{v}^\varepsilon.
\end{equation}

Next, let $\mathbf{A}(\mathbf{x}) =\mathrm{col}\,\lbrace A_1(\mathbf{x}),\dots ,A_d(\mathbf{x})\rbrace $, where $A_j(\mathbf{x})$ are $\Gamma$-periodic real-valued functions
satisfying~\eqref{A_j in L rho}. Let~$\widehat{v}(\mathbf{x})$ and~$\check{\mathcal{V}}(\mathbf{x})$ be $\Gamma$-periodic real-valued functions such that
\begin{equation}
\label{hat v, check V}
\widehat{v},\check{\mathcal{V}}\in L_s(\Omega), \quad s=1\;\mbox{for}\;d=1,\quad s>d/2\;\mbox{for}\;d\geqslant 2;\\
\int\limits_\Omega \widehat{v}(\mathbf{x})\omega ^2 (\mathbf{x})\,d\mathbf{x}=0.
\end{equation}
In $L_2(\mathcal{O})$, we consider the operator $\widetilde{\mathfrak{B}}_{D,\varepsilon}$ given formally by the differential expression
\begin{equation*}
\widetilde{\mathfrak{B}}_{\varepsilon}
=(\mathbf{D}-\mathbf{A}^\varepsilon )^* \check{g}^\varepsilon (\mathbf{D}-\mathbf{A}^\varepsilon)+\varepsilon ^{-2}\check{v}^\varepsilon +\varepsilon ^{-1}\widehat{v}^\varepsilon +\check{\mathcal{V}}^\varepsilon
\end{equation*}
with the Dirichlet condition on~$\partial\mathcal{O}$. The precise definition is given in terms of the quadratic form.

We put
\begin{equation}
\label{v=, V=}
v(\mathbf{x}):=\widehat{v}(\mathbf{x})\omega ^2(\mathbf{x}),\quad \mathcal{V}(\mathbf{x}):=\check{\mathcal{V}}(\mathbf{x})\omega ^2(\mathbf{x}).
\end{equation}
By \eqref{check A_D,eps} and \eqref{check A_D,eps in initial terms}, we have $\widetilde{\mathfrak{B}}_{D,\varepsilon}=(\omega ^\varepsilon )^{-1}\mathfrak{B}_{D,\varepsilon}(\omega ^\varepsilon )^{-1}$, where the operator~$\mathfrak{B}_{D,\varepsilon}$ is given by the expression~\eqref{mathfrak B_D,eps} with the Dirichlet condition on~$\partial\mathcal{O}$;
$g$ is defined by \eqref{check A_D}, and $v$, $\mathcal{V}$ are given by~\eqref{v=, V=}. By~\eqref{hat v, check V} and the properties of $\omega$, the coefficients~$v$ and~$\mathcal{V}$ satisfy~\eqref{v,V condition}. Then the operator~$\mathfrak{B}_{D,\varepsilon}$ can be represented in the form~\eqref{mathfrak B_D,eps in other words}, where~$a_j$, $j=1,\dots,d$, and~$Q$ are constructed in terms of $g$, $\mathbf{A}$, $v$, and~$\mathcal{V}$ according to~\eqref{Q(x)=V+<gA,A>}, \eqref{a_j sec. 10.1}.

The constant~$\lambda$ is chosen according to condition~\eqref{lambda =} for the operator with the same coefficients~$g$, $a_j$, $j=1,\dots ,d$, and $Q$,
as the coefficients of $\mathfrak{B}_{D,\varepsilon}$, and the coefficient $Q_0(\mathbf{x}):=\omega ^2(\mathbf{x})$. Then the operators  $\widetilde{\mathcal{B}}_{D,\varepsilon}:=\widetilde{\mathfrak{B}}_{D,\varepsilon}+\lambda I$ and $\mathcal{B}_{D,\varepsilon}:=\mathfrak{B}_{D,\varepsilon}+\lambda Q_0^\varepsilon$ are related by~$\widetilde{\mathcal{B}}_{D,\varepsilon}=(\omega ^\varepsilon )^{-1}\mathcal{B}_{D,\varepsilon}(\omega ^\varepsilon )^{-1}$.

The following set of parameters is called the~``problem data'':
\begin{equation}
\label{problem data for Schredinger}
\begin{split}
&d,\rho,s;\quad\Vert \check{g}\Vert _{L_\infty},\; \Vert \check{g}^{-1}\Vert _{L_\infty},\; \Vert \mathbf{A}\Vert _{L_\rho (\Omega)},\; \Vert \check{v}\Vert _{L_s(\Omega)}, \;\Vert \widehat{v}\Vert _{L_s(\Omega)},\;\Vert \check{\mathcal{V}}\Vert _{L_s(\Omega)};
\\
&\mbox{the parameters of the lattice }\Gamma ;\quad\mbox{the domain }\mathcal{O} .
\end{split}
\end{equation}

\subsection{Homogenization of the first initial boundary-value problem for the parabolic equation with strongly singular potential}

\label{Subsec. 5.3}

We apply Proposition~\ref{Proposition example 1} to the operator $\widetilde{\mathcal{B}}_{D,\varepsilon}$ described in Subsection~\ref{Subsection 5.1}.
We have $f(\mathbf{x})=\omega (\mathbf{x})^{-1}$, whence, by \eqref{mean value of omega-tilda}, $f_0=1$ and $\widetilde{\mathcal{B}}_D^0=\mathcal{B}_D^0$.
The coefficients $g^0$, $\mathbf{A}^0$, and $\mathcal{V}^0$ of the effective operator are constructed in terms of $g$, $\mathbf{A}$, $v$, and $\mathcal{V}$ (see~\eqref{check A_D,eps} and~\eqref{v=, V=}), as described in Subsection~\ref{Subsection 4/2}. We apply the results to homogenization of the solution of the first initial boundary-value problem
\begin{equation*}
\begin{cases}
\frac{\partial {u}_\varepsilon}{\partial t}(\mathbf{x},t)&=-
(\mathbf{D}-\mathbf{A}^\varepsilon(\mathbf{x}))^*\check{g}^\varepsilon (\mathbf{x})(\mathbf{D}-\mathbf{A}^\varepsilon (\mathbf{x})) {u}_\varepsilon (\mathbf{x},t)\\
&\quad\!-\left(\varepsilon ^{-2}\check{v}^\varepsilon +
\varepsilon ^{-1}\widehat{v}^\varepsilon (\mathbf{x})+\check{\mathcal{V}}^\varepsilon (\mathbf{x})
+\lambda I
\right)
 {u}_\varepsilon (\mathbf{x},t),\quad
\mathbf{x}\!\in\!\mathcal{O},\quad t\!>\!0;
\\
{u}_\varepsilon (\,\cdot\, ,t) \vert _{\partial \mathcal{O}}&=0,\quad t>0;\\
 {u}_\varepsilon (\mathbf{x},0)&=\omega ^\varepsilon (\mathbf{x})^{-1}\varphi(\mathbf{x}),\quad \mathbf{x}\in \mathcal{O}.
\end{cases}
\end{equation*}
Here $\varphi\in L_2(\mathcal{O})$. (For simplicity, we consider a homogeneous equation.) Then $u_\varepsilon (\,\cdot\, ,t)= e^{-\widetilde{\mathcal{B}}_{D,\varepsilon}t} (\omega ^\varepsilon )^{-1}\varphi$.

Let $u_0$ be the solution of the homogenized problem
\begin{equation*}
\begin{cases}
\frac{\partial u_0}{\partial t}(\mathbf{x},t)&=-
(\mathbf{D}-\mathbf{A}^0)^*g^0(\mathbf{D}-\mathbf{A}^0) {u}_0(\mathbf{x},t)-
\left(\mathcal{V}^0+\lambda \right) {u}_0 (\mathbf{x},t),\\
&\qquad\mathbf{x}\in\mathcal{O},\quad t>0;\\
u_0(\,\cdot\, ,t) \vert _{\partial \mathcal{O}}&=0,\quad t>0;\\
u_0(\mathbf{x},0)&=\varphi(\mathbf{x}),\quad \mathbf{x}\in\mathcal{O}.
\end{cases}
\end{equation*}

Proposition~\ref{Proposition example 1} implies the following result.

\begin{proposition}
Suppose that the assumptions of Subsection~\textnormal{\ref{Subsec. 5.3}} are satisfied. Denote
\begin{align*}
& \check{v}_\varepsilon (\,\cdot\, ,t):=u_0(\,\cdot\, ,t)+\varepsilon \Psi ^\varepsilon \nabla u_0(\,\cdot\, ,t)+\varepsilon \widetilde{\Lambda}^\varepsilon u_0(\,\cdot\, ,t),\\
& \check{q}_\varepsilon (\,\cdot\, ,t):=\widetilde{g}^\varepsilon \nabla u_0(\,\cdot\, ,t)+g^\varepsilon (\nabla \widetilde{\Lambda})^\varepsilon u_0(\,\cdot\, ,t).
\end{align*}
Then for $0<\varepsilon\leqslant\varepsilon _1$ we have
\begin{align*}
\Vert & (\omega ^\varepsilon )^{-1}u_\varepsilon (\,\cdot\, ,t)- u_0(\,\cdot\, ,t)\Vert _{L_2(\mathcal{O})}
\leqslant C_{15}\varepsilon (t+\varepsilon ^2)^{-1/2}e^{-c_\flat t/2}\Vert \varphi \Vert _{L_2(\mathcal{O})},\quad t\geqslant 0;
\\
\Vert & (\omega ^\varepsilon )^{-1}u_\varepsilon (\,\cdot\, ,t)- \check{v}_\varepsilon (\,\cdot\, ,t)\Vert _{H^1(\mathcal{O})}
\leqslant C_{18}(\varepsilon ^{1/2}t^{-3/4}+\varepsilon t^{-1})e^{-c_\flat t/2}\Vert \varphi \Vert _{L_2(\mathcal{O})},
\\
\Vert & g^\varepsilon \nabla (\omega ^\varepsilon )^{-1}u_\varepsilon (\,\cdot\, ,t)-\check{q}_\varepsilon (\,\cdot\, ,t)\Vert _{L_2(\mathcal{O})}
\leqslant \widetilde{C}_{18}(\varepsilon ^{1/2}t^{-3/4}+\varepsilon t^{-1})e^{-c_\flat t/2}\Vert \varphi \Vert _{L_2(\mathcal{O})},
\end{align*}
$t>0$.
The constants $C_{15}$, $C_{18}$, and $\widetilde{C}_{18}$ depend on the problem data~\eqref{problem data for Schredinger}.
\end{proposition}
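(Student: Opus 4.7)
The plan is to reduce the assertion to Proposition~\ref{Proposition example 1} via the factorization $\check{\mathcal{A}}_\varepsilon=(\omega^\varepsilon)^{-1}\mathbf{D}^*g^\varepsilon\mathbf{D}(\omega^\varepsilon)^{-1}$ with $g=\omega^2\check g$, exactly as set up in Subsection~\ref{Subsection 5.1}. I would view $\widetilde{\mathcal{B}}_{D,\varepsilon}=f^\varepsilon\mathcal{B}_{D,\varepsilon}f^\varepsilon$ with $f:=\omega^{-1}$ and $Q_0:=\omega^2$, where $\mathcal{B}_{D,\varepsilon}$ is the Section~\ref{Section 4} operator whose coefficients are $g=\omega^2\check g$, $\mathbf{A}$, $v=\widehat v\,\omega^2$, and $\mathcal{V}=\check{\mathcal{V}}\,\omega^2$. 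Assumption~\eqref{hat v, check V} together with the two-sided bounds on $\omega$ guarantees that $v$ and $\mathcal{V}$ satisfy~\eqref{v,V condition}, and the normalization~\eqref{mean value of omega-tilda} yields $\overline{Q_0}=1$, whence $f_0=1$ and $\widetilde{\mathcal{B}}_D^0=\mathcal{B}_D^0$.

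Next, I would note that with the initial datum $(\omega^\varepsilon)^{-1}\varphi$ the solution of the $\varepsilon$-problem is $u_\varepsilon(\cdot,t)=e^{-\widetilde{\mathcal{B}}_{D,\varepsilon}t}(\omega^\varepsilon)^{-1}\varphi$, so that
\[
(\omega^\varepsilon)^{-1}u_\varepsilon(\cdot,t)=f^\varepsilon e^{-\widetilde{\mathcal{B}}_{D,\varepsilon}t}f^\varepsilon\varphi,\qquad u_0(\cdot,t)=f_0 e^{-\widetilde{\mathcal{B}}_D^0 t}f_0\varphi.
\]
The first inequality of the proposition is then immediate from the $L_2$-bound of Proposition~\ref{Proposition example 1}.

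For the $H^1$- and flux estimates the remaining task is to match the correctors. Since $n=1$, $b(\mathbf{D})=\mathbf{D}=-i\nabla$, and $\Lambda=i\Psi$ by Subsection~\ref{Subsection 4/2}, one has $[\Lambda^\varepsilon]\mathbf{D}=\Psi^\varepsilon\nabla$. With $f_0=1$, the operator~\eqref{K_D for scalar case} therefore acts on $\varphi$ as $\Psi^\varepsilon\nabla u_0+\widetilde\Lambda^\varepsilon u_0$, so $f_0 e^{-\widetilde{\mathcal{B}}_D^0 t}f_0\varphi+\varepsilon\mathcal{K}_D^0(t;\varepsilon)\varphi=\check v_\varepsilon(\cdot,t)$, and the second bound will follow directly from the $H^1$-estimate of Proposition~\ref{Proposition example 1}. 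By the same calculation, $\mathfrak{G}_D^0(t;\varepsilon)\varphi=\widetilde g^\varepsilon\nabla u_0+g^\varepsilon(\nabla\widetilde\Lambda)^\varepsilon u_0=\check q_\varepsilon(\cdot,t)$, and the flux estimate of Proposition~\ref{Proposition example 1} yields the third bound.

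I do not anticipate any serious obstacle: the whole argument is bookkeeping that identifies the objects of Subsection~\ref{Subsection 5.1} with those of Subsections~\ref{Subsection 4/1}--\ref{Subsection 4/2} through the factorization $\widetilde{\mathcal{B}}_{D,\varepsilon}=(\omega^\varepsilon)^{-1}\mathcal{B}_{D,\varepsilon}(\omega^\varepsilon)^{-1}$. The only point requiring care is that the normalization~\eqref{mean value of omega-tilda} is precisely what forces $\overline{Q_0}=1$; this trivializes the conjugating factors on the effective side and produces the correctors $\check v_\varepsilon$ and $\check q_\varepsilon$ in exactly the form claimed.
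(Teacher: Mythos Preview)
Your proposal is correct and matches the paper's approach exactly: the paper does not even write out a proof, stating only that ``Proposition~\ref{Proposition example 1} implies the following result,'' and your bookkeeping is precisely the identification one needs---$f=\omega^{-1}$, $Q_0=\omega^2$, $\overline{Q_0}=1$ by~\eqref{mean value of omega-tilda}, hence $f_0=1$ and the correctors $\check v_\varepsilon$, $\check q_\varepsilon$ coincide with $u_0+\varepsilon\mathcal{K}_D^0(t;\varepsilon)\varphi$ and $\mathfrak{G}_D^0(t;\varepsilon)\varphi$.
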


Note that, in the presence of a strongly singular potential in the equation, not the solution $u_\varepsilon$, but the product
$(\omega ^\varepsilon )^{-1}u_\varepsilon$ admits a ``good approximation''. This shows that the nature of the results of \S\ref{Section 5} and~\S\ref{Section 4} is different.

\section*{Appendix}
\addcontentsline{toc}{section}{Appendix}
In Appendix, we consider the case where $d\geqslant 3$ and prove the statements about removal of the smoothing operator~$S_\varepsilon$
in the case of sufficiently smooth boundary (Lemma~\ref{Lemma K-K^0} and Theorem~\ref{Theorem smooth boundary})
and in  the case of a strictly interior subdomain (Lemma~\ref{Lemma K-K0 strictly interior} and Theorem~\ref{Theorem O' no S_eps no conditions on Lambda's}).

\section{The properties of the matrix-valued functions $\Lambda$ and $\widetilde{\Lambda}$}

\label{Section 6}

We need the following results; see \cite[Lemma~2.3]{PSu} and \cite[Lemma~3.4]{MSu15}.

\begin{lemma}
\label{Lemma Lambda-varepsilon}
Let $\Lambda $ be the $\Gamma$-periodic solution of problem~\eqref{Lambda problem}. Then for any function $u\in C_0^\infty (\mathbb{R}^d)$ and $\varepsilon >0$ we have
\begin{equation*}
\int\limits_{\mathbb{R}^d}\vert (\mathbf{D}\Lambda )^\varepsilon (\mathbf{x})\vert ^2\vert u(\mathbf{x})\vert ^2\,d\mathbf{x}
\leqslant\beta _1\Vert u\Vert ^2_{L_2(\mathbb{R}^d)}
+\beta _2 \varepsilon ^2\int\limits_{\mathbb{R}^d}\vert \Lambda ^\varepsilon (\mathbf{x})\vert ^2 \vert \mathbf{D}u(\mathbf{x})\vert ^2\,d\mathbf{x}.
\end{equation*}
The constants $\beta _1$ and $\beta _2$ depend on $m$, $d$, $\alpha _0$, $\alpha _1$, $\Vert g\Vert _{L_\infty}$, and $\Vert g^{-1}\Vert _{L_\infty}$.
\end{lemma}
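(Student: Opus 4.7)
The change of variables $\mathbf{y}=\varepsilon^{-1}\mathbf{x}$, $v(\mathbf{y}):=u(\varepsilon\mathbf{y})$ rescales both sides of the claimed inequality by the same factor $\varepsilon^{d}$, so it suffices to prove the $\varepsilon=1$ statement
\begin{equation*}
\int_{\R^{d}}|\mathbf{D}\Lambda(\mathbf{y})|^{2}\,|v(\mathbf{y})|^{2}\,d\mathbf{y}
\leqslant \beta_{1}\|v\|_{L_{2}(\R^{d})}^{2}
+\beta_{2}\int_{\R^{d}}|\Lambda(\mathbf{y})|^{2}\,|\mathbf{D}v(\mathbf{y})|^{2}\,d\mathbf{y}
\end{equation*}
for all $v\in C_{0}^{\infty}(\R^{d})$. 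Writing $\Lambda$ column-by-column, let $\Lambda_{k}$, $k=1,\dots,m$, denote its columns, so that $b(\mathbf{D})^{*}g(b(\mathbf{D})\Lambda_{k}+\mathbf{e}_{k})=0$, where $\mathbf{e}_{k}\in\C^{m}$ is the $k$-th coordinate vector.

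The core of the argument is a Caccioppoli-type energy identity obtained by testing the equation for $\Lambda_{k}$ against $|v|^{2}\Lambda_{k}$; this test function is admissible since $v\in C_{0}^{\infty}$ and $\Lambda_{k}\in \widetilde{H}^{1}(\Omega;\C^{n})$. The identity reads
\begin{equation*}
\int_{\R^{d}}\langle gb(\mathbf{D})\Lambda_{k},\,b(\mathbf{D})(|v|^{2}\Lambda_{k})\rangle\,d\mathbf{y}
=-\int_{\R^{d}}\langle g\mathbf{e}_{k},\,b(\mathbf{D})(|v|^{2}\Lambda_{k})\rangle\,d\mathbf{y}.
\end{equation*}
Expanding $b(\mathbf{D})(|v|^{2}\Lambda_{k})=|v|^{2}b(\mathbf{D})\Lambda_{k}+\sum_{j=1}^{d}b_{j}(D_{j}|v|^{2})\Lambda_{k}$, using $|D_{j}|v|^{2}|\leqslant 2|v||\mathbf{D}v|$ together with $|b_{j}|\leqslant\alpha_{1}^{1/2}$ from \eqref{b_l <=}, invoking the pointwise coercivity $\langle g\mathbf{z},\mathbf{z}\rangle\geqslant \|g^{-1}\|_{L_{\infty}}^{-1}|\mathbf{z}|^{2}$ on the principal term, and absorbing all remaining cross-terms by Cauchy--Schwarz with a small parameter, I arrive at the ``$b$-energy'' estimate
\begin{equation*}
\int_{\R^{d}}|v|^{2}|b(\mathbf{D})\Lambda_{k}|^{2}\,d\mathbf{y}
\leqslant C_{1}\int_{\R^{d}}|\Lambda_{k}|^{2}|\mathbf{D}v|^{2}\,d\mathbf{y}
+C_{2}\|v\|_{L_{2}(\R^{d})}^{2}.
\end{equation*}

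To pass from $|b(\mathbf{D})\Lambda_{k}|$ back to $|\mathbf{D}\Lambda_{k}|$ while preserving the pointwise weight $|v|^{2}$, I would apply the global ellipticity inequality $\alpha_{0}\|\mathbf{D}h\|_{L_{2}(\R^{d})}^{2}\leqslant \|b(\mathbf{D})h\|_{L_{2}(\R^{d})}^{2}$, a Plancherel consequence of \eqref{<b^*b<}, to the compactly supported function $h=v\Lambda_{k}\in H^{1}(\R^{d};\C^{n})$. Expanding $b(\mathbf{D})(v\Lambda_{k})=v\,b(\mathbf{D})\Lambda_{k}+\sum_{j=1}^{d}b_{j}(D_{j}v)\Lambda_{k}$ and combining with the elementary bound $|\mathbf{D}(v\Lambda_{k})|^{2}\geqslant \tfrac{1}{2}|v\mathbf{D}\Lambda_{k}|^{2}-|\Lambda_{k}\mathbf{D}v|^{2}$ yields
\begin{equation*}
\int_{\R^{d}}|v|^{2}|\mathbf{D}\Lambda_{k}|^{2}\,d\mathbf{y}
\leqslant C_{3}\int_{\R^{d}}|v|^{2}|b(\mathbf{D})\Lambda_{k}|^{2}\,d\mathbf{y}
+C_{4}\int_{\R^{d}}|\Lambda_{k}|^{2}|\mathbf{D}v|^{2}\,d\mathbf{y}.
\end{equation*}
Inserting the $b$-energy bound, summing over $k=1,\dots,m$, and undoing the rescaling produce the lemma, with $\beta_{1},\beta_{2}$ tracked explicitly through $m,d,\alpha_{0},\alpha_{1},\|g\|_{L_{\infty}},\|g^{-1}\|_{L_{\infty}}$.

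The delicate step is precisely this last transfer: the ellipticity of $b(\mathbf{D})$ is only available as a global $L_{2}$-inequality, not pointwise, so one is forced to localize via $h=v\Lambda_{k}$, and the unavoidable price is the cross-term $\int|\Lambda_{k}|^{2}|\mathbf{D}v|^{2}$. Fortunately, this is exactly the shape of the second term on the right-hand side of the lemma, so the bookkeeping closes without requiring any finer information on $\Lambda$ (such as boundedness, as in Condition~\ref{Condition Lambda in L infty}).
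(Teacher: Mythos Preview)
Your proof is correct. The paper itself does not prove this lemma but cites it from \cite[Lemma~2.3]{PSu}; your argument---rescale to $\varepsilon=1$, test the cell equation for each column $\Lambda_k$ against $|v|^2\Lambda_k$ to get a Caccioppoli-type bound on $\int|v|^2|b(\mathbf{D})\Lambda_k|^2$, then transfer to $\int|v|^2|\mathbf{D}\Lambda_k|^2$ via the global ellipticity inequality applied to $h=v\Lambda_k$---is precisely the standard route taken in that reference, and all the bookkeeping you outline closes with constants depending only on the listed data.
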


\begin{lemma}
\label{Lemma on Lambda-tilda-varepsilon}
Let $\widetilde{\Lambda}$ be the $\Gamma$-periodic solution of problem \eqref{tildeLambda_problem}.
Then for any function $u\in C_0^\infty (\mathbb{R}^d)$ and $0< \varepsilon \leqslant 1$ we have
\begin{equation*}
\begin{split}
\int\limits_{\mathbb{R}^d}\vert (\mathbf{D}\widetilde{\Lambda})^\varepsilon(\mathbf{x})\vert ^2\vert u(\mathbf{x})\vert ^2\,d\mathbf{x}
\leqslant \widetilde{\beta} _1 \Vert u\Vert ^2_{H^1(\mathbb{R}^d)}
+\widetilde{\beta} _2 \varepsilon ^2\int\limits_{\mathbb{R}^d}\vert \widetilde{\Lambda}^\varepsilon (\mathbf{x})\vert ^2 \vert \mathbf{D}u(\mathbf{x})\vert ^2 \,d\mathbf{x}.
\end{split}
\end{equation*}
The constants $\widetilde{\beta} _1$ and $\widetilde{\beta} _2$ depend only on $n$, $d$, $\alpha _0$, $\alpha _1$, $\rho$, $\Vert g\Vert _{L_\infty}$, $\Vert g^{-1}\Vert _{L_\infty}$,
the norms $\Vert a_j\Vert _{L_\rho (\Omega)}$, $j=1,\dots ,d$, and the parameters of the lattice $\Gamma$.
\end{lemma}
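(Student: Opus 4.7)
I would prove Lemma~\ref{Lemma on Lambda-tilda-varepsilon} in two main stages: a scaling reduction to the case $\varepsilon=1$, followed by a Caccioppoli-type energy estimate based on the defining equation~\eqref{tildeLambda_problem} of $\widetilde{\Lambda}$.

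\emph{Reduction to $\varepsilon=1$.} Setting $U(\mathbf{y}):=u(\varepsilon\mathbf{y})$ and changing variables $\mathbf{y}=\mathbf{x}/\varepsilon$, one has
\begin{equation*}
\int_{\mathbb{R}^d}|(\mathbf{D}\widetilde{\Lambda})^\varepsilon|^2|u|^2\,d\mathbf{x}=\varepsilon^d\int|\mathbf{D}\widetilde{\Lambda}|^2|U|^2\,d\mathbf{y},\qquad \int|\widetilde{\Lambda}|^2|\mathbf{D}U|^2\,d\mathbf{y}=\varepsilon^{2-d}\int|\widetilde{\Lambda}^\varepsilon|^2|\mathbf{D}u|^2\,d\mathbf{x},
\end{equation*}
while $\Vert U\Vert_{L_2}^2=\varepsilon^{-d}\Vert u\Vert_{L_2}^2$ and $\Vert\mathbf{D}U\Vert_{L_2}^2=\varepsilon^{2-d}\Vert\mathbf{D}u\Vert_{L_2}^2\leqslant\varepsilon^{-d}\Vert\mathbf{D}u\Vert_{L_2}^2$ for $\varepsilon\leqslant 1$, whence $\Vert U\Vert_{H^1(\mathbb{R}^d)}^2\leqslant\varepsilon^{-d}\Vert u\Vert_{H^1(\mathbb{R}^d)}^2$. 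Applying the $\varepsilon=1$ inequality to $U$ and multiplying by $\varepsilon^d$ produces the $\varepsilon$-version with the same constants. Thus it suffices to prove the bound for $\varepsilon=1$.

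\emph{Energy estimate at $\varepsilon=1$.} I would test the weak form of~\eqref{tildeLambda_problem} against $\boldsymbol{\phi}=|u|^2\widetilde{\Lambda}$ (column by column), obtaining
\begin{equation*}
\int\langle g\,b(\mathbf{D})\widetilde{\Lambda},b(\mathbf{D})(|u|^2\widetilde{\Lambda})\rangle\,d\mathbf{x}=-\sum_{j=1}^d\int\langle a_j^*,D_j(|u|^2\widetilde{\Lambda})\rangle\,d\mathbf{x}.
\end{equation*}
Expanding $b(\mathbf{D})(|u|^2\widetilde{\Lambda})=|u|^2 b(\mathbf{D})\widetilde{\Lambda}+\sum_j b_j D_j(|u|^2)\widetilde{\Lambda}$ and using $\langle gv,v\rangle\geqslant\Vert g^{-1}\Vert_{L_\infty}^{-1}|v|^2$ isolates the principal quantity $\int|u|^2|b(\mathbf{D})\widetilde{\Lambda}|^2\,d\mathbf{x}$ on the left, with the cross term $D_j(|u|^2)\sim u\mathbf{D}u$ absorbed by Young's inequality into a residue proportional to $\int|\widetilde{\Lambda}|^2|\mathbf{D}u|^2\,d\mathbf{x}$. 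The $a_j$-terms on the right are handled via H\"older, pairing $a_j\in L_\rho(\Omega)$ locally with the Sobolev embedding $H^1(\mathbb{R}^d)\hookrightarrow L_{\rho'}^{\mathrm{loc}}$ (valid since $\rho>d$ for $d\geqslant 2$, respectively $\rho=2$ for $d=1$), and with $\widetilde{\Lambda}\in\widetilde{H}^1(\Omega)$; this produces a total contribution controlled by $\Vert u\Vert_{H^1(\mathbb{R}^d)}^2+\int|\widetilde{\Lambda}|^2|\mathbf{D}u|^2\,d\mathbf{x}$. The passage from $|b(\mathbf{D})\widetilde{\Lambda}|$ to $|\mathbf{D}\widetilde{\Lambda}|$ in the weighted integral is then effected via the commutation $u\,b(\mathbf{D})\widetilde{\Lambda}=b(\mathbf{D})(u\widetilde{\Lambda})-\sum_j b_j(D_j u)\widetilde{\Lambda}$, the $L_2$-ellipticity $\alpha_0\Vert\mathbf{D}(u\widetilde{\Lambda})\Vert_{L_2}^2\leqslant\Vert b(\mathbf{D})(u\widetilde{\Lambda})\Vert_{L_2}^2$ from~\eqref{<b^*b<}, and the expansion of $\Vert\mathbf{D}(u\widetilde{\Lambda})\Vert_{L_2}^2$ into the diagonal contribution $\int|u|^2|\mathbf{D}\widetilde{\Lambda}|^2\,d\mathbf{x}$, a cross term (again absorbed by Young), and $\int|\widetilde{\Lambda}|^2|\mathbf{D}u|^2\,d\mathbf{x}$.

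\emph{Main obstacle.} The principal difficulty, and the reason why the right-hand side features the full $H^1$-norm of $u$ rather than only $\Vert u\Vert_{L_2}^2$ as in the companion Lemma~\ref{Lemma Lambda-varepsilon}, is the low regularity of the lower-order coefficients $a_j\in L_\rho(\Omega)$, which are not a priori bounded. Unlike the $\Lambda$-problem~\eqref{Lambda problem}, whose source is the constant matrix $\mathbf{1}_m$ that can be estimated by an $L_\infty$-bound, the source $-\sum_j D_j a_j^*$ of \eqref{tildeLambda_problem} is only a distributional derivative of an $L_\rho$-function, so one is forced to use the Sobolev embedding $H^1\hookrightarrow L_{\rho'}^{\mathrm{loc}}$ (available precisely under the integrability hypothesis on $a_j$), and this is exactly what brings the gradient of $u$ into the right-hand side of the claimed inequality.
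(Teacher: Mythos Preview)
The paper does not actually prove this lemma: it is quoted verbatim from \cite[Lemma~3.4]{MSu15} (just as the companion Lemma~\ref{Lemma Lambda-varepsilon} is quoted from \cite[Lemma~2.3]{PSu}), so there is no ``paper's own proof'' to compare against here.

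That said, your outline is the correct one and is essentially the argument of the cited reference. The scaling reduction to $\varepsilon=1$ is standard and your bookkeeping is right (the only place $\varepsilon\leqslant 1$ is used is to bound $\|\mathbf{D}U\|_{L_2}$ by $\varepsilon^{-d/2}\|\mathbf{D}u\|_{L_2}$). The Caccioppoli step---testing \eqref{tildeLambda_problem} columnwise against $|u|^2\widetilde{\Lambda}$, using coercivity of $g$ and Young to isolate $\int|u|^2|b(\mathbf{D})\widetilde{\Lambda}|^2$, then upgrading to $\int|u|^2|\mathbf{D}\widetilde{\Lambda}|^2$ via the commutator identity and the ellipticity bound~\eqref{<b^*b<}---is exactly how these weighted gradient estimates are proved. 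Your handling of the $a_j$-terms is also the right mechanism: after expanding $D_j(|u|^2\widetilde{\Lambda})$ and applying Young, the residual $\int|a_j|^2|u|^2$ is precisely controlled by the form-bound~\eqref{sum a-j u} (equivalently the local Sobolev embedding you invoke), and this is indeed what forces the full $H^1$-norm on the right, in contrast to Lemma~\ref{Lemma Lambda-varepsilon} where the source $g\mathbf{1}_m$ is bounded. One small point of care: the term $\nu\int|u|^2|\mathbf{D}\widetilde{\Lambda}|^2$ coming from the $a_j$-side must be absorbed \emph{after} the $b(\mathbf{D})\to\mathbf{D}$ conversion, so the order is to first combine the Caccioppoli and commutator inequalities into a single estimate for $\int|u|^2|\mathbf{D}\widetilde{\Lambda}|^2$ with a small multiple of itself on the right, and only then close. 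With that ordering your argument goes through.
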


Below in \S\ref{Section removing steklov operator} we will need the following multiplier properties of the matrix-valued functions $\Lambda (\mathbf{x})$ and $\widetilde{\Lambda}(\mathbf{x})$.

\begin{lemma}
\label{Lemma Lambda multiplicator properties}
Suppose that a matrix-valued function $\Lambda({\mathbf x})$ is the~$\Gamma$-periodic solution of problem~\textnormal{\eqref{Lambda problem}}. Let $d\geqslant 3$ and $l=d/2$.

\noindent $1^\circ$. For $0< \varepsilon \leqslant 1$ and ${\mathbf u} \in H^{l-1}({\mathbb R}^d; {\mathbb C}^m)$
we have $\Lambda^\varepsilon {\mathbf u} \in L_2({\mathbb R}^d;{\mathbb C}^n)$ and
\begin{equation}
\label{9.0}
\| \Lambda^\varepsilon {\mathbf u}\|_{L_2({\mathbb R}^d)}
 \leqslant C^{(0)} \| {\mathbf u}\|_{H^{l-1}({\mathbb R}^d)}.
\end{equation}

\noindent $2^\circ$. For $0< \varepsilon \leqslant 1$ and ${\mathbf u} \in H^{l}({\mathbb R}^d; {\mathbb C}^m)$ we have
 $\Lambda^\varepsilon {\mathbf u} \in H^1({\mathbb R}^d;{\mathbb C}^n)$ and
\begin{equation}
\label{9.1}
\| \Lambda^\varepsilon {\mathbf u}\|_{H^1({\mathbb R}^d)} \leqslant  C^{(1)} \varepsilon^{-1}
\| {\mathbf u}\|_{L_2({\mathbb R}^d)} + C^{(2)}  \|{\mathbf u}\|_{H^l({\mathbb R}^d)}.
\end{equation}
The constants $C^{(0)}$, $C^{(1)}$, and $C^{(2)}$ depend on $m$, $d$, $\alpha_0$, $\alpha_1$,
$\|g\|_{L_\infty}$, $\|g^{-1}\|_{L_\infty}$, and the parameters of the lattice~$\Gamma$.
\end{lemma}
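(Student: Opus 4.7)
The plan is to establish part~$1^\circ$ first and to deduce part~$2^\circ$ from it by the Leibniz rule combined with Lemma~\ref{Lemma Lambda-varepsilon}.

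For part~$1^\circ$, I would reduce to an $\varepsilon$-free multiplication estimate by scaling. Setting $\tilde{\mathbf u}(\mathbf y) := \mathbf u(\varepsilon \mathbf y)$, the change of variables $\mathbf x = \varepsilon \mathbf y$ yields $\|\Lambda^\varepsilon \mathbf u\|_{L_2(\mathbb R^d)}^2 = \varepsilon^d \|\Lambda \tilde{\mathbf u}\|_{L_2(\mathbb R^d)}^2$; on the other hand, the identity $\hat{\tilde{\mathbf u}}(\boldsymbol\xi) = \varepsilon^{-d}\hat{\mathbf u}(\boldsymbol\xi/\varepsilon)$ together with the elementary inequality $(1+\varepsilon^2|\boldsymbol\eta|^2)^s \leq (1+|\boldsymbol\eta|^2)^s$ (valid for $0<\varepsilon\leq 1$ and $s\geq 0$) gives $\|\tilde{\mathbf u}\|_{H^s(\mathbb R^d)}^2 \leq \varepsilon^{-d}\|\mathbf u\|_{H^s(\mathbb R^d)}^2$. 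Hence \eqref{9.0} reduces to the $\varepsilon$-free statement
\[
\|\Lambda \mathbf v\|_{L_2(\mathbb R^d)} \leq C\|\mathbf v\|_{H^{l-1}(\mathbb R^d)},\qquad \mathbf v \in H^{l-1}(\mathbb R^d;\mathbb C^m),
\]
for any $\Gamma$-periodic $\Lambda$ with $\|\Lambda\|_{H^1(\Omega)} \leq M$ (cf.~\eqref{Lamba in H1<=}). To prove this, I pick a non-negative $\chi \in C_0^\infty(\mathbb R^d)$ satisfying $\sum_{\mathbf k\in\Gamma}\chi(\mathbf x-\mathbf k)^2 \equiv 1$ and write $\|\Lambda\mathbf v\|_{L_2}^2 = \sum_{\mathbf k}\int\chi_{\mathbf k}^2|\Lambda|^2|\mathbf v|^2\,d\mathbf x$, with $\chi_{\mathbf k}:=\chi(\cdot-\mathbf k)$. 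H\"older's inequality with conjugate exponents $p=2d/(d-2)$ and $q=d$ (so $1/p+1/q=1/2$) bounds each summand by $\|\chi_{\mathbf k}\Lambda\|_{L_p(\mathbb R^d)}^2\,\|\tilde\chi_{\mathbf k}\mathbf v\|_{L_q(\mathbb R^d)}^2$, where $\tilde\chi_{\mathbf k}$ is a slightly larger cutoff equal to~$1$ on $\mathrm{supp}\,\chi_{\mathbf k}$. By $\Gamma$-periodicity of $\Lambda$ the factor $\|\chi_{\mathbf k}\Lambda\|_{L_p}$ is $\mathbf k$-independent, and it is finite because the Sobolev embedding $H^1(\Omega)\hookrightarrow L_{2d/(d-2)}(\Omega)$ (valid since $d\geq 3$) together with \eqref{Lamba in H1<=} yields $\|\Lambda\|_{L_p(\Omega)} \leq C(d,\Omega)\,M$. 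For the second factor, the Sobolev embedding $H^{l-1}(\mathbb R^d)\hookrightarrow L_d(\mathbb R^d)$ (matching the Sobolev exponent $l-1=d/2-1$) gives $\|\tilde\chi_{\mathbf k}\mathbf v\|_{L_d} \leq C\|\tilde\chi_{\mathbf k}\mathbf v\|_{H^{l-1}}$, and the localization property $\sum_{\mathbf k}\|\tilde\chi_{\mathbf k}\mathbf v\|_{H^{l-1}(\mathbb R^d)}^2 \leq C\|\mathbf v\|_{H^{l-1}(\mathbb R^d)}^2$ -- a consequence of the uniform finite overlap of the cover and the multiplier property of $\tilde\chi_{\mathbf k}$ in $H^{l-1}$ -- closes the estimate.

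Part~$2^\circ$ follows by applying the Leibniz rule $\mathbf D(\Lambda^\varepsilon \mathbf u) = \varepsilon^{-1}(\mathbf D\Lambda)^\varepsilon \mathbf u + \Lambda^\varepsilon \mathbf D\mathbf u$. The $L_2$-part $\Lambda^\varepsilon\mathbf u$ is immediately dominated by $C^{(0)}\|\mathbf u\|_{H^{l-1}}\leq C^{(0)}\|\mathbf u\|_{H^l}$ via part~$1^\circ$. The term $\Lambda^\varepsilon\mathbf D\mathbf u$ is controlled by applying part~$1^\circ$ to $\mathbf D\mathbf u \in H^{l-1}(\mathbb R^d;\mathbb C^m)$, producing $\|\Lambda^\varepsilon\mathbf D\mathbf u\|_{L_2}\leq C^{(0)}\|\mathbf u\|_{H^l}$. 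Finally, Lemma~\ref{Lemma Lambda-varepsilon} gives $\|(\mathbf D\Lambda)^\varepsilon\mathbf u\|_{L_2}^2 \leq \beta_1\|\mathbf u\|_{L_2}^2 + \beta_2\varepsilon^2\|\Lambda^\varepsilon\mathbf D\mathbf u\|_{L_2}^2$, the second summand of which is absorbed using the above; multiplying by~$\varepsilon^{-1}$ produces exactly the required structure $C^{(1)}\varepsilon^{-1}\|\mathbf u\|_{L_2} + C^{(2)}\|\mathbf u\|_{H^l}$ with $C^{(1)} = \beta_1^{1/2}$ and $C^{(2)}$ depending on $C^{(0)}$, $\beta_2$, and universal constants.

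The main obstacle is the scale-free multiplication estimate in part~$1^\circ$: a direct H\"older-per-cell approach applied to the translates $\varepsilon(\Omega+\mathbf k)$ does not sum to $\|\mathbf v\|_{H^{l-1}}^2$, and one must reduce to the $\varepsilon$-free regime and then invoke the localization property of $H^{l-1}$. The latter is classical but requires care when $d$ is odd, so that $l-1=d/2-1$ is half-integer: one then verifies it directly from the Gagliardo--Slobodeckij seminorm, using the Lipschitz bound on $\tilde\chi$ and the finite-overlap property to control $\sum_{\mathbf k}\int\!\!\int_{(\mathrm{supp}\,\tilde\chi_{\mathbf k})^2}|\mathbf v(\mathbf x)-\mathbf v(\mathbf y)|^2\,|\mathbf x-\mathbf y|^{-d-2(l-1)}\,d\mathbf x\,d\mathbf y$ by the full Gagliardo seminorm on $\mathbb R^d$.
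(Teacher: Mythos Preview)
Your proof is correct and follows essentially the same strategy as the paper: scale to the $\varepsilon$-free regime, localize, apply H\"older with the critical Sobolev exponents $2d/(d-2)$ and $d$, use $\Lambda\in H^1(\Omega)$ via~\eqref{Lamba in H1<=}, and for part~$2^\circ$ combine the Leibniz rule with Lemma~\ref{Lemma Lambda-varepsilon} and part~$1^\circ$ applied to $\mathbf D\mathbf u$.

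The one difference worth noting is in the localization step. You introduce a smooth partition of unity $\sum_{\mathbf k}\chi_{\mathbf k}^2\equiv 1$, apply the \emph{global} embedding $H^{l-1}(\mathbb R^d)\hookrightarrow L_d(\mathbb R^d)$ to each $\tilde\chi_{\mathbf k}\mathbf v$, and then need the almost-orthogonality $\sum_{\mathbf k}\|\tilde\chi_{\mathbf k}\mathbf v\|_{H^{l-1}}^2\leqslant C\|\mathbf v\|_{H^{l-1}}^2$, which, as you say, is slightly delicate for half-integer $l-1$. The paper instead dispenses with cutoffs altogether: after rescaling it simply partitions $\mathbb R^d$ into the translates $\Omega+\mathbf a$, applies the \emph{local} embedding $H^{l-1}(\Omega)\hookrightarrow L_d(\Omega)$ on each cell, and uses $\sum_{\mathbf a}\|\mathbf U\|_{H^{l-1}(\Omega+\mathbf a)}^2\leqslant \|\mathbf U\|_{H^{l-1}(\mathbb R^d)}^2$. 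This last inequality is immediate for integer $l-1$ and, for half-integer $l-1$, follows because the Gagliardo double integral over each $(\Omega+\mathbf a)^2$ is dominated by the full double integral over $\mathbb R^d\times\mathbb R^d$. So the paper's route is a bit cleaner and sidesteps the very obstacle you flag, though both arguments are perfectly valid.
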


\begin{proof}
It suffices to check \eqref{9.0} and~\eqref{9.1} for~${\mathbf u}\! \in\!\! C_0^\infty\!({\mathbb R}^d; {\mathbb C}^m\!)$.
Substituting ${\mathbf x}= \varepsilon {\mathbf y}$, $\varepsilon^{d/2}{\mathbf u}({\mathbf x})={\mathbf U}({\mathbf y})$,
we obtain
\begin{equation}
\label{9.2}
\begin{split}
\| \Lambda^\varepsilon {\mathbf u}\|^2_{L_2({\mathbb R}^d)} \leqslant
\int\limits_{{\mathbb R}^d} |\Lambda(\varepsilon^{-1} {\mathbf x})|^2
|{\mathbf u}({\mathbf x})|^2 \,d{\mathbf x}& =
\int\limits_{{\mathbb R}^d} |\Lambda({\mathbf y})|^2
|{\mathbf U}({\mathbf y})|^2 \,d{\mathbf y}
\\
\qquad=
\sum_{{\mathbf a}\in \Gamma} \int\limits_{\Omega + {\mathbf a}}|\Lambda({\mathbf y})|^2
|{\mathbf U}({\mathbf y})|^2 \,d{\mathbf y} &\leqslant
\sum_{{\mathbf a}\in \Gamma}   \|\Lambda\|^2_{L_{2 \nu}(\Omega)} \| {\mathbf U}\|_{L_{2 \nu'}(\Omega + {\mathbf a})}^2,
\end{split}
\end{equation}
where $\nu^{-1}+(\nu ')^{-1}=1$. We choose $\nu$ so that the embedding
$H^1(\Omega) \hookrightarrow  L_{2\nu}(\Omega)$ is continuous, i.~e., $\nu= d (d-2)^{-1}$. Then
\begin{equation}
\label{9.3}
\|\Lambda\|_{L_{2\nu}(\Omega)}^2 \leqslant c_\Omega \|\Lambda\|^2_{H^1(\Omega)},
\end{equation}
where the constant $c_\Omega$ depends only on the dimension $d$ and the lattice $\Gamma$.
We have $2\nu '=d$. Since the embedding $H^{l-1}(\Omega) \hookrightarrow L_{d}(\Omega)$ is continuous, we have
\begin{equation}
\label{9.4}
\| {\mathbf U}\|_{L_{d}(\Omega + {\mathbf a})}^2
 \leqslant c_\Omega'
\| {\mathbf U}\|_{H^{l-1}(\Omega + {\mathbf a})}^2,
\end{equation}
where the constant $c_\Omega'$ depends only on the dimension $d$ and the lattice~$\Gamma$.
Now, from \eqref{9.2}--\eqref{9.4} it follows that
\begin{equation}
\label{9.5}
\int\limits_{{\mathbb R}^d} |\Lambda ^\varepsilon({\mathbf x})|^2
|{\mathbf u}({\mathbf x})|^2 \,d{\mathbf x}
 \leqslant c_\Omega c_\Omega' \|\Lambda\|^2_{H^1(\Omega)}
\| {\mathbf U}\|_{H^{l-1}({\mathbb R}^d)}^2.
\end{equation}

Obviously, for~$0< \varepsilon \leqslant 1$ we have
$
\| {\mathbf U}\|_{H^{l-1}({\mathbb R}^d)} \leqslant \| {\mathbf u}\|_{H^{l-1}({\mathbb R}^d)}.
$
Combining this with \eqref{9.5} and \eqref{Lamba in H1<=}, we see that
\begin{equation}
\label{9.6}
\int\limits_{{\mathbb R}^d} |\Lambda ^\varepsilon({\mathbf x})|^2
|{\mathbf u}({\mathbf x})|^2 \,d{\mathbf x}
 \leqslant c_\Omega c_\Omega'  M^2 \| {\mathbf u}\|_{H^{l-1}({\mathbb R}^d)}^2, \quad
{\mathbf u} \in C_0^\infty({\mathbb R}^d; {\mathbb C}^m),
\end{equation}
which proves estimate \eqref{9.0} with the constant~$C^{(0)}:= (c_\Omega c_\Omega')^{1/2}  M$.

Next, by Lemma~\ref{Lemma Lambda-varepsilon},
\begin{equation}
\label{9.7}
\begin{split}
\| {\mathbf D}(\Lambda^\varepsilon {\mathbf u})\|^2_{L_2({\mathbb R}^d)}
&\leqslant 2 \varepsilon^{-2}
\int\limits_{{\mathbb R}^d} | ({\mathbf D}\Lambda)^\varepsilon ({\mathbf x}) {\mathbf u}({\mathbf x})  |^2
\,d{\mathbf x} +
2 \int\limits_{{\mathbb R}^d} | \Lambda^\varepsilon ({\mathbf x})|^2 | {\mathbf D}{\mathbf u}({\mathbf x})  |^2
\,d{\mathbf x}
\\
 &\qquad\leqslant 2 \beta_1 \varepsilon^{-2}
 \int\limits_{{\mathbb R}^d} |{\mathbf u}({\mathbf x})  |^2\,d{\mathbf x}
+ 2 (1+\beta_2)\int\limits_{{\mathbb R}^d} | \Lambda^\varepsilon ({\mathbf x})|^2 | {\mathbf D}{\mathbf u}({\mathbf x})  |^2
\,d{\mathbf x}.
\end{split}
\end{equation}
From \eqref{9.6} (with ${\mathbf u}$ replaced by the derivatives $\partial_j {\mathbf u}$) it follows that
\begin{equation}
\label{9.8}
\int\limits_{{\mathbb R}^d} | \Lambda^\varepsilon ({\mathbf x})|^2 | {\mathbf D}{\mathbf u}({\mathbf x})  |^2
\,d{\mathbf x}
\leqslant c_\Omega c_\Omega'  M^2 \| {\mathbf u}\|_{H^{l}({\mathbb R}^d)}^2, \quad
{\mathbf u} \in C_0^\infty({\mathbb R}^d; {\mathbb C}^m).
\end{equation}

As a result, relations \eqref{9.6}--\eqref{9.8} imply inequality \eqref{9.1} with the constants
$C^{(1)}:=(2\beta_1)^{1/2}$ and $C^{(2)}:=M (3+2 \beta_2)^{1/2} (c_\Omega c_\Omega')^{1/2}$.
\end{proof}

Using the extension operator $P_{\mathcal O}$ satisfying estimates \eqref{PO},
we deduce the following statement from Lemma~\ref{Lemma Lambda multiplicator properties}($1^\circ$).

\begin{corollary}
\label{Corollary Lambda eps multiplier}
Suppose that the assumptions of Lemma~\textnormal{\ref{Lemma Lambda multiplicator properties}} are satisfied.
Then the operator $[\Lambda^\eps]$ is continuous from $H^{l-1}({\mathcal O};{\mathbb C}^m)$ to $L_2({\mathcal O};{\mathbb C}^n)$ and
\begin{equation*}
\|[\Lambda^\eps] \|_{ H^{l-1}({\mathcal O}) \to L_2({\mathcal O})}
\leqslant C^{(0)} C^{(l-1)}_{\mathcal O}.
\end{equation*}
\end{corollary}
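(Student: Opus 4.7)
The plan is to reduce the statement to Lemma~\ref{Lemma Lambda multiplicator properties}($1^\circ$) by using the universal extension operator $P_\mathcal{O}$ of \eqref{P_O H^1, H^2}. Given $\mathbf{u}\in H^{l-1}(\mathcal{O};\mathbb{C}^m)$, I first form $P_\mathcal{O}\mathbf{u}\in H^{l-1}(\mathbb{R}^d;\mathbb{C}^m)$ and apply the estimate \eqref{9.0} to it, obtaining
\begin{equation*}
\|\Lambda^\eps P_\mathcal{O}\mathbf{u}\|_{L_2(\mathbb{R}^d)}\leqslant C^{(0)}\|P_\mathcal{O}\mathbf{u}\|_{H^{l-1}(\mathbb{R}^d)}.
\end{equation*}

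Next I use \eqref{PO} with $\sigma=l-1$ to control the right-hand side by $C^{(0)}C_\mathcal{O}^{(l-1)}\|\mathbf{u}\|_{H^{l-1}(\mathcal{O})}$. Since the restriction of $\Lambda^\eps P_\mathcal{O}\mathbf{u}$ to $\mathcal{O}$ agrees with $\Lambda^\eps\mathbf{u}$ (because $(P_\mathcal{O}\mathbf{u})|_\mathcal{O}=\mathbf{u}$ and multiplication by $\Lambda^\eps$ is pointwise), we have $\|\Lambda^\eps \mathbf{u}\|_{L_2(\mathcal{O})}\leqslant \|\Lambda^\eps P_\mathcal{O}\mathbf{u}\|_{L_2(\mathbb{R}^d)}$, and the claimed bound follows immediately.

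No real obstacle arises: the whole-space multiplier inequality \eqref{9.0} has already been established in Lemma~\ref{Lemma Lambda multiplicator properties}, and the Lipschitz domain $\mathcal{O}$ admits the extension operator of \cite{R} satisfying \eqref{PO}. The only minor technical point is to note that the argument of \eqref{9.0} requires $\mathbf{u}\in H^{l-1}(\mathbb{R}^d;\mathbb{C}^m)$ (strictly speaking, the lemma was checked on $C_0^\infty$ and then extended by density), but since $P_\mathcal{O}\mathbf{u}$ lies in that Sobolev class, density arguments identical to those used in the proof of the lemma apply verbatim.
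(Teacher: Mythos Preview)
Your argument is correct and is exactly the approach indicated in the paper: extend via $P_\mathcal{O}$, apply \eqref{9.0}, bound the extension norm by \eqref{PO}, and restrict. The paper omits the details and simply states that the corollary follows from Lemma~\ref{Lemma Lambda multiplicator properties}($1^\circ$) using $P_\mathcal{O}$, so you have filled in precisely what was intended.
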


The following statement can be checked similarly to Lemma~\ref{Lemma Lambda multiplicator properties},
by using Lemma~\ref{Lemma on Lambda-tilda-varepsilon} and estimate~\eqref{tilde Lambda in H1 <=}.

\begin{lemma}
\label{Lemma tilde Lambda multiplicator properties}
Suppose that a matrix-valued function~$\widetilde{\Lambda}(\mathbf{x})$ is the~$\Gamma$-periodic solution of problem~\eqref{tildeLambda_problem}.
Let $d\geqslant 3$ and $l=d/2$.

\noindent
$1^\circ$. For $0<\varepsilon\leqslant 1$ and $\mathbf{u}\in H^{l-1}(\mathbb{R}^d;\mathbb{C}^n)$ we have
$\widetilde{\Lambda}^\varepsilon\mathbf{u}\in L_2(\mathbb{R}^d;\mathbb{C}^n)$ and
\begin{equation*}
\Vert \widetilde{\Lambda}^\varepsilon\mathbf{u}\Vert _{L_2(\mathbb{R}^d)}
\leqslant \widetilde{C}^{(0)}\Vert \mathbf{u}\Vert _{H^{l-1}(\mathbb{R}^d)}.
\end{equation*}

\noindent
$2^\circ$. For $0<\varepsilon\leqslant 1$ and $\mathbf{u}\in H^l(\mathbb{R}^d;\mathbb{C}^n)$ we have
$\widetilde{\Lambda}^\varepsilon\mathbf{u}\in H^1(\mathbb{R}^d;\mathbb{C}^n)$ and
\begin{equation*}
\Vert \widetilde{\Lambda}^\varepsilon\mathbf{u}\Vert _{H^1(\mathbb{R}^d)}
\leqslant \widetilde{C}^{(1)}\varepsilon ^{-1}\Vert \mathbf{u}\Vert _{H^1(\mathbb{R}^d)}
+\widetilde{C}^{(2)}\Vert \mathbf{u}\Vert _{H^l(\mathbb{R}^d)}.
\end{equation*}
The constants
$$
\widetilde{C}^{(0)}:=(c_\Omega c_\Omega ')^{1/2}\widetilde{M},\quad \widetilde{C}^{(1)}:=(2\widetilde{\beta}_1)^{1/2},\quad \widetilde{C}^{(2)}:=\sqrt{2}(\widetilde{\beta}_2+1)^{1/2}(c_\Omega c_\Omega ')^{1/2}\widetilde{M}
$$
depend only on the problem data \eqref{problem data}.
\end{lemma}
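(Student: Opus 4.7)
The plan is to imitate the proof of Lemma~\ref{Lemma Lambda multiplicator properties}, replacing $\Lambda$ by $\widetilde{\Lambda}$, invoking Lemma~\ref{Lemma on Lambda-tilda-varepsilon} in place of Lemma~\ref{Lemma Lambda-varepsilon}, and using the uniform bound~\eqref{tilde Lambda in H1 <=} instead of~\eqref{Lamba in H1<=}. Throughout, it suffices to verify the estimates on $\mathbf{u}\in C_0^\infty(\mathbb{R}^d;\mathbb{C}^n)$ and then extend by density.

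For Part~$1^\circ$, I would perform the standard rescaling $\mathbf{x}=\varepsilon\mathbf{y}$, $\mathbf{U}(\mathbf{y})=\varepsilon^{d/2}\mathbf{u}(\mathbf{x})$, which reduces the integral $\int_{\mathbb{R}^d}|\widetilde{\Lambda}^\varepsilon(\mathbf{x})|^2|\mathbf{u}(\mathbf{x})|^2\,d\mathbf{x}$ to $\int_{\mathbb{R}^d}|\widetilde{\Lambda}(\mathbf{y})|^2|\mathbf{U}(\mathbf{y})|^2\,d\mathbf{y}$. Splitting this over translates of the cell $\Omega$ by lattice vectors and applying H\"older's inequality with the pair $(2\nu,2\nu')$, $\nu=d/(d-2)$, $2\nu'=d$, I would use the Sobolev embeddings $H^1(\Omega)\hookrightarrow L_{2\nu}(\Omega)$ and $H^{l-1}(\Omega)\hookrightarrow L_{d}(\Omega)$ (both valid since $d\geqslant 3$ and $l-1=d/2-1$). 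Summing in $\mathbf{a}\in\Gamma$ yields a bound by $c_\Omega c_\Omega'\|\widetilde{\Lambda}\|_{H^1(\Omega)}^2\|\mathbf{U}\|_{H^{l-1}(\mathbb{R}^d)}^2$; using~\eqref{tilde Lambda in H1 <=} and the trivial bound $\|\mathbf{U}\|_{H^{l-1}(\mathbb{R}^d)}\leqslant\|\mathbf{u}\|_{H^{l-1}(\mathbb{R}^d)}$ for $0<\varepsilon\leqslant 1$ gives the claim with $\widetilde{C}^{(0)}=(c_\Omega c_\Omega')^{1/2}\widetilde{M}$.

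For Part~$2^\circ$, I would expand $\mathbf{D}(\widetilde{\Lambda}^\varepsilon\mathbf{u})=\varepsilon^{-1}(\mathbf{D}\widetilde{\Lambda})^\varepsilon\mathbf{u}+\widetilde{\Lambda}^\varepsilon\mathbf{D}\mathbf{u}$ and apply Lemma~\ref{Lemma on Lambda-tilda-varepsilon} to the first term. This produces $2\widetilde{\beta}_1\varepsilon^{-2}\|\mathbf{u}\|_{H^1(\mathbb{R}^d)}^2+2(1+\widetilde{\beta}_2)\int|\widetilde{\Lambda}^\varepsilon|^2|\mathbf{D}\mathbf{u}|^2\,d\mathbf{x}$; note that the factor $\|\mathbf{u}\|_{H^1}^2$ (rather than $\|\mathbf{u}\|_{L_2}^2$ as for $\Lambda$) is precisely what accounts for the difference between the two lemmas. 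The remaining integral is handled by applying Part~$1^\circ$ componentwise with $\mathbf{u}$ replaced by $\partial_j\mathbf{u}$, which produces a bound of order $\|\mathbf{u}\|_{H^l(\mathbb{R}^d)}^2$. Collecting the two estimates, together with the $L_2$-bound from Part~$1^\circ$, yields~$\widetilde{C}^{(1)}=(2\widetilde{\beta}_1)^{1/2}$ and $\widetilde{C}^{(2)}=\sqrt{2}(\widetilde{\beta}_2+1)^{1/2}(c_\Omega c_\Omega')^{1/2}\widetilde{M}$.

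There is no real obstacle beyond bookkeeping: the proof is entirely parallel to Lemma~\ref{Lemma Lambda multiplicator properties}. The only point requiring mild attention is the extra $\|\mathbf{u}\|_{H^1}^2$ on the right-hand side of Lemma~\ref{Lemma on Lambda-tilda-varepsilon}, which forces the appearance of $\varepsilon^{-1}\|\mathbf{u}\|_{H^1(\mathbb{R}^d)}$ in~$2^\circ$ (this is why the statement differs from the corresponding one for $\Lambda$); this is harmless since in applications $l\geqslant 2$ and so $\|\mathbf{u}\|_{H^1}\leqslant\|\mathbf{u}\|_{H^l}$.
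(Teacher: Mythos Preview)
Your proposal is correct and follows exactly the approach indicated in the paper, which simply states that the proof is checked ``similarly to Lemma~\ref{Lemma Lambda multiplicator properties}, by using Lemma~\ref{Lemma on Lambda-tilda-varepsilon} and estimate~\eqref{tilde Lambda in H1 <=}.'' You have correctly identified the only substantive difference---that Lemma~\ref{Lemma on Lambda-tilda-varepsilon} produces $\|\mathbf{u}\|_{H^1}^2$ rather than $\|\mathbf{u}\|_{L_2}^2$, which is precisely why $\widetilde{C}^{(1)}$ multiplies $\varepsilon^{-1}\|\mathbf{u}\|_{H^1(\mathbb{R}^d)}$ rather than $\varepsilon^{-1}\|\mathbf{u}\|_{L_2(\mathbb{R}^d)}$.
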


Using the extension operator~$P_\mathcal{O}$, we deduce the following corollary from Lemma~\ref{Lemma tilde Lambda multiplicator properties}($1^\circ$).

\begin{corollary}
\label{Corollary tilde Lambda eps multiplier}
Under the assumptions of Lemma~\textnormal{\ref{Lemma tilde Lambda multiplicator properties}}, the operator~$[\widetilde{\Lambda}^\varepsilon]$
is continuous from $H^{l-1}(\mathcal{O};\mathbb{C}^n)$ to $L_2(\mathcal{O};\mathbb{C}^n)$ and
\begin{equation*}
\Vert [\widetilde{\Lambda}^\varepsilon ]\Vert _{H^{l-1}(\mathcal{O})\rightarrow L_2(\mathcal{O})}
\leqslant \widetilde{C}^{(0)}C_\mathcal{O}^{(l-1)}.
\end{equation*}
\end{corollary}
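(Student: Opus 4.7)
The plan is to deduce this corollary directly from Lemma~\ref{Lemma tilde Lambda multiplicator properties}($1^\circ$) by the standard extension trick, exactly as Corollary~\ref{Corollary Lambda eps multiplier} is obtained from Lemma~\ref{Lemma Lambda multiplicator properties}($1^\circ$).

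Given $\mathbf{u}\in H^{l-1}(\mathcal{O};\mathbb{C}^n)$, I would set $\mathbf{U}:=P_\mathcal{O}\mathbf{u}\in H^{l-1}(\mathbb{R}^d;\mathbb{C}^n)$, where $P_\mathcal{O}$ is the universal extension operator from~\eqref{P_O H^1, H^2}. By Lemma~\ref{Lemma tilde Lambda multiplicator properties}($1^\circ$), the function $\widetilde{\Lambda}^\varepsilon \mathbf{U}$ lies in $L_2(\mathbb{R}^d;\mathbb{C}^n)$ and satisfies
\begin{equation*}
\Vert \widetilde{\Lambda}^\varepsilon\mathbf{U}\Vert_{L_2(\mathbb{R}^d)}\leqslant \widetilde{C}^{(0)}\Vert\mathbf{U}\Vert_{H^{l-1}(\mathbb{R}^d)}.
\end{equation*}

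Next, since $\mathbf{U}|_\mathcal{O}=\mathbf{u}$ and multiplication by $\widetilde{\Lambda}^\varepsilon$ commutes with restriction to $\mathcal{O}$, the restriction of $\widetilde{\Lambda}^\varepsilon\mathbf{U}$ to $\mathcal{O}$ coincides with $\widetilde{\Lambda}^\varepsilon\mathbf{u}$. Therefore
\begin{equation*}
\Vert \widetilde{\Lambda}^\varepsilon\mathbf{u}\Vert_{L_2(\mathcal{O})}\leqslant \Vert \widetilde{\Lambda}^\varepsilon\mathbf{U}\Vert_{L_2(\mathbb{R}^d)}\leqslant \widetilde{C}^{(0)}\Vert\mathbf{U}\Vert_{H^{l-1}(\mathbb{R}^d)}.
\end{equation*}
Finally, applying the continuity estimate~\eqref{PO} for $P_\mathcal{O}$ with $\sigma=l-1$ gives $\Vert\mathbf{U}\Vert_{H^{l-1}(\mathbb{R}^d)}\leqslant C_\mathcal{O}^{(l-1)}\Vert\mathbf{u}\Vert_{H^{l-1}(\mathcal{O})}$, and combining the two bounds yields the claimed inequality with constant $\widetilde{C}^{(0)}C_\mathcal{O}^{(l-1)}$.

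There is no real obstacle here; the statement is a two-line consequence of the already-established lemma plus the boundedness of the extension operator, and its role in the paper is merely to record the multiplier property in the bounded-domain setting for later use in Sections~\ref{Section removing steklov operator} and~\ref{Section removing S-eps in strictly interior subdomain}. The only thing to be careful about is that the constants $\widetilde{C}^{(0)}$ and $C_\mathcal{O}^{(l-1)}$ depend only on the problem data~\eqref{problem data} and on $\mathcal{O}$, as required.
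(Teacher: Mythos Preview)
Your proof is correct and follows exactly the approach indicated in the paper, which simply states that the corollary is deduced from Lemma~\ref{Lemma tilde Lambda multiplicator properties}($1^\circ$) using the extension operator $P_\mathcal{O}$.
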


\section{Removal of the smoothing operator in the corrector \\ in the case of sufficiently smooth boundary}
\label{Section removing steklov operator}

\subsection{Proof of Lemma~\ref{Lemma K-K^0}}

Suppose that the assumptions of Lemma~\ref{Lemma K-K^0} are satisfied.
Let $\mathbf{u}_0$ be given by \eqref{u_0=}, where $\boldsymbol{\varphi}\in L_2(\mathcal{O};\mathbb{C}^n)$. We put
$$
\widetilde{\mathbf{u}}_0(\,\cdot\, ,t)=P_\mathcal{O}\mathbf{u}_0(\,\cdot\, ,t).
$$
According to \eqref{K_D(t,e)} and~\eqref{K_D^0(t;eps)}, we have
\begin{align}
\label{K_D phi =}
&\mathcal{K}_D(t;\varepsilon)\boldsymbol{\varphi}=\bigl(\Lambda ^\varepsilon S_\varepsilon b(\mathbf{D})+\widetilde{\Lambda}^\varepsilon S_\varepsilon \bigr)\widetilde{\mathbf{u}}_0(\,\cdot\, ,t),
\\
\label{K_D^0 phi =}
&\mathcal{K}_D^0(t;\varepsilon)\boldsymbol{\varphi}=\bigl(\Lambda ^\varepsilon b(\mathbf{D})+\widetilde{\Lambda}^\varepsilon  \bigr)\mathbf{u}_0(\,\cdot\, ,t).
\end{align}
We need to estimate the following value
\begin{equation}
\label{rem. S_eps 1}
\begin{split}
\Vert \mathcal{K}_D(t;\varepsilon)\boldsymbol{\varphi}-\mathcal{K}_D^0(t;\varepsilon)\boldsymbol{\varphi}\Vert _{H^1(\mathcal{O})}
&\leqslant
\Vert \Lambda ^\varepsilon \bigl( (S_\varepsilon -I)b(\mathbf{D})\widetilde{\mathbf{u}}_0\bigr)(\,\cdot\, ,t)\Vert _{H^1(\mathbb{R}^d)}
\\
&+
\Vert \widetilde{\Lambda}^\varepsilon \bigl( (S_\varepsilon -I)\widetilde{\mathbf{u}}_0\bigr)(\,\cdot\,,t)\Vert _{H^1(\mathbb{R}^d)}.
\end{split}
\end{equation}
Under the above assumptions, by Lemma~\ref{Lemma exp tilde B_D^0 L2-Hq}, we have $\mathbf{u}_0\in H^{l+1}(\mathcal{O};\mathbb{C}^n)$, whence $\widetilde{\mathbf{u}}_0\in H^{l+1}(\mathbb{R}^d;\mathbb{C}^n)$. This gives us possibility to apply Lemma~\ref{Lemma Lambda multiplicator properties}($2^\circ$) to estimate the first summand in the right-hand side
of~\eqref{rem. S_eps 1}:
\begin{equation}
\label{rem. S_eps 2}
\begin{split}
\Vert \Lambda ^\varepsilon \bigl( (S_\varepsilon -I)b(\mathbf{D})\widetilde{\mathbf{u}}_0\bigr)(\,\cdot\,,t)\Vert _{H^1(\mathbb{R}^d)}
&\leqslant
C^{(1)}\varepsilon ^{-1}\Vert \bigl((S_\varepsilon -I)b(\mathbf{D})\widetilde{\mathbf{u}}_0\bigr)(\,\cdot\, ,t)\Vert _{L_2(\mathbb{R}^d)}
\\
&+
C^{(2)}\Vert \bigl( (S_\varepsilon -I)b(\mathbf{D})\widetilde{\mathbf{u}}_0\bigr)(\,\cdot\, ,t)\Vert _{H^l(\mathbb{R}^d)},
\end{split}
\end{equation}
where $l=d/2$. The first term in the right-hand side of \eqref{rem. S_eps 2} is estimated with the help of Proposition~\ref{Proposition S__eps - I} and relations~\eqref{<b^*b<}, \eqref{f_0<=}, \eqref{PO}, \eqref{u_0=}, and \eqref{exp tilde B_D^0 L2-H2}:
\begin{equation}
\label{rem. S_eps 3}
\begin{split}
\varepsilon ^{-1}\Vert &\bigl( (S_\varepsilon -I)b(\mathbf{D})\widetilde{\mathbf{u}}_0\bigr)(\,\cdot\,,t)\Vert _{L_2(\mathbb{R}^d)}
\leqslant r_1\Vert \mathbf{D}b(\mathbf{D})\widetilde{\mathbf{u}}_0(\,\cdot\, ,t)\Vert _{L_2(\mathbb{R}^d)}
\\
&\leqslant r_1\alpha _1^{1/2}C_\mathcal{O}^{(2)}\Vert \mathbf{u}_0(\,\cdot\, ,t)\Vert _{H^2(\mathcal{O})}
\leqslant
C^{(3)}t^{-1}e^{-c_\flat t/2}\Vert \boldsymbol{\varphi}\Vert _{L_2(\mathcal{O})},
\end{split}
\end{equation}
where $C^{(3)}:=r_1\alpha _1^{1/2}C_\mathcal{O}^{(2)}\widetilde{c}\Vert f\Vert _{L_\infty}$.
To estimate the second term in the right-hand side of~\eqref{rem. S_eps 2}, we apply \eqref{S_eps <= 1} and \eqref{<b^*b<}:
\begin{equation}
\label{rem. S_eps 4}
\begin{split}
\Vert \bigl((S_\varepsilon -I)b(\mathbf{D})\widetilde{\mathbf{u}}_0\bigr)(\,\cdot\,,t)\Vert _{H^l(\mathbb{R}^d)}
&\leqslant 2\alpha _1^{1/2}\Vert \widetilde{\mathbf{u}}_0(\,\cdot\, ,t)\Vert _{H^{l+1}(\mathbb{R}^d)}.
\end{split}
\end{equation}
By \eqref{f_0<=}, \eqref{PO}, \eqref{u_0=}, and Lemma~\ref{Lemma exp tilde B_D^0 L2-Hq},
\begin{equation}
\label{rem. S_eps 5}
\begin{split}
\Vert \widetilde{\mathbf{u}}_0(\,\cdot\, ,t)\Vert _{H^{l+1}(\mathbb{R}^d)}
&\leqslant C_\mathcal{O}^{(l+1)}\widehat{\mathrm{C}}_{l+1}\Vert f\Vert ^2_{L_\infty}t^{-(l+1)/2}e^{-c_\flat t/2}\Vert \boldsymbol{\varphi}\Vert _{L_2(\mathcal{O})}.
\end{split}
\end{equation}
From \eqref{rem. S_eps 4} and \eqref{rem. S_eps 5} it follows that
\begin{equation}
\label{rem. S_eps 6}
\Vert \bigl( (S_\varepsilon -I)b(\mathbf{D})\widetilde{\mathbf{u}}_0\bigr) (\,\cdot\,,t)\Vert _{H^l(\mathbb{R}^d)}
\leqslant C^{(4)}t^{-(l+1)/2}e^{-c_\flat t/2}\Vert \boldsymbol{\varphi}\Vert _{L_2(\mathcal{O})},
\end{equation}
where $C^{(4)}:=2\alpha _1^{1/2}C_\mathcal{O}^{(l+1)}\widehat{\mathrm{C}}_{l+1}\Vert f\Vert ^2_{L_\infty}$.

Now we estimate the second term in the right-hand side of~\eqref{rem. S_eps 1}. By Lemma~\ref{Lemma tilde Lambda multiplicator properties}($2^\circ$),
\begin{equation}
\label{rem. S_eps 7}
\begin{split}
\Vert \widetilde{\Lambda}^\varepsilon \bigl((S_\varepsilon -I)\widetilde{\mathbf{u}}_0\bigr)(\,\cdot\, ,t)\Vert _{H^1(\mathbb{R}^d)}
&\leqslant
\widetilde{C}^{(1)}\varepsilon ^{-1}\Vert (S_\varepsilon -I)\widetilde{\mathbf{u}}_0(\,\cdot\,,t)\Vert _{H^1(\mathbb{R}^d)}
\\
&+\widetilde{C}^{(2)}\Vert (S_\varepsilon -I)\widetilde{\mathbf{u}}_0(\,\cdot\,,t)\Vert _{H^l(\mathbb{R}^d)},\quad l=d/2.
\end{split}
\end{equation}
The first summand in the right-hand side of~\eqref{rem. S_eps 7} is estimated by using Proposition~\ref{Proposition S__eps - I} and relations~\eqref{f_0<=}, \eqref{PO}, \eqref{u_0=}, \eqref{exp tilde B_D^0 L2-H2}:
\begin{equation}
\label{rem. S_eps 7a}
\begin{split}
\varepsilon ^{-1}\Vert (S_\varepsilon -I)\widetilde{\mathbf{u}}_0(\,\cdot\,,t)\Vert _{H^1(\mathbb{R}^d)}
\leqslant r_1C_\mathcal{O}^{(2)}\Vert \mathbf{u}_0(\,\cdot\, ,t)\Vert _{H^2(\mathcal{O})}
\leqslant C ^{(5)}t^{-1}e^{-c_\flat t/2}\Vert \boldsymbol{\varphi}\Vert _{L_2(\mathcal{O})};
\\
C^{(5)}:=r_1C_\mathcal{O}^{(2)}\widetilde{c}\Vert f\Vert _{L_\infty}.
\end{split}
\end{equation}
The second summand in \eqref{rem. S_eps 7} is estimated with the help of \eqref{S_eps <= 1} and \eqref{rem. S_eps 5}:
\begin{equation}
\label{rem. S_eps 7b}
\begin{split}
\Vert (S_\varepsilon -I)\widetilde{\mathbf{u}}_0(\,\cdot\,,t)\Vert _{H^l(\mathbb{R}^d)}
&\leqslant 2\Vert \widetilde{\mathbf{u}}_0(\,\cdot\, ,t)\Vert _{H^l(\mathbb{R}^d)}
\leqslant  2\Vert \widetilde{\mathbf{u}}_0(\,\cdot\, ,t)\Vert _{H^{l+1}(\mathbb{R}^d)}
\\
&\leqslant C^{(6)}t^{-(l+1)/2}e^{-c_\flat t/2}\Vert \boldsymbol{\varphi}\Vert _{L_2(\mathcal{O})};
\quad
C^{(6)}:=2C_\mathcal{O}^{(l+1)}\widehat{\mathrm{C}}_{l+1}\Vert f\Vert ^2_{L_\infty}.
\end{split}
\end{equation}

As a result, relations~\eqref{rem. S_eps 1}--\eqref{rem. S_eps 3} and \eqref{rem. S_eps 6}--\eqref{rem. S_eps 7b} imply that
\begin{equation*}
\begin{split}
\Vert \mathcal{K}_D(t;\varepsilon)\boldsymbol{\varphi}-\mathcal{K}_D^0(t;\varepsilon)\boldsymbol{\varphi}\Vert _{H^1(\mathcal{O})}
\leqslant (C^{(7)}t^{-1}+C^{(8)}t^{-(l+1)/2})e^{-c_\flat t/2}\Vert \boldsymbol{\varphi}\Vert _{L_2(\mathcal{O})},
\end{split}
\end{equation*}
where $l=d/2$,
$C^{(7)}:=C^{(1)}C^{(3)}+\widetilde{C}^{(1)}C^{(5)}$, and $C^{(8)}:=C^{(2)}C^{(4)}+\widetilde{C}^{(2)}C^{(6)}$.
This proves estimate \eqref{lemma K-K^0} with the constant $\widehat{\mathcal{C}}_d:=\max \lbrace C^{(7)};C^{(8)}\rbrace$. \qed

\subsection{Proof of Theorem~\ref{Theorem smooth boundary}}
Inequality \eqref{Th smooth domain 1} directly follows from \eqref{Th_exp_korrector} and \eqref{lemma K-K^0}.
Herewith, $\mathcal{C}_d:=2(\widehat{\mathcal{C}}_d+C_{16})$. Above, we took  into account that for $t>1$
the term $\varepsilon t^{-1}$ does not exceed $\varepsilon ^{1/2}t^{-3/4}$, and for $t\leqslant 1$ it does not
exceed $\varepsilon t^{-d/4-1/2}$ since $d\geqslant 3$.

Let us check \eqref{Th smooth domain 2}. By \eqref{Th smooth domain 1} and \eqref{b_l <=},
\begin{equation}
\label{rem. S_eps 9}
\begin{split}
\Bigl\Vert & g^\varepsilon b(\mathbf{D})\Bigl(
f^\varepsilon e^{-\widetilde{B}_{D,\varepsilon}t}(f^\varepsilon)^*-f_0e^{-\widetilde{B}_D^0t}f_0
-\varepsilon \bigl(\Lambda ^\varepsilon b(\mathbf{D})+\widetilde{\Lambda}^\varepsilon\bigr)f_0e^{-\widetilde{B}_D^0t}f_0
\Bigr)\Bigr\Vert _{L_2\rightarrow L_2}
\\
&\leqslant \Vert g\Vert _{L_\infty}(d\alpha _1)^{1/2}\mathcal{C}_d(\varepsilon ^{1/2}t^{-3/4}+\varepsilon t^{-d/4-1/2})e^{-c_\flat t/2}.
\end{split}
\end{equation}

We have
\begin{equation}
\label{rem. S_eps 10}
\begin{split}
&\varepsilon g^\varepsilon b(\mathbf{D})\bigl(\Lambda^\varepsilon b(\mathbf{D})+\widetilde{\Lambda}^\varepsilon\bigr)f_0e^{-\widetilde{B}_D^0t}f_0
=g^\varepsilon \Bigl( (b(\mathbf{D})\Lambda)^\varepsilon +\bigl(b(\mathbf{D})\widetilde{\Lambda}\bigr)^\varepsilon \Bigr)f_0e^{-\widetilde{B}_D^0t}f_0
\\
&+
\varepsilon\sum _{k,j=1}^d g^\varepsilon b_k \Lambda ^\varepsilon b_j D_kD_jf_0e^{-\widetilde{B}_D^0t}f_0
+\varepsilon \sum _{j=1}^d g^\varepsilon b_j \widetilde{\Lambda}^\varepsilon D_j f_0 e^{-\widetilde{B}_D^0 t}f_0.
\end{split}
\end{equation}
The norm of the second summand in the right-hand side of \eqref{rem. S_eps 10} is estimated with the help of \eqref{b_l <=}, \eqref{f_0<=}, Lemma~\ref{Lemma exp tilde B_D^0 L2-Hq}, and Corollary~\ref{Corollary Lambda eps multiplier}:
\begin{equation}
\label{rem. S_eps 11}
\begin{split}
\varepsilon\Bigl\Vert & \sum _{k,j=1}^d g^\varepsilon b_k \Lambda ^\varepsilon b_j D_kD_jf_0e^{-\widetilde{B}_D^0t}f_0\Bigr\Vert _{L_2(\mathcal{O})\rightarrow L_2(\mathcal{O})}
\leqslant
C^{(9)}\varepsilon t^{-(l+1)/2}e^{-c_\flat t/2},
\end{split}
\end{equation}
$l=d/2$; $C^{(9)}:=\alpha_1dC^{(0)}C_\mathcal{O}^{(l-1)}\widehat{\mathrm{C}}_{l+1}\Vert g\Vert _{L_\infty}\Vert f\Vert ^2_{L_\infty}$.
The third summand in the right-hand side of~\eqref{rem. S_eps 10} is estimated by using \eqref{b_l <=}, \eqref{f_0<=}, Lemma~\ref{Lemma exp tilde B_D^0 L2-Hq}, and Corollary~\ref{Corollary tilde Lambda eps multiplier}:
\begin{equation}
\label{rem. S_eps 12}
\begin{split}
\varepsilon \Bigl\Vert    \sum _{j=1}^d g^\varepsilon b_j \widetilde{\Lambda}^\varepsilon D_j f_0 e^{-\widetilde{B}_D^0 t}f_0\Bigr\Vert _{L_2(\mathcal{O})\rightarrow L_2(\mathcal{O})}
\leqslant C^{(10)}\varepsilon t^{-(l+1)/2}e^{-c_\flat t/2},
\end{split}
\end{equation}
where $l=d/2$ and
$$
C^{(10)}:=(d\alpha_1)^{1/2}\widetilde{C}^{(0)}C_\mathcal{O}^{(l-1)}\widehat{\mathrm{C}}_{l+1}\Vert g\Vert _{L_\infty}\Vert f\Vert ^2_{L_\infty}.
$$
As a result, relations \eqref{rem. S_eps 9}--\eqref{rem. S_eps 12} imply inequality \eqref{Th smooth domain 2} with the constant
 \begin{equation*}
 \widetilde{\mathcal{C}}_d:=\Vert g\Vert _{L_\infty}(d\alpha _1)^{1/2}\mathcal{C}_d +C^{(9)}+C^{(10)}.\phantom{ccccccccc}\qed
\end{equation*}

\section{Removal of the smoothing operator in the corrector\\ in a strictly interior subdomain}
\label{Section removing S-eps in strictly interior subdomain}
\subsection{One property of the operator $S_\eps$}
Now we proceed to estimates in a strictly interior subdomain.
We start with one simple property of the operator~$S_\eps$.

Let ${\mathcal O}'$ be a strictly interior subdomain of the domain ${\mathcal O}$, and let $\delta$ be given by \eqref{delta= 1.62a}.
Denote
$$
{\mathcal O}'':=\{ {\mathbf x}\in {\mathcal O}:\ {\rm dist}\,\{{\mathbf x};
\partial {\mathcal O}\} > \delta/2\},\quad
{\mathcal O}''':=\{ {\mathbf x}\in {\mathcal O}:\ {\rm dist}\,\{{\mathbf x};
\partial {\mathcal O}\} > \delta/4\}.
$$

\begin{lemma}
\label{Lemma one property of S-eps}
Let $S_\eps$ be the operator \textnormal{\eqref{S_eps}}. Let $2r_1=\mathrm{diam}\,\Omega$.
Suppose that $\mathbf{v}\in L_2(\mathbb{R}^d;\mathbb{C}^m)$ and ${\mathbf v} \in H^{\sigma}({\mathcal O}'''; {\mathbb C}^m)$ with some
$\sigma \in {\mathbb Z}_+$. Then for $0< \eps \leqslant (4r_1)^{-1}\delta$ we have $S_\varepsilon\mathbf{v}\in H^\sigma (\mathcal{O}'';\mathbb{C}^m)$ and
\begin{equation*}
\| S_\eps  {\mathbf v}\|_{H^\sigma({\mathcal O}'')} \leqslant
\|  {\mathbf v}\|_{H^\sigma({\mathcal O}''')}.
\end{equation*}
\end{lemma}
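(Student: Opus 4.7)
The plan is to reduce everything to the $L_2$ version by commuting $S_\varepsilon$ with derivatives, and then to control $S_\varepsilon$ on $\mathcal{O}''$ by a translation/Jensen argument that only uses values of $\mathbf{v}$ in $\mathcal{O}'''$.

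First I would check the geometric input on which everything rests. Since $\Omega$ is symmetric about the origin and $\mathrm{diam}\,\Omega=2r_1$, any $\mathbf{z}\in\Omega$ satisfies $|\mathbf{z}|\leqslant r_1$. Thus for $0<\varepsilon\leqslant(4r_1)^{-1}\delta$ and $\mathbf{x}\in\mathcal{O}''$ (so $\mathrm{dist}\{\mathbf{x};\partial\mathcal{O}\}>\delta/2$), every point $\mathbf{x}-\varepsilon\mathbf{z}$ with $\mathbf{z}\in\Omega$ lies at distance greater than $\delta/2-\varepsilon r_1\geqslant \delta/4$ from $\partial\mathcal{O}$, hence in $\mathcal{O}'''$. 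This is exactly the role of the smallness condition on $\varepsilon$.

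Next I would differentiate under the integral sign in \eqref{S_eps}: for any multiindex $\alpha$ with $|\alpha|\leqslant\sigma$ and for $\mathbf{x}\in\mathcal{O}''$,
\begin{equation*}
\mathbf{D}^\alpha (S_\varepsilon \mathbf{v})(\mathbf{x})=|\Omega|^{-1}\int_\Omega (\mathbf{D}^\alpha \mathbf{v})(\mathbf{x}-\varepsilon\mathbf{z})\,d\mathbf{z},
\end{equation*}
where the right-hand side is well-defined because, by the observation above, $\mathbf{x}-\varepsilon\mathbf{z}\in\mathcal{O}'''$ and $\mathbf{D}^\alpha\mathbf{v}\in L_2(\mathcal{O}''';\mathbb{C}^m)$ by hypothesis. (This identity is the standard commutation of $S_\varepsilon$ with $\mathbf{D}^\alpha$, localized to $\mathcal{O}''$; it can be justified by first establishing it for $\mathbf{v}\in C^\infty$ and then by approximation.)

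The main — in fact only — analytic step is then the $L_2$ bound on a smoothed function. By the Cauchy–Schwarz (or Jensen) inequality applied inside the integral defining $S_\varepsilon$,
\begin{equation*}
|\mathbf{D}^\alpha (S_\varepsilon \mathbf{v})(\mathbf{x})|^2\leqslant |\Omega|^{-1}\int_\Omega |(\mathbf{D}^\alpha\mathbf{v})(\mathbf{x}-\varepsilon\mathbf{z})|^2\,d\mathbf{z},\quad\mathbf{x}\in\mathcal{O}''.
\end{equation*}
Integrating over $\mathbf{x}\in\mathcal{O}''$, applying Fubini, and substituting $\mathbf{y}=\mathbf{x}-\varepsilon\mathbf{z}$ (for each fixed $\mathbf{z}$, $\mathbf{x}\in\mathcal{O}''$ implies $\mathbf{y}\in\mathcal{O}'''$), I obtain
\begin{equation*}
\|\mathbf{D}^\alpha(S_\varepsilon\mathbf{v})\|^2_{L_2(\mathcal{O}'')}\leqslant |\Omega|^{-1}\int_\Omega \|\mathbf{D}^\alpha\mathbf{v}\|^2_{L_2(\mathcal{O}''')}\,d\mathbf{z}=\|\mathbf{D}^\alpha\mathbf{v}\|^2_{L_2(\mathcal{O}''')}.
\end{equation*}
Summing over $|\alpha|\leqslant\sigma$ yields the claimed inequality. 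There is no real obstacle here; the content of the lemma is entirely in the geometric threshold $\varepsilon\leqslant(4r_1)^{-1}\delta$ that guarantees the support condition $\mathbf{x}-\varepsilon\Omega\subset\mathcal{O}'''$ uniformly in $\mathbf{x}\in\mathcal{O}''$.
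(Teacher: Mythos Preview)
Your proof is correct and follows essentially the same approach as the paper: commute derivatives through $S_\varepsilon$, apply Cauchy--Schwarz inside the average, use the geometric observation that $\mathbf{x}-\varepsilon\mathbf{z}\in\mathcal{O}'''$ for $\mathbf{x}\in\mathcal{O}''$, $\mathbf{z}\in\Omega$, and then Fubini. The paper compresses these steps into a single display but the argument is identical.
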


\begin{proof}
According to \eqref{S_eps},
\begin{equation}
\label{10.2}
\begin{split}
\| S_\eps  {\mathbf v}\|^2_{H^\sigma({\mathcal O}'')}
=
|\Omega|^{-2} \sum_{|\alpha| \leqslant \sigma} \int\limits_{{\mathcal O}''} d{\mathbf x}\bigg| \int\limits_{\Omega} {\mathbf D}^\alpha
{\mathbf v}({\mathbf x} - \eps {\mathbf z}) \, d{\mathbf z}\bigg|^2
\leqslant
|\Omega|^{-1} \sum_{|\alpha| \leqslant \sigma} \int\limits_{{\mathcal O}''} d{\mathbf x} \int\limits_{\Omega} \left|{\mathbf D}^\alpha
{\mathbf v}({\mathbf x} - \eps {\mathbf z})\right|^2 d{\mathbf z}.
\end{split}
\end{equation}
Since $0< \eps r_1 \leqslant \delta/4$, for ${\mathbf x} \in {\mathcal O}''$ and ${\mathbf z}\in \Omega$
we have ${\mathbf x} - \eps {\mathbf z}\in {\mathcal O}'''$.
Hence, changing the order of integration in~\eqref{10.2}, we obtain the required estimate.
\end{proof}

\subsection{A cut-off function $\chi({\mathbf x})$}
We fix a smooth cut-off function $\chi({\mathbf x})$ such that
\begin{equation}
\label{chi}
\begin{split}
&\chi \in C^\infty_0({\mathbb R}^d),\quad 0 \leqslant \chi({\mathbf x}) \leqslant 1; \quad \chi({\mathbf x})=1,\ {\mathbf x}\in {\mathcal O}' ;
\\
&{\rm supp}\,\chi \subset   {\mathcal O}'';\quad |{\mathbf D}^\alpha \chi({\mathbf x})| \leqslant \kappa_\sigma \delta^{-\sigma}, \quad |\alpha|=\sigma,\quad
\sigma\in {\mathbb N}.
\end{split}
\end{equation}
The constants $\kappa_\sigma$ depend only on $d$, $\sigma$, and the domain ${\mathcal O}$.

\begin{lemma}
\label{Lemma chi}
Suppose that $\chi({\mathbf x})$ is a cut-off function satisfying conditions \textnormal{\eqref{chi}}.
Let $k\in {\mathbb Z}_+$.

\noindent
$1^\circ$. For any function ${\mathbf v} \in H^{k}({\mathbb R}^d; {\mathbb C}^m)$ we have
\begin{equation}
\label{10.4}
\| \chi {\mathbf v}\|_{H^k({\mathbb R}^d)}
\leqslant
C_k^{(11)} \sum_{j=0}^k \delta^{-(k-j)} \|{\mathbf v}\|_{H^{j}({\mathcal O}'')}.
\end{equation}

\noindent
$2^\circ$. For any function ${\mathbf v} \in H^{k+1}({\mathbb R}^d; {\mathbb C}^m)$ we have
\begin{equation}
\label{10.5}
\begin{split}
\| \chi {\mathbf v}\|_{H^{k+1/2}({\mathbb R}^d)}
\leqslant
C_{k+1/2}^{(11)} \biggl( \sum_{j=0}^{k+1} \delta^{-(k+1 -j)} \|{\mathbf v}\|_{H^{j}({\mathcal O}'')} \biggr)^{1/2}
 \biggl( \sum_{i =0}^{k} \delta^{-(k - i)} \|{\mathbf v}\|_{H^{i}({\mathcal O}'')} \biggr)^{1/2}.
\end{split}
\end{equation}

The constants $C_k^{(11)}$ and $C_{k+1/2}^{(11)}$ depend on $d$, $k$, and the domain~${\mathcal O}$.
\end{lemma}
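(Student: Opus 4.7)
\smallskip

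\noindent\textbf{Proof plan for Lemma \ref{Lemma chi}.} For part $1^\circ$, the strategy is a direct application of the Leibniz rule together with the support condition ${\rm supp}\,\chi\subset{\mathcal O}''$ and the derivative bounds in \eqref{chi}. For any multiindex $\alpha$ with $|\alpha|\leqslant k$, we expand
$$
\mathbf{D}^\alpha(\chi\mathbf{v})=\sum_{\beta\leqslant\alpha}\binom{\alpha}{\beta}(\mathbf{D}^\beta\chi)(\mathbf{D}^{\alpha-\beta}\mathbf{v}),
$$
and, since the left-hand side is supported in $\mathcal{O}''$, the $L_2(\mathbb{R}^d)$-norm can be replaced by the $L_2(\mathcal{O}'')$-norm. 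Using $|\mathbf{D}^\beta\chi|\leqslant\kappa_{|\beta|}\delta^{-|\beta|}$ (with $\kappa_0=1$), grouping the terms according to $|\alpha-\beta|=j$, and summing over $|\alpha|\leqslant k$, we control $\|\chi\mathbf{v}\|_{H^k(\mathbb{R}^d)}$ by a multiple of $\sum_{j=0}^k\delta^{-(k-j)}\|\mathbf{v}\|_{H^j(\mathcal{O}'')}$. The constant $C_k^{(11)}$ absorbs the binomial coefficients, the $\kappa_{|\beta|}$, and the number of multiindices involved, and therefore depends only on $d$, $k$, and $\mathcal{O}$.

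For part $2^\circ$, the plan is to use the standard interpolation inequality between integer-order Sobolev norms on $\mathbb{R}^d$:
$$
\|\chi\mathbf{v}\|_{H^{k+1/2}(\mathbb{R}^d)}\leqslant c\,\|\chi\mathbf{v}\|_{H^k(\mathbb{R}^d)}^{1/2}\,\|\chi\mathbf{v}\|_{H^{k+1}(\mathbb{R}^d)}^{1/2},
$$
which is immediate from the Fourier characterization $\|u\|_{H^s}^2=\int(1+|\xi|^2)^s|\widehat u(\xi)|^2\,d\xi$ and the Cauchy--Schwarz inequality applied to $(1+|\xi|^2)^{k+1/2}=(1+|\xi|^2)^{k/2}(1+|\xi|^2)^{(k+1)/2}$. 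Then I apply part $1^\circ$ to each of the two factors (with indices $k$ and $k+1$, the latter requiring $\mathbf{v}\in H^{k+1}(\mathbb{R}^d;\mathbb{C}^m)$) and multiply. This produces exactly the product of the two square roots in~\eqref{10.5}, with $C_{k+1/2}^{(11)}$ built from $c$, $C_k^{(11)}$, and $C_{k+1}^{(11)}$.

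The key inputs are the Leibniz rule, the scaling of the cut-off derivatives in~\eqref{chi}, and one standard interpolation inequality; no serious obstacle arises. The only point requiring minor care is the bookkeeping of the $\delta^{-(k-j)}$ factors, to ensure that a derivative of order $k-j$ falling on $\chi$ contributes precisely $\delta^{-(k-j)}$ and is paired with the norm $\|\mathbf{v}\|_{H^j(\mathcal{O}'')}$ of the remaining $j$ derivatives on $\mathbf{v}$. This combinatorial match is automatic from $|\beta|+|\alpha-\beta|=|\alpha|\leqslant k$ together with $|\mathbf{D}^\beta\chi|\leqslant\kappa_{|\beta|}\delta^{-|\beta|}$, so the argument reduces to routine estimation.
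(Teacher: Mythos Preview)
Your proposal is correct and follows essentially the same approach as the paper: part~$1^\circ$ via the Leibniz formula combined with the support condition and the derivative bounds in~\eqref{chi}, and part~$2^\circ$ via the interpolation inequality $\|\mathbf{w}\|_{H^{k+1/2}}^2\leqslant\|\mathbf{w}\|_{H^{k+1}}\|\mathbf{w}\|_{H^k}$ applied to $\mathbf{w}=\chi\mathbf{v}$. Your write-up is in fact more detailed than the paper's, which simply cites these two ingredients; note also that the interpolation constant can be taken equal to~$1$ (Cauchy--Schwarz on the Fourier side gives it sharply), so your extra constant $c$ is harmless but unnecessary.
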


\begin{proof}
Inequality \eqref{10.4} follows from the Leibniz formula for the derivatives of the product
$\chi {\mathbf v}$ and from the estimates for the derivatives of $\chi$ (see \eqref{chi}).
To check~\eqref{10.5}, we should also take into account that
$$
\| {\mathbf w} \|_{H^{k+1/2}({\mathbb R}^d)}^2 \leqslant
\| {\mathbf w} \|_{H^{k+1}({\mathbb R}^d)} \|  {\mathbf w} \|_{H^{k}({\mathbb R}^d)},\quad
{\mathbf w} \in H^{k+1}({\mathbb R}^d; {\mathbb C}^m).
$$
\end{proof}

\subsection{Proof of Lemma~\ref{Lemma K-K0 strictly interior}}
  Suppose that the assumptions of Lemma~\ref{Lemma K-K0 strictly interior} are satisfied.
  Let $\mathbf{u}_0$ be given by \eqref{u_0=} with $\boldsymbol{\varphi}\in L_2(\mathcal{O};\mathbb{C}^n)$.
  According to \eqref{f_0<=} and  \eqref{exp tilde B_D^0 L2-H1}, \eqref{exp tilde B_D^0 L2-H2}, we have
  \begin{align}
\label{rem. S_eps 14}
&\Vert \mathbf{D}\mathbf{u}_0(\,\cdot\,,t)\Vert _{L_2(\mathcal{O})}
\leqslant \Vert \mathbf{u}_0(\,\cdot\, ,t)\Vert _{H^1(\mathcal{O})}
\leqslant c_3\Vert f\Vert _{L_\infty}t^{-1/2}e^{-c_\flat t/2}\Vert \boldsymbol{\varphi}\Vert _{L_2(\mathcal{O})},
\\
\label{rem. S_eps 14a}
&\Vert \mathbf{D}\mathbf{u}_0(\,\cdot\,,t)\Vert _{H^1(\mathcal{O})}
\leqslant \Vert \mathbf{u}_0(\,\cdot\, ,t)\Vert _{H^2(\mathcal{O})}
\leqslant \widetilde{c}\Vert f\Vert _{L_\infty}t^{-1}e^{-c_\flat t/2}\Vert \boldsymbol{\varphi}\Vert _{L_2(\mathcal{O})}.
\end{align}
Let $\widetilde{\mathbf{u}}_0=P_\mathcal{O}\mathbf{u}_0$. Relations \eqref{K_D phi =} and \eqref{K_D^0 phi =} remain valid.
We need to estimate the following value:
\begin{equation}
\label{rem. S_eps 15}
\begin{split}
\Vert \mathcal{K}_D(t;\varepsilon)\boldsymbol{\varphi}-\mathcal{K}_D^0(t;\varepsilon)\boldsymbol{\varphi}\Vert _{H^1(\mathcal{O}')}
&\leqslant
\Vert \Lambda ^\varepsilon \chi \bigl( (S_\varepsilon -I)b(\mathbf{D})\widetilde{\mathbf{u}}_0\bigr)(\,\cdot\,,t)\Vert _{H^1(\mathbb{R}^d)}
\\
&+
\Vert \widetilde{\Lambda}^\varepsilon \chi \bigl( (S_\varepsilon -I)\widetilde{\mathbf{u}}_0\bigr)(\,\cdot\, ,t)\Vert _{H^1(\mathbb{R}^d)}.
\end{split}
\end{equation}
Recall (cf. discussion in Subsection~\ref{Subsection strictly interior}) that $\mathbf{u}_0(\,\cdot\, ,t)\in H^\sigma (\mathcal{O}''';\mathbb{C}^n)$ for any $\sigma\in\mathbb{Z}_+$.
Then the function $\widetilde{\mathbf{u}}_0(\,\cdot\,,t)$ satisfies the assumptions of Lemma~\ref{Lemma one property of S-eps} for any $\sigma\in\mathbb{Z}_+$. Hence, $(S_\varepsilon\widetilde{\mathbf{u}}_0)(\,\cdot\, ,t)\in H^\sigma (\mathcal{O}'';\mathbb{C}^n)$ for ${0<\varepsilon\leqslant (4r_1)^{-1}\delta}$.
Then we can apply Lemma~\ref{Lemma Lambda multiplicator properties}($2^\circ$) to estimate the first summand in the right-hand side of~\eqref{rem. S_eps 15}:
\begin{equation}
\label{rem. S_eps 16}
\begin{split}
\Vert \Lambda ^\varepsilon \chi \bigl((S_\varepsilon -I)b(\mathbf{D})\widetilde{\mathbf{u}}_0\bigr)(\,\cdot\,,t)\Vert _{H^1(\mathbb{R}^d)}
&\leqslant
C^{(1)}\varepsilon ^{-1}\Vert \chi \bigl( (S_\varepsilon -I)b(\mathbf{D})\widetilde{\mathbf{u}}_0\bigr)(\,\cdot\,,t)\Vert _{L_2(\mathbb{R}^d)}
\\&
+
C^{(2)}\Vert \chi \bigl( (S_\varepsilon -I)b(\mathbf{D})\widetilde{\mathbf{u}}_0\bigr)(\,\cdot\,,t)\Vert _{H^l(\mathbb{R}^d)},
\end{split}
\end{equation}
$l=d/2$.
The first term in the right-hand side of \eqref{rem. S_eps 16}
is estimated by using inequality~\eqref{rem. S_eps 3} (which holds without additional smoothness assumption on~$\partial\mathcal{O}$):
\begin{equation}
\label{rem. S_eps 17}
\begin{split}
\varepsilon ^{-1}\Vert \chi \bigl((S_\varepsilon -I)b(\mathbf{D})\widetilde{\mathbf{u}}_0\bigr) (\,\cdot\, ,t)\Vert _{L_2(\mathbb{R}^d)}
&\leqslant
C^{(3)}t^{-1}e^{-c_\flat t/2}\Vert \boldsymbol{\varphi}\Vert _{L_2(\mathcal{O})}.
\end{split}
\end{equation}
Now, we consider the second summand in the right-hand side of~\eqref{rem. S_eps 16}. Obviously,
\begin{equation}
\label{rem. S_eps 18}
\begin{split}
\Vert \chi \bigl( (S_\varepsilon -I)b(\mathbf{D})\widetilde{\mathbf{u}}_0\bigr)(\,\cdot\, ,t)\Vert _{H^l(\mathbb{R}^d)}
&\leqslant
\Vert \chi  (S_\varepsilon b(\mathbf{D})\widetilde{\mathbf{u}}_0)(\,\cdot\, ,t)\Vert _{H^l(\mathbb{R}^d)}
\\
&+
\Vert \chi b(\mathbf{D})\widetilde{\mathbf{u}}_0(\,\cdot\, ,t)\Vert _{H^l(\mathbb{R}^d)}.
\end{split}
\end{equation}
To estimate the second term in the right-hand side of~\eqref{rem. S_eps 18}, we apply Lemma~\ref{Lemma chi} and \eqref{b_l <=}.
If $l=d/2$ is integer (i.~e., the dimension $d$ is even), we have
\begin{equation}
\label{rem. S_eps 19}
\Vert \chi b(\mathbf{D})\widetilde{\mathbf{u}}_0(\,\cdot\, ,t)\Vert _{H^l(\mathbb{R}^d)}
\leqslant C_l^{(11)}(d\alpha _1)^{1/2}
\sum _{j=0}^l\delta^{-(l-j)}\Vert \mathbf{D}\mathbf{u}_0(\,\cdot\, ,t)\Vert _{H^j(\mathcal{O}'')}.
\end{equation}
If $l=d/2=k+1/2$, then
\begin{equation}
\label{rem. S_eps 20}
\begin{split}
\Vert \chi b(\mathbf{D})\widetilde{\mathbf{u}}_0(\,\cdot\, ,t)\Vert _{H^l(\mathbb{R}^d)}
&\leqslant
C_l^{(11)}\!(d\alpha _1)^{1/2}
\Bigl(
\sum _{j=0}^{k+1}\delta ^{-(k+1-j)}\Vert \mathbf{D}\mathbf{u}_0(\,\cdot\, ,t)\Vert _{H^j(\mathcal{O}'')}
\Bigr)^{1/2}
\\
&\qquad\qquad\qquad\times
\Bigl(
\sum _{\sigma=0}^k \delta ^{-(k-\sigma)}\Vert\mathbf{D}\mathbf{u}_0(\,\cdot\, ,t)\Vert _{H^\sigma (\mathcal{O}'')}\Bigr)^{1/2}.
\end{split}
\end{equation}
The norms of $\mathbf{D}\mathbf{u}_0(\,\cdot\, ,t)$ in $L_2(\mathcal{O};\mathbb{C}^n)$ and in $H^1(\mathcal{O};\mathbb{C}^n)$ are estimated in \eqref{rem. S_eps 14} and \eqref{rem. S_eps 14a}. By \eqref{f_0<=}, \eqref{u_0=}, and \eqref{exp tilde B_D^0 L2-H-sigma strictly interior} (with $\mathcal{O}'$ replaced by $\mathcal{O}''$),
\begin{equation}
\label{rem. S_eps 21}
\begin{split}
\Vert \mathbf{D}\mathbf{u}_0 (\,\cdot\, ,t)\Vert _{H^\sigma (\mathcal{O}'')}
\!\leqslant\! \mathrm{C}'_{\sigma +1}\Vert f\Vert ^2_{L_\infty} 2^\sigma t^{-1/2}(\delta ^{-2}\!\!+t^{-1}\!)^{\sigma /2}e^{-c_\flat t/2}\Vert \boldsymbol{\varphi}\Vert _{L_2(\mathcal{O})},
\end{split}
\end{equation}
$\sigma\geqslant 2$.
Using \eqref{rem. S_eps 14}, \eqref{rem. S_eps 14a}, and \eqref{rem. S_eps 19}--\eqref{rem. S_eps 21}, we arrive at the inequality
\begin{equation}
\label{rem. S_eps 22}
\Vert \chi b(\mathbf{D})\widetilde{\mathbf{u}}_0(\,\cdot\, ,t)\Vert _{H^l(\mathbb{R}^d)}
\leqslant
C^{(12)}t^{-1/2}(\delta ^{-2}+t^{-1})^{d/4}e^{-c_\flat t/2}\Vert \boldsymbol{\varphi}\Vert _{L_2(\mathcal{O})}.
\end{equation}
The constant $C^{(12)}$ depends only on the problem data \eqref{problem data}.

To estimate the first term in the right-hand side of \eqref{rem. S_eps 18}, we apply Lemmas \ref{Lemma one property of S-eps} and~\ref{Lemma chi}.
Assume that $0<\varepsilon\leqslant (4 r_1)^{-1}\delta$. By \eqref{b_l <=}, in the case of integer $l$, we have
\begin{equation}
\label{rem. S_eps 23}
\begin{split}
\Vert \chi (S_\varepsilon b(\mathbf{D})\widetilde{\mathbf{u}}_0)(\,\cdot\,,t)\Vert _{H^l(\mathbb{R}^d)}
\leqslant C_l^{(11)}(d\alpha _1)^{1/2}\sum _{\sigma =0}^l\delta ^{-(l-\sigma)}\Vert \mathbf{D}\mathbf{u}_0(\,\cdot\, ,t)\Vert _{H^\sigma(\mathcal{O}''')}.
\end{split}
\end{equation}
The norms of $\mathbf{D}\mathbf{u}_0(\,\cdot\, ,t)$ in $L_2(\mathcal{O};\mathbb{C}^n)$ and in $H^1(\mathcal{O};\mathbb{C}^n)$ are estimated in \eqref{rem. S_eps 14} and \eqref{rem. S_eps 14a}. By \eqref{f_0<=}, \eqref{u_0=} and \eqref{exp tilde B_D^0 L2-H-sigma strictly interior} (with $\mathcal{O}'$ replaced by $\mathcal{O}'''$)
\begin{equation}
\label{rem. S_eps 24}
\Vert \mathbf{D}\mathbf{u}_0(\,\cdot\, ,t)\Vert _{H^\sigma (\mathcal{O}''')}
\!\leqslant\!
\mathrm{C}'_{\sigma+1}\Vert f\Vert^2_{L_\infty}4^\sigma t^{-1/2}(\delta ^{-2}\!\!+t^{-1})^{\sigma/2}e^{-c_\flat t/2}\Vert \boldsymbol{\varphi}\Vert _{L_2(\mathcal{O})},
\end{equation}
$\sigma\geqslant 2$.
From \eqref{rem. S_eps 14}, \eqref{rem. S_eps 14a}, \eqref{rem. S_eps 23}, and \eqref{rem. S_eps 24} it follows that
\begin{equation}
\label{rem. S_eps 25}
\Vert \chi (S_\varepsilon b(\mathbf{D})\widetilde{\mathbf{u}}_0)(\,\cdot\, ,t)\Vert _{H^l(\mathbb{R}^d)}
\leqslant
C^{(13)}t^{-1/2}(\delta ^{-2}+t^{-1})^{d/4}e^{-c_\flat t/2}\Vert \boldsymbol{\varphi}\Vert _{L_2(\mathcal{O})}.
\end{equation}
The constant $C^{(13)}$ depends only on the problem data \eqref{problem data}.
Estimate \eqref{rem. S_eps 25} in the case of half-integer $l$ is checked similarly.
Combining \eqref{rem. S_eps 16}--\eqref{rem. S_eps 18}, \eqref{rem. S_eps 22}, and \eqref{rem. S_eps 25}, we estimate the first summand in the right-hand side of \eqref{rem. S_eps 15}:
\begin{equation}
\label{first summand in K-K0 str. int.}
\begin{split}
\Vert \Lambda ^\varepsilon \chi \bigl( (S_\varepsilon -I)b(\mathbf{D})\widetilde{\mathbf{u}}_0\bigr) (\,\cdot\, ,t)\Vert _{H^1(\mathbb{R}^d)}
\leqslant C^{(14)} \bigl( t^{-1}+t^{-1/2}(\delta ^{-2}+t^{-1})^{d/4}\bigr)e^{-c_\flat t/2}\Vert \boldsymbol{\varphi}\Vert _{L_2(\mathcal{O})}.
\end{split}
\end{equation}
Here $C^{(14)}:=\max\lbrace C^{(1)}C^{(3)};C^{(2)}(C^{(12)}+C^{(13)})\rbrace$.

The second summand in the right-hand side of \eqref{rem. S_eps 15} is estimated by Lemma~\ref{Lemma tilde Lambda multiplicator properties}($2^\circ$):
\begin{equation}
\label{rem. S_eps 26}
\begin{split}
\Vert \widetilde{\Lambda}^\varepsilon \chi \bigl((S_\varepsilon -I)\widetilde{\mathbf{u}}_0\bigr)(\,\cdot\, ,t)\Vert _{H^1(\mathbb{R}^d)}
&\leqslant
\widetilde{C}^{(1)}\varepsilon ^{-1}\Vert \chi \bigl((S_\varepsilon -I)\widetilde{\mathbf{u}}_0\bigr)(\,\cdot\,,t)\Vert _{H^1(\mathbb{R}^d)}
\\
&+\widetilde{C}^{(2)}\Vert \chi \bigl((S_\varepsilon -I)\widetilde{\mathbf{u}}_0\bigr)(\,\cdot\, ,t)\Vert _{H^l(\mathbb{R}^d)},
\end{split}
\end{equation}
where $l=d/2$.
 To estimate the first summand in the right-hand side of \eqref{rem. S_eps 26}, we use \eqref{chi} and inequality \eqref{rem. S_eps 7a} (which holds without extra smoothness assumption
 on the boundary):
\begin{equation*}
\begin{split}
\varepsilon^{-1}\Vert & \chi \bigl((S_\varepsilon -I)\widetilde{\mathbf{u}}_0\bigr)(\,\cdot\,,t)\Vert _{H^1(\mathbb{R}^d)}\\
&\leqslant
\varepsilon ^{-1}\Vert \bigl( (S_\varepsilon -I)\widetilde{\mathbf{u}}_0\bigr)(\,\cdot\,,t)\Vert _{H^1(\mathbb{R}^d)}
+\varepsilon ^{-1}\Vert (\mathbf{D}\chi)\bigl( (S_\varepsilon -I)\widetilde{\mathbf{u}}_0\bigr)(\,\cdot\,,t)\Vert _{L_2(\mathbb{R}^d)}
\\
&\leqslant C^{(5)} t^{-1}e^{-c_\flat t/2}\Vert \boldsymbol{\varphi}\Vert _{L_2(\mathcal{O})}
+\varepsilon ^{-1}\kappa _1 \delta ^{-1}\Vert (S_\varepsilon -I)\widetilde{\mathbf{u}}_0 (\,\cdot\, ,t)\Vert _{L_2(\mathbb{R}^d)}
.
\end{split}
\end{equation*}
Combining this with Proposition~\ref{Proposition S__eps - I} and relations \eqref{f_0<=}, \eqref{PO}, \eqref{u_0=}, and \eqref{exp tilde B_D^0 L2-H1}, we obtain
\begin{equation}
\label{rem. S_eps 27}
\begin{split}
\varepsilon^{-1}\Vert \chi \bigl((S_\varepsilon -I)\widetilde{\mathbf{u}}_0\bigr)(\,\cdot\,,t)\Vert _{H^1(\mathbb{R}^d)}
\leqslant
C^{(15)}(\delta ^{-1}t^{-1/2}+t^{-1})e^{-c_\flat t/2}\Vert \boldsymbol{\varphi}\Vert _{L_2(\mathcal{O})},
\end{split}
\end{equation}
where
$C^{(15)}:=\max\lbrace C^{(5)};\kappa _1 r_1 C_\mathcal{O}^{(1)}c_3\Vert f\Vert _{L_\infty}\rbrace$.

If $l=d/2$ is integer, the second summand in the right-hand side of \eqref{rem. S_eps 26} is estimated by analogy with \eqref{rem. S_eps 23}:
\begin{equation}
\label{rem. S_eps 28}
\begin{split}
\Vert \chi \bigl((S_\varepsilon -I)\widetilde{\mathbf{u}}_0\bigr)(\,\cdot\, ,t)\Vert _{H^l(\mathbb{R}^d)}
\leqslant
2C_l^{(11)}\sum  _{\sigma =0}^l \delta ^{-(l-\sigma)}\Vert \mathbf{u}_0(\,\cdot\, ,t)\Vert _{H^\sigma (\mathcal{O}''')},
\end{split}
\end{equation}
$ 0<\varepsilon\leqslant (4 r_1)^{-1}\delta $.
The norms of $\mathbf{u}_0$ in $L_2(\mathcal{O};\mathbb{C}^n)$, $H^1(\mathcal{O};\mathbb{C}^n)$, and $H^2(\mathcal{O};\mathbb{C}^n)$ are estimated by Lemma~\ref{Lemma properties of operator exponential} and relations \eqref{f_0<=}, \eqref{u_0=}. For $\sigma\geqslant 3$ the norm $\Vert \mathbf{u}_0 (\,\cdot\, ,t)\Vert _{H^\sigma (\mathcal{O}''')}$
is estimated by using \eqref{exp tilde B_D^0 L2-H-sigma strictly interior} (with $\mathcal{O}'$ replaced by $\mathcal{O}'''$):
\begin{equation*}
\begin{split}
\Vert \mathbf{u}_0(\,\cdot\, ,t)\Vert _{H^\sigma (\mathcal{O}''')}
&\leqslant \mathrm{C}_{\sigma +1} '\Vert f\Vert^2_{L_\infty} 4^{\sigma }t^{-1/2}(\delta ^{-2}+t^{-1})^{\sigma /2}e^{-c_\flat t/2}\Vert \boldsymbol{\varphi}\Vert _{L_2(\mathcal{O})}.
\end{split}
\end{equation*}
Combining these arguments with \eqref{rem. S_eps 28}, we deduce that
\begin{equation}
\label{rem. S_eps 29}
\Vert \chi \bigl((S_\varepsilon -I)\widetilde{\mathbf{u}}_0\bigr)(\,\cdot\, ,t)\Vert _{H^l(\mathbb{R}^d)}
\leqslant
C^{(16)}t^{-1/2}(\delta ^{-2}+t^{-1})^{d/4}e^{-c_\flat t/2}\Vert \boldsymbol{\varphi}\Vert _{L_2(\mathcal{O})},
\end{equation}
with the constant $C^{(16)}$ depending only on the problem data \eqref{problem data}.
For the case of half-integer $l$, estimate \eqref{rem. S_eps 29} is checked similarly.
As a result, relations \eqref{rem. S_eps 26}, \eqref{rem. S_eps 27}, and \eqref{rem. S_eps 29} imply the following estimate for the second summand in the right-hand side
 of \eqref{rem. S_eps 15}:
\begin{equation*}
\begin{split}
\Vert \widetilde{\Lambda}^\varepsilon\chi \bigl((S_\varepsilon -I)\widetilde{\mathbf{u}}_0\bigr)(\,\cdot\,,t)\Vert _{H^1(\mathbb{R}^d)}
&\leqslant
\widetilde{C}^{(1)}C^{(15)}(\delta ^{-1}t^{-1/2}+t^{-1})e^{-c_\flat t/2}\Vert \boldsymbol{\varphi}\Vert _{L_2(\mathcal{O})}
\\
&+
\widetilde{C}^{(2)}C^{(16)}t^{-1/2}(\delta ^{-2}+t^{-1})^{d/4}e^{-c_\flat t/2}\Vert \boldsymbol{\varphi}\Vert _{L_2(\mathcal{O})}.
\end{split}
\end{equation*}
Together with \eqref{rem. S_eps 15} and \eqref{first summand in K-K0 str. int.}, this implies inequality \eqref{lm K-K0 strictly interior} with the constant
$
\mathrm{C}_d'':=
C^{(14)}+\widetilde{C}^{(1)}C^{(15)}+ \widetilde{C}^{(2)}C^{(16)}
$.
We have taken into account that the term $\delta ^{-1}t^{-1/2}$ does not exceed $t^{-1/2}(\delta ^{-2}+t^{-1})^{d/4}$.
\qed

\subsection{Proof of Theorem \ref{Theorem O' no S_eps no conditions on Lambda's}}
Inequality \eqref{Th O' no S_eps no conditions on Lambda's} follows directly from \eqref{Th_exp_korrector strictly interior} and \eqref{lm K-K0 strictly interior}. Herewith,
$\mathrm{C}_d :=\max\lbrace C_{20};C_{21}\rbrace +\mathrm{C}_d''$.

Let us check \eqref{Th O' no S_eps no conditions on Lambda's fluxes}. From \eqref{Th O' no S_eps no conditions on Lambda's}, \eqref{b_l <=}, and \eqref{K_D^0(t;eps)} it follows that
\begin{equation}
\label{6.24}
\begin{split}
\Vert &g^\varepsilon b(\mathbf{D})
\bigl(
f^\varepsilon e^{-\widetilde{B}_{D,\varepsilon}t}(f^\varepsilon)^*
-
(I+\varepsilon \Lambda ^\varepsilon  b(\mathbf{D})+\varepsilon\widetilde{\Lambda}^\varepsilon)
f_0 e^{-\widetilde{B}_D^0 t}f_0
\bigr)
\Vert _{L_2(\mathcal{O})\rightarrow L_2(\mathcal{O}')}
\\
&\leqslant \Vert g\Vert _{L_\infty}(d\alpha _1)^{1/2}\mathrm{C}_d\varepsilon h_d(\delta ;t) e^{-c_\flat t/2}.
\end{split}
\end{equation}

Let us apply identity \eqref{rem. S_eps 10}. The norm of the second summand in the right-hand side of \eqref{rem. S_eps 10} is estimated with the help of
\eqref{b_l <=}, \eqref{chi}, and Lemma~\ref{Lemma Lambda multiplicator properties}($1^\circ$):
\begin{equation}
\label{6.25}
\begin{split}
\varepsilon &\Bigl\Vert \sum _{k,j=1}^d  g^\varepsilon b_k \Lambda ^\varepsilon b_j D_kD_j f_0 e^{-\widetilde{B}_D^0t}f_0\Bigr\Vert _{L_2(\mathcal{O})\rightarrow L_2(\mathcal{O}')}
\\
&\leqslant
\varepsilon \alpha _1 \Vert g\Vert _{L_\infty}
C^{(0)}\sum _{k,j=1}^d \Vert \chi D_k D_j f_0 e^{-\widetilde{B}_D^0t}f_0\Vert _{L_2(\mathcal{O})\rightarrow  H^{l-1}(\mathbb{R}^d)},
\quad l=d/2.
\end{split}
\end{equation}
Next, we apply Lemma~\ref{Lemma chi}. If $l$ is integer, \eqref{f_0<=} yields
\begin{equation}
\label{6.26}
\begin{split}
&\sum _{k,j=1}^d \Vert \chi D_k D_j f_0 e^{-\widetilde{B}_D^0t}f_0\Vert _{L_2(\mathcal{O})\rightarrow  H^{l-1}(\mathbb{R}^d)}\\
&\qquad\leqslant
d C_{l-1}^{(11)}\Vert f\Vert _{L_\infty}\sum _{i=0}^{l-1} \delta ^{-(l-1-i)}\Vert f_0 e^{-\widetilde{B}_D^0 t}\Vert _{L_2(\mathcal{O})\rightarrow H^{i+2}(\mathcal{O}'')}.
\end{split}
\end{equation}
The norm $\Vert f_0e^{-\widetilde{B}_D^0 t}\Vert _{L_2(\mathcal{O})\rightarrow H^{2}(\mathcal{O})}$ satisfies \eqref{exp tilde B_D^0 L2-H2}.
If $i\geqslant 1$, relations \eqref{f_0<=} and \eqref{exp tilde B_D^0 L2-H-sigma strictly interior} (with $\mathcal{O}$ replaced by $\mathcal{O}''$) imply that
\begin{equation*}
\Vert f_0 e^{-\widetilde{B}_D^0 t}\Vert _{L_2(\mathcal{O})\rightarrow H^{i+2}(\mathcal{O}'')}
\leqslant \mathrm{C}_{i+2}'\Vert f\Vert _{L_\infty}2^{i+1}t^{-1/2}(\delta ^{-2}+t^{-1})^{(i+1)/2}e^{-c_\flat t/2}.
\end{equation*}
Combining this with \eqref{exp tilde B_D^0 L2-H2}, \eqref{6.25}, and \eqref{6.26}, we obtain
\begin{equation}
\label{6.28}
\begin{split}
\varepsilon \Bigl\Vert \sum _{k,j=1}^d g^\varepsilon b_k \Lambda ^\varepsilon b_j D_kD_j f_0 e^{-\widetilde{B}_D^0t}f_0\Bigr\Vert _{L_2(\mathcal{O})\rightarrow L_2(\mathcal{O}')}
\leqslant
C^{(17)}\varepsilon t^{-1/2}(\delta ^{-2}+t^{-1})^{d/4}e^{-c_\flat t/2},
\end{split}
\end{equation}
where the constant $C^{(17)}$ depends only on the problem data \eqref{problem data}.
If $l$ is half-integer, inequality \eqref{6.28} is checked by using Lemma~\ref{Lemma chi}($2^\circ$).

The third summand in the right-hand side of \eqref{rem. S_eps 10} is estimated similarly by using \eqref{b_l <=}, \eqref{chi}, Lemma~\ref{Lemma tilde Lambda multiplicator properties}($1^\circ$), and Lemma~\ref{Lemma chi}. As a result, we obtain
\begin{equation}
\label{6.29}
\begin{split}
\varepsilon \sum _{j=1}^d \Vert g^\varepsilon b_j \widetilde{\Lambda}^\varepsilon D_j f_0 e^{-\widetilde{B}_D^0 t}f_0\Vert _{L_2(\mathcal{O})\rightarrow L_2(\mathcal{O}')}
\leqslant C^{(18)}
\varepsilon  t^{-1/2}(\delta^{-2}+t^{-1})^{d/4}e^{-c_\flat t/2}.
\end{split}
\end{equation}
Here the constant $C^{(18)}$ depends only on the problem data \eqref{problem data}.

Finally, relations \eqref{tilde g}, \eqref{rem. S_eps 10}, \eqref{6.24}, \eqref{6.28}, and \eqref{6.29} imply
inequality  \eqref{Th O' no S_eps no conditions on Lambda's fluxes} with the constant
$\widetilde{\mathrm{C}}_d:=\Vert g\Vert _{L_\infty} (d\alpha _1)^{1/2}\mathrm{C}_d+C^{(17)}+C^{(18)}$.
\qed

\end{document}